\title{Non-archimedean SYZ fibrations via tropical contractions}
\author{Yuto Yamamoto}
\address{
RIKEN iTHEMS, Wako, Saitama 351-0198, Japan}
\email{yuto.yamamoto@riken.jp}
\date{}
\begin{document}

\begin{abstract}
We consider a toric degeneration of Calabi--Yau complete intersections of Batyrev--Borisov in the Gross--Siebert program.
The author showed in his previous work that there exists an integral affine contraction map called a tropical contraction, from the tropical variety obtained as its tropicalization to the dual intersection complex of the toric degeneration.
In this article, we prove that the dual intersection complex is isomorphic to the essential skeleton of the Berkovich analytification as piecewise integral affine manifolds, and the composition of the tropicalization map and the tropical contraction is an affinoid torus fibration with a discriminant of codimension $2$, which induces the same integral affine structure as the one coming from the toric degeneration.
This is a generalization of an earlier work by Pille-Schneider for a specific degeneration of Calabi--Yau hypersurfaces in projective spaces.
\end{abstract}

\maketitle

\section{Introduction}\label{sc:intro}

In this article, we study relations between the Gross--Siebert program \cite{MR2213573, MR2669728, MR2846484} and the non-archimedean geometric approach to the SYZ mirror symmetry initiated by Kontsevich--Soibelman \cite{MR2181810}.
The SYZ conjecture predicts that a Calabi--Yau manifold admits the structure of a special Lagrangian torus fibration, and the mirror Calabi--Yau manifold is obtained by taking its dual fibration \cite{MR1429831}.
The fibration is called an SYZ fibration.
Based on the proposal by Kontsevich--Soibelman and the previous study \cite{MR3595497} on the relation with the minimal model program, Nicaise--Xu--Yu \cite{MR3946280} constructed an analogue of the SYZ fibration in non-archimedean geometry.
They showed that for a maximally degenerate Calabi--Yau variety, the Berkovich retraction associated with a \emph{good minimal dlt-model} is an affinoid torus fibration away from a subset of codimension $2$.
It is called a \emph{non-archimedean SYZ fibration}.
The Berkovich retraction is a map from the Berkovich analytification of the degeneration to its skeleton which is identified with the dual complex of the dlt-model, and induces an integral affine structure (with singularities) on the skeleton.
On the other hand, in the Gross--Siebert program, we consider a so-called \emph{toric degeneration}, a degeneration to a union of toric varieties.
In general, a toric degeneration is not a good minimal dlt-model (and vice versa), and there is no Berkovich retraction associated with a toric degeneration.
Nevertheless, also with a toric degeneration, we associate its dual intersection complex, and equip it with an integral affine structure with singularities in a certain way in the Gross--Siebert program.

The dual (intersection) complexes in these two approaches are basically defined in different ways (see \pref{sc:berk} and \cite[Section 4]{MR2213573} for the respective definitions, and compare them), and whether they can be naturally identified with each other seems non-trivial.
At least, their integral affine structures considered in the respective approaches are apparently different in general.
In fact, the singularities of the integral affine structure induced by a Berkovich retraction locates in the union of cells of codimension greater than or equal to $2$ of the dual complex, while in the Gross--Siebert program, they intersect also with the interiors of cells of codimension $1$.
Mazzon and Pille-Schneider \cite{MPS21} pointed out this discrepancy, and constructed a non-archimedean SYZ fibration whose base coincides with the integral affine manifold with singularity in the Gross--Siebert program for a quintic $3$-fold, generalizing the previous construction by Kontsevich--Soibelman for a quartic K3 surface \cite[Section 4.2.5]{MR2181810}.
The non-archimedean SYZ fibrations in loc.cit.~and \cite{MPS21} are constructed by considering the Berkovich retraction associated with some non-minimal model as an intermediate step, and composing it with a further retraction of the skeleton.

Subsequently, Pille-Schneider \cite{PS22} generalized it to the higher-dimensional case.
He did it by using tropical geometry rather than considering a non-minimal model as an intermediate step.
To be specific, he uses tropicalization maps and \emph{tropical contractions} of \cite{Yam21}, which will be explained shortly.
The goal of this article is to generalize his work further to the case of Calabi--Yau complete intersections of Batyrev--Borisov \cite{MR1463173}.
Our purpose of doing it is to build a bridge between the Gross--Siebert program and the non-archimedean geometric approach to mirror symmetry.

\subsection{Main result}\label{sc:main}

Let $k$ be an algebraically closed field of characteristic $0$.
We set $R:=k\ldd t \rdd$ and $K:=k \lbb t \rbb$.
Let further $d$ and $r$ be positive integers.
Consider a free $\bZ$-module $M$ of rank $d+r$ and its dual lattice $N:=\Hom(M, \bZ)$.
We set $M_\bR:=M \otimes_\bZ \bR$ and $N_\bR:=N \otimes_\bZ \bR=\Hom(M, \bR)$.
Let $\Delta \subset M_\bR$ be a reflexive polytope, and $\Delta =\Delta_1+ \cdots + \Delta_r$ be a nef partition.
From these with some additional data, Gross  \cite{MR2198802} constructed a toric degeneration $f \colon \scX \to \Spec R$ of Calabi--Yau complete intersections of Batyrev--Borisov, and its dual intersection complex $B^{\check{h}}_\nabla$ equipped with an integral affine structure with singularities.
Let $f' \colon X \to \Spec K$ be the base change of the toric degeneration $f \colon \scX \to \Spec R$ to $K$.
The tropicalization $\trop(X)$ of $X$ is defined as the image of the Berkovich analytification $X^{\an}$ of $X$ by the tropicalization map $\trop$.
In the previous work \cite{Yam21}, the author showed that one has $B^{\check{h}}_\nabla \subset \trop (X)$, and there exists a proper continuous map called a tropical contraction
\begin{align}
\delta \colon \trop (X) \to B^{\check{h}}_\nabla,
\end{align}
which preserves the integral affine structures (\cite[Theorem 1.2]{Yam21}\footnote{Throughout the article, we refer to arXiv version 2 of \cite{Yam21}; the numbering may change in later revisions.}).
The sheaves of integral affine functions on tropical spaces are regarded as the structure sheaves in tropical geometry.
In this sense, one can say that the tropical contraction $\delta$ is a \emph{morphism of tropical spaces} (cf.~\cite[Remark 3.27]{Yam21}).
We refer the reader to \cite[Section 3]{Yam21} for more details about tropical contractions.
The construction of the tropical contraction $\delta$ is briefly recalled in \pref{sc:contraction}.

Let further $\Sk(X) \subset X^{\an}$ denote the essential skeleton of $X$ of (cf.~\cite[Definition 4.10]{MR3370127}).
It has a canonical piecewise affine structure (cf.~\cite[4.10.2]{MR3370127}).
The following is the main result of this article.
The precise setup is stated in \pref{sc:toric-CY}.

\begin{theorem}\label{th:main}
The following hold:
\begin{enumerate}
\item The restriction of the tropicalization map $\trop$ to the essential skeleton $\Sk(X) \subset X^{\an}$ is a piecewise integral affine isomorphism onto the dual intersection complex $B^{\check{h}}_\nabla \subset \trop (X)$ of the toric degeneration $f \colon \scX \to \Spec R$.
\item The composition of the tropicalization map and the tropical contraction 
\begin{align}\label{eq:na-syz}
\delta \circ \mathrm{trop} \colon X^\mathrm{an} \to B^{\check{h}}_\nabla \cong \Sk (X)
\end{align} 
is an affinoid torus fibration outside the discriminant $\Gamma \subset B^{\check{h}}_\nabla$, 
and the integral affine structure on $B^{\check{h}}_\nabla \setminus \Gamma$ induced by it coincides with the one coming from the toric degeneration $f \colon \scX \to \Spec R$.
\end{enumerate}
\end{theorem}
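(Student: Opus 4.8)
The plan is to reduce both assertions to a description of the map $\delta\circ\trop$ in local toric charts indexed by the cells of $B^{\check{h}}_\nabla$, using two inputs: the \emph{faithful tropicalization} of the schön variety $X$, by which $\trop\colon X^{\mathrm{an}}\to\trop(X)$ is an affinoid torus fibration over the locus where $\trop(X)$ is a topological manifold and there induces the standard tropical integral affine structure; and the explicit combinatorics of the tropical contraction $\delta$ from \cite{Yam21}. The discriminant is taken to be $\Gamma:=\Gamma_{\mathrm{GS}}\cup\delta(\trop(X)_{\mathrm{sing}})\cup\Gamma_{\delta}$, the union of the Gross--Siebert discriminant, the image of the non-manifold locus of $\trop(X)$, and the locus over which $\delta$ is not a local integral affine isomorphism. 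Showing $\Gamma$ has codimension $2$ is the step I expect to be the main obstacle: for a tropical \emph{hypersurface} the non-manifold locus is automatically codimension $2$ (this is the situation of \cite{PS22}), but $\trop(X)$ is now locally an intersection of $r$ tropical hypersurfaces coming from the nef partition $\Delta=\Delta_1+\dots+\Delta_r$, and one must check that $\Delta$-regularity of the defining sections forces enough transversality that no codimension-$1$ non-manifold points, and no codimension-$1$ contraction behaviour of $\delta$, occur.

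For part~(1), pass to a toric \emph{snc} model of $X$ obtained from an equivariant resolution of the Gross degeneration $f\colon\scX\to\Spec R$ of \cite{MR2198802}, possibly after a base change $t\mapsto t^{1/m}$ (which leaves the essential skeleton unchanged). The Kontsevich--Soibelman skeleton $\Sk(X)$ is the closure of the divisorial points of minimal weight; because the Gross degeneration is maximally degenerate this is the full dual intersection complex, which is homeomorphic to $B^{\check{h}}_\nabla$. That $\trop$ realises this homeomorphism is seen via the canonical section of $\trop$ over the manifold locus of $\trop(X)$: over the relative interior of each cell of $B^{\check{h}}_\nabla$ it selects precisely the quasi-monomial valuations attached to the corresponding toric stratum of the model, i.e.\ the points of $\Sk(X)$. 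Finally, $\trop|_{\Sk(X)}$ is \emph{integral} piecewise affine because on each such cell the section of $\trop$ is given by an explicit monomial formula, so the chart it induces on $\Sk(X)$ is the lattice chart that also cuts out the corresponding cell of $B^{\check{h}}_\nabla$ in the Gross--Siebert construction.

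For part~(2), fix a chart $V$ of $B^{\check{h}}_\nabla\setminus\Gamma$. By the choice of $\Gamma$, $\delta^{-1}(V)$ lies in the manifold locus of $\trop(X)$ and $\delta$ restricts there to a local integral affine isomorphism $\delta^{-1}(V)\xrightarrow{\ \sim\ }V$; over that manifold locus $\trop\colon X^{\mathrm{an}}\to\trop(X)$ is an affinoid torus fibration, hence so is $(\delta\circ\trop)^{-1}(V)\to\delta^{-1}(V)$, and composing with the isomorphism $\delta^{-1}(V)\xrightarrow{\ \sim\ }V$ exhibits $\delta\circ\trop$ as an affinoid torus fibration over $V$, and thus over all of $B^{\check{h}}_\nabla\setminus\Gamma$. (The cells of $\trop(X)$ not belonging to $B^{\check{h}}_\nabla$, and the genuinely positive-dimensional fibres of $\delta$, map into $\Gamma$ by construction, so they cause no trouble here.)

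It remains to identify the two integral affine structures on $B^{\check{h}}_\nabla\setminus\Gamma$. The integral affine functions of the affinoid torus fibration $\delta\circ\trop$ on $V$ are locally those $\phi$ with $\phi\circ\delta\circ\trop$ of the form $-\log|u|$ for an invertible analytic function $u$; by faithfulness of the tropicalization of $X$ this is equivalent to $\phi\circ\delta$ being a tropical integral affine function on $\delta^{-1}(V)$, and by \cite[Theorem 1.2]{Yam21}, which says $\delta$ preserves the integral affine structures, this in turn is equivalent to $\phi$ being a Gross--Siebert integral affine function on $V$. Hence the two structure sheaves coincide, which is the assertion. As indicated, the only essentially new work relative to \cite{PS22} is the codimension-$2$ estimate for the combined discriminant $\Gamma$ in the complete intersection case; granting that, parts~(1) and~(2) reduce to toric bookkeeping and the two structural inputs above.
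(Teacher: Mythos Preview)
Your strategy for part~(2) has a genuine gap that cannot be repaired within the framework you propose. You want to factor $\delta\circ\trop$ as ``$\trop$ is an affinoid torus fibration over the manifold locus of $\trop(X)$'' followed by ``$\delta$ is a local integral affine isomorphism over $\delta^{-1}(V)$'', with the bad locus $\Gamma$ of codimension~$2$. But $\delta$ is \emph{not} a local isomorphism away from a codimension-$2$ set: already in the quartic K3 example ($d=2$, $r=1$), the tropical variety $\trop(X)$ consists of $\partial\nabla$ together with $2$-dimensional ``wings'' attached along every edge, and $\delta$ contracts each wing onto its attaching edge. Thus $\delta$ has $1$-dimensional fibres over the interior of every codimension-$1$ cell, and your $\Gamma_\delta$ is the entire codimension-$1$ skeleton. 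The same phenomenon occurs in general: by the construction recalled here, $\delta^{-1}(W^\circ_{\tau',\tau})=V^\circ_{\tau',\tau}$ involves the cone $\cone_{\mathbb T}\bigl(\bigcup_i\beta_i^\ast(\mu_{\tau'})\bigr)$, which is positive-dimensional whenever $\tau'$ is not a maximal cell. So your codimension-$2$ estimate is not merely ``the main obstacle'' --- it is false for the discriminant you define.

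The paper's proof works precisely because it does \emph{not} try to split $\delta\circ\trop$ into two individually nice maps. Instead, for each vertex $v$ it constructs a minimal snc-model $\scrX$ by repeatedly blowing up the toric degeneration along the irreducible components of the central fibre in an order ending at the component for $v$, and then proves two things directly: (i) on the preimage of the open star of $v$, the composition $\delta\circ\trop$ \emph{equals} the Berkovich retraction $\rho_{\scrX}$ (Lemma~\ref{lm:berkovich-tropical}); and (ii) $\rho_{\scrX}$ is an affinoid torus fibration there, shown by exhibiting an isomorphism of formal completions between $\scrX$ along $\overline D_v$ and a regular \emph{toric} model $\scrY$ (Lemma~\ref{lm:affinoid}). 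The point is that the ``extra'' directions in the fibres of $\trop$ over the wings exactly match the directions collapsed by $\delta$, so the composite is a single Berkovich retraction even though neither factor is nice on its own. Your sketch of part~(1) is also too vague to stand --- the identification of $\Sk(X)$ with $B^{\check h}_\nabla$ as piecewise \emph{integral} affine manifolds in the paper goes through this same explicit snc-model, computing $\trop$ on divisorial and monomial points via the blow-up coordinates and the shuffle combinatorics (Lemmas~\ref{lm:divisorial} and~\ref{lm:trop-tri}); an abstract ``equivariant resolution plus section of $\trop$'' does not by itself give you the integral structure or the match with the Gross--Siebert cells.
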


The definition of affinoid torus fibrations is recalled in \pref{df:affinoid}.
The outline of the proof of the theorem is also explained using an example in \pref{sc:overview}.

\pref{th:main} is proved in \cite[Theorem A]{PS22} for a specific degeneration of Calabi--Yau hypersurfaces in projective spaces (cf.~\pref{rm:ps22}).
\pref{th:main}(2) is also regarded as a generalization of \cite[Section 4.2.5]{MR2181810} and \cite[Theorem C]{MPS21} (cf.~\pref{rm:KSMPS21}).

\subsection{Metric SYZ conjecture}

In the setup of \cite{PS22} (cf.~\pref{rm:ps22}), Pille-Schneider also showed a strong version of the so-called \emph{NA MA-real MA comparison property} for the Fermat family of Calabi--Yau hypersurfaces (\cite[Theorem B]{PS22}).
The NA MA-real MA comparison property requires that there exists an snc-model such that the potential of a solution to the non-archimedean Monge--Amp\`{e}re equation corresponding to the complex Monge--Amp\`{e}re equation giving the Calabi--Yau metric is constant along fibers of the associated Berkovich retraction (cf.~\cite[Definition 3.3]{MR4688155}).
It is shown by Li \cite{MR4688155} that the NA MA-real MA comparison property implies the \emph{metric SYZ conjecture} which claims that there exist special Lagrangian torus fibrations on generic regions of the Calabi--Yau manifolds near the limit $t \to 0$ (cf.~e.g.~\cite[Conjecture 1.1]{MR4701493}).
Furthermore, it is also proved in \cite[Proposition 5.8]{PS22} that for the Fermat family, the special Lagrangian torus fibrations on the generic regions, which were originally constructed in \cite{MR4460593} converge to the non-archimedean SYZ fibration of \cite[Theorem A]{PS22} (or of \pref{th:main} of this article) in the hybrid space associated with the degeneration.

In order to prove \pref{th:main}, we construct a minimal snc-model for $X$ of \pref{sc:main} (\pref{pr:snc}).
This model is also used in \cite{GY24} to prove the NA MA-real MA comparison property for a toric degeneration of Calabi--Yau complete intersections in a toric variety, under the assumption that the solution of the non-archimedean Monge--Amp\`{e}re equation is given by a toric metric.
By further applying the results of \cite{MR4701493, AH23} on the existence of a solution to the corresponding real Monge--Amp\`{e}re equation, we also generalize the existing results (\cite{MR4692438, MR4701493, AH23, MR4886023}) on the metric SYZ conjecture as corollaries (cf.~\cite[Corollary 1.3, Corollary 1.4]{GY24}).

\subsection{Other related works}

There are also many works constructing (topological or Lagrangian or hybrid) SYZ fibrations such as \cite{MR1738179, MR1821145, MR1876075, MR2487600, MR4290084, RZ20, MR4692382}.
It is conjectured by Kontsevich--Soibelman \cite[Conjecture 3]{MR2181810} that a maximally degenerating family of Calabi--Yau manifolds with Ricci-flat K\"{a}hler metrics converges to the base of the non-archimedean SYZ fibration in the Gromov--Hausdorff topology.
This is confirmed in \cite{MR4612002, MR4692382} for $K$-trivial finite quotients of abelian varieties.

\subsection{Organization of this article}

In \pref{sc:pre}, we recall some basic notions which will be used in this article, and fix notation.
In \pref{sc:toric-CY}, we recall the construction by Gross \cite{MR2198802} of toric degenerations of Calabi--Yau complete intersections,  and state the precise setup of \pref{th:main}.
In \pref{sc:contraction}, we recall the construction of tropical contractions of \cite{Yam21}.
The proof of \pref{th:main} is given in \pref{sc:proof1}.

\section{Preliminaries}\label{sc:pre}

\subsection{Notaion}

Throughout this article, $k$ is an algebraically closed field of characteristic $0$, and $R:=k\ldd t \rdd$, $K:=k \lbb t \rbb$.
The maximal ideal in $R$ is denoted by $\frakm$.
The field $K$ has the standard valuation $\ord_t$ given by
\begin{align}
\ord_t \colon K \to \bR \cup \lc \infty \rc, \quad \sum_{j \in \bZ} c_j t^j \mapsto \min \lc j \in \bZ \relmid c_j \neq 0 \rc.
\end{align}

For a subset $S \subset \bR^d$ $\lb d \in \bZ_{>0} \rb$, the \emph{convex hull} $\conv \lb S \rb \subset \bR^d$, the \emph{affine hull} $\aff(S) \subset \bR^d$, the \emph{conic hull} $\cone \lb S \rb \subset \bR^d$, and the \emph{linear span} $\vspan \lb S \rb \subset \bR^d$ are defined by
\begin{align}
\conv \lb S \rb&:=\lc \sum_{i=1}^n \lambda_i s_i \relmid n \geq 1, s_i \in S, \lambda_i \in \bR_{\geq 0}, \sum_{i=1}^n \lambda_i =1 \rc, \\
\aff \lb S \rb&:=\lc \sum_{i=1}^n \lambda_i s_i \relmid n \geq 1, s_i \in S, \lambda_i \in \bR, \sum_{i=1}^n \lambda_i =1 \rc, \\
\cone \lb S \rb&:=\lc \sum_{i=1}^n \lambda_i s_i \relmid n \geq 0, s_i \in S, \lambda_i \in \bR_{\geq 0} \rc, \\
\vspan \lb S \rb&:=\lc \sum_{i=1}^n \lambda_i s_i \relmid n \geq 0, s_i \in S, \lambda_i \in \bR \rc.
\end{align}
The relative interior of $S$ will be denoted by $\rint (S) \subset S$.

For a free $\bZ$-module $M$ of finite rank and its dual $N:=\Hom \lb M, \bZ \rb$, we let $M_\bR:=M \otimes_\bZ \bR$ and $N_\bR:=N \otimes_\bZ \bR=\Hom(M, \bR)$.
The ranks of $M$ and $N$ may vary depending on the sections.

\subsection{Berkovich geometry}\label{sc:berk}

We recall the basic notions in Berkovich geometry, which will be used in this article.
We refer the reader to \cite{MR3370127} or \cite{MR3419957} for more details. 
This subsection is based on these papers.
Let $X$ be a connected regular $K$-scheme of finite type.
The \emph{Berkovich analytification} $X^{\an}$ is defined as the set of couples of a scheme-theoretic point of $X$ and a valuation on the residue field at the scheme-theoretic point, which extends the valuation $\ord_t$ on $K$.
In this article, a point in $X^{\an}$ is denoted by $x$, and the scheme-theoretic point and the additive valuation corresponding to the point $x \in X^{\an}$ are denoted by $\xi_x \in X$ and $v_x \colon k(x) \to \bR \cup \lc \infty \rc$ respectively, where $k(x)$ denotes the residue field at the point $\xi_x \in X$.
Although the analytic space $X^{\an}$ is also endowed with a topology and a structure sheaf, we do not recall them here, since we do not use them explicitly in this article.

A \emph{model} of $X$ is a normal flat $R$-scheme $\scrX$ endowed with an isomorphism $\scrX \times_R K \cong X$.
We will write its special fiber as $\scrX_k:=\scrX \times_R k$.
An \emph{snc-model} is a regular model $\scrX$ whose special fiber $\scrX_k$ is a divisor with strict normal crossing.
Let $\scrX$ be an snc-model of $X$.
We write its special fiber as $\scrX_k=\sum_{i \in I} N_i D_i$ $\lb N_i \in \bN \rb$ with irreducible components $D_i$.
A non-empty connected component of the schematic intersection $\bigcap_{j \in J} D_j$ with $J \subset I$ is called a \emph{stratum}.
Let $\Delta \lb \scrX_k \rb$ be the \emph{dual complex} of $\scrX_k$.
It is a $\Delta$-complex consisting of simplices, which has a bijective correspondence with the set of strata of $\scrX_k$.
The simplex corresponding to a connected component $S$ of $\bigcap_{j \in J} D_j$ is
\begin{align}
\sigma_S:=\lc \alpha=\lb \alpha_j \rb_{j \in J} \in \lb \bR_{\geq 0} \rb^J \relmid \sum_{j \in J} \alpha_j N_j =1\rc,
\end{align}
and we have $\sigma_{S'} \prec \sigma_{S}$ in $\Delta \lb \scrX_k \rb$ if and only if $S \subset S'$.
Let $\alpha \in \sigma_S$ be an arbitrary point.
We consider the map
\begin{align}\label{eq:expansion}
\scO_{\scrX, \xi}^{\times} \to \bR, \quad f=\sum_{\beta \in \lb \bZ_{\geq 0} \rb^J} c_\beta \prod_{j \in J} z_j^{\beta_j} 
\mapsto \min \lc \sum_{j \in J} \alpha_j \beta_j \relmid \beta = \lb \beta_j \rb_{j \in J} \in \lb \bZ_{\geq 0} \rb^J, c_\beta \neq 0 \rc,
\end{align}
where $\xi$ is the generic point of $S$, $c_\beta$ is either zero or a unit in $\widehat{\scO}_{\scrX, \xi}$, and $z_j \in \scO_{\scrX, \xi}$ is a local equation of $D_j$ such that $\lc z_j \rc_{j \in J}$ is a regular system of parameters of $\scO_{\scrX, \xi}$ (cf.~e.g.~\cite[Section 2.4]{MR3370127}).
The above expansion of $f \in \scO_{\scrX, \xi}^{\times}$ is taken by regarding $f$ as an element of $\widehat{\scO}_{\scrX, \xi}$ and using Cohen's structure theorem.
The map \eqref{eq:expansion} naturally extends to a valuation on the function field $K(X)$.
We write it as $v_\alpha \colon K(X) \to \bR \cup \lc \infty \rc$, and call the point in $X^{\an}$ corresponding to $v_\alpha$ the \emph{monomial point} associated with $(S, \alpha)$.
When $S$ is an irreducible component of the special fiber $\scrX_k$ and $\alpha$ is the unique point of $\sigma_S$, it is also called the \emph{divisorial point} associated with $S$.
The map
\begin{align}
i_\scrX \colon \Delta \lb \scrX_k \rb \to X^{\an}, \quad \sigma_S \ni \alpha \mapsto v_\alpha,
\end{align}
is a homeomorphism onto its image (cf.~e.g.~\cite[Proposition 3.1.4]{MR3370127}), and the image is called the \emph{skeleton} of $\scrX$.
We write it as $\Sk \lb \scrX \rb \subset X^{\an}$.

Let $\frakX$ be the $\frakm$-adic formal completion of $\scrX$.
The generic fiber $\frakX_\eta$ of $\frakX$ is the compact analytic domain in $X^{\an}$, which consists of points $x \in X^{\an}$ such that the morphism $\Spec k(x) \to X$ extends to a morphism
\begin{align}\label{eq:center}
\Spec k(x)^\circ \to \scrX,
\end{align}
where $k(x)^\circ$ is the valuation ring of the valued field $\lb k(x), v_x \rb$.
The image of the closed point of $\Spec k(x)^\circ$ by \eqref{eq:center} is called the \emph{center} of $x \in X^{\an}$.
If $\scrX$ is proper, we have $\frakX_\eta=X^{\an}$ by the valuative criterion of properness.
The \emph{Berkovich retraction}
\begin{align}\label{eq:retraction}
\rho_\scrX \colon \frakX_\eta \to \Sk \lb \scrX \rb
\end{align}
associated with $\scrX$ is defined as follows:
For a point $x \in \frakX_\eta$, let $S$ be the minimal stratum of $\scrX_k$ containing the center of $x$, and suppose that it is a connected component of $\bigcap_{j \in J} D_j$.
Let further $z_j$ be a local equation of $D_j$ at the center of $x$.
The image of the point $x$ by \eqref{eq:retraction} is defined to be the monomial point associated with $(S, \alpha)$ where $\alpha=\lb \alpha_j :=v_x(z_j) \rb_{j \in J} \in \sigma_S$.

Let $S$ be a stratum of $\scrX_k$, and $\frakX_S$ be the formal completion of $\scrX$ along $S$.
The generic fiber $\frakX_{S, \eta}$ of $\frakX_S$ is the subset of $\frakX_\eta$, which consists of points $x \in \frakX_\eta$ whose center is contained in $S$.
For $\frakX_{S, \eta}$, we have the retraction 
\begin{align}
\rho_\scrX \colon \frakX_{S, \eta} \to \ostar(\sigma_S)
\end{align}
given by the restriction of \eqref{eq:retraction}, where $\ostar(\sigma_S) \subset \Sk \lb \scrX \rb$ is the open star of $\sigma_S$ in $\Sk (\scrX) \cong \Delta \lb \scrX_k \rb$.

\subsection{Tropicalizations}\label{sc:trop}

We recall the definition of tropicalizations.
We refer the reader to \cite{MR2428356, MR2511632} and \cite[Section 5]{MR2900439} for more details.
Let $(\bT:= \bR \cup \lc \infty \rc, \min, +)$ be the tropical number semifield.
We equip the set $\bT$ with the topology which makes it homeomorphic to a half line.
Let further $M$ be a free $\bZ$-module of rank $d \in \bZ_{> 0}$ and $N:=\Hom(M, \bZ)$ be its dual.
For a cone $C \subset N_\bR$, we also set
\begin{align}
	C^\vee&:= \lc m \in M_\bR \relmid \la m,n \ra \geq 0  \mathrm{\ for\ all\ } n \in C \rc, \\
	C^\perp&:= \lc m \in M_\bR \relmid \la m,n \ra =0  \mathrm{\ for\ all\ } n \in C \rc.
\end{align}

Let $\Sigma$ be a fan in $N_\bR$.
For a cone $C \in \Sigma$, we define $X_C(\bT)$ as the set of monoid homomorphisms $C^\vee \cap M \to (\bT, + )$
\begin{align}
	X_C(\bT):=\Hom(C^\vee \cap M, \bT)
\end{align}
with the compact open topology.
For elements $p_1, p_2 \in X_C(\bT)$, we define $p_1+p_2 \in X_C(\bT)$ as
\begin{align}
C^\vee \cap M \to \bT, \quad m \mapsto \lb p_1+p_2 \rb (m):=p_1(m)+p_2(m).
\end{align}
Then $\lb X_C(\bT), + \rb$ also becomes a monoid.
For an element $s \in C \subset X_C(\bT)$ and $\lambda \in \bT_{\geq 0}$, we also define $\lambda s \in X_C(\bT)$ as
\begin{align}
C^\vee \cap M \to \bT, \quad m \mapsto \lambda s(m).
\end{align}
Here we use the convention $\infty \cdot 0 =0$.
For a subset $S \subset C \subset X_C(\bT)$, we define 
\begin{align}\label{eq:tcone}
\cone_\bT \lb S \rb:=\lc \sum_{i=1}^n \lambda_i s_i \in X_C(\bT) \relmid n \geq 0, s_i \in S, \lambda_i \in \bT_{\geq 0} \rc. 
\end{align}
When a cone $C_1 \in \Sigma$ is a face of a cone $C_2 \in \Sigma$, we have a natural immersion
\begin{align}
	X_{C_1}(\bT) \to X_{C_2}(\bT),\quad \left(p \colon C_1^\vee \cap M \to \bT\right) \mapsto ({C_2}^\vee \cap M \subset C_1^\vee \cap M \xrightarrow{p} \bT).
\end{align}
By gluing $\lc X_C(\bT) \rc_{C \in \Sigma}$ together, we obtain the \emph{tropical toric variety} $X_\Sigma(\bT)$ associated with the fan $\Sigma$
\begin{align}
	X_\Sigma(\bT):=\lb \bigsqcup_{C \in \Sigma} X_C(\bT) \rb \bigg/ \sim .
\end{align}
The \emph{tropical torus orbit} $O_C(\bT)$ corresponding to the cone $C \in \Sigma$ is defined by
\begin{align}
O_C(\bT):=\Hom(C^\perp \cap M,\bR)=N_\bR / \vspan(C).
\end{align}
For a face $C' \prec C$, there is a natural inclusion
\begin{align}\label{eq:O-emb}
O_{C'}(\bT) \hookrightarrow X_C(\bT), \quad \left(p \colon {C'}^\perp \cap M \to \bR \right) \mapsto 
\lb m \mapsto 
\left\{
\begin{array}{ll}
p(m) & m \in {C'}^\perp \cap M \\
\infty & \mathrm{otherwise} \\
\end{array}
\right.
\rb.
\end{align}
We think of the set $O_C(\bT)$ as a subset of $X_C(\bT)$ by the inclusion.
One has
\begin{align}
X_\Sigma(\bT)=\bigsqcup_{C \in \Sigma} O_{C}(\bT).
\end{align}
There is also a natural projection map
\begin{align}\label{eq:proj}
\pi_C \colon X_C(\bT) \to O_C(\bT), \quad \left(p \colon C^\vee \cap M \to \bT\right) \mapsto (C^\perp \cap M \subset C^\vee \cap M \xrightarrow{p} \bT).
\end{align}

For a cone $C \in \Sigma$, let $X_{C}:= \Spec \lb K \ld C^\vee \cap M \rd \rb$ be the affine toric variety over $K$ associated with the cone $C$.
Let further $X_{\Sigma}$ denote the toric variety over $K$ associated with the fan $\Sigma$.
The \emph{tropicalization map}
\begin{align}\label{eq:trop}
\trop \colon \lb X_{\Sigma} \rb^{\an} \to X_{\Sigma} \lb \bT \rb
\end{align}
is given on each open subset $\lb X_C \rb^{\an} \subset \lb X_{\Sigma} \rb^{\an}$ by
\begin{align}
\lb X_{C} \rb^{\an} \to X_{C} \lb \bT \rb, \quad 
x \mapsto 
\lb m \mapsto v_x \lb z^m \rb \rb,
\end{align}
where $\lb X_\Sigma \rb^{\an}, \lb X_{C} \rb^{\an}$ are the Berkovich analytifications of the toric varieties $X_\Sigma, X_{C}$.
In particular, when the fan $\Sigma$ consists only of the single cone $\lc 0 \rc \subset N_\bR$, the map \eqref{eq:trop} is 
\begin{align}\label{eq:trop'}
\trop \colon \lb \Spec K \ld M \rd \rb^{\an} \to N_\bR, \quad x \mapsto 
\lb m \mapsto v_x \lb z^m \rb \rb.
\end{align}

\begin{definition}{\rm(\cite[Definition 4]{MR2181810})}\label{df:affinoid}
Let $f \colon Y \to B$ be a continuous map from a $K$-analytic space $Y$ to a topological space $B$.
We say that $f$ is a $d$-dimensional \emph{affinoid torus fibration} if for any point $b \in B$, there exists an open neighborhood $U \subset B$ of $b$, an open subset $V \subset N_\bR$, and a commutative diagram
\begin{align}
  \begin{CD}
     f^{-1} \lb U \rb @>>> \trop^{-1} \lb V \rb \\
  @V{f}VV    @V{\trop}VV \\
     U  @>>>  V,
  \end{CD}
\end{align}
where the right vertical map is the restriction of \eqref{eq:trop'}, the upper horizontal map is an isomorphism of $K$-analytic spaces, and the lower horizontal map is a homeomorphism.
\end{definition}

\subsection{Shuffles}\label{sc:shuffle}

We recall the notion called shuffles and the relation with triangulations of products of simplices.
We will use shuffles in \pref{sc:proof1}.
This subsection is based on \cite[Section B.6]{MR4406774}.
Let $p_0, \cdots, p_r$ be non-negative integers, and set $\bar{p}:=\sum_{i=0}^r p_i$.
Let further $\scS=\lc S_i \relmid 0 \leq i \leq r \rc$ be a family of mutually disjoint subsets of the set $\lc 1, 2, \cdots, \bar{p} \rc$ such that 
\begin{align}
\bigcup_{i=0}^r S_i = \lc 1, 2, \cdots, \bar{p} \rc, \quad |S_i|=p_i\quad (0 \leq i \leq r).
\end{align}
When $r=1$, such a partition $\scS$ is called a \emph{$(p_0, p_1)$-shuffle} (cf.~\cite[Definition B.6.1]{MR4406774}).
In this article, we also call $\scS$ with arbitrary $r \geq 1$ a \emph{$(p_0, \cdots, p_r)$-shuffle}.
We also call $(p_0, \cdots, p_r)$ the \emph{degree} of the shuffle $\scS$.

Consider walks on the $p_0 \times \cdots \times p_r$ grid
\begin{align}
\lc \lb j_0, \cdots, j_r \rb \in \bZ^{r+1} \relmid 0 \leq j_i \leq p_i \rc
\end{align}
from $(0, \cdots, 0)$ to $(p_0, \cdots, p_r)$ such that exactly one of the components increases by one in each step.
For such a walk and an integer $l$ such that $0 \leq l \leq \bar{p}$, let $\lb j_0(l), \cdots, j_r(l) \rb$ denote the point of the grid at which we arrive after $l$ times moves.
With a $(p_0, \cdots, p_r)$-shuffle $\scS=\lc S_i \rc_i$, we associate the walk determined by $j_i(0)=0$ $(0 \leq i \leq r)$ and
\begin{align}
j_i(l+1)
=\left\{
\begin{array}{ll}
j_i(l)+1 & l+1 \in S_i\\
j_i(l) & l+1 \nin S_i
\end{array}
\right.
\end{align}
for $l \geq 0$.
This correspondence gives a bijection between $(p_0, \cdots, p_r)$-shuffles and the walks (cf.~\cite[Section B.6.1]{MR4406774}).

For a finite partially ordered set $P$, let $\Lambda \lb P \rb$ denote the abstract simplicial complex associated with $P$, i.e., the vertex set is $P$ and $k$-simplices are the subsets consisting of totally ordered chains of the form $x_0<x_1<\cdots<x_k$ with each $x_j \in P$.
We also write its geometric realization as $|\Lambda \lb P \rb|$, i.e.,
\begin{align}
|\Lambda \lb P \rb|:=
\lc \sum_{x \in P} t_x e_x \in \bR^{|P|} \relmid \sum_{x \in P} t_x=1, t_x \geq 0, \lc x \in P \relmid t_x \neq 0 \rc \in \Lambda \lb P \rb \rc,
\end{align}
where $\lc e_x \rc_{x \in P}$ is the standard basis of $\bR^{|P|}$.

Let $\lc P_i \rc_{i \in I}$ be a family of a finite number of partially ordered sets.
We let the product $\prod_{i \in I} P_i$ have the partial order defined by $\lb x_{1, i} \rb_{i \in I} \leq \lb x_{2, i} \rb_{i \in I}$ if $x_{1, i} \leq x_{2, i}$ for all $i \in I$.
We consider the map
\begin{align}\label{eq:triangulation}
\left| \Lambda \lb \prod_{i \in I} P_i \rb \right| \to 
\prod_{i \in I} \left| \Lambda \lb  P_i \rb \right|, \quad
\sum_{x} t_x e_x \mapsto \prod_{i \in I} \lb \sum_{x} t_x e_{x_i} \rb,
\end{align}
where the sum is over all the elements $x=(x_i)_{i \in I} \in \prod_{i \in I} P_i$.
This map sends the simplex $\lb x_{0, i} \rb_{i \in I} < \cdots < \lb x_{k, i} \rb_{i \in I}$ of $\left| \Lambda \lb \prod_{i \in I} P_i \rb \right|$ into the product of the simplices of $\left| \Lambda \lb P_i \rb \right|$ spanned by the sets of the vertices $\lc x_{j, i} \rc_{j}$.

\begin{lemma}{\rm(cf.~\cite[Lemma B.6.3]{{MR4406774}})}\label{lm:triangulation}
The map \eqref{eq:triangulation} is a piecewise affine isomorphism.
\end{lemma}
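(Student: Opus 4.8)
The plan is to reduce the statement to the case in which every $P_i$ is totally ordered, and then to recognize the induced decomposition of a product of standard simplices as the classical staircase (shuffle) triangulation. First I would reduce to chains. For each $i$, choose a linear extension, i.e.\ an order-embedding $\iota_i \colon P_i \hookrightarrow C_i$ of $P_i$ into a totally ordered set with $|C_i| = |P_i|$. Then $\prod_{i} \iota_i \colon \prod_{i} P_i \hookrightarrow \prod_{i} C_i$ is again an order-embedding, and these maps realize $\Lambda(P_i)$ and $\Lambda(\prod_i P_i)$ as full subcomplexes of $\Lambda(C_i)$ and $\Lambda(\prod_i C_i)$ respectively. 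Since the map \eqref{eq:triangulation} is affine on each simplex of its domain and is visibly natural in the family $\lc P_i \rc_i$, it fits into a commutative square with the analogous map for $\lc C_i \rc_i$ and the inclusions of these subcomplexes. So it suffices to treat the case of chains, provided one also checks that the image of $\lvert\Lambda(\prod_i P_i)\rvert$ is exactly $\prod_i \lvert\Lambda(P_i)\rvert$: the inclusion ``$\subseteq$'' is immediate since the projection of a chain in $\prod_i P_i$ to $P_i$ is a chain, and ``$\supseteq$'' follows from the chain case, because pulling a point of $\prod_i \lvert\Lambda(P_i)\rvert$ back along the isomorphism for $\lc C_i\rc_i$ yields a point whose support is a chain whose $i$-th projection is supported inside $P_i$, hence a chain in $\prod_i P_i$.

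Next I would treat the chain case $P_i = \lc 0,1,\dots,p_i\rc$, so that $\lvert\Lambda(P_i)\rvert = \Delta^{p_i}$ is the standard simplex and $\prod_i P_i$ is the $p_0\times\cdots\times p_r$ grid. The maximal simplices of $\Lambda(\prod_i P_i)$ are the maximal chains in the grid, hence correspond to the monotone walks recalled above, hence to $(p_0,\dots,p_r)$-shuffles $\scS = \lc S_i\rc_i$; write the walk of $\scS$ as $x^{(0)} = (0,\dots,0), x^{(1)}, \dots, x^{(\bar{p})} = (p_0,\dots,p_r)$ and let $\sigma_\scS$ be the corresponding $\bar{p}$-simplex. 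A point of $\sigma_\scS$ is $\sum_{l=0}^{\bar{p}} t_l\, e_{x^{(l)}}$ with $t_l \ge 0$ and $\sum_l t_l = 1$, and \eqref{eq:triangulation} sends it to the tuple whose $i$-th factor has $m$-th coordinate $\sum_{l \,:\, j_i(l) = m} t_l$, where $j_i(l)$ is the $i$-th coordinate of $x^{(l)}$. Writing $S_i = \lc s_{i,1} < \dots < s_{i,p_i}\rc$, setting $s_{i,p_i+1} := \bar{p}+1$, and $T_l := \sum_{l' \ge l} t_{l'}$ (so $T_0 = 1$, $T_{\bar{p}+1} = 0$), a short telescoping computation gives that the sum of the first $m+1$ coordinates of the $i$-th factor equals $1 - T_{s_{i,m+1}}$. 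Since $\bigcup_i \lc s_{i,1}, \dots, s_{i,p_i}\rc = \lc 1, \dots, \bar{p}\rc$, the image determines $T_1, \dots, T_{\bar{p}}$, hence all the $t_l = T_l - T_{l+1}$; so \eqref{eq:triangulation} is affine and injective on $\sigma_\scS$, mapping it onto a $\bar{p}$-dimensional simplex $Q_\scS \subseteq \prod_i \Delta^{p_i}$, namely the locus on which the running partial sums of the factors, arranged in the order prescribed by the walk $\scS$, form a non-decreasing sequence.

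Finally I would show that $\lc Q_\scS \rc_\scS$ is a polyhedral subdivision of $\prod_i\Delta^{p_i}$ compatible with the simplicial structure of the domain, and conclude. For covering: given a point of $\prod_i\Delta^{p_i}$, its $\bar{p}$ relevant partial sums, sorted into non-decreasing order with a fixed rule for ties, determine a shuffle $\scS$ with the point lying in $Q_\scS$ (this uses that the partial sums of a single factor are already monotone). For disjointness of interiors and compatibility of faces: the relative interiors of $Q_\scS$ and $Q_{\scS'}$ can meet only where two partial sums coincide, a lower-dimensional locus; a facet of $\sigma_\scS$, being a maximal proper sub-chain of the walk, is sent to a facet of $Q_\scS$; and $\sigma_\scS\cap\sigma_{\scS'}$, which is the simplex on the grid points visited by both walks, is sent onto $Q_\scS\cap Q_{\scS'}$. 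Granting this, \eqref{eq:triangulation} is a continuous bijection, affine on each simplex of the triangulation $\lvert\Lambda(\prod_i P_i)\rvert$ of its domain and with affine inverse on each cell $Q_\scS$, hence a piecewise affine isomorphism; combined with the reduction step this proves the lemma.

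I expect the last step to be the main obstacle: checking that the cells $Q_\scS$ genuinely tile $\prod_i\Delta^{p_i}$ with disjoint interiors and that their pairwise intersections are common faces matching the combinatorics of $\Lambda(\prod_i P_i)$ — that is, that the shuffle construction yields an honest triangulation and not merely a covering by simplices, and in particular that the locally defined affine inverses agree on overlaps. This is classical (it is the staircase triangulation of a product of simplices), but carrying the face-compatibility through for arbitrary $r$, and then transporting the conclusion back along the reduction to arbitrary posets $P_i$, is where the care is needed.
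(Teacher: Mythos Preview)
Your approach is correct; one small quibble is that $\Lambda(P_i)$ need not be a \emph{full} subcomplex of $\Lambda(C_i)$ (incomparable elements of $P_i$ become comparable in the linear extension), but your argument only uses that the inclusion is simplicial and that the square commutes, so this is harmless.

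The paper's proof is much shorter and takes a different route: it inducts on $|I|$, factoring the map for $\{P_i\}_{i \in I}$ as the $|I|=2$ map for the pair $\bigl\{P_{i_0},\ \prod_{i \ne i_0} P_i\bigr\}$ (cited directly from Loday) followed by the identity on $|\Lambda(P_{i_0})|$ times the map for $\{P_i\}_{i \ne i_0}$ (the inductive hypothesis). No reduction to chains and no explicit staircase combinatorics are needed. Your argument is more self-contained---you effectively reprove the cited $|I|=2$ case and extend it directly to arbitrary $|I|$---and it makes the shuffle triangulation of $\prod_i \Delta^{p_i}$ explicit from the outset, which the paper needs later anyway and in fact records separately right after the lemma. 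The paper's induction is more economical as a proof of the bare statement; your direct computation buys a concrete description of the cells and the inverse, at the cost of having to verify the tiling property you flag as the main obstacle.
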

\begin{proof}
We prove the lemma by induction on $|I|$.
When $|I|=1$, it is trivial.
When $|I|=2$, it is proved in \cite[Lemma B.6.3]{{MR4406774}}.
We suppose that the lemma holds when $|I|=l$, and show that it holds also when $|I|=l+1$.
Choose an arbitrary element $i_0 \in I$.
One can decompose the map \eqref{eq:triangulation} for $\lc P_i \rc_{i \in I}$ as follows:
\begin{align}
\left| \Lambda \lb P_{i_0} \times \prod_{i \in I \setminus \lc i_0 \rc} P_i \rb \right| 
\to \left| \Lambda \lb  P_{i_0} \rb \right| \times \left| \Lambda \lb \prod_{i \in I \setminus \lc i_0 \rc} P_i \rb \right|
\to \left| \Lambda \lb  P_{i_0} \rb \right| \times \prod_{i \in I \setminus \lc i_0 \rc} \left| \Lambda \lb  P_i \rb \right|,
\end{align}
where the former map is the map \eqref{eq:triangulation} for $\lc P_{i_0}, \prod_{i \in I \setminus \lc i_0 \rc} P_i \rc$, and the latter map is the product of the identity map and the map \eqref{eq:triangulation} for $\lc P_i \rc_{i \in I \setminus \lc i_0 \rc}$.
One can see that both maps are piecewise affine isomorphisms by \cite[Lemma B.6.3]{{MR4406774}} and by the induction hypothesis respectively.
Thus we obtain the lemma.
\end{proof}

We set $I:= \lc 0, \cdots, r \rc$, and let $\scS=\lc S_i \relmid i \in I \rc$ be a $(p_0, \cdots, p_r)$-shuffle.
We write $S_i=\lc  y_{i, 1}< \cdots < y_{i,  p_i} \rc \subset \lc 1, \cdots, \bar{p} \rc$.
For each $i \in I$, we also consider a totally ordered set $P_i:= \lc x_{i, 0}< \cdots < x_{i,  p_i} \rc$ consisting of $p_i+1$ elements $x_{i,  j}$ $(0 \leq j \leq p_i)$.
The abstract simplicial complex $\Lambda \lb P_i \rb$ is the standard simplex of dimension $p_i$.
For each $i \in I$, let 
\begin{align}
\eta_{\scS, i} \colon \Delta^{\bar{p}}:=\lc \sum_{j=0}^{\bar{p}} t_j e_j \in \bR^{\bar{p}+1} \relmid \sum_{j=0}^{\bar{p}} t_j=1, t_j \geq 0 \rc
\to
\left| \Lambda \lb P_i \rb \right|
\end{align}
be the affine map sending each vertex $e_j$ to $e_{x_{i, k}} \in \left| \Lambda \lb P_i \rb \right|$ with $k$ such that $y_{i, k} \leq j < y_{i, k+1}$ (letting $y_{i, 0}:=0, y_{i, p_i+1}:=\bar{p}+1$).
We set
\begin{align}\label{eq:simplex}
\eta_{\scS}:=\prod_{i \in I} \eta_{\scS, i} \colon \Delta^{\bar{p}} \to \prod_{i \in I} \left| \Lambda \lb P_i \rb \right|.
\end{align}
By \pref{lm:triangulation}, the target of \eqref{eq:simplex} (the product of standard simplices) has a triangulation induced by the map \eqref{eq:triangulation} and the simplicial structure of $\Lambda \lb \prod_{i \in I} P_i \rb$.
The triangulation coincides with the collection of the images of the maps $\eta_\scS$ of \eqref{eq:simplex} for all the $(p_0, \cdots, p_r)$-shuffles $\scS$.
This fact is proved in \cite[Corollary B.6.4]{MR4406774} for the case $r=1$.
One can show it also for general $r \geq 1$ in the same way.

\section{Toric degenerations of Calabi--Yau complete intersections}\label{sc:toric-CY}

We recall the construction by Gross \cite{MR2198802} of toric degenerations of Calabi--Yau complete intersections,  and state the precise setup of \pref{th:main}.
Let $d$ and $r$ be positive integers.
Consider a free $\bZ$-module $M$ of rank $d+r$ and its dual $N:=\Hom(M, \bZ)$.
Let $\Delta \subset M_\bR$ be a reflexive polytope, and $\Delta^\ast := \lc n \in N_\bR \relmid \la \Delta, n \ra \geq -1 \rc$ be its polar polytope.
Let further $\Sigma \subset N_\bR$ be the normal fan of $\Delta$.
Consider the convex piecewise linear function $\varphi \colon N_\bR \to \bR$ that corresponds to the anti-canonical sheaf on the toric variety associated with the fan $\Sigma$.
Let $\lc e_1, \cdots, e_l \rc$ be the set of primitive generators of one-dimensional cones of the fan $\Sigma$.
Then we have $\varphi(e_i)=1$ for any $i \in \lc 1, \cdots, r \rc$.
A Minkowski decomposition $\Delta =\Delta_1+ \cdots + \Delta_r$ is called a \emph{nef-partition} if the induced decomposition $\varphi = \varphi_1 + \cdots + \varphi_r$ satisfies $\varphi_i(e_j) \in \lc 0, 1\rc$ for any $i \in \lc 1, \cdots, r\rc$ and $j \in \lc 1, \cdots, l\rc$. 
Let $\nabla_i \subset N_\bR$ be the convex hull of $0 \in N_\bR$ and all $e_j$ such that $\varphi_i(e_j) =1$.
Then the Minkowski sum $\nabla:=\nabla_1 + \cdots + \nabla_r$ becomes a reflexive polytope.
We also write its polar polytope as $\nabla^\ast \subset M_\bR$.
Let $\Sigmav \subset M_\bR$ be the normal fan of $\nabla$, and $\varphiv \colon M_\bR \to \bR$ be the piecewise linear function corresponding to the anti-canonical sheaf on the toric variety associated with the fan $\Sigmav$.
We also write the decomposition of $\varphiv$ induced by $\nabla=\nabla_1 + \cdots + \nabla_r$ as $\varphiv = \varphiv_1 + \cdots + \varphiv_r$.
For lattice polytopes $\mu \subset \partial \Delta^\ast$ and $\eta \subset \partial \nabla^\ast$, we define
\begin{align}
\beta_i^\ast(\mu)&:= \lc n \in \mu \relmid \varphi_i(n)=1 \rc \subset \nabla_i \cap \partial \Delta^\ast, \quad \rotatebox[origin=c]{180}{$\beta$} (\mu):=\sum_{i=1}^r \beta_i^\ast(\mu) \subset \nabla, \\
\rotatebox[origin=c]{180}{$\beta$} _i^\ast(\eta)&:= \lc m \in \eta \relmid \check{\varphi}_i(m)=1 \rc \subset \Delta_i \cap \partial \nabla^\ast, \quad \beta (\eta):=\sum_{i=1}^r \rotatebox[origin=c]{180}{$\beta$} _i^\ast(\eta) \subset \Delta.
\end{align}
The subsets $\beta_i^\ast(\mu), \rotatebox[origin=c]{180}{$\beta$} _i^\ast(\eta)$ are faces of $\mu, \eta$ respectively.

We take coherent subdivisions $\Sigma' \subset N_\bR, \Sigmav' \subset M_\bR$ of the fans $\Sigma, \Sigmav$ whose fan polytopes (i.e., the convex hulls of primitive generators of all one-dimensional cones) remain $\Delta^\ast, \nabla^\ast$ respectively, and consider an integral piecewise linear function $\check{h} \colon M_\bR \to \bR$ that is strictly convex on the fan $\Sigmav'$.
We also assume that the function $\check{h}':=\check{h}-\varphiv \colon M_\bR \to \bR$ is convex (not necessarily strict convex) on $\Sigmav'$.
Let
\begin{align}
\nabla^{\check{h}}&:=\lc n \in N_\bR \relmid \la m, n \ra \geq -\check{h}(m), \forall m \in M_\bR \rc \\
\nabla^{\check{h}'}&:=\lc n \in N_\bR \relmid \la m, n \ra \geq -\check{h}'(m), \forall m \in M_\bR \rc
\end{align}
be the Newton polytopes of $\check{h}$ and $\check{h}'$, and set
\begin{align}
\scrR^{\check{h}}_{\Delta^\ast}&:= \lc (F_1, F_2) \relmid F_1 \prec \Delta^\ast,  F_2 \prec \nabla^{\check{h}'}\ \mathrm{such\ that}\ \rotatebox[origin=c]{180}{$\beta$} (F_1) \neq \emptyset, F_1+F_2 \prec \Delta^\ast + \nabla^{\check{h}'}\rc,\\
\scrP^{\check{h}}_{\Delta^\ast}&:= \lc \rotatebox[origin=c]{180}{$\beta$} (F_1)+F_2 \relmid (F_1, F_2) \in \scrR^{\check{h}}_{\Delta^\ast} \rc, \\
B^{\check{h}}_\nabla&:=\bigcup_{(F_1, F_2) \in \scrR^{\check{h}}_{\Delta^\ast}} \rotatebox[origin=c]{180}{$\beta$} (F_1)+F_2 \subset N_\bR.
\end{align}
One has $B^{\check{h}}_\nabla \subset \partial \nabla^{\check{h}}$ (\cite[Corollary 3.3]{MR2198802}).
We also consider
\begin{align}\label{eq:t-Delta-i}
\tilde{\Delta}_i:=\lc (m, l) \in M_\bR \oplus \bR \relmid m \in \Delta_i, l \geq \check{h}'(m) \rc, \quad 
\tilde{\Delta}:=\sum_{i=1}^r \tilde{\Delta}_i
\end{align}
and the normal fan $\tilde{\Sigma} \subset N_\bR \oplus \bR$ of $\tilde{\Delta}$.
The fan $\tilde{\Sigma}$ is the union of
\begin{enumerate}
\item $\lc \cone \lb F_1 \rb \times \lc 0 \rc \relmid F_1 \prec \Delta^\ast \rc$,
\item $\lc \cone(F_1) \times \lc 0 \rc + \cone \lb F_2 \times \lc 1 \rc \rb \relmid F_1 \prec \Delta^\ast,  F_2 \prec \nabla^{\check{h}'}, F_1+F_2 \prec \Delta^\ast + \nabla^{\check{h}'} \rc$, and 
\item $\lc \cone \lb F_2 \times \lc 1 \rc \rb \relmid F_2 \prec \nabla^{\check{h}'} \rc$
\end{enumerate}
(\cite[Proposition 3.7]{MR2198802}).
The polytope $\tilde{\Delta}$ determines a line bundle on the toric variety associated with $\tilde{\Sigma}$.
It is induced by the piecewise linear function $\tilde{\varphi}$ on $\tilde{\Sigma}$ defined by
\begin{align}\label{eq:tvp}
\tilde{\varphi} (\tilde{n}):=-\inf_{\tilde{m}\in \tilde{\Delta}} \la \tilde{m},\tilde{n}\ra,
\end{align}
where $\tilde{n} \in N_\bR \oplus \bR$.
The Minkowski decomposition $\tilde{\Delta}=\sum_{i=1}^r \tilde{\Delta}_i$ induces a decomposition $\tilde{\varphi}=\sum_{i=1}^r \tilde{\varphi}_i$ with $\tilde{\varphi}_i \lb \lb n , 0 \rb \rb=\varphi_i(n)$ and $\tilde{\varphi}_i \lb \lb n , 1 \rb \rb=0$ for any $n \in \nabla^{\check{h}'}$.

We take a subdivision $\tilde{\Sigma}'$ of the fan $\tilde{\Sigma} \subset N_\bR \oplus \bR$ so that it satisfies the following condition:
\begin{condition}\label{cd:orig}
The following hold:
\begin{enumerate}
\item The fan $\lc C \cap (N_\bR \oplus \lc 0\rc) \relmid C \in \tilde{\Sigma}' \rc$ coincides with the fan $\Sigma'$.
\item Every $1$-dimensional cone of $\tilde{\Sigma}'$ not contained in $N_\bR \oplus \lc 0\rc$ is generated by a primitive vector $(n, 1)$ with $n \in \nabla^{\check{h}'} \cap N$.
\end{enumerate}
\end{condition}
Such a subdivision $\tilde{\Sigma}'$ is called \emph{good} in \cite[Definition 3.8]{MR2198802}.
In this article, we suppose that one can take a good subdivision $\tilde{\Sigma}'$ that also satisfies the following:

\begin{condition}\label{cd:add}
The fan $\tilde{\Sigma}'$ is unimodular, i.e., all the cones are generated by a subset of a basis of the lattice $N \oplus \bZ \subset N_\bR \oplus \bR$.
(In particular, the subfan $\Sigma' \subset \tilde{\Sigma}'$ is required to be unimodular.)
\end{condition}

We fix such a subdivision $\tilde{\Sigma}'$ in the following.
The fan $\tilde{\Sigma}'$ consists of the following three sorts of cones (\cite[Observation 3.9]{MR2198802}):
\begin{enumerate}
\item cones of the form $\cone(\mu) \times \lc 0 \rc$ for $\cone(\mu) \in \Sigma'$ with $\mu \subset \partial \Delta^\ast$,
\item cones of the form $\cone(\mu) \times \lc 0 \rc + \cone \lb \nu \times \lc 1 \rc \rb$ where $\cone(\mu) \in \Sigma'$ for $\mu \subset \partial \Delta^\ast,  \nu \subset \partial \nabla^{\check{h}'}$, and $\mu + \nu$ is contained in a face of $\Delta^\ast + \nabla^{\check{h}'}$, and
\item cones contained in $\cone(\nabla^{\check{h}'} \times \lc 1\rc)$.
\end{enumerate}
Cones of the second type with $\rotatebox[origin=c]{180}{$\beta$}(\mu) \neq \emptyset$ are called \textit{relevant}.
We set
\begin{align}
\scrR(\tilde{\Sigma}')&:=\lc (\mu, \nu) \relmid 
\begin{array}{l}
\mu \subset \partial \Delta^\ast, \nu \subset \partial \nabla^{\check{h}'} \\
\cone(\mu) \times \lc 0 \rc + \cone \lb \nu \times \lc 1 \rc \rb \mathrm{\ is\ a\ relevant\ cone\ in\ } \tilde{\Sigma}'
\end{array}
\rc, \\
\scrP(\tilde{\Sigma}')&:=\lc \rotatebox[origin=c]{180}{$\beta$} (\mu) + \nu \relmid (\mu, \nu) \in \scrR(\tilde{\Sigma}') \rc.
\end{align}
Then one has
\begin{align}
B^{\check{h}}_\nabla=\bigcup_{(\mu, \nu) \in \scrR(\tilde{\Sigma}')} \rotatebox[origin=c]{180}{$\beta$} (\mu) + \nu,
\end{align}
and $\scrP(\tilde{\Sigma}')$ is a polyhedral decomposition of $B^{\check{h}}_\nabla$ \cite[Proposition 3.12, Definition 3.13]{MR2198802}.

For each $\tau \in \scrP(\tilde{\Sigma}')$, we take an element $a_\tau \in \rint (\tau)$, and consider the subdivision $\widetilde{\scrP} ( \tilde{\Sigma}' )$ of $\scrP ( \tilde{\Sigma}' )$ defined by
\begin{align}
\widetilde{\scrP} ( \tilde{\Sigma}' ):=\lc \conv \lb \lc a_{\tau_0}, a_{\tau_1}, \cdots, a_{\tau_l} \rc \rb \relmid \tau_0 \prec \tau_1 \prec \cdots \prec \tau_l, l \geq 0, \tau_i \in \scrP ( \tilde{\Sigma}' ) \rc.
\end{align}
We also set
\begin{align}
\Gamma(\tilde{\Sigma}'):=\bigcup_{\substack{\tau_0 \prec \tau_1 \prec \cdots \prec \tau_l, l \geq 0, \tau_i \in \scrP ( \tilde{\Sigma}' ) \\ \dim(\tau_0) \geq 1, \dim(\tau_l) \leq d-1}} \conv \lb \lc a_{\tau_0}, a_{\tau_1}, \cdots, a_{\tau_l} \rc \rb.
\end{align}
For each vertex $v \in \scrP(\tilde{\Sigma}')$, let $W_v^\circ$ be the union of interiors of all simplices of $\widetilde{\scrP}(\tilde{\Sigma}')$ containing $v$.
The vertex $v$ is written as $v=\rotatebox[origin=c]{180}{$\beta$} (\mu_v) + \nu_v$ with $(\mu_v, \nu_v) \in \scrR(\tilde{\Sigma}')$.
We define a chart on $W_v^\circ$ as the restriction of the projection
\begin{align}\label{eq:fanstr}
\psi_v \colon N_\bR \to N_\bR / \vspan (\beta^\ast_1(\mu_v), \cdots, \beta^\ast_r(\mu_v))
\end{align}
to $W_v^\circ$.
For each maximal-dimensional face $\sigma \in \scrP(\tilde{\Sigma}')$ of $B^{\check{h}}_\nabla$, we also define a chart
\begin{align}\label{eq:int-aff}
\psi_ \sigma \colon \rint(\sigma) \hookrightarrow \aff(\sigma)
\end{align}
via the inclusion.
These charts define an integral affine structure on $B^{\check{h}}_\nabla \setminus \Gamma(\tilde{\Sigma}')$.
It makes $B^{\check{h}}_\nabla$ an integral affine manifold with singularities of dimension $d$ \cite[Proposition 3.14]{MR2198802}.

Let $X_{\tilde{\Sigma}'}$ be the toric variety over $k$ associated with the fan $\tilde{\Sigma}'$, and $t$ be the regular function on $X_{\tilde{\Sigma}'}$ corresponding to $(0, 1) \in M \oplus \bZ$, which defines the morphism $f \colon X_{\tilde{\Sigma}'} \to \bA^1$.
Let further $\scL_i$ be the line bundle on $X_{\tilde{\Sigma}'}$ associated with the polytope $\tilde{\Delta}_i$, and take a general section $s_i \in \Gamma \lb X_{\tilde{\Sigma}'},  \scL_i \rb$ for every $i \in \lc 1, \cdots, r \rc$.
We consider the variety $\scX \subset X_{\tilde{\Sigma}'}$ defined by
\begin{align}\label{eq:tss}
ts_1+s_1^0= \cdots =ts_r+s_r^0=0,
\end{align}
where $s_i^0 \in \Gamma \lb X_{\tilde{\Sigma}'},  \scL_i \rb$ is the section defined by $(0, 0) \in \tilde{\Delta}_i$.
By restricting the morphism $f \colon X_{\tilde{\Sigma}'} \to \bA^1$ to $\scX$ and taking the base change to $R$, we obtain a morphism $\scX \to \Spec R$.
By abuse of notation, we also write it as $f \colon \scX \to \Spec R$.
This is a toric degeneration of Calabi--Yau varieties \cite[Theorem 3.10]{MR2198802}, and $(B^{\check{h}}_\nabla, \scrP(\tilde{\Sigma}'))$ forms its dual intersection complex \cite[Proposition 3.14]{MR2198802}.
We refer the reader to \cite[Section 4]{MR2213573} for the definitions of toric degenerations and their dual intersection complexes.

Let $f' \colon X \to \Spec K$ be the base change of the toric degeneration $f \colon \scX \to \Spec R$ to $K$.
The scheme $X$ is a subscheme of the toric variety $X_{\Sigma'}$ over $K$ associated with the fan $\Sigma'$.
Since the fan $\Sigma'$ is unimodular by \pref{cd:add}, the ambient toric variety $X_{\Sigma'}$ is smooth.
By Bertini's theorem, one can see that $X$ is also smooth.
We suppose that $X$ is irreducible.
This is satisfied, for instance, when the nef-partition $\Delta =\Delta_1+ \cdots + \Delta_r$ is $2$-independent in the sense of \cite[Definition 3.1]{MR1463173} (cf.~\cite[Theorem 3.3(ii)]{MR1463173}).
Let $\scX_k:=\scX \times_R k$ be the special fiber of the toric degeneration. 

\begin{proposition}\label{pr:canonical}
The logarithmic relative canonical divisor $K_{\scX/R}+\scX_{k, \mathrm{red}}$ of $\scX$ is linearly equivalent to $0$.
\end{proposition}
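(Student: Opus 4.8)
The plan is to exhibit $\scX$ as a local complete intersection inside the smooth toric variety $X_{\tilde{\Sigma}'}$, to compute $K_\scX$ by adjunction together with a short calculation with toric boundary divisors, and then to use reducedness of the special fibre to pass to the logarithmic statement. Because $\Spec R$ carries no nontrivial line bundle we have $K_{\scX/R}\sim K_\scX$, and linear equivalence of Cartier divisors is preserved by the flat base change $\Spec R\to\bA^1$; so it suffices to work with the subvariety $\scX\subset X_{\tilde{\Sigma}'}$ of \eqref{eq:tss} together with its morphism $f$ to $\bA^1$, and to show that $K_\scX+\scX_{0,\mathrm{red}}\sim 0$, where $\scX_0:=f^{-1}(0)$ is the fibre over the origin (which becomes $\scX_k$ after base change to $R$).

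First I would observe that $X_{\tilde{\Sigma}'}$ is smooth, since $\tilde{\Sigma}'$ is unimodular by \pref{cd:add}, and that by \cite[Theorem 3.10]{MR2198802} the morphism $f$ is a toric degeneration of Calabi--Yau varieties for general $s_i$. In particular $\scX=V(ts_1+s_1^0)\cap\cdots\cap V(ts_r+s_r^0)$ has pure codimension $r$ in $X_{\tilde{\Sigma}'}$, hence is a local complete intersection, so $K_\scX$ is Cartier and, since each $ts_i+s_i^0$ is a section of $\scL_i$, iterated adjunction gives $K_\scX\sim\bigl(K_{X_{\tilde{\Sigma}'}}+\scL_1+\cdots+\scL_r\bigr)|_\scX$ (writing $\scL_i$ also for a divisor in $|\scL_i|$). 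Next I would compute the class on the right with toric data. By \cite[Observation 3.9]{MR2198802} and \pref{cd:orig}, each ray of $\tilde{\Sigma}'$ is generated either by a primitive vector $(e,0)$ with $e\in\partial\Delta^\ast\cap N$ or by a primitive vector $(n,1)$ with $n\in\nabla^{\check{h}'}\cap N$; write $D_\rho$ for the associated prime toric divisor and $v_\rho$ for the generator. Then $K_{X_{\tilde{\Sigma}'}}\sim-\sum_\rho D_\rho$, while $\scL_1+\cdots+\scL_r$, corresponding to $\tilde{\Delta}$ and hence to the support function $\tilde{\varphi}=\sum_i\tilde{\varphi}_i$ of \eqref{eq:tvp}, equals $\sum_\rho\tilde{\varphi}(v_\rho)\,D_\rho$. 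Using $\tilde{\varphi}_i((n,0))=\varphi_i(n)$, $\tilde{\varphi}_i((n,1))=0$, the relation $\varphi=\sum_i\varphi_i$, and $\partial\Delta^\ast=\lc\varphi=1\rc$ (as $\varphi$ is the support function of the reflexive polytope $\Delta$), one obtains $\tilde{\varphi}(e,0)=\varphi(e)=1$ on every horizontal ray and $\tilde{\varphi}(n,1)=0$ on every vertical ray, hence
\[
K_{X_{\tilde{\Sigma}'}}+\scL_1+\cdots+\scL_r\;\sim\;-\sum_{v_\rho=(n,1)}D_\rho\;=\;-\,\mathrm{div}(t)\;=\;-f^{-1}(0),
\]
the first equality because $\mathrm{div}(t)=\mathrm{div}\bigl(z^{(0,1)}\bigr)=\sum_\rho\la(0,1),v_\rho\ra\,D_\rho$ picks out exactly the divisors with $v_\rho=(n,1)$, and the second because $\mathrm{div}(t)=f^{-1}(0)$ as Cartier divisors.

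Restricting to $\scX$ I get $K_\scX\sim-\bigl(f^{-1}(0)\cap\scX\bigr)=-\scX_0$. Since $f$ is a toric degeneration its special fibre is reduced, so $\scX_0=\scX_{0,\mathrm{red}}$, giving $K_\scX+\scX_{0,\mathrm{red}}\sim 0$ and, after base change, $K_{\scX/R}+\scX_{k,\mathrm{red}}\sim 0$. I expect the only genuinely nontrivial inputs to be the two facts imported from \cite[Theorem 3.10]{MR2198802} — that \eqref{eq:tss} has the expected codimension (so $K_\scX$ is Cartier and iterated adjunction applies) and that the special fibre is reduced — while the toric computation, although it is the longest part, is routine once \pref{cd:orig}, \eqref{eq:tvp} and the equality $\varphi|_{\partial\Delta^\ast}\equiv 1$ are available; inside it I would be most careful with the vanishing $\tilde{\varphi}(n,1)=0$ and with the coefficient $\tilde{\varphi}(e,0)=1$ on every horizontal ray, since it is precisely the interplay of these that makes the ``$+\scX_{k,\mathrm{red}}$'' correction come out exactly.
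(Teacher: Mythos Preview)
Your proof is correct and follows essentially the same route as the paper: iterated adjunction for the local complete intersection $\scX\subset X_{\tilde{\Sigma}'}$, the toric formula $K_{X_{\tilde{\Sigma}'}}\sim-\sum_\rho D_\rho$, evaluation of the support functions $\tilde{\varphi}_i$ on horizontal and vertical rays, and reducedness of the special fibre. The only cosmetic difference is that the paper tracks each $\scL_i$ separately (establishing the value of each $\varphi_i$ on each ray via \cite[Proposition 3.19, Corollary 3.23]{MR2405763}) and then passes through $\omega_{\scX/\bA^1}$, whereas you sum first and use $\varphi|_{\partial\Delta^\ast}\equiv 1$ directly; your packaging is slightly more streamlined but not a different argument.
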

\begin{proof}
Since the fan $\Sigma'$ is unimodular, the set of the primitive generators of $1$-dimensional cones in $\Sigma'$ coincides with $\partial \Delta^\ast \cap N$.
By
\begin{align}
\partial \Delta^\ast \cap N&=\bigsqcup_{i=1}^r \lb \nabla_i \cap N \setminus \lc 0 \rc \rb, 
\quad \nabla_i \cap \partial \Delta^\ast=\lc n \in \partial \Delta^\ast \relmid \varphi_i(n)=1 \rc
\end{align}
\cite[Corollary 3.23, Proposition 3.19]{MR2405763}, we can get
\begin{align}\label{eq:varphi}
\varphi_i(n)
=\left\{
\begin{array}{ll}
1 & n \in \nabla_i \cap \partial \Delta^\ast \cap N, \\
0 & n \in \lb \partial \Delta^\ast \cap N \rb \setminus \lb \nabla_i \cap \partial \Delta^\ast \cap N \rb.
\end{array}
\right.
\end{align}
Since $\tilde{\varphi}_i \lb \lb n , 0 \rb \rb=\varphi_i(n)$ and $\tilde{\varphi}_i \lb \lb n , 1 \rb \rb=0$ for any $n \in \nabla^{\check{h}'}$, we can see from \eqref{eq:varphi} that the divisor defined by $ts_i+s_i^0$ in $X_{\tilde{\Sigma}'}$ is linearly equivalent to
\begin{align}
\sum_{\rho \in \tilde{\Sigma}'(1, i)} D_\rho,
\end{align}
where $\tilde{\Sigma}'(1, i)$ denotes the set of of $1$-dimensional cones in $\tilde{\Sigma'}$ whose primitive generators are contained in $\nabla_i \times \lc 0 \rc \lb \subset N_\bR \oplus \bR \rb$, and $D_\rho$ denotes the toric divisor on the variety $X_{\tilde{\Sigma}'}$ corresponding to the cone $\rho \in \tilde{\Sigma}'(1, i)$.
One also has
\begin{align}
\omega_{X_{\tilde{\Sigma}'}} \simeq \scO_{X_{\tilde{\Sigma}'}} \lb -\sum_{\rho \in \tilde{\Sigma}'(1)} D_\rho \rb,
\end{align}
where $\tilde{\Sigma}'(1)$ denotes the set of $1$-dimensional cones in $\tilde{\Sigma}'$ (cf.~e.g.~\cite[Theorem 8.2.3]{MR2810322}).
By using the adjunction formula repeatedly ($r$ times), one can get
\begin{align}
\omega_\scX \simeq i^\ast \scO_{X_{\tilde{\Sigma}'}} \lb -\sum_{\rho \in \tilde{\Sigma}'(1, 0)} D_\rho \rb,
\end{align}
where $i \colon \scX \hookrightarrow X_{\tilde{\Sigma}'}$ is the inclusion, and $\tilde{\Sigma}'(1, 0)$ denotes the set of $1$-dimensional cones in $\tilde{\Sigma}'$, which are not contained in $N_\bR \times \lc 0 \rc$.
By \pref{cd:orig}(2), one also has
\begin{align}
f^\ast \omega_{\bA^1} \simeq i^\ast \scO_{X_{\tilde{\Sigma}'}} \lb -\sum_{\rho \in \tilde{\Sigma}'(1, 0)} D_\rho \rb.
\end{align}
By the adjunction formula (cf.~e.g.~\cite[Section 6.4.2, Theorem 4.9 (a)]{MR1917232})
\begin{align}
\omega_{\scX} \simeq \omega_{\scX/\bA^1} \otimes_{\scO_\scX} f^\ast \omega_{\bA^1},
\end{align}
one can see that the canonical sheaf $\omega_{\scX/\bA^1}$ is trivial.
By taking the base change (cf.~e.g.~\cite[Section 6.4.2, Theorem 4.9 (b)]{MR1917232}), we can also see that $\omega_{\scX/R}$ is trivial.
Since $\scX_k$ is reduced and the principal divisor defined by the function $t$, we can conclude the claim of the lemma.
\end{proof}

\section{Tropical contractions}\label{sc:contraction}

We work in the same setup and use the same notation as \pref{sc:toric-CY}.
The \emph{tropicalization} $\trop (X)$ of $X$ is the image of the Berkovich analytification $X^{\an} \subset \lb X_{\Sigma'} \rb^{\an}$ of $X$ by the tropicalization map
\begin{align}\label{eq:trops}
\trop \colon \lb X_{\Sigma'} \rb^{\an} \to X_{\Sigma'} \lb \bT \rb,
\end{align}
which we recalled in \pref{sc:trop}.
The following is one of the main theorems of \cite{Yam21}.

\begin{theorem}{\rm(\cite[Theorem 1.2]{Yam21})}\label{th:contraction}
One has $B^{\check{h}}_\nabla \subset \trop (X)$, and there exists a proper continuous map 
\begin{align}
\delta \colon \trop (X) \to B^{\check{h}}_\nabla
\end{align}
called a \emph{tropical contraction}, which preserves the integral affine structures.
\end{theorem}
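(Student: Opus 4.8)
We recall from \cite{Yam21} how the tropical contraction $\delta$ is built; here we only outline the plan. The first step is to make $\trop(X)$ explicit. Over $K$ the variety $X \subset X_{\Sigma'}$ is cut out by the $r$ equations \eqref{eq:tss}, so $X = \bigcap_{i=1}^{r} Z_i$ with $Z_i := \{ts_i + s_i^0 = 0\}$; each $\trop(Z_i)$ is the tropical hypersurface determined by the Newton datum of $ts_i + s_i^0$, namely by the polytope $\tilde{\Delta}_i$ of \eqref{eq:t-Delta-i} (the monomial $s_i^0$ corresponding to the vertex $(0,0)$ with $t$-valuation $0$, the monomials of $ts_i$ to the remaining lattice points), and since the $s_i$ are general no unexpected cancellation occurs, so that $\trop(X) = \bigcap_{i=1}^{r} \trop(Z_i)$. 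A direct computation with these data shows that $\trop(X)$ is a polyhedral complex whose combinatorics is governed by the good unimodular subdivision $\tilde{\Sigma}'$ fixed in \pref{sc:toric-CY}, its bounded cells being exactly the polytopes $\rotatebox[origin=c]{180}{$\beta$}(\mu) + \nu$ for $(\mu, \nu) \in \scrR(\tilde{\Sigma}')$. Comparing with the definition $B^{\check{h}}_\nabla = \bigcup_{(\mu,\nu)} \rotatebox[origin=c]{180}{$\beta$}(\mu) + \nu$ and using $B^{\check{h}}_\nabla \subset \partial \nabla^{\check{h}}$ from \cite[Corollary 3.3]{MR2198802}, the inclusion $B^{\check{h}}_\nabla \subset \trop(X)$ reduces to checking, cell by cell, that $\rotatebox[origin=c]{180}{$\beta$}(\mu) + \nu$ lies on every $\trop(Z_i)$; this is precisely where the nef-partition condition $\varphi_i(e_j) \in \{0,1\}$ — equivalently, the structure of the faces $\beta_i^\ast$ — enters. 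This proves the first assertion.

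Next, one builds $\delta$ one cell at a time. For each relevant cone $\cone(\mu) \times \{0\} + \cone(\nu \times \{1\}) \in \tilde{\Sigma}'$ the corresponding maximal cell $\sigma$ of $\trop(X)$ decomposes, as an affine polyhedron, into a bounded part mapping isomorphically onto the face $\rotatebox[origin=c]{180}{$\beta$}(\mu) + \nu$ of $B^{\check{h}}_\nabla$ and a complementary linear direction $\vspan(\beta_1^\ast(\mu), \dots, \beta_r^\ast(\mu))$ — the directions along which the complete intersection degenerates torically. On $\sigma$ one sets $\delta$ to be the projection killing this complementary direction, that is, up to translation, the linear map $\psi_v$ of \eqref{eq:fanstr} attached to the vertex $v = \rotatebox[origin=c]{180}{$\beta$}(\mu) + \nu$; the same recipe has to be carried out also over the part of $\trop(X)$ lying in the toric boundary $X_{\Sigma'}(\bT) \setminus N_\bR$. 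One then checks that these partial maps agree along shared faces and hence glue to a single continuous map $\delta \colon \trop(X) \to B^{\check{h}}_\nabla$.

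It remains to verify the stated properties. Properness is immediate once $\delta$ is continuous, since $\trop(X)$ is compact: it is the image of the compact space $X^{\an}$ under $\trop$ inside the tropical toric variety $X_{\Sigma'}(\bT)$, which is compact because $\Sigma'$ is a complete fan. For the integral affine structures, each local projection $\psi_v$ is reduction modulo the \emph{saturated} sublattice $\vspan(\beta_1^\ast(\mu), \dots, \beta_r^\ast(\mu)) \cap N$, so it pulls the integral affine functions defined by the Gross--Siebert charts $\psi_v$ of \eqref{eq:fanstr} and $\psi_\sigma$ of \eqref{eq:int-aff} back to functions that are integral affine for the ambient tropical structure on $\trop(X)$; as those charts define precisely the integral affine structure on $B^{\check{h}}_\nabla \setminus \Gamma(\tilde{\Sigma}')$, the map $\delta$ preserves integral affine structures, and, being fiberwise a collapse of rational cones, it is a morphism of tropical spaces in the sense of \cite[Remark 3.27]{Yam21}.

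The main obstacle will be the gluing in the second step: showing that the fiberwise-linear retractions defined on the individual relevant cones of $\tilde{\Sigma}'$ are single-valued and continuous across common faces, including the cells of $\trop(X)$ at infinity. Concretely this is the combinatorial compatibility of the collapsing sublattices $\vspan(\beta_1^\ast(\mu), \dots, \beta_r^\ast(\mu))$ along faces of the relevant cones, which rests on the fine structure of $\tilde{\Sigma}'$ — that it is a \emph{good} subdivision in the sense of \cite[Definition 3.8]{MR2198802} and, crucially, unimodular by \pref{cd:add}. Once this compatibility is in place, properness and the comparison of integral affine structures follow formally from the explicit local product description above.
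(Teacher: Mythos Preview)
Your outline is in the right spirit, but it departs from the construction actually recalled here (from \cite[Section~5.3]{Yam21}) in two places worth flagging.

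First, a minor point: you identify $\trop(X)$ with the set-theoretic intersection $\bigcap_i \trop(Z_i)$. The paper instead works with the closure $X(f_1,\dots,f_r)$ of the \emph{stable} intersection of the tropical hypersurfaces $X(f_i)^\circ$, and the equality $\trop(X)=X(f_1,\dots,f_r)$ is what \cite[Proposition~5.2]{Yam21} establishes. For general $s_i$ the two notions agree on the dense torus, but the distinction matters when one passes to the boundary of $X_{\Sigma'}(\bT)$, and it is the stable-intersection description that drives the combinatorics.

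Second, and more substantively, you define $\delta$ cell-by-cell on maximal cells of $\trop(X)$, projecting out a fixed linear direction $\vspan(\beta_1^\ast(\mu),\dots,\beta_r^\ast(\mu))$ on each one, and then identify gluing as ``the main obstacle''. The construction recalled in \pref{sc:contraction} resolves exactly this obstacle by taking a different decomposition: it passes to the barycentric-type refinement $\widetilde{\scrP}(\tilde{\Sigma}')$ of $B^{\check{h}}_\nabla$, covers $\trop(X)$ by the sets $V^\circ_{\tau',\tau}$ indexed by flags $\tau'\prec\tau$, and on each such piece uses the projection $\pi_{C_{\tau'}}$ followed by the section $t_{\tau'}$ (see \eqref{eq:local-delta}). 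The point is that the contraction direction is attached to the \emph{face} $\tau'$, not to a maximal cell of $\trop(X)$, and the barycentric subdivision is precisely the bookkeeping device that makes the different projection directions near different vertices fit together continuously. Your formulation, with one projection per relevant cone, does not by itself supply this interpolation; the compatibility you hope for across shared faces is not automatic, and the paper's recipe via $W^\circ_{\tau',\tau}$ and $V^\circ_{\tau',\tau}$ is what makes it work.
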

The tropical contractions also preserve invariants of tropical spaces such as tropical (co)homology groups and eigenwave/radiance obstructions \cite[Corollary 1.3, Corollary 1.4]{Yam21}.
We refer the reader to \cite[Theorem 3.19]{Yam21} for more details of the properties of tropical contractions.
The tropical contraction of \pref{th:contraction} is constructed explicitly in \cite[Section 5.3]{Yam21}.
We briefly review the construction in the following.
We refer the reader to \cite[Section 5.3]{Yam21} for more details of the construction.
Some examples are also given in \cite[Example 5.16, Section 3.3]{Yam21}.

Let $f_i \colon N_\bR \to \bR$ $(1 \leq i \leq r)$ be the tropical polynomial defined by
\begin{align}
f_i (n) :=\min_{m \in \Delta_i \cap M} \lc \check{h}(m)+\la m, n \ra \rc,
\end{align}
and $X \lb f_i \rb^\circ \subset N_\bR$ be the tropical hypersurface defined by $f_i$, i.e., the corner locus of the function $f_i$.
We write the closure of the stable intersection (cf.~\cite[(1.4)]{Yam21}) of the tropical hypersurfaces $X \lb f_i \rb^\circ$ $(1 \leq i \leq r)$ in the tropical toric variety $X_{\Sigma'} (\bT) (\supset N_\bR)$ as $X(f_1, \cdots, f_r)$.
In \cite[Proposition 5.2]{Yam21}, it is shown that we have
\begin{align}
\trop(X)=X(f_1, \cdots, f_r).
\end{align}
For every polyhedron $\tau=\rotatebox[origin=c]{180}{$\beta$} (\mu_\tau) + \nu_\tau \in \scrP(\tilde{\Sigma}')$, we set
\begin{align}
C_\tau:=\cone \lb \mu_\tau \rb \in \Sigma'.
\end{align}
Let $U_\tau \subset B^{\check{h}}_\nabla$ denote the open star of $a_{\tau}$ in $\widetilde{\scrP}(\tilde{\Sigma}')$, i.e.,
\begin{align}
U_\tau := \bigcup_{\substack{\tau_0 \prec \tau_1 \prec \cdots \prec \tau_l, \\ l \geq 0, \tau_i \in \scrP(\tilde{\Sigma}'), \\ \tau \in \lc \tau_0, \cdots, \tau_l \rc}} \rint \lb \conv \lb \lc a_{\tau_0}, a_{\tau_1}, \cdots, a_{\tau_l} \rc \rb \rb.
\end{align}
For each face $\tau' \prec \tau$, we define
\begin{align}
W_{\tau', \tau}^\circ&:=\bigcup_{\substack{\tau' \prec \tau_1 \prec \cdots \prec \tau_l, \\ l \geq 0, \tau_i \in \scrP(\tilde{\Sigma}'), \\ \tau \in \lc \tau', \tau_1, \cdots, \tau_l \rc}} \rint \lb \conv \lb \lc a_{\tau'}, a_{\tau_1}, \cdots, a_{\tau_l} \rc \rb \rb.
\end{align}
Then one has $U_\tau =\bigsqcup_{\tau' \prec \tau} W_{\tau', \tau}^\circ$.
We also define 
\begin{align}
V_{\tau', \tau}^\circ&:=\lb W_{\tau', \tau}^\circ + \cone_\bT \lb \bigcup_{i=1}^r \beta_i^\ast \lb \mu_{\tau'} \rb \rb
 \rb 
 \cap X(f_1, \cdots, f_r) \subset X_{C_{\tau'}}(\bT) \subset X_{\Sigma'}(\bT), \\
X_\tau^\circ&:=\bigcup_{\tau' \prec \tau} V_{\tau', \tau}^\circ \subset X(f_1, \cdots, f_r) \cap X_{C_\tau}(\bT).
\end{align}
Then one has $X(f_1, \cdots, f_r)=\bigcup_{\tau \in \scrP(\tilde{\Sigma}')} X_\tau^\circ$ (\cite[Lemma 5.15]{Yam21}).
The restriction of the projection map $\pi_{C_{\tau'}} \colon X_{C_{\tau'}}(\bT) \to O_{C_{\tau'}} (\bT)$ of \eqref{eq:proj} to the subset $W_{\tau', \tau}^\circ$ is injective (\cite[Lemma 5.12, Lemma 3.20]{Yam21}), and there uniquely exists a map
\begin{align}\label{eq:t}
t_{\tau'} \colon \pi_{C_{\tau'}} \lb W_{\tau', \tau}^\circ \rb \to W_{\tau', \tau}^\circ
\end{align}
such that $t_{\tau'} \circ \pi_{C_{\tau'}}$ is the identity map on $W_{\tau', \tau}^\circ$.
The restriction of the tropical contraction $\delta$ to $V_{\tau', \tau}^\circ$ is given by the composition
\begin{align}\label{eq:local-delta}
V_{\tau', \tau}^\circ \hookrightarrow X_{C_{\tau'}}(\bT) \xrightarrow{\pi_{C_{\tau'}}} \pi_{C_{\tau'}}\lb W_{\tau', \tau}^\circ \rb \xrightarrow{t_{\tau'}} W_{\tau', \tau}^\circ.
\end{align}
We show some lemmas that will be used in the proof of \pref{th:main}.

\begin{lemma}\label{lm:delta-1}
One has $\delta^{-1} \lb U_\tau \rb=X_{\tau}^\circ$.
\end{lemma}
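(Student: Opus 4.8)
The plan is to verify both inclusions $\delta^{-1}(U_\tau) \supseteq X_\tau^\circ$ and $\delta^{-1}(U_\tau) \subseteq X_\tau^\circ$ directly from the explicit local description of $\delta$ recalled in \eqref{eq:local-delta}, together with the decompositions $U_\tau = \bigsqcup_{\tau' \prec \tau} W_{\tau',\tau}^\circ$, $X_\tau^\circ = \bigcup_{\tau' \prec \tau} V_{\tau',\tau}^\circ$, and the global covers $B^{\check h}_\nabla = \bigsqcup_{\sigma \in \scrP(\tilde\Sigma')} W_{\tau_0,\sigma}^\circ$-type stratifications and $X(f_1,\dots,f_r) = \bigcup_\sigma X_\sigma^\circ$. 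The key point is that $\delta$ restricted to $V_{\tau',\tau}^\circ$ factors as $\pi_{C_{\tau'}}$ followed by $t_{\tau'}$, and by construction $t_{\tau'}$ lands in $W_{\tau',\tau}^\circ \subset U_\tau$; this immediately gives $\delta(V_{\tau',\tau}^\circ) \subseteq U_\tau$ for every $\tau' \prec \tau$, hence $\delta(X_\tau^\circ) \subseteq U_\tau$, which is the inclusion $X_\tau^\circ \subseteq \delta^{-1}(U_\tau)$.

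For the reverse inclusion, I would take a point $x \in \delta^{-1}(U_\tau)$, so $x \in X(f_1,\dots,f_r) = \bigcup_{\sigma} X_\sigma^\circ$, say $x \in V_{\sigma',\sigma}^\circ$ for some $\sigma' \prec \sigma$ in $\scrP(\tilde\Sigma')$. Then $\delta(x) \in W_{\sigma',\sigma}^\circ$ by \eqref{eq:local-delta}. Since $\delta(x) \in U_\tau = \bigsqcup_{\tau' \prec \tau} W_{\tau',\tau}^\circ$ and the sets $W_{\tau_0,\sigma_0}^\circ$ for distinct pairs $(\tau_0,\sigma_0)$ with $\tau_0 \prec \sigma_0$ are pairwise disjoint (they are the relative interiors of distinct simplices of the barycentric-type subdivision $\widetilde{\scrP}(\tilde\Sigma')$), the point $\delta(x)$ lies in a single stratum $W_{\tau_0,\sigma_0}^\circ$; comparing, we get $(\tau_0,\sigma_0) = (\sigma',\sigma)$ and simultaneously $\sigma' \prec \tau$ with $\tau \in \{\sigma', \sigma_1,\dots,\sigma_l\}$ for the chain defining this stratum — i.e.\ $\tau$ occurs in the chain $\sigma' \prec \cdots \prec \sigma$. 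In particular $\sigma' \prec \tau \prec \sigma$. It then remains to show $V_{\sigma',\sigma}^\circ \subseteq X_\tau^\circ$ under the hypothesis $\sigma' \prec \tau \prec \sigma$; but $V_{\sigma',\sigma}^\circ \subseteq V_{\sigma', \tau}^{\circ,\text{(relative to }\tau)}$ need not hold verbatim, so instead I would argue that $W_{\sigma',\sigma}^\circ \subseteq W_{\sigma',\tau}^\circ$ is false in general and one must instead use that the chain through $\sigma$ containing $\tau$ decomposes, giving $\delta(x) \in W_{\sigma',\tau}^\circ$ after re-expressing, and that $V_{\sigma',\sigma}^\circ \subseteq X_\tau^\circ = \bigcup_{\tau'\prec\tau} V_{\tau',\tau}^\circ$ because $V_{\sigma',\sigma}^\circ \subseteq V_{\sigma',\tau}^\circ$ whenever $\sigma' \prec \tau \prec \sigma$, using $W_{\sigma',\sigma}^\circ \subseteq W_{\sigma',\tau}^\circ + (\text{cone terms})$ and the definition of $V_{\sigma',\tau}^\circ$ as $\big(W_{\sigma',\tau}^\circ + \cone_\bT(\bigcup_i \beta_i^\ast(\mu_{\sigma'}))\big) \cap X(f_1,\dots,f_r)$.

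The main obstacle I anticipate is precisely this last combinatorial containment: showing that $\sigma' \prec \tau \prec \sigma$ forces $V_{\sigma',\sigma}^\circ \subseteq V_{\sigma',\tau}^\circ$, i.e.\ unwinding how the strata $W^\circ$ of the barycentric subdivision $\widetilde{\scrP}(\tilde\Sigma')$ nest, and checking that the cone directions $\beta_i^\ast(\mu_{\sigma'})$ used in $V_{\sigma',\sigma}^\circ$ are the same ones used in $V_{\sigma',\tau}^\circ$ (they are, since both are indexed by $\sigma'$). I would handle this by appealing to \cite[Lemma 5.12, Lemma 5.15]{Yam21} and the disjointness statement $U_\tau = \bigsqcup_{\tau' \prec \tau} W_{\tau',\tau}^\circ$, reducing everything to the statement that a point of $U_\tau$ lying in $W_{\sigma',\sigma}^\circ$ must have $\tau$ on the chain from $\sigma'$ to $\sigma$, which is a direct consequence of how open stars in a barycentric subdivision are built. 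Once that is in place, both inclusions close up and the lemma follows.
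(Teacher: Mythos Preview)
Your overall strategy matches the paper's, but there are two concrete errors in the reverse inclusion.

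First, the sets $W_{\tau_0,\sigma_0}^\circ$ for distinct pairs $(\tau_0,\sigma_0)$ are \emph{not} pairwise disjoint: each $W_{\tau',\tau}^\circ$ is a \emph{union} of relative interiors of simplices of $\widetilde{\scrP}(\tilde\Sigma')$, namely those whose minimal vertex is $a_{\tau'}$ and which contain $a_\tau$. A single simplex interior lies in $W_{\tau',\tau}^\circ$ for every $\tau$ in its chain. What \emph{is} true is that for fixed $\tau$ the $W_{\tau',\tau}^\circ$ with $\tau'\prec\tau$ are disjoint (the minimal vertex is unique). So from $\delta(x)\in W_{\sigma',\sigma}^\circ$ and $\delta(x)\in U_\tau$, the correct conclusion is only that $\sigma'\prec\tau$ and $\delta(x)\in W_{\sigma',\tau}^\circ$; you cannot conclude $\tau\prec\sigma$, and in fact either of $\tau\prec\sigma$ or $\sigma\prec\tau$ may occur.

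Second, and more seriously, the set-level inclusion you aim for, $V_{\sigma',\sigma}^\circ\subset V_{\sigma',\tau}^\circ$ (or $V_{\sigma',\sigma}^\circ\subset X_\tau^\circ$), is false in general: a generic point $y\in V_{\sigma',\sigma}^\circ$ may have $\delta(y)$ lying in a simplex of $\widetilde{\scrP}(\tilde\Sigma')$ whose chain does not contain $\tau$, so $\delta(y)\notin U_\tau$. The paper avoids this by working pointwise. From $x\in V_{\sigma',\sigma}^\circ$ one has $x\in\delta(x)+\cone_\bT\bigl(\bigcup_i\beta_i^\ast(\mu_{\sigma'})\bigr)$; combining this with $\delta(x)\in W_{\sigma',\tau}^\circ$ gives
\[
x\in W_{\sigma',\tau}^\circ+\cone_\bT\Bigl(\bigcup_i\beta_i^\ast(\mu_{\sigma'})\Bigr)\cap X(f_1,\dots,f_r)=V_{\sigma',\tau}^\circ\subset X_\tau^\circ,
\]
which is exactly what you need. (The paper phrases this via the auxiliary set $W_{\tau',\tau_n,\tau}^\circ$, the union over chains containing both $\tau_n=\sigma$ and $\tau$, but the content is the same.) Replace your set-level attempt with this pointwise argument and the proof closes.
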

\begin{proof}
It is obvious that we have $\delta^{-1} \lb U_\tau \rb \supset X_{\tau}^\circ$.
We will show $\delta^{-1} \lb U_\tau \rb \subset X_{\tau}^\circ$.
Let $n \in \delta^{-1} \lb U_\tau \rb$ be an arbitrary point.
Then we have $n \in X_{\tau_n}^\circ$ for some $\tau_n \in \scrP(\tilde{\Sigma}')$, and $n \in V_{\tau', \tau_n}^\circ$ for some $\tau' \prec \tau_n$.
We can see from \eqref{eq:local-delta} that we have $\delta (n) \in W_{\tau', \tau_n}^\circ$, and
\begin{align}\label{eq:ndn}
n \in \delta (n)+ \cone_\bT \lb \bigcup_{i=1}^r \beta_i^\ast \lb \mu_{\tau'} \rb \rb.
\end{align}
Since we have $\delta (n) \in U_\tau$, we also get
\begin{align}\label{eq:ndn2}
\delta (n)+ \cone_\bT \lb \bigcup_{i=1}^r \beta_i^\ast \lb \mu_{\tau'} \rb \rb
\subset
W_{\tau', \tau_n, \tau}^\circ+ \cone_\bT \lb \bigcup_{i=1}^r \beta_i^\ast \lb \mu_{\tau'} \rb \rb
\subset V_{\tau', \tau}^\circ \subset X_\tau^\circ,
\end{align}
where
\begin{align}
W_{\tau', \tau_n, \tau}^\circ:=
\bigcup_{\substack{\tau' \prec \tau_1 \prec \cdots \prec \tau_l, \\ l \geq 0, \tau_i \in \scrP(\tilde{\Sigma}'), \\ \tau_n, \tau \in \lc \tau', \tau_1, \cdots, \tau_l \rc}} \rint \lb \conv \lb \lc a_{\tau'}, a_{\tau_1}, \cdots, a_{\tau_l} \rc \rb \rb.
\end{align}
By \eqref{eq:ndn} and \eqref{eq:ndn2}, we get $n \in X_\tau^\circ$.
We obtained $\delta^{-1} \lb U_\tau \rb \subset X_{\tau}^\circ$.
\end{proof}

\begin{lemma}\label{lm:sigma-cone}
For any maximal-dimensional polyhedron $\sigma \in \scrP(\tilde{\Sigma}')$, one has
\begin{align}\label{eq:sigma-cone}
\lb \rint \lb \sigma \rb + \cone_\bT \lb \bigcup_{i=1}^r \beta_i^\ast \lb \mu_{\sigma} \rb \rb
 \rb 
 \cap X(f_1, \cdots, f_r)
 =\rint \lb \sigma \rb.
\end{align}
We also have
\begin{align}\label{eq:sigma-cone2}
\lb \rint \lb \sigma \rb + \cone_\bT \lb \bigcup_{i=1}^r \beta_i^\ast \lb \mu_{v} \rb \rb
 \rb 
 \cap X(f_1, \cdots, f_r)
 =\rint \lb \sigma \rb
\end{align}
for any vertex $v \prec \sigma$.
\end{lemma}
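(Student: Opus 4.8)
The plan is to first reduce \eqref{eq:sigma-cone2} to \eqref{eq:sigma-cone}, and then to prove \eqref{eq:sigma-cone} by combining a transversality statement with a global analysis of the tropical polynomials $f_i$. For the reduction: given a vertex $v \prec \sigma$ of $\scrP(\tilde{\Sigma}')$ one has $\mu_v \prec \mu_\sigma$ by the face structure of $\scrP(\tilde{\Sigma}')$ (cf.~\cite[Proposition 3.12, Definition 3.13]{MR2198802}), hence $\beta_i^\ast(\mu_v) = \{n \in \mu_v \mid \varphi_i(n) = 1\} \subseteq \{n \in \mu_\sigma \mid \varphi_i(n) = 1\} = \beta_i^\ast(\mu_\sigma)$ for every $i$, so $\cone_\bT \bigl( \bigcup_{i=1}^r \beta_i^\ast(\mu_v) \bigr) \subseteq \cone_\bT \bigl( \bigcup_{i=1}^r \beta_i^\ast(\mu_\sigma) \bigr)$. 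The inclusion ``$\supseteq$'' holds in both \eqref{eq:sigma-cone} and \eqref{eq:sigma-cone2} because the tropical cone contains $0$ and $\rint(\sigma) \subseteq B^{\check{h}}_\nabla \subseteq X(f_1, \dots, f_r)$ by \pref{th:contraction}; so, granting \eqref{eq:sigma-cone}, the left-hand side of \eqref{eq:sigma-cone2} is squeezed between $\rint(\sigma)$ and the left-hand side of \eqref{eq:sigma-cone}, which equals $\rint(\sigma)$. Thus it remains to prove ``$\subseteq$'' in \eqref{eq:sigma-cone}.

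First I would reinterpret the tropical cone. Since $\Sigma'$ is unimodular (\pref{cd:add}), its primitive ray generators are exactly the lattice points of $\partial \Delta^\ast$, each of which lies in a single $\nabla_i$ by the nef-partition hypothesis (cf.~\eqref{eq:varphi}); therefore $\bigcup_{i=1}^r \beta_i^\ast(\mu_\sigma)$ contains all primitive generators of $C_\sigma := \cone(\mu_\sigma) \in \Sigma'$, and since $\beta_i^\ast(\mu_\sigma) \subseteq \mu_\sigma$ we obtain $\cone_\bT \bigl( \bigcup_{i=1}^r \beta_i^\ast(\mu_\sigma) \bigr) = \overline{C_\sigma}$, the closure of $C_\sigma$ inside $X_{C_\sigma}(\bT) \subseteq X_{\Sigma'}(\bT)$. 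Next I would prove the transversality statement $C_\sigma \cap L_\sigma = \{0\}$ for $L_\sigma := \vspan(\sigma - \sigma)$. Writing $\sigma = \rotatebox[origin=c]{180}{$\beta$}(\mu_\sigma) + \nu_\sigma$, the cone $\cone(\mu_\sigma) \times \{0\} + \cone(\nu_\sigma \times \{1\})$ is a relevant cone of $\tilde{\Sigma}'$, so by \pref{cd:add} its ray generators --- $(n_1, 0), \dots, (n_s, 0)$ for $n_1, \dots, n_s$ the primitive generators of $C_\sigma$, together with the generators $(n, 1)$ over the vertices of $\nu_\sigma$ --- form part of a $\bZ$-basis of $N \oplus \bZ$; a direct computation with this basis then shows that no nonzero non-negative combination of $n_1, \dots, n_s$ lies in $\vspan(\rotatebox[origin=c]{180}{$\beta$}(\mu_\sigma) - \rotatebox[origin=c]{180}{$\beta$}(\mu_\sigma)) + \vspan(\nu_\sigma - \nu_\sigma) = L_\sigma$. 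Completing $n_1, \dots, n_s$ to a basis of $N$ identifies $X_{C_\sigma}(\bT)$ with $\bT^s \times \bR^{d+r-s}$ (via the dual basis), in which $N_\bR$ is the locus of finite coordinates and $\overline{C_\sigma} = \bT_{\geq 0}^s \times \{0\}^{d+r-s}$. Since $\sigma$ is a $d$-dimensional polyhedron contained in the purely $d$-dimensional space $\trop(X) = X(f_1, \dots, f_r)$, every point of $\rint(\sigma)$ is a generic point of $\trop(X)$, near which $\trop(X)$ coincides with $\aff(\sigma)$; together with transversality this already gives $y + w \notin \trop(X)$ for $y \in \rint(\sigma)$ and sufficiently small $w \in \overline{C_\sigma} \setminus \{0\}$.

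The hard part --- which I expect to be the main obstacle --- is to upgrade this to the global statement that $y + w \notin \trop(X)$ for \emph{all} $w \in \overline{C_\sigma} \setminus \{0\}$. As $\trop(X)$ is the closure of a stable intersection we have $\trop(X) \subseteq \bigcap_{i=1}^r X(f_i)$, so it is enough to exhibit, for each such $y$ and $w$, an index $i$ for which the minimum defining $f_i$ is attained at a single monomial at $y + w$ (and, when $w$ has infinite coordinates, the analogous statement in the torus orbit of $X_{\Sigma'}(\bT)$ containing $y + w$). At a generic point $y \in \rint(\sigma)$ each $f_i$ attains its minimum exactly along a lattice segment $[m_i^-, m_i^+]$ with $u_i := m_i^+ - m_i^- \neq 0$, and $L_\sigma = \{v \mid \langle u_i, v \rangle = 0 \text{ for all } i\}$; since $w \in C_\sigma \setminus \{0\}$ and $C_\sigma \cap L_\sigma = \{0\}$ one may pick $i$ with $\langle u_i, w \rangle \neq 0$, say $\langle u_i, w \rangle > 0$ after exchanging $m_i^-$ and $m_i^+$, so that $m_i^-$ strictly beats $m_i^+$ at $y + w$. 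The crux is then to rule out that some other monomial of $\Delta_i \cap M$ overtakes $m_i^-$ on the segment from $y$ to $y + w$; I would do this by extracting from the description of $\scrP(\tilde{\Sigma}')$ and of $C_\sigma = \cone(\mu_\sigma)$ in \cite[Observation 3.9, Proposition 3.12]{MR2198802}, together with $B^{\check{h}}_\nabla \subseteq \partial \nabla^{\check{h}}$ and the position of $C_\sigma$ relative to the face of $\nabla^{\check{h}}$ spanned by $\sigma$, the statement that $m_i^-$ is already the unique minimizer of $\langle m, \cdot \rangle$ over $\Delta_i$ in every direction of the smallest face of $C_\sigma$ whose relative interior contains $w$; concavity of $f_i$ then forces $m_i^-$ to remain the unique minimizer all along the segment, whence $y + w \notin X(f_i)$. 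Carrying out this last structural step is where the real work lies.
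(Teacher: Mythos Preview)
Your reduction of \eqref{eq:sigma-cone2} to \eqref{eq:sigma-cone} via $\mu_v \prec \mu_\sigma$ and the squeeze argument is correct and is exactly what the paper does. The ``$\supseteq$'' direction is also handled the same way. The difference is entirely in the ``$\subseteq$'' direction of \eqref{eq:sigma-cone}, where your proposal remains incomplete: you correctly identify the ``crux'' (ruling out that some third monomial of $f_i$ overtakes along the ray $y+tw$) and then sketch a plan that is vague and whose correctness you do not establish.

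The paper bypasses this difficulty by invoking three precise structural facts, after which the crux becomes a two–line computation. Write $n = n_0 + \sum_{j=1}^r \sum_k \lambda_{j,k}\, n_{j,k}$ with $n_0 \in \rint(\sigma)$, $\lambda_{j,k} \ge 0$, $n_{j,k} \in \beta_j^\ast(\mu_\sigma)$ (or a limit as some $\lambda_{j,k} \to \infty$). Then:
\begin{itemize}
\item By \cite[Lemma 5.6.1]{Yam21}, for each $i$ the only monomials of $f_i$ attaining the minimum on $\rint(\sigma)$ are $0$ and a single $m_i \in (\Delta_i \cap M)\setminus\{0\}$. In your notation this pins down $m_i^- = 0$, which you did not know and which is what makes the next step work.
\item By \cite[Lemma 5.5]{Yam21}, $\langle m_i, n_{j,k} \rangle = -\delta_{i,j}$. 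Hence $\check{h}(m_i)+\langle m_i, n\rangle = -\sum_k \lambda_{i,k}$, while the monomial $0$ gives value $0$.
\item By \cite[Proposition 3.13]{MR2405763}, $\langle m, n_{j,k} \rangle \ge -\delta_{i,j}$ for \emph{every} $m \in \Delta_i \cap M$, so for $m \neq 0, m_i$ one gets $\check{h}(m)+\langle m, n\rangle > c - \sum_k \lambda_{i,k}$ for some $c>0$.
\end{itemize}
Thus if $\lambda_{i,k}>0$ for some $k$, the monomial $m_i$ is the \emph{unique} minimizer of $f_i$ at $n$, so $n \notin X(f_i) \supset X(f_1,\dots,f_r)$. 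Hence all $\lambda_{j,k}=0$ and $n = n_0 \in \rint(\sigma)$.

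Your transversality statement $C_\sigma \cap L_\sigma = \{0\}$ is true (it follows instantly from the second bullet above, since $\langle m_i, \cdot\rangle$ vanishes on $L_\sigma$), but it only gives the local conclusion. Your attempted globalization --- ``$m_i^-$ is already the unique minimizer of $\langle m,\cdot\rangle$ over $\Delta_i$ in every direction of the face of $C_\sigma$'' --- is not justified and, as stated, is not quite the right formulation: it is $m_i^+ = m_i$, not $m_i^-$, that wins when you move into $C_\sigma$, and the control over the remaining monomials comes not from the face structure of $\nabla^{\check{h}}$ but from the Batyrev--Borisov duality inequality $\langle \Delta_i, \nabla_j \rangle \ge -\delta_{i,j}$. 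Once you use the three bullets above, no transversality detour is needed.
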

\begin{proof}
First, we show \eqref{eq:sigma-cone}.
It is obvious that the right hand side is contained in the left hand side, since $\rint (\sigma) \subset B^{\check{h}}_\nabla \subset \trop (X) = X(f_1, \cdots, f_r)$ and $0 \in \cone_\bT \lb \bigcup_{i=1}^r \beta_i^\ast \lb \mu_{\sigma} \rb \rb$.
We check the opposite inclusion.
Let $n$ be an arbitrary element of the left hand side.
It can be written as
\begin{align}\label{eq:n}
n=n_0+\sum_{j=1}^r \sum_{k} \lambda_{j, k} \cdot n_{j, k},
\end{align}
where $n_0 \in \rint \lb \sigma \rb$, $\lambda_{j, k} \in \bR_{\geq 0}$, $n_{j, k} \in \beta^\ast_j \lb \mu_\sigma \rb$, or as the limit of the right hand side of $\eqref{eq:n}$ as some $\lambda_{j, k}$ approach to $\infty$.
By \cite[Lemma 5.6.1]{Yam21} for $\sigma$, we can see that for each $i \in \lc 1, \cdots, r \rc$, there uniquely exists an element $m_i \in \lb \Delta_i \cap M \rb \setminus \lc 0 \rc$ such that $\check{h}(m_i)+\la m_i, \bullet \ra$ and $\check{h}(0)+\la 0, \bullet \ra \equiv 0$ are the only tropical monomials of the tropical polynomial $f_i$, which attain the minimum of $f_i$ at $n_0 \in \rint \lb \sigma \rb$.
By \cite[Lemma 5.5]{Yam21} for $\sigma$, we have $\la m_i, n_{j, k} \ra = -\delta_{i, j}$.
By using this, one can get
\begin{align}
\check{h}(m_i)+\la m_i, n \ra&=\check{h}(m_i)+\la m_i, n_0 \ra+\sum_{j=1}^r \sum_{k} \lambda_{j, k} \la m_i, n_{j, k}  \ra=-\sum_{k} \lambda_{i, k}
\end{align}
for the element $n$ of \eqref{eq:n}.
On the other hand, by \cite[Proposition 3.13]{MR2405763}, we also have $\la m, n_{j, k} \ra \geq -\delta_{i, j}$ for any $m \in \lb \Delta_i \cap M \rb$, from which we can get
\begin{align}
\check{h}(m)+\la m, n \ra&=\check{h}(m)+\la m, n_0 \ra+\sum_{j=1}^r \sum_{k} \lambda_{j, k} \la m, n_{j, k} \ra>c-\sum_{k} \lambda_{i, k}.
\end{align}
for any $m \in \lb \Delta_i \cap M \rb \setminus \lc 0, m_i \rc$, where $c$ is some positive real constant.
Therefore, if some $\lambda_{i,k}$ is greater than $0$, then $\check{h}(m_i)+\la m_i, \bullet \ra$ is the only monomial that attains the minimum of $f_i$ at $n$, and the element $n$ is not contained in the closure $X \lb f_i \rb$ of the tropical hypersurface $X \lb f_i \rb^\circ$ in the tropical toric variety $X_{\Sigma'} (\bT)$.
Thus all the elements $\lambda_{j, k}$ must be $0$, since $X(f_i) \supset X(f_1, \cdots, f_r)$.
We obtained $n=n_0 \in \rint \lb \sigma \rb$, and the equality \eqref{eq:sigma-cone}.

\eqref{eq:sigma-cone2} follows from \eqref{eq:sigma-cone}, since the left hand side of \eqref{eq:sigma-cone2} obviously contains $\rint \lb \sigma \rb$, and is contained in the left hand side of \eqref{eq:sigma-cone}.
\end{proof}

\begin{lemma}\label{lm:delta-1m}
For any maximal-dimensional polyhedron $\sigma \in \scrP(\tilde{\Sigma}')$, 
one has $\delta^{-1} \lb \rint (\sigma) \rb= \rint (\sigma)$.
Furthermore, the restriction of the tropical contraction $\delta$ to $\delta^{-1} \lb \rint (\sigma) \rb= \rint (\sigma)$ is the identity map.
\end{lemma}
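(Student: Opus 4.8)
The plan is to reduce the statement to the identity $U_\sigma=\rint(\sigma)$, which holds precisely because $\sigma$ is maximal-dimensional, and then to apply \pref{lm:delta-1} and \pref{lm:sigma-cone}. First I would prove $U_\sigma=\rint(\sigma)$. For $\rint(\sigma)\subseteq U_\sigma$: any $x\in\rint(\sigma)$ lies in the relative interior of a unique simplex $\conv\lb\lc a_{\tau_0},\cdots,a_{\tau_l}\rc\rb$ of $\widetilde{\scrP}(\tilde{\Sigma}')$ coming from a chain of faces $\tau_0\prec\cdots\prec\tau_l$ of $\sigma$; since $a_{\tau_i}\in\rint(\tau_i)$ lies in $\partial\sigma$ whenever $\tau_i$ is a proper face of $\sigma$, and since $\sigma$ is maximal-dimensional so the chain cannot run beyond $\sigma$, we must have $\tau_l=\sigma$, hence $a_\sigma$ is a vertex of this simplex and it lies in the open star $U_\sigma$. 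For $U_\sigma\subseteq\rint(\sigma)$: every simplex of $\widetilde{\scrP}(\tilde{\Sigma}')$ containing $a_\sigma$ is of the form $\conv\lb\lc a_{\tau_0},\cdots,a_{\tau_{l-1}},a_\sigma\rc\rb$ with $\tau_0\prec\cdots\prec\tau_{l-1}$ proper faces of $\sigma$ (again because $\sigma$ is maximal-dimensional); writing a point of its relative interior as $ta_\sigma+(1-t)q$, where $t\in(0,1]$ is its barycentric coordinate at $a_\sigma$ and $q\in\sigma$, the standard fact that the half-open segment from a point of $\rint(\sigma)$ to a point of $\sigma$ remains in $\rint(\sigma)$ shows this point lies in $\rint(\sigma)$. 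The same computation also gives $W_{\tau',\sigma}^\circ\subseteq\rint(\sigma)$ for every face $\tau'$ of $\sigma$.

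By \pref{lm:delta-1} we then get $\delta^{-1}\lb\rint(\sigma)\rb=\delta^{-1}\lb U_\sigma\rb=X_\sigma^\circ$, so it suffices to prove $X_\sigma^\circ=\rint(\sigma)$. The inclusion $\rint(\sigma)\subseteq X_\sigma^\circ$ is immediate: by the above, each $x\in\rint(\sigma)$ lies in some $W_{\tau_0,\sigma}^\circ$, and $x\in B^{\check{h}}_\nabla\subseteq X(f_1,\cdots,f_r)$, hence $x\in V_{\tau_0,\sigma}^\circ\subseteq X_\sigma^\circ$. For the reverse inclusion I would write $X_\sigma^\circ=\bigcup_{\tau'}V_{\tau',\sigma}^\circ$ over the faces $\tau'$ of $\sigma$ and combine $W_{\tau',\sigma}^\circ\subseteq\rint(\sigma)$ with $\mu_{\tau'}\subseteq\mu_\sigma$ --- which holds because $C_{\tau'}$ is a face of $C_\sigma$ when $\tau'$ is a face of $\sigma$, so that $\cone_\bT\lb\bigcup_i\beta_i^\ast(\mu_{\tau'})\rb\subseteq\cone_\bT\lb\bigcup_i\beta_i^\ast(\mu_\sigma)\rb$. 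This gives $V_{\tau',\sigma}^\circ\subseteq\lb\rint(\sigma)+\cone_\bT\lb\bigcup_i\beta_i^\ast(\mu_\sigma)\rb\rb\cap X(f_1,\cdots,f_r)$, which equals $\rint(\sigma)$ by \eqref{eq:sigma-cone}. Hence $X_\sigma^\circ=\rint(\sigma)$ and $\delta^{-1}\lb\rint(\sigma)\rb=\rint(\sigma)$.

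For the final assertion, take $x\in\rint(\sigma)=X_\sigma^\circ$ and a face $\tau'$ of $\sigma$ with $x\in V_{\tau',\sigma}^\circ$, so $x=w+c$ with $w\in W_{\tau',\sigma}^\circ$ and $c\in\cone_\bT\lb\bigcup_i\beta_i^\ast(\mu_{\tau'})\rb$. Since the generators of this cone lie in $C_{\tau'}$, the projection $\pi_{C_{\tau'}}$ sends $x$ and $w$ to the same point, and because $t_{\tau'}\circ\pi_{C_{\tau'}}$ is the identity on $W_{\tau',\sigma}^\circ$, the local description \eqref{eq:local-delta} of $\delta$ on $V_{\tau',\sigma}^\circ$ yields $\delta(x)=w$. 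It remains to see $c=0$, for which I would reuse the monomials from the proof of \pref{lm:sigma-cone}: by \cite[Lemma~5.5, Lemma~5.6.1]{Yam21} applied to $\sigma$, for each $i$ there is $m_i\in\lb\Delta_i\cap M\rb\setminus\lc 0\rc$ with $\check{h}(m_i)+\la m_i,n\ra=0$ for all $n\in\rint(\sigma)$ and $\la m_i,n'\ra=-\delta_{i,j}$ for all $n'\in\beta_j^\ast(\mu_\sigma)\supseteq\beta_j^\ast(\mu_{\tau'})$. Since $x,w\in\rint(\sigma)$, subtracting gives $\la m_i,c\ra=0$ for every $i$; expanding $c=\sum_{j=1}^r\sum_k\lambda_{j,k}n_{j,k}$ with $\lambda_{j,k}\geq 0$ and $n_{j,k}\in\beta_j^\ast(\mu_{\tau'})$, this becomes $\sum_k\lambda_{i,k}=0$, forcing all $\lambda_{j,k}=0$. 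Hence $c=0$ and $\delta(x)=w=x$.

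I expect the main obstacle to be the opening step: recognizing and cleanly establishing that the open star $U_\sigma$ collapses onto $\rint(\sigma)$ for a top-dimensional cell, together with the companion inclusion $W_{\tau',\sigma}^\circ\subseteq\rint(\sigma)$; once this combinatorial reduction is made, the rest is routine bookkeeping on top of \pref{lm:delta-1} and \pref{lm:sigma-cone}. A secondary point requiring care is the monotonicity $\mu_{\tau'}\subseteq\mu_\sigma$ under the face relation, which I would extract from the description of $\tilde{\Sigma}'$ and its cells in \pref{sc:toric-CY} (equivalently, from the fact that the assignment $\tau\mapsto C_\tau$ of \pref{sc:contraction} is order-preserving).
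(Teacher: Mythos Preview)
Your proof is correct and follows essentially the same route as the paper: establish $U_\sigma=\rint(\sigma)$, invoke \pref{lm:delta-1}, and use $\beta_i^\ast(\mu_{\tau'})\subset\beta_i^\ast(\mu_\sigma)$ together with \eqref{eq:sigma-cone} to conclude $X_\sigma^\circ=\rint(\sigma)$. The only minor difference is that for the identity claim the paper dispatches it in one line from \eqref{eq:local-delta}---once $X_\sigma^\circ=\rint(\sigma)=\bigsqcup_{\tau'\prec\sigma}W_{\tau',\sigma}^\circ$, any $x$ already lies in some $W_{\tau',\sigma}^\circ\subset V_{\tau',\sigma}^\circ$, where $t_{\tau'}\circ\pi_{C_{\tau'}}$ is the identity by construction---so your detour through the monomials $m_i$ to force $c=0$ is correct but unnecessary.
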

\begin{proof}
We have $\rint (\sigma)=U_\sigma$.
We will show $X_\sigma^\circ = \rint (\sigma)$.
The former claim follows from this and \pref{lm:delta-1}.
Since $\beta^\ast_j \lb \mu_{\tau'} \rb \subset \beta^\ast_j \lb \mu_\sigma \rb$ for any $\tau' \prec \sigma$, we can get
\begin{align}
X_\sigma^\circ \subset \lb \rint \lb \sigma \rb + \cone_\bT \lb \bigcup_{i=1}^r \beta_i^\ast \lb \mu_{\sigma} \rb \rb
 \rb 
 \cap X(f_1, \cdots, f_r)
 =\rint \lb \sigma \rb
\end{align}
by \eqref{eq:sigma-cone}.
The opposite inclusion is obvious.
Thus we obtain $X_\sigma^\circ = \rint (\sigma)$.
The latter claim is now obvious from \eqref{eq:local-delta}.
\end{proof}

\section{Proof of Theorem 1.1}\label{sc:proof1}

\subsection{Overview of the proof}\label{sc:overview}

Let us consider
\begin{align}\label{eq:k3}
\scX:=\lc z_0 z_1 z_2 z_3+t \cdot F_4(z_0, z_1, z_2, z_3 )=0 \rc \subset \bP_R^3,
\end{align}
where $z_0, z_1, z_2, z_3$ are the homogeneous coordinates of $\bP_R^3$, and $F_4$ is a general homogeneous polynomial of degree $4$.
This is the toric degeneration of \pref{sc:toric-CY} in the case where $d=2$, $r=1$,
\begin{align}
\Delta&=\nabla^\ast=\conv \lb \lc (3, -1, -1), (-1, 3, -1), (-1, -1, 3), (-1,-1,-1) \rc \rb \subset M_\bR \cong \bR^3, \\
\Delta^\ast&=\nabla=\conv \lb \lc (1, 0, 0), (0, 1, 0), (0, 0, 1), (-1,-1,-1) \rc \rb \subset N_\bR \cong \bR^3,
\end{align}
and $\Sigma'=\Sigma, \Sigmav'=\Sigmav$, $\check{h}=\check{\varphi}, \tilde{\Sigma}'=\tilde{\Sigma}$.
The special fiber is the union of four irreducible components $D_i:=\lc z_i=t=0 \rc$ $(i \in \lc 0, \cdots, 3 \rc)$.
The inclusion $B^{\check{h}}_\nabla \subset \trop (X)$ and a tropical contraction $\delta \colon \trop (X) \to B^{\check{h}}_\nabla$ of \cite[Theorem 1.2]{Yam21} are shown in \cite[Example 5.16, Figure 5.1]{Yam21}.

This example was studied in detail in Section 3 of the first version of \cite{MPS21}.
They observed the following:
The toric degeneration $\scX$ is a minimal dlt-model, and the essential skeleton coincides with $B^{\check{h}}_\nabla$.
However, it is not a good\footnote{In general, we say that a dlt-model $\scX$ is \emph{good} if every irreducible component of  $\scX_{k, \mathrm{red}}$ is $\bQ$-Cartier.} minimal dlt-model since the irreducible components $D_i$ are not $\bQ$-Cartier.
Therefore, one can not simply associate the Berkovich retraction with the toric degeneration $\scX$.
In order to make a good minimal dlt-model out of $\scX$, consider blowing it up along the irreducible components $D_i$ one after the other.
For any permutation $(i, j, k, l)$ of $\lc 0, 1, 2, 3 \rc$, let $\scrX_{i, j, k}$ be the model obtained by blowing up $\scX$ along $D_i, D_j, D_k$ in this order.
Then the model $\scrX_{i, j, k}$ is a minimal snc-model, and we can see from \cite[Theorem 6.1]{MR3946280} that the Berkovich retraction $\rho_{\scrX_{i, j, k}}$ associated with the model $\scrX_{i, j, k}$ is an affinoid torus fibration outside all the vertices of $B^{\check{h}}_\nabla$.
Moreover, one can also show that the Berkovich retraction $\rho_{\scrX_{i, j, k}}$ is an affinoid torus fibration also around the vertex $v_l$ of $B^{\check{h}}_\nabla$ corresponding to the irreducible component $D_l$.
For showing this, they use \cite[Theorem A]{MPS21} which is a generalization of \cite[Proposition 5.4]{MR3946280}.

Pille-Schneider \cite{PS22} further showed that the composition of the tropicalization map with the tropical contraction $\delta$ coincides with the Berkovich retraction $\rho_{\scrX_{i, j, k}}$ around the vertex $v_l$, and proved \pref{th:main} in this case (and also in the higher-dimensional case, see \pref{rm:ps22}).

The outline of our proof of \pref{th:main}(2) for the general case is also basically the same as the one of \cite{PS22}.
The proof consists of the following four steps:
\begin{enumerate}
\item For every vertex $v$ of $B^{\check{h}}_\nabla$, we take an order on the set of all the irreducible components of the special fiber such that the last one is the irreducible component corresponding to $v$. 
\item We construct a minimal snc-model $\scrX$ by blowing up our toric degeneration along irreducible components of the special fiber one after the other in the order.
\item We show that the composition of the tropicalization map with the tropical contraction $\delta$ coincides with the Berkovich retraction $\rho_\scrX$ associated with the model $\scrX$ locally around the vertex $v$.
\item We also show that the Berkovich retraction $\rho_\scrX$ is an affinoid torus fibration around the vertex $v$.
\end{enumerate}

How we blow up our toric degeneration is explained in \pref{sc:blow-up}.
In \pref{sc:minimal}, we show that the model $\scrX$ obtained by the repetitive blow-ups is a minimal snc-model.
In \pref{sc:essential}, we also compute the image of the skeleton of $\scrX$ by the tropicalization map to show that the essential skeleton coincides with $B^{\check{h}}_\nabla$, i.e., \pref{th:main}(1).
The above steps (3) and (4) are done in \pref{sc:tropical-berkovich}.
The proofs in \pref{sc:minimal}-\pref{sc:tropical-berkovich} are based on an explicit computation of the repetitive blow-ups, which is given in \pref{sc:local}.

In relation to the case of \pref{eq:k3} and the higher-dimensional case considered in \cite{PS22}, we need to take some additional care in our general case.
Since the toric degeneration of \pref{eq:k3} (and the one considered in \cite{PS22}) is a minimal dlt-model, we can associate its dual complex, and it coincides with the essential skeleton (cf.~\cite[Proposition 4.1]{PS22}).
In this case, the dual intersection complex in the sense of the Gross--Siebert program is also a simplicial complex, and coincides with the dual complex of the dlt-model as simplicial complexes (cf.~\cite[Proposition 4.3]{PS22}).
However, these are not the case in our general setup.
First, our toric degenerations are not dlt-models in general, and one can not simply associate its dual complex, while we have the dual intersection complex in the sense of the Gross--Siebert program.
Because of this, in this article, we consider the dual complex of the minimal snc-model $\scrX$ obtained by the repetitive blow-ups in order to compute the essential skeleton.
Furthermore, in general, the dual intersection complex in the Gross--Siebert program may contain polytopes that are not a simplex, unlike the dual complex of a dlt-model.
In our setup, it may contain products of simplices.
Our repetitive blow-ups give a subdivision of the dual intersection complex, and this is described in terms of shuffles that we recalled in \pref{sc:shuffle}.
Also for the above step (4), we can not simply apply \cite[Theorem A]{MPS21} in our general setup, unlike in the case of \cite{PS22}, and we need to modify the argument (cf.~ \pref{rm:ps22}).

In the rest of this article, we work in the same setup and use the same notation as in \pref{sc:toric-CY} and \pref{sc:contraction}.

\subsection{Blow-ups  of toric degenerations}\label{sc:blow-up}

First, we fix a vertex $v \in \scrP(\tilde{\Sigma}')$.
It is written as $v=\rotatebox[origin=c]{180}{$\beta$} (\mu_v) + \nu_v$ with $(\mu_v, \nu_v) \in \scrR(\tilde{\Sigma}')$.
The subsets $\beta_i^\ast \lb \mu_v \rb \subset \nabla_i \cap  \partial \Delta^\ast \cap N$ and $\nu_v \subset \partial \nabla^{\check{h}'} \cap N$ consist of a single point.
For each $i \in \lc 0, \cdots, r \rc$, we set
\begin{align}\label{eq:ni}
N_i:=
\left\{
\begin{array}{ll}
\partial \nabla^{\check{h}'} \cap N & i=0, \\
\nabla_i \cap \partial \Delta^\ast \cap N & i \in \lc 1, \cdots, r \rc,
\end{array}
\right.
\end{align}
and we fix a total order on each $N_i$ so that $\nu_v$ and $\beta_i^\ast \lb \mu_v \rb$ become the greatest element in $N_i$ with respect to the order respectively.
By using these orders on $N_i$ and the total order $N_0 \leq N_1 \leq \cdots \leq N_r$, we put the lexicographical order on
\begin{align}\label{eq:prod}
\prod_{i=0}^r N_i.
\end{align}
It is seen in the beginning of the proof of \cite[Theorem 3.10]{MR2198802} that each irreducible component of the special fiber $\scX_k$ is a $d$-dimensional toric stratum in the toric variety $X_{\tilde{\Sigma}'}$, which corresponds to a relevant cone
\begin{align}
\cone(\mu) \times \lc 0 \rc + \cone \lb \nu \times \lc 1 \rc \rb \in  \tilde{\Sigma}'.
\end{align}
The irreducible component corresponds to the vertex  $\rotatebox[origin=c]{180}{$\beta$} (\mu) + \nu$ of the dual intersection complex $B^{\check{h}}_\nabla$.
The sets $\beta_i^\ast \lb \mu \rb \subset \nabla_i \cap \partial \Delta^\ast \cap N$ and $\nu \subset \partial \nabla^{\check{h}'} \cap N$ consist of a single point, and these points define an element in \eqref{eq:prod}.
Hence, all the irreducible components of $\scX_k$ are labeled by elements in \eqref{eq:prod}.
We write the total order on the set of irreducible components of $\scX_k$, which is induced by the the lexicographical order on \eqref{eq:prod} as $\leq_{\scX}$.

Let $l_\scX \in \bZ_{>0}$ be the number of irreducible components of $\scX_k$, and set $I_\scX:= \lc 1, \cdots, l_\scX \rc$.
For $i \in I_\scX$, let $D_i$ denote the irreducible component of $\scX_k$ that is the $i$-th least with respect to the order $\leq_{\scX}$.
We have
\begin{align}
\scX_k=\sum_{i \in I_{\scX}} D_i.
\end{align}
We consider blowing up $\scX$ repeatedly along the irreducible components of $\scX_k$ one by one in the order $\leq_{\scX}$.
To be precise, for each $l \in I_{\scX} \cup \lc 0 \rc$, we define the variety $\scX_l$ and the divisors $D_{i}^l \subset \scX_l$ $(i \in I_{\scX})$ inductively as follows:
\begin{itemize}
\item For $l=0$, we set $\scX_0:=\scX$ and $D_i^0:=D_i$.
\item Suppose that we have defined the variety $\scX_l$ and divisors $D_{i}^l \subset \scX_l$ $(i \in I_{\scX})$ for an element $l \in I_{\scX} \cup \lc 0 \rc$.
We define $\scX_{l+1}$ to be the blow-up of $\scX_l$ along $D_{l+1}^{l}$.
We further define $D_{l+1}^{l+1} \subset \scX_{l+1}$ to be the exceptional divisor of the blow-up, and $D_{i}^{l+1}$ $(i \in I_{\scX} \setminus \lc l+1 \rc)$ to be the strict transform of $D_{i}^{l} \subset \scX_l$.
\end{itemize}
Then we set $\scrX:=\scX_{l_\scX}$ and $\overline{D}_i:=D_i^{l_\scX} \subset \scrX$.

\subsection{Local computations of blow-ups}\label{sc:local}

Let $C:=\cone(\mu) \times \lc 0 \rc + \cone \lb \nu \times \lc 1 \rc \rb \in \tilde{\Sigma}'$ be a relevant cone of maximal dimension $d+r+1$.
The special fiber $\scX_k$ is contained in the union of affine toric varieties $U_C:=\Spec k \ld C^\vee \cap \lb M \oplus \bZ \rb \rd \subset X_{\tilde{\Sigma}'}$ associated with such cones $C$.
We set
\begin{align}
k_0:=\# \lb \nu \cap N \rb-1, \quad k_i:=\# \lb\beta_i^\ast \lb \mu \rb \cap N \rb-1 \quad (1 \leq i \leq r).
\end{align}
Let $\lc n_{0,j} \relmid j \in \lc 0, \cdots, k_0 \rc \rc$ $\lb \subset N_\bR \oplus \bR \rb$ denote all the elements in 
$\lb \nu \cap N \rb \times \lc 1 \rc \lb \subset N_0 \times \lc 1 \rc \rb$.
For each $i \in \lc 1, \cdots, r \rc$, we also let $\lc n_{i,j} \relmid j \in \lc 0, \cdots, k_i \rc \rc$ denote all the elements in $\lb \beta_i^\ast \lb \mu \rb \cap N \rb \times \lc 0 \rc \lb \subset N_i \times \lc 0 \rc \rb$.
Here we assign the indices $j$ to these elements $n_{i, j}$ so that they agree with the total order on $N_i$ that we fixed in \pref{sc:blow-up}.
By \pref{cd:add}, the elements $\lc n_{i, j} \relmid 0 \leq i \leq r, 0\leq j \leq k_i \rc$ form a basis of $N \oplus \bZ$, and $\sum_{i=0}^r k_i=d$.
Let further $\lc m_{i, j} \relmid 0 \leq i \leq r, 0\leq j \leq k_i \rc$ denote its dual basis in $M \oplus \bZ$.
Then one has $t=\prod_{j=0}^{k_0} z^{m_{0,j}}$, and can write the equations of $\scX$ in the affine toric variety $U_C$ as
\begin{align}
f_i \cdot \prod_{j=0}^{k_0} z^{m_{0,j}} =\prod_{j=0}^{k_i} z^{m_{i, j}} \quad (1 \leq i \leq r),
\end{align}
where $f_i$ is proportional to the section $s_i$ (see the proof of \cite[Theorem 3.10]{MR2198802}).
In the following, we will compute how blowing up $\scX$ repeatedly as explained in \pref{sc:blow-up} affects the open subscheme $\scX \cap U_C \subset \scX$.
The resulting scheme is naturally covered by affine open subschemes, which are labeled by shuffles recalled in \pref{sc:shuffle}.

Let $\scrS_{C, 1}$ be the set of $(k_0, k_1, \cdots, k_r)$-shuffles.
(See \pref{sc:shuffle} for the definition of shuffles.)
Let further $\scrS_{C, 2}$ be the set of shuffles whose degree $(p_0, \cdots, p_r)$ satisfies 
\begin{itemize}
\item $p_0 = k_0$,
\item $p_i \leq k_i$ $(1 \leq i \leq r)$, and $p_i \neq k_i$ for some $i \in \lc 1, \cdots, r \rc$.
\end{itemize}
We fix a shuffle $\scS=\lc S_i \relmid 0 \leq i \leq r \rc \in \scrS_{C, 1} \cup \scrS_{C, 2}$, and let $(p_0, \cdots, p_r)$ denote the degree of $\scS$.
We set $\bar{p}:=\sum_{i=0}^r p_i$.
For an integer $l$ $(0 \leq l \leq \bar{p})$, we also let $\lb j_{\scS, 0}(l), \cdots, j_{\scS, r} (l) \rb$ denote the point of the grid at which we arrive after $l$ times moves during the walk corresponding the shuffle $\scS$.
Furthermore, for an integer $l$ $(1 \leq l \leq \bar{p})$, let $i_\scS (l) \in \lc 0, \cdots, r \rc$ denote the number such that $l \in S_{i_\scS (l)}$.
We also set 
\begin{align}\label{eq:ip1}
i_\scS (\bar{p}+1)&:=0\\
j_{\scS, i} (\bar{p}+1)&:=
\left\{
\begin{array}{ll}
j_{\scS, i}(\bar{p})+1=k_0+1 & i=i_\scS (\bar{p}+1)=0,\\
j_{\scS, i}(\bar{p})=p_i & i \in \lc 1, \cdots, r \rc,
\end{array}
\right.
\end{align}
i.e., for the walk corresponding the shuffle $\scS$, we add the move $+(1, 0, \cdots, 0) \in \bZ^{r+1}$ at the end as its $(\bar{p}+1)$-th move.

We set 
\begin{align}\label{eq:us0}
\scU_{\scS, 0}&:=\scX \cap U_C,\\ 
z^{i, j}_0&:=z^{m_{i,j}}\quad (0 \leq i \leq r, 0 \leq j \leq k_i),\\ 
f_{0, i}&:=f_i\quad (1 \leq i \leq r).
\end{align}
(Although \eqref{eq:us0} does not depend of $\scS$, we put $\scS$ as a subscript in \eqref{eq:us0} in order to deal with it uniformly together with schemes $\scU_{\scS, l}$ which will be defined later and depend on $\scS$.)
Let $D_{\scS, 1}$ denote the least (with respect to the order $\leq_{\scX}$) of all the irreducible components of $\scX_k$ intersecting with $\scU_{\scS, 0}$.
When we blow up the variety $\scX$ repeatedly as explained in \pref{sc:blow-up}, the blow-up along (the strict transform $\widetilde{D}_{\scS, 1}$ of) $D_{\scS, 1}$ is the first blow-up that affects $\scU_{\scS, 0}$ among all the blow-ups that we do for $\scX$.
The irreducible component $D_{\scS, 1}$ is the toric stratum in the toric variety $X_{\tilde{\Sigma}'}$, which corresponds to the cone generated by $\lc n_{i, 0} \relmid 0 \leq i \leq r \rc$.
It is defined by
\begin{align}
z^{i, 0}_0=0\quad (0 \leq i \leq r)
\end{align}
in $\scU_{\scS, 0}$.
The blow-up of $\scU_{\scS, 0}$ along (the strict transform $\widetilde{D}_{\scS, 1}$ of) $D_{\scS, 1}$ is the Zariski closure of the graph of the morphism
\begin{align}\label{eq:mor1}
\scU_{\scS, 0} \setminus \widetilde{D}_{\scS, 1} \to \bP^r
\end{align}
defined by the functions $z^{i, 0}_0$ $(0 \leq i \leq r)$.
It is covered by the Zariski closures of the graphs of the morphisms
\begin{align}\label{eq:mor2}
\scU_{\scS, 0} \setminus \lb z^{i, 0}_0=0 \rb \to \bA^r\quad (0 \leq i \leq r)
\end{align} 
obtained as the restriction of \eqref{eq:mor1}.
For the shuffle $\scS$ that we fixed above, we define $\scU_{\scS, 1}$ to be the Zariski closure of the graph of the morphism \eqref{eq:mor2} with $i=i_\scS (1)$.
Let $z^{i, 0}_1$ $\lb 0 \leq i \leq r, i \neq i_\scS (1) \rb$ be the coordinates of the target $\bA^r$ of \eqref{eq:mor2} with $i=i_\scS (1)$.
The scheme $\scU_{\scS, 1}$ is the closed subscheme of
\begin{align}
\Spec \left. \lb k \ld z^{i, j}_0 : 0 \leq i \leq r, 0 \leq j \leq k_i \rd \otimes k \ld z^{i, 0}_1 : 0 \leq i \leq r, i \neq i_\scS (1) \rd \rb \middle/ \lb z^{i,0}_0=z^{i, 0}_1 z^{i(1), 0}_0, 0 \leq i \leq r, i \neq i_\scS (1) \rb \right.,
\end{align}
which is defined by
\begin{align}
f_{0, i} \cdot \prod_{j=1}^{k_0}z^{0,j}_0 =z^{i, 0}_1 \cdot \prod_{j=1}^{k_i} z^{i, j}_0 \quad (1 \leq i \leq r)
\end{align}
when $i_\scS (1)=0$, and by
\begin{align}
f_{0, i} \cdot z_1^{0, 0} \cdot \prod_{j=1}^{k_0}z^{0,j}_0 &
=\left\{
\begin{array}{ll}
z^{i, 0}_1 \cdot \prod_{j=1}^{k_i} z^{i, j}_0 & i \neq i_\scS (1)\\
\prod_{j=1}^{k_i} z^{i, j}_0 & i =i_\scS (1)\\
\end{array}
\right.
\quad (1 \leq i \leq r)
\end{align}
when $i_\scS (1)\neq 0$.
One has the isomorphism
\begin{align}\label{eq:spec1}
\scU_{\scS, 1} \cong \Spec \left. k \ld z^{i, j}_1 : 0 \leq i \leq r, 0 \leq j \leq k_i \rd  \middle/ \lb f_{1, i} \cdot \prod_{j=j_{\scS, 0}(1)}^{k_0} z^{0,j}_1 =\prod_{j=j_{\scS, i}(1)}^{k_i} z^{i, j}_1, 1 \leq i \leq r \rb \right.
\end{align}
given by the map determined by

\begin{align}\label{eq:us1-isom}
z_1^{i, 0} \mapsto z_1^{i, 0} \lb 0 \leq i \leq r, i \neq i_\scS (1) \rb, \quad
z_{0}^{i, j} \mapsto
\left\{
\begin{array}{ll}
z_{1}^{i, j} \cdot z_{1}^{i_\scS (1), 0}& i \neq i_\scS (1), j=0, \\
z_{1}^{i, j} & \mathrm{otherwise},\\
\end{array}
\right.
\end{align}
where $f_{1, i}$ $(1 \leq i \leq r)$ is the image of $f_{0, i}$ by \eqref{eq:us1-isom}.
Under the identification of \eqref{eq:spec1}, the restriction $\pi_1 \colon \scU_{\scS, 1} \to \scU_{\scS, 0}$ of the blow-up is given by
\begin{align}\label{eq:x01}
z_{0}^{i, j} \mapsto
\left\{
\begin{array}{ll}
z_{1}^{i, j} \cdot z_{1}^{i_\scS (1), 0}& i \neq i_\scS (1), j=0, \\
z_{1}^{i, j} & \mathrm{otherwise}.\\
\end{array}
\right.
\end{align}
Let $D$ be an irreducible component of $\scX_k$ intersecting with $\scU_{\scS, 0}$, and $\lc n_{i, j_i} \relmid 0 \leq i \leq r \rc$ be the primitive generators of the cone in $\tilde{\Sigma}'$ corresponding to $D$.
If the component $D$ is contained in $\lb z^{i_\scS (1), 0}_0=0 \rb$, i.e., the index $j_{i}$ (in $n_{i, j_i}$) with $i=i_\scS (1)$ equals $0$, then its strict transform does not intersect with $\scU_{\scS, 1}$.
If $j_{i_\scS (1)} \geq 1$, then the strict transform intersects with $\scU_{\scS, 1}$, and is defined in \eqref{eq:spec1} by
\begin{align}
z^{i, j_i}_1=0\quad (0 \leq i \leq r).
\end{align}

Let $D_{\scS, 2}$ be the least (with respect to the order $\leq_{\scX}$) of all the irreducible components $D$ of $\scX_k$ such that $D >_{\scX} D_{\scS, 1}$ and the strict transforms intersect with $\scU_{\scS, 1}$.
The blow-up along the strict transform of $D_{\scS, 2}$ is the blow-up that affects our affine chart $\scU_{\scS, 1}$ next.
The irreducible component $D_{\scS, 2}$ corresponds to the cone in $\tilde{\Sigma}'$ whose primitive generators are $\lc n_{i, j_i} \relmid 0 \leq i \leq r \rc$ with 
\begin{align}
j_i=
\left\{
\begin{array}{ll}
1 & i =i_\scS (1), \\
0 & \mathrm{otherwise}.\\
\end{array}
\right.
\end{align}
In other words, the cone is generated by $\lc n_{i, j_{\scS, i}(1)} \relmid 0 \leq i \leq r \rc$.
The blow-up of $\scU_{\scS, 1}$ along the strict transform $\widetilde{D}_{\scS, 2}$ of $D_{\scS, 2}$ is the Zariski closure of the graph of the morphism
\begin{align}\label{eq:mor3}
\scU_{\scS, 1} \setminus \widetilde{D}_{\scS, 2} \to \bP^r
\end{align} 
defined by the functions $z^{i, j_{\scS, i}(1)}_1$ $(0 \leq i \leq r)$.
It is covered by the Zariski closures of the graphs of the morphisms
\begin{align}\label{eq:mor4}
\scU_{\scS, 1} \setminus \lb z^{i, j_{\scS, i}(1)}_1=0 \rb \to \bA^r\quad (0 \leq i \leq r)
\end{align} 
obtained as the restriction of \eqref{eq:mor3}.
We define $\scU_{\scS, 2}$ to the Zariski closure of the graph of the morphism \eqref{eq:mor4} with $i=i_\scS (2)$.

We continue this process inductively as we will explain below (\pref{dl:u}).
We define
\begin{align}
I(l):=\lc 0 \rc \cup \lc i \in \lc 1, \cdots, r \rc \relmid j_{\scS, i} (l) < k_i \rc
\end{align}
for $l$ $(0 \leq l \leq \bar{p}+1)$, and 
\begin{align}
I'(l):=I(l) \setminus \lc i_\scS (l+1) \rc
\end{align}
for $l$ $(0 \leq l \leq \bar{p})$.
We also set
\begin{align}
\bar{l}:=
\left\{
\begin{array}{ll}
\min \lc l \in \bZ_{>0} \relmid I(l)=\lc 0 \rc \rc & \scS \in \scrS_{C, 1}, \\
\bar{p}+1 & \scS \in \scrS_{C, 2}.
\end{array}
\right.
\end{align}
For each $i \in \lc 0, \cdots, r \rc$, we also define
\begin{align}
l_i:=
\left\{
\begin{array}{ll}
\min \lc l \in \bZ_{>0} \relmid j_{\scS, i} (l)=k_i \rc & p_i=k_i, \\
\infty & p_i \neq k_i.
\end{array}
\right.
\end{align}
In the following, as abbreviations, we also simply write
\begin{align}
i(l):=i_\scS(l), \quad j_i (l):= j_{\scS, i} (l)
\end{align}
unless we need to empathize which shuffle $\scS$ we are considering.

\begin{definition-lemma}\label{dl:u}
\begin{enumerate}
\item Suppose that we blew up $\scU_{\scS, l-1}$ along (the strict transform of) the irreducible component $D_{\scS, l}$ of $\scX_k$, and obtained the scheme $\scU_{\scS, l}$ $(1 \leq l \leq \bar{l})$ as an affine open subscheme of the blow-up.
(We did it for $l=1, 2$ above.)
\begin{enumerate}
\item One has the isomorphism
\begin{align}\label{eq:spec2}
\scU_{\scS, l} \cong \Spec \left. k \ld z^{i, j}_l : 0 \leq i \leq r, 0 \leq j \leq k_i \rd  \middle/ \lb g_{l, i}, 1 \leq i \leq r \rb \right.
\end{align}
with
\begin{align}\label{eq:gli}
g_{l, i}:=
f_{l, i} \cdot \prod_{j=j_{0} (l)}^{k_0} z^{0,j}_l \cdot \prod_{l' \geq l_i+1}^{l} z_l^{i(l'), j_{i(l')} (l'-1)} -\prod_{j=j_{i} (l)}^{k_i} z^{i, j}_l,
\end{align}
under which the restriction $\pi_{l} \colon \scU_{\scS, l} \to \scU_{\scS, l-1}$ of the blow-up along (the strict transform of) $D_{\scS, l}$ is given by
\begin{align}\label{eq:blow-up1}
z_{l-1}^{i, j} \mapsto
\left\{
\begin{array}{ll}
z_{l}^{i, j} \cdot z_l^{i(l), j_{i(l)} (l-1)}& i \in I'(l-1), j=j_{i}(l-1), \\
z_{l}^{i, j} & \mathrm{otherwise}.\\
\end{array}
\right.
\end{align}
The function $f_{l, i}$ in \eqref{eq:gli} is the image of $f_{l-1, i}$ by the map \eqref{eq:blow-up1}.
(When $k_0 <j_0(l)$ (resp. $l <l_i+1$), the first product with respect to $j$ (resp. the product with respect to $l'$) in \eqref{eq:gli} is the empty product.)

\item The strict transform of an irreducible component $D (>_{\scX} D_{\scS, l})$ of $\scX_k$ intersects with $\scU_{\scS, l}$ if and only if the primitive generators of the cone corresponding to $D$ are $\lc n_{i, j_i} \relmid 0 \leq i \leq r \rc$ such that $j_i(l) \leq j_i \leq k_i$ for all $i \in \lc 0, \cdots, r \rc$.
When this holds, the strict transform of $D$ is defined in \eqref{eq:spec2} by
\begin{align}\label{eq:transform1}
z^{i, j_i}_l=0\quad \lb i \in I(l) \rb.
\end{align}

\item Suppose further $l < \bar{l}$.
We define $D_{\scS, l+1}$ to be the least (with respect to the order $\leq_{\scX}$) of all the irreducible components $D$ of $\scX_k$  such that $D >_{\scX} D_{\scS, l}$ and the strict transforms intersect with $\scU_{\scS, l}$, i.e., the toric stratum corresponding to the cone generated by $\lc n_{i, j_{i}(l)} \relmid 0 \leq i \leq r \rc$.
We also define $\scU_{\scS, l+1}$ to be the Zariski closure of the graph of the morphism
\begin{align}\label{eq:mor5}
\scU_{\scS, l} \setminus \lb z_l^{i(l+1), j_{i(l+1)} (l)}=0 \rb \to \bA^{|I'(l)|}
\end{align} 
obtained as the restriction of the morphism 
\begin{align}
\scU_{\scS, l} \setminus \widetilde{D}_{\scS, l+1} \to \bP^{|I'(l)|}
\end{align} 
defined by the functions $z^{i, j_{i} (l)}_l$ $\lb i \in I(l) \rb$, which we think of for blowing up along the strict transform $\widetilde{D}_{\scS, l+1}$ of $D_{\scS, l+1}$.
\end{enumerate}
\item Suppose that we have performed the above process repeatedly, and obtained $\scU_{\scS, l}$ with $l=\bar{l}$.
\begin{itemize}
\item When $\scS \in \scrS_{C, 1}$, the strict transforms of the irreducible components of $\scX_k$, which intersect with $\scU_{\scS, \bar{l}}$ are all Cartier divisors.
\item When $\scS \in \scrS_{C, 2}$, there is no irreducible component of $\scX_k$ whose strict transform intersects with $\scU_{\scS, \bar{l}}$.
\end{itemize}
Therefore, the further repetitive blow-ups do not affect $\scU_{\scS, \bar{l}}$ anymore in either case.
We stop the process at this point, and set $\scrU_\scS:=\scU_{\scS, \bar{l}} \subset \scrX, z_\scS^{i, j}:=z_{\bar{l}}^{i, j}, f_{\scS, i}:=f_{\bar{l}, i}$.
When $\scS \in \scrS_{C, 1}$, we write the irreducible component of $\scX_k$ corresponding to the cone generated by $\lc n_{i, j_i(l-1)} \relmid 0 \leq i \leq r \rc$ as $D_{\scS, l}$ also for every integer $l$ such that $\bar{l} < l \leq d+1(=\bar{p}+1)$.
\end{enumerate}
\end{definition-lemma}
\begin{proof}
First, we show the claims in (1). 
We have already seen them for $l=1$.
Assuming that the claims hold until we finish the $l$-th blow-up, we show that they hold also for $l+1$.
Let $z^{i, j_{i} (l)}_{l+1}$ $\lb i \in I'(l) \rb$ be the coordinates of the target $\bA^{|I'(l)|}$ of \eqref{eq:mor5}.
The scheme $\scU_{\scS, l+1}$ is the closed subscheme of 
\begin{align}
\Spec \left. \lb k \ld z^{i, j}_l : 0 \leq i \leq r, 0 \leq j \leq k_i \rd \otimes k \ld z^{i, j_{i} (l)}_{l+1} : i \in I'(l) \rd \rb \middle/ \lb z^{i, j_{i} (l)}_l=z^{i, j_{i} (l)}_{l+1} z_l^{i(l+1), j_{i(l+1)} (l)}, i \in I'(l) \rb \right.,
\end{align}
which is defined by
\begin{align}
\left\{
\begin{array}{ll}
f_{l, i} \cdot \prod_{j=j_0(l)+1}^{k_0} z^{0,j}_l -z^{i, j_i(l)}_{l+1} \cdot \prod_{j=j_i(l)+1}^{k_i} z^{i, j}_l & l < l_i \\
f_{l, i} \cdot \prod_{j=j_{0} (l)+1}^{k_0} z^{0,j}_l \cdot z_l^{0, j_0(l)} \cdot \prod_{l' \geq l_i+1}^{l} z_l^{i(l'), j_{i(l')} (l'-1)} -\prod_{j=j_{i} (l)}^{k_i} z^{i, j}_l & l \geq l_i \\
\end{array}
\right.
\quad (1 \leq i \leq r)
\end{align}
when $i (l+1)=0$, and by
\begin{align}
\left\{
\begin{array}{ll}
f_{l, i} \cdot z^{0, j_0(l)}_{l+1} \cdot \prod_{j=j_0(l)+1}^{k_0} z^{0,j}_l - z^{i, j_i(l)}_{l+1} \cdot \prod_{j=j_i(l)+1}^{k_i} z^{i, j}_l & l < l_i, i \neq i(l+1) \\
f_{l, i} \cdot z^{0, j_0(l)}_{l+1} \cdot \prod_{j=j_0(l)+1}^{k_0} z^{0,j}_l - \prod_{j=j_i(l)+1}^{k_i} z^{i, j}_l & l < l_i, i = i(l+1) \\
f_{l, i} \cdot z^{0, j_0(l)}_{l+1} \cdot \prod_{j=j_0(l)+1}^{k_0} z^{0,j}_l \cdot \prod_{l' \geq l_i+1}^{l+1} z_l^{i(l'), j_{i(l')} (l'-1)}-\prod_{j=j_{i} (l)}^{k_i} z^{i, j}_l & l \geq l_i \\
\end{array}
\right.
\quad (1 \leq i \leq r)
\end{align}
when $i(l+1)\neq 0$.
One can check that this is isomorphic to \eqref{eq:spec2} with $l$ replaced with $l+1$ by the map determined by
\begin{align}\label{eq:ll1}
z^{i, j_{i} (l)}_{l+1} \mapsto z^{i, j_{i} (l)}_{l+1} \lb i \in I'(l) \rb, \quad
z_{l}^{i, j} \mapsto
\left\{
\begin{array}{ll}
z_{l+1}^{i, j} \cdot z_{l+1}^{i(l+1), j_{i(l+1)} (l)}& i \in I'(l), j=j_{i}(l), \\
z_{l+1}^{i, j} & \mathrm{otherwise},\\
\end{array}
\right.
\end{align}
and the morphism $\pi_{l+1} \colon \scU_{\scS, l+1} \to \scU_{\scS, l}$ is given by \eqref{eq:blow-up1} with $l$ replaced with $l+1$.
We obtained the claim (a) of (1).

We check the claim (b) of (1).
Let $D (>_{\scX} D_{\scS, l+1})$ be an irreducible component of $\scX_k$.
Suppose that the strict transform of $D$ intersects with $\scU_{\scS, l+1}$.
Then the strict transform of $D$ for the $l$-th blow-up should also intersect with $\scU_{\scS, l}$.
By the induction hypothesis, the primitive generators of the cone corresponding to $D$ are $\lc n_{i, j_i} \relmid 0 \leq i \leq r \rc$ satisfying $j_i(l) \leq j_i \leq k_i$ for all $i \in \lc 0, \cdots, r \rc$.
If the strict transform of $D$ for the $l$-th blow-up is contained in $\lb z_l^{i(l+1), j_{i(l+1)} (l)}=0 \rb$, i.e., $j_i$ with $i=i(l+1)$ equals $j_{i(l+1)}(l)$, then the strict transform of $D$ for the $(l+1)$-th blow-up does not intersect with $\scU_{\scS, l+1}$.
If not, i.e., $j_i$ with $i=i(l+1)$ is greater than or equal to $j_{i(l+1)}(l)+1$, then it intersects with $\scU_{\scS, l+1}$.
Since
\begin{align}
j_i(l+1)
=\left\{
\begin{array}{ll}
j_i(l)+1 & i=i(l+1), \\
j_i(l) & i \neq i(l+1),
\end{array}
\right.
\end{align}
a necessary and sufficient condition for the strict transform of $D$ to intersect with $\scU_{\scS, l+1}$ is to have $j_i \geq j_i(l+1)$ for all $i \in \lc 0, \cdots, r \rc$.

Suppose that the strict transform of $D (>_{\scX} D_{\scS, l+1})$ intersects with $\scU_{\scS, l+1}$.
The exceptional divisor of the $(l+1)$-th blow-up is the Cartier divisor defined by $z_{l+1}^{i(l+1), j_{i(l+1)}(l)}$.
From this, \eqref{eq:blow-up1} with $l$ replaced with $l+1$, and the induction hypothesis, one can see that the strict transform of $D$ is defined by
\begin{align}\label{eq:transform2}
z^{i, j_i}_{l+1}=0\quad \lb i \in I(l) \rb.
\end{align}
For an element $i \nin I(l+1)$, the $i$-th equation defining $\scU_{\scS, l+1}$ is
\begin{align}
f_{l+1, i} \cdot \prod_{j=j_0(l)}^{k_0} z^{0,j}_{l+1} \cdot \prod_{l' \geq l_i+1}^{l+1} z_{l+1}^{i(l'), j_{i(l')} (l'-1)}=z^{i, k_i}_{l+1}.
\end{align}
Hence, $z^{0,j_0}_{l+1}=0$ implies $z^{i, k_i}_{l+1}=0$.
Therefore, one can remove the equation for $i$ in \eqref{eq:transform2}, and the claim of \eqref{eq:transform1} for $l+1$ also holds.
We obtained the claim (b) of (1).

Lastly, we check the claim in (2).
When $\scS \in \scrS_{C, 1}$, it follows from \eqref{eq:transform1} with $l=\bar{l}$ and $I(\bar{l})=\lc 0 \rc$.
Suppose $\scS \in \scrS_{C, 2}$.
Since $j_0(\bar{l}=\bar{p}+1)=k_0+1$, there is no irreducible component of $\scX_k$ such that the primitive generators of the corresponding cone are $\lc n_{i, j_i} \relmid 0 \leq i \leq r \rc$ satisfying $j_0 \geq j_0(\bar{l})$.
The claims follows from this and (b).
\end{proof}

\begin{lemma}\label{lm:us}
One has
\begin{align}\label{eq:Us}
\scrU_\scS \cong \Spec \left. k \ld z^{i, j}_\scS : 0 \leq i \leq r, 0 \leq j \leq k_i \rd  \middle/ \lb g_{\scS, i}, 1 \leq i \leq r \rb \right.,
\end{align}
where $g_{\scS, i}$ $\lb 1 \leq i \leq r \rb$ is defined by
\begin{align}
g_{\scS, i}:=
f_{\scS, i} \cdot \prod_{l \geq l_i+1}^{\bar{p}+1} z_\scS^{i(l), j_{i(l)}(l-1)}-\prod_{j=p_i}^{k_i} z^{i, j}_\scS.
\end{align}
(When $\bar{p}+1 < l_i+1$, the above product with respect to $l$ is the empty product.)
\end{lemma}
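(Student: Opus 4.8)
The plan is to obtain \eqref{eq:Us} by specializing the description in \pref{dl:u} to $l=\bar{l}$. By construction one has $\scrU_\scS=\scU_{\scS,\bar{l}}$, $z_\scS^{i,j}=z_{\bar{l}}^{i,j}$ and $f_{\scS,i}=f_{\bar{l},i}$, so the presentation \eqref{eq:Us} is nothing but the isomorphism \eqref{eq:spec2} at $l=\bar{l}$, which holds by \pref{dl:u}(1)(a), and it remains only to check that the polynomial $g_{\bar{l},i}$ of \eqref{eq:gli}, rewritten in the variables $z_\scS^{i,j}$, coincides with the displayed $g_{\scS,i}$. So the whole statement reduces to a bookkeeping identity between \eqref{eq:gli} at $l=\bar{l}$ and the claimed formula.

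First I would dispose of the case $\scS\in\scrS_{C, 2}$, where $\bar{l}=\bar{p}+1$. By \eqref{eq:ip1} one has $j_{0}(\bar{p}+1)=k_0+1$, so the factor $\prod_{j=j_{0}(\bar{l})}^{k_0}z^{0,j}_{\bar{l}}$ in \eqref{eq:gli} is the empty product, and $j_{i}(\bar{p}+1)=p_i$ for $1\leq i\leq r$, so $\prod_{j=j_{i}(\bar{l})}^{k_i}z^{i,j}_{\bar{l}}=\prod_{j=p_i}^{k_i}z^{i,j}_\scS$. The remaining factor $\prod_{l'\geq l_i+1}^{\bar{l}}z_{\bar{l}}^{i(l'),j_{i(l')}(l'-1)}$ is literally $\prod_{l\geq l_i+1}^{\bar{p}+1}z_\scS^{i(l),j_{i(l)}(l-1)}$. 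Hence \eqref{eq:gli} becomes $g_{\scS,i}$ verbatim; in particular, when $p_i\neq k_i$ one has $l_i=\infty$ and the relevant product is empty, matching the parenthetical remark in the statement.

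The substantive case is $\scS\in\scrS_{C, 1}$, where $p_i=k_i$ for all $i$ and $\bar{p}=d$. Here I would first record that $\bar{l}\leq d$ (at the last step every coordinate of the walk reaches its maximum, so $I(d)=\lc 0\rc$), that $j_{i}(\bar{l})=k_i=p_i$ for $1\leq i\leq r$ by the definition of $\bar{l}$ (whence $\prod_{j=j_{i}(\bar{l})}^{k_i}z^{i,j}_{\bar{l}}=z^{i,k_i}_{\bar{l}}=\prod_{j=p_i}^{k_i}z^{i,j}_\scS$), and that consequently $l_i\leq\bar{l}$. The key combinatorial point is that every move of the extended walk indexed by $l$ with $\bar{l}<l\leq\bar{p}+1=d+1$ is in the $0$-th direction: for $\bar{l}<l\leq d$ the coordinates $j_i$ with $i\geq1$ are already saturated at $p_i$ and cannot increase, and for $l=d+1$ this is forced by \eqref{eq:ip1}; moreover $j_0$ runs through $j_0(\bar{l}),\,j_0(\bar{l})+1,\dots,j_0(d)=k_0$ along these steps. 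Therefore $\prod_{l=\bar{l}+1}^{\bar{p}+1}z_\scS^{i(l),j_{i(l)}(l-1)}=\prod_{j=j_{0}(\bar{l})}^{k_0}z^{0,j}_\scS$, and splitting the product $\prod_{l\geq l_i+1}^{\bar{p}+1}$ at $\bar{l}$ identifies $f_{\scS,i}\cdot\prod_{l\geq l_i+1}^{\bar{p}+1}z_\scS^{i(l),j_{i(l)}(l-1)}$ with the product of $f_{\bar{l},i}$, $\prod_{j=j_{0}(\bar{l})}^{k_0}z^{0,j}_{\bar{l}}$ and $\prod_{l'\geq l_i+1}^{\bar{l}}z_{\bar{l}}^{i(l'),j_{i(l')}(l'-1)}$, i.e.\ with the first term of \eqref{eq:gli} at $l=\bar{l}$. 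Combining with the identification of the monomial $\prod_{j=p_i}^{k_i}z^{i,j}_\scS$ above yields $g_{\bar{l},i}=g_{\scS,i}$, as required.

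The only real obstacle is the bookkeeping: one must handle the various empty-product conventions ($l_i=\infty$ when $p_i\neq k_i$, $l_i=\bar{l}$, $j_0(\bar{l})=k_0$, and $\bar{l}=\bar{p}+1$ versus $\bar{l}<\bar{p}+1$) carefully, and establish once and for all the combinatorial fact that past step $\bar{l}$ the walk proceeds purely in the $0$-th direction; once that is in hand, the proof is a mechanical rewriting of \eqref{eq:gli}.
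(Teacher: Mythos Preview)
Your proof is correct and follows essentially the same route as the paper's own argument: both reduce the statement to \pref{dl:u}(1)(a) at $l=\bar{l}$ by checking $j_i(\bar{l})=p_i$ and the product identity $\prod_{j=j_{0}(\bar{l})}^{k_0} z^{0,j}_\scS \cdot \prod_{l' \geq l_i+1}^{\bar{l}} z_\scS^{i(l'), j_{i(l')}(l'-1)} = \prod_{l \geq l_i+1}^{\bar{p}+1} z_\scS^{i(l), j_{i(l)}(l-1)}$. The paper simply asserts these two facts and concludes; you supply the full justification, in particular the observation that past step $\bar{l}$ the extended walk moves only in the $0$-th direction, which is exactly what makes the product identity transparent.
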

\begin{proof}
In either case $\scS \in \scrS_{C, 1}$ or $\scS \in \scrS_{C, 2}$, we have $j_i ( \bar{l})=p_i$ and
\begin{align}
\prod_{j=j_{0} (\bar{l})}^{k_0} z^{0,j}_\scS \cdot \prod_{l' \geq l_i+1}^{\bar{l}} z_\scS^{i(l'), j_{i(l')} (l'-1)} 
=
\prod_{l \geq l_i+1}^{\bar{p}+1} z_\scS^{i(l), j_{i(l)}(l-1)}.
\end{align}
The claim follows from these and \pref{dl:u}(1)(a).
\end{proof}

\begin{lemma}\label{lm:cover}
For any $l \in \lc 1, \cdots, \bar{l} \rc$, the family of open subschemes $\lc \scU_{\scS, l} \relmid \scS \in \scrS_{C, 1} \cup \scrS_{C, 2} \rc$ forms a covering of the scheme obtained after $\scX \cap U_C$ is blown up along $l$ components.
\end{lemma}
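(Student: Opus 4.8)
The plan is to prove \pref{lm:cover} by induction on $l$, combining a base case that unwinds the first blow-up with an inductive step that compares how each chart $\scU_{\scS, l}$ gets covered after the next blow-up. The key bookkeeping device is the correspondence between shuffles and lattice walks recalled in \pref{sc:shuffle}: after $l$ blow-ups, each chart $\scU_{\scS, l}$ in the covering should correspond to a \emph{partial} walk of length $l$ on the $k_0 \times \cdots \times k_r$ grid, and the charts obtained by further blowing up $\scU_{\scS, l}$ correspond to the one-step extensions of that partial walk. Summing over all shuffles (equivalently, all complete walks, with the convention in \eqref{eq:ip1} for those in $\scrS_{C,1}$ and the truncated ones in $\scrS_{C,2}$) then exactly exhausts every chart produced by the iterated blow-up.

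First I would treat $l=1$. By \pref{dl:u}(1)(c) and the discussion preceding it, the blow-up of $\scX \cap U_C = \scU_{\scS, 0}$ along $D_{\scS, 1}$ is covered by the $r+1$ affine charts given by the morphisms \eqref{eq:mor2} for $i \in \lc 0, \dots, r\rc$, and $\scU_{\scS, 1}$ is by definition the chart indexed by $i = i_\scS(1)$. Here $D_{\scS, 1}$ depends only on the cone $C$, not on $\scS$ (it is the toric stratum through the origin of the chart). The set $\lc i_\scS(1) : \scS \in \scrS_{C,1} \cup \scrS_{C,2} \rc$ is all of $\lc 0, \dots, r\rc$ since any value of $i_\scS(1)$ is realized by some shuffle (take the first step of the walk in coordinate $i$; this is possible for $i=0$ always and for $i \geq 1$ whenever $k_i \geq 1$, and if $k_i = 0$ for some $i \geq 1$ the corresponding chart $\lb z^{i,0}_0 = 0\rb$ is empty in $\scU_{\scS, 0}$, which one checks from the defining equation $f_i \prod z^{0,j}_0 = \prod_{j=0}^{k_i} z^{i,j}_0 = z^{i,0}_0$, so it need not be covered). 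Hence $\lc \scU_{\scS, 1} \rc_\scS$ covers the first blow-up.

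For the inductive step, assume $\lc \scU_{\scS, l} : \scS \rc$ covers the scheme $\scX_l$ obtained after $l$ blow-ups, and consider the $(l+1)$-st blow-up, which is along (the strict transform of) $D_{l+1}$. The point is that on a fixed chart $\scU_{\scS, l}$, either the $(l+1)$-st blow-up does not affect it — this happens precisely when $l+1 > \bar l$, i.e. when the strict transform of $D_{l+1}$ misses $\scU_{\scS, l}$, in which case $\scU_{\scS, l+1} = \scU_{\scS, l}$ by \pref{dl:u}(2) and nothing changes — or it does, in which case by \pref{dl:u}(1)(b),(c) the relevant center inside $\scU_{\scS, l}$ is exactly $D_{\scS, l+1}$ (the minimal component $>_\scX D_{\scS,l}$ still meeting the chart), the blow-up of $\scU_{\scS, l}$ along it is covered by the $|I(l)|$ charts from \eqref{eq:mor4}-type morphisms, and the one indexed by $i_\scS(l+1)$ is $\scU_{\scS, l+1}$. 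I then need two matching facts: (i) when the $(l+1)$-st blow-up of $\scX$ restricts to a blow-up of $\scU_{\scS, l}$, that restricted center really is the blow-up center $D_{\scS, l+1}$ — i.e. $D_{l+1}$ and $D_{\scS, l+1}$ induce the same center on this chart; and (ii) as $\scS$ ranges over all shuffles \emph{extending the partial walk of} $\scS$ (i.e. agreeing with it through step $l$), the values $i_\scS(l+1)$ range over all of $I(l)$ — again realized by choosing the $(l+1)$-st step of the walk, and the indices $i \notin I(l)$ (those with $j_i(l) = k_i$) are exactly the ones whose chart $\lb z^{i, j_i(l)}_l = 0\rb$ is empty in $\scU_{\scS, l}$ by the defining equation $g_{l,i}$, so they are legitimately omitted. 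Together with the induction hypothesis, (i) and (ii) give that $\lc \scU_{\scS, l+1} : \scS \rc$ covers $\scX_{l+1}$.

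The main obstacle is fact (i): one must verify that performing the global blow-ups of $\scX$ in the prescribed order $\leq_\scX$ and then restricting to the open set $\scU_{\scS, l}$ agrees with performing the local blow-ups recorded in \pref{dl:u}, i.e. that the blow-ups of components $D$ with $D_{\scS, l} <_\scX D <_\scX D_{\scS, l+1}$ genuinely do nothing to $\scU_{\scS, l}$ (their strict transforms miss the chart, which is the content of the "if and only if" in \pref{dl:u}(1)(b)) and that blowing up along a strict transform is local on the base so it commutes with the restriction to the open subscheme $\scU_{\scS, l}$. This is essentially already encoded in the construction in \pref{dl:u}, so the step is bookkeeping rather than new input, but it is where the combinatorics of the order $\leq_\scX$, the shuffle/walk dictionary, and the chart-indexing all have to be checked to line up simultaneously.
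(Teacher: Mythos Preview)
Your approach is the same as the paper's: induction on $l$, with the inductive step reducing to the claim that for each chart $\scU_{\scS,l}$ and each direction $i\in I(l)$, some shuffle $\scS'$ agreeing with $\scS$ through step $l$ has $i_{\scS'}(l+1)=i$. The paper states this reduction as \pref{cd:shuffle} and then does a short case analysis. Your write-up, however, leaves a genuine gap in exactly the case the paper isolates.

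The gap is in your fact (ii). You assert that the values $i_{\scS'}(l+1)$ range over all of $I(l)$ ``by choosing the $(l+1)$-st step of the walk''. This works for $i\in\lc 1,\dots,r\rc$ (since $i\in I(l)$ means $j_i(l)<k_i$, so a step in direction $i$ is available and can be completed to a shuffle in $\scrS_{C,1}$), and it works for $i=0$ when $j_0(l)<k_0$. But $0\in I(l)$ always, and when $j_0(l)=k_0$ all real steps in direction $0$ have been used; no shuffle in $\scrS_{C,1}$ extending the partial walk has $i_{\scS'}(l+1)=0$. This is precisely where $\scrS_{C,2}$ enters: you must take $\scS'$ to be the partial walk itself, viewed as a shuffle of degree $(k_0,j_1(l),\dots,j_r(l))$, so that $\bar p'=l$ and the convention \eqref{eq:ip1} gives $i_{\scS'}(\bar p'+1)=i_{\scS'}(l+1)=0$. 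One must also check that this $\scS'$ lies in $\scrS_{C,2}$, i.e.\ that some $j_i(l)<k_i$; the paper observes that otherwise $l=\sum_i k_i$, forcing $I(l)=\lc 0\rc$ already at step $l$ and hence $l\geq\bar l$, contradicting $l+1\leq\bar l$. You allude to $\scrS_{C,2}$ in your opening paragraph but never invoke it in the inductive step, so as written your argument does not cover the $0$-th chart once the direction-$0$ budget is exhausted.

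A smaller point: in the base case you claim that when $k_i=0$ the chart indexed by $i$ is ``empty''. It is not. What is true is that the defining equation $f_i\prod_j z_0^{0,j}=z_0^{i,0}$ exhibits $z_0^{i,0}$ as a multiple of $z_0^{0,0}$, so the $i$-th chart is contained in the $0$-th chart (equivalently, the blow-up ideal is already generated by $\lc z_0^{i',0}:i'\in I(0)\rc$). The conclusion---that charts for $i\notin I(0)$ may be omitted---is correct, but the reason is redundancy, not emptiness. The same remark applies in the inductive step to $i\notin I(l)$.
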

\begin{proof}
We prove this by induction on $l$.
It is clear that the claim holds for $l=1$.
We suppose that the claim holds for $l-1$, and show the claim for $l (\leq \bar{l})$. 
From \pref{dl:u}, we can see that it suffices to show that for any $\scS \in \scrS_{C, 1} \cup \scrS_{C, 2}$ and $i \in I(l-1)$, there exists a shuffle $\scS' \in \scrS_{C, 1} \cup \scrS_{C, 2}$ satisfying the following:
\begin{condition}\label{cd:shuffle}
The following hold:
\begin{itemize}
\item $i_\scS (l')=i_{\scS'} (l')$ for all $l' \leq l-1$, and 
\item $i_{\scS'}(l)=i$.
\end{itemize}
\end{condition}
If $i \in \lc 1, \cdots, r \rc$, then $j_{\scS, i}(l-1)<k_i$, and it is obvious that there exists $\scS' \in \scrS_{C, 1}$ satisfying \pref{cd:shuffle}.
We consider the case $i=0$.
If $j_{\scS, 0}(l-1)<k_0$, then it is again obvious that there exists such $\scS' \in \scrS_{C, 1}$.
Suppose $j_{\scS, 0} (l-1)=k_0$.
If $j_{\scS, i}(l-1)=k_i$ also for all $i \in \lc 1, \cdots, r \rc$, then $l-1=\sum_{i=0}^r k_i \geq \bar{l}$.
This contradicts $l \leq \bar{l}$.
Hence, there exists some $i_0 \in \lc 1, \cdots, r \rc$ such that $j_{\scS,  i_0}(l-1)<k_{i_0}$.
From how we set $i_\scS(\bar{p}+1)$ in \eqref{eq:ip1} and the definition of $\scrS_{C, 2}$, we can see that there exists $\scS' \in \scrS_{C, 2}$ satisfying \pref{cd:shuffle}.
Thus we can conclude the claim.
\end{proof}

By \pref{lm:cover}, we can conclude the following:

\begin{corollary}\label{cr:cover}
The collection of open subschemes $\scrU_{\scS} \subset \scrX$ ($\scS \in \scrS_{C, 1} \cup \scrS_{C, 2}$ and $C \in \tilde{\Sigma}'$ is a relevant cone of maximal dimension) covers the special fiber $\scrX_k$ of $\scrX$.
\end{corollary}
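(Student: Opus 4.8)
The plan is to obtain the corollary as a bookkeeping consequence of \pref{lm:cover}, the only additional input being the fact recalled at the beginning of \pref{sc:local} that the special fiber $\scX_k$ of the original toric degeneration is covered by the affine charts $\scX \cap U_C$ as $C$ ranges over the relevant cones of $\tilde{\Sigma}'$ of maximal dimension $d+r+1$.

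First I would record that the morphism $\pi \colon \scrX \to \scX$ is, by construction, a composition of blow-ups, each centered along an irreducible component of the special fiber; in particular, since $\scX_k = \lc t=0 \rc$ and $\scrX_k = \lc \pi^\ast t = 0 \rc$, the underlying set of $\scrX_k$ is $\pi^{-1}\lb |\scX_k| \rb$. As blow-ups commute with restriction to open subschemes, for each relevant maximal cone $C$ the restriction of $\pi$ over $\scX \cap U_C$ is the corresponding iterated blow-up of $\scX \cap U_C$: the blow-ups whose centers have strict transforms disjoint from this chart restrict there to isomorphisms, and by \pref{dl:u}(1)(b) together with \pref{dl:u}(2) only finitely many of the global blow-ups actually modify $\scX \cap U_C$, the tower stabilizing after at most $d+1$ steps. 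Combining this with $|\scX_k| \subset \bigcup_C \lb \scX \cap U_C \rb$ gives $\scrX_k \subset \bigcup_C \pi^{-1}\lb \scX \cap U_C \rb$.

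Then I would fix a relevant maximal cone $C$ and apply \pref{lm:cover} with $l$ taken equal to the length of the (finite) blow-up tower over $\scX \cap U_C$: at that stage, by \pref{dl:u}(2), each chart $\scU_{\scS, l}$ has already stabilized to $\scrU_\scS$ and no further blow-up affects it, so $\pi^{-1}\lb \scX \cap U_C \rb$ is precisely the scheme appearing in \pref{lm:cover}; the latter then says it is covered by $\lc \scrU_\scS \relmid \scS \in \scrS_{C, 1} \cup \scrS_{C, 2} \rc$. Taking the union over all relevant maximal cones $C$ yields the asserted covering of $\scrX_k$. There is no substantial obstacle here; the one point I would take care to state cleanly is exactly this passage from the local picture of \pref{lm:cover} — iterated blow-ups of a single affine chart — to the global model $\scrX$, namely that the global blow-ups not recorded in the shuffle combinatorics restrict to isomorphisms over $\scX \cap U_C$, which is precisely the content of \pref{dl:u}(1)(b) and \pref{dl:u}(2).
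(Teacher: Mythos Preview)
Your proposal is correct and takes essentially the same approach as the paper: the paper states the corollary as an immediate consequence of \pref{lm:cover}, and you have simply spelled out the globalization step (that $\scX_k$ is covered by the charts $\scX \cap U_C$, that blow-ups commute with restriction to opens, and that the global blow-ups whose centers miss a given chart act trivially there). There is nothing to correct.
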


\begin{lemma}
Let $l \in \lc 1, \cdots, \bar{l} \rc$ and $\scS \in \scrS_{C, 1}$.
For $(i, j)$ such that $0 \leq i \leq r$ and $0 \leq j \leq k_i$, we have the following equalities as elements of $K \lb \scU_{\scS, l} \rb \cong K \lb \scU_{\scS, 0} \rb$:
\begin{enumerate}
\item For $(i, j) \neq (1, k_1), \cdots, (r, k_r), (i(1), 0)$, we have
\begin{align}\label{eq:xijl1}
z_l^{i, j}
=\left\{
\begin{array}{ll}
z_0^{i, j} & j > j_i(l), \\
\prod_{j'=0}^j z_0^{i, j'} \cdot \lb \prod_{j'=0}^{j_{i(l)}(l)-1} z_0^{i(l), j'} \rb^{-1} & j = j_i(l), \\
\prod_{j'=0}^j z_0^{i, j'} \cdot \lb \prod_{j'=0}^{j_{i(l_{i, j})}(l_{i, j})-1} z_0^{i(l_{i, j}), j'} \rb^{-1} & j < j_i(l),
\end{array}
\right.
\end{align}
where $l_{i, j}:=\max \lc l' \relmid 1 \leq l' \leq d, j_i(l')=j \rc$.
\item For $(i, j) = (1, k_1), \cdots, (r, k_r), (i(1), 0)$, we have
\begin{align}
z_l^{i, j}=z_0^{i, j}.
\end{align}
\end{enumerate}
\end{lemma}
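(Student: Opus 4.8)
The plan is to prove both equalities simultaneously by induction on $l$, feeding the explicit description \eqref{eq:blow-up1} of each blow-up $\pi_l \colon \scU_{\scS, l} \to \scU_{\scS, l-1}$ into the inductive hypothesis. All the identities are understood in the common function field $K(\scU_{\scS, l}) \cong K(\scU_{\scS, 0})$, in which every coordinate $z_{l'}^{i,j}$ is a nonzero element, so the inverse products make sense. For the base case $l = 1$ one has $j_i(1) = 1$ for $i = i(1)$ and $j_i(1) = 0$ otherwise (recall $\scS \in \scrS_{C,1}$), and \eqref{eq:us1-isom} (equivalently \eqref{eq:x01}) gives directly $z_1^{i,j} = z_0^{i,j}$ whenever $i = i(1)$ or $j \geq 1$, and $z_1^{i,0} = z_0^{i,0} \cdot (z_0^{i(1),0})^{-1}$ for $i \neq i(1)$, which matches the claimed formulas.

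For the inductive step I would first record the following consequence of \eqref{eq:blow-up1}: the morphism $\pi_l$ fixes every coordinate $z^{i,j}$ except those with $i \in I'(l-1)$ and $j = j_i(l-1)$, which it multiplies by the new exceptional coordinate $e_l := z_l^{i(l), j_{i(l)}(l-1)}$ (note $j_i(l-1) = j_i(l)$ for $i \neq i(l)$, since $l \notin S_i$). Inverting, $z_l^{i,j} = z_{l-1}^{i,j}$ unless $i \in I'(l-1)$ and $j = j_i(l)$, in which case $z_l^{i,j} = z_{l-1}^{i,j} \cdot e_l^{-1}$; moreover $e_l = z_{l-1}^{i(l), j_{i(l)}(l-1)}$ because $i(l) \notin I'(l-1)$. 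I would then split into the exhaustive cases (a) $i = i(l)$; (b) $i \in I'(l-1)$; (c) $i \notin I(l-1)$ (which forces $i \geq 1$ and $j_i(l) = k_i$), and within each treat the sub-cases $j > j_i(l)$, $j = j_i(l)$, $j < j_i(l)$.

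In cases (a), (c), and case (b) with $j \neq j_i(l)$, one has $z_l^{i,j} = z_{l-1}^{i,j}$, and the statement reduces to the inductive hypothesis after checking a few elementary combinatorial facts about the walk attached to $\scS$: that $l_{i,j} = \max\{l' \leq d : j_i(l') = j\}$ does not change, that $j_i(\cdot)$ is constant in $l$ when $i \neq i(l)$, and that when $i = i(l)$ the branch of the formula for a fixed $j$ changes consistently as $j_{i(l)}(\cdot)$ increases past $j$ (concretely $l_{i,j}=l-1$ when $j=j_i(l-1)$). In case (b) with $j = j_i(l)$ one substitutes the ``$j = j_i(l-1)$'' branch of the inductive hypothesis into both $z_{l-1}^{i,j}$ and $e_l = z_{l-1}^{i(l),j_{i(l)}(l-1)}$, notes that the common denominator $\prod_{j'=0}^{j_{i(l-1)}(l-1)-1} z_0^{i(l-1),j'}$ cancels, and simplifies to $\prod_{j'=0}^{j} z_0^{i,j'} \cdot \bigl(\prod_{j'=0}^{j_{i(l)}(l-1)} z_0^{i(l),j'}\bigr)^{-1}$, which is the asserted ``$j = j_i(l)$'' expression since $j_{i(l)}(l)-1 = j_{i(l)}(l-1)$. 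For part (2) I would observe that $(i(1),0)$ and the $(i,k_i)$ with $1 \leq i \leq r$ are never of the form $(i, j_i(l))$ with $i \in I'(l-1)$ — for $(i(1),0)$ because $j_{i(1)}(l) \geq 1$ for $l \geq 1$, and for $(i,k_i)$ because $i \in I(l-1)$ forces $j_i(l-1) < k_i$ while $i(l) \notin I'(l-1)$ — so these coordinates stay fixed under every $\pi_l$ and remain equal to $z_0^{i,j}$.

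I expect the main obstacle to be case (b) with $j = j_i(l)$ together with the bookkeeping of which branch of the inductive formula is active at which stage: the subtle point is that $l_{i,j}$ is a maximum over all $l' \leq d$ rather than over $l' \leq l$, so one has to confirm that the ``$j < j_i(l)$'' branch at step $l$ is genuinely compatible with whichever branch applied at step $l-1$, and that the telescoping numerator/denominator products collapse to the single product displayed in \eqref{eq:xijl1}. All the remaining steps are routine manipulation of the shuffle combinatorics recalled in \pref{sc:shuffle}.
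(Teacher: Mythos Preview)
Your proposal is correct and follows essentially the same approach as the paper: both argue by induction on $l$, invert \eqref{eq:blow-up1} to get $z_l^{i,j} = z_{l-1}^{i,j}$ unless $i \in I'(l-1)$ and $j = j_i(l-1)$, and then run through the same case analysis (the paper splits first on the trichotomy $j \gtreqless j_i(l)$ and then on whether $i = i(l)$, whereas you split first on $i$ and then on $j$, but the content of each case is identical). Your identification of the key cancellation in case (b) with $j = j_i(l)$ and the observation $l_{i,j} = l-1$ when $i = i(l)$ and $j = j_i(l-1)$ match the paper exactly.
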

\begin{proof}
We can see from \eqref{eq:blow-up1} that we have
\begin{align}\label{eq:l-l+1}
z_{l+1}^{i, j}
=\left\{
\begin{array}{ll}
z_l^{i, j} \cdot \lb z_l^{i(l+1), j_{i(l+1)}(l)} \rb^{-1} & i \in I'(l), j=j_i(l) \\
z_l^{i, j} & \mathrm{otherwise}
\end{array}
\right.
\end{align}
as elements of $K \lb \scU_{\scS, l+1} \rb \cong K \lb \scU_{\scS, l} \rb$.
Using this, we prove the claim by induction on $l$.

First, we check (2).
For $(i, k_i)$ $(1 \leq i \leq r)$, if we have $k_i=j_i(l)$, then $i \nin I'(l)$.
Hence, by \eqref{eq:l-l+1}, we have $z_{l+1}^{i, j}=z_l^{i, j}$ for all $l \in \lc 1, \cdots, \bar{l} \rc$.
Thus we obtain (2) for $(i, k_i)$.
For $(i, j) =(i(1), 0)$, we have $j_{i(1)}(l) \geq 1>0=j$ for all $l \in \lc 1, \cdots, \bar{l} \rc$.
Hence, by \eqref{eq:l-l+1}, we have $z_{l+1}^{i, j}=z_l^{i, j}$ for all $l \in \lc 1, \cdots, \bar{l} \rc$.
Thus we obtain (2) also for $(i(1), 0)$.

Next, we will prove (1).
Let $(i, j)$ be an element which is none of $(1, k_1), \cdots, (r, k_r)$, and $(i(1), 0)$.
We consider the case $l=1$.
If $j > j_i(1)$, then we have $z_{1}^{i, j}=z_{0}^{i, j}$ by \eqref{eq:l-l+1}, which coincides with \eqref{eq:xijl1} for $j > j_i(l)$.
In the case of $j < j_i(1)$, we must have $j_i(1)=1$ and $(i, j)=(i(1),0)$.
Therefore, we do not consider this case.
Suppose $j=j_i(1)$.
By \eqref{eq:l-l+1}, we have
\begin{align}
z_{1}^{i, j_i(1)}
=\left\{
\begin{array}{ll}
z_0^{i, j_i(1)} \cdot \lb z_0^{i(1), j_{i(1)}(0)} \rb^{-1}= z_0^{i, 0} \cdot \lb z_0^{i(1), 0} \rb^{-1} & i \neq i(1), \\
z_0^{i, j_i(1)}=z_0^{i, 1} & i = i(1).
\end{array}
\right.
\end{align}
This coincides with \eqref{eq:xijl1} for $j = j_i(l)$.
Thus we obtain (1) for $l=1$.

We suppose that (1) holds for $l$, and prove that it holds also for $l+1$.
First, we consider the case where $j > j_i(l+1)$.
We have
\begin{align}
z_{l+1}^{i, j}=z_{l}^{i, j}=z_{0}^{i, j}
\end{align}
by $j > j_i(l+1) \geq j_i(l)$, \eqref{eq:l-l+1}, and the induction hypothesis.
This coincides with \eqref{eq:xijl1} with $l$ replaced with $l+1$.

Next, we consider the case where $j = j_i(l+1)$.
If $i(l+1)=i$, then $i \nin I'(l)$ and $j >j_i(l)$.
Hence, we have 
\begin{align}\label{eq:xijl2}
z_{l+1}^{i, j}=z_{l}^{i, j}=z_{0}^{i, j}
\end{align}
by \eqref{eq:l-l+1} and the induction hypothesis.
On the other hand, \eqref{eq:xijl1} for $j=j_i(l)$ with $l$ replaced with $l+1$ is 
\begin{align}
z_{l+1}^{i, j}
=\prod_{j'=0}^j z_0^{i, j'} \cdot \lb \prod_{j'=0}^{j_{i(l+1)}(l+1)-1} z_0^{i(l+1), j'} \rb^{-1}
=\prod_{j'=0}^j z_0^{i, j'} \cdot \lb \prod_{j'=0}^{j-1} z_0^{i, j'} \rb^{-1}
=z_{0}^{i, j}.
\end{align}
This coincides with \eqref{eq:xijl2}.
If $i(l+1) \neq i$, then $j = j_i(l+1)=j_i(l)$ and $i \in I'(l)$.
Hence, by \eqref{eq:l-l+1} and the induction hypothesis, we have 
\begin{align}
z_{l+1}^{i, j}&=z_l^{i, j} \cdot \lb z_l^{i(l+1), j_{i(l+1)}(l)} \rb^{-1}\\
&=\prod_{j'=0}^j z_0^{i, j'} \cdot \lb \prod_{j'=0}^{j_{i(l)}(l)-1} z_0^{i(l), j'} \rb^{-1} \cdot \lb \prod_{j'=0}^{j_{i(l+1)}(l)} z_0^{i(l+1), j'} \rb^{-1} \cdot \prod_{j'=0}^{j_{i(l)}(l)-1} z_0^{i(l), j'} \\
&=\prod_{j'=0}^j z_0^{i, j'} \cdot \lb \prod_{j'=0}^{j_{i(l+1)}(l)} z_0^{i(l+1), j'} \rb^{-1}.
\end{align}
This coincides with \eqref{eq:xijl1} for $j=j_i(l)$ with $l$ replaced with $l+1$, since $j_{i(l+1)}(l)=j_{i(l+1)}(l+1)-1$.

Lastly, we consider the case where $j < j_i(l+1)$.
In this case, we have $j \leq j_i(l)$.
If $j=j_i(l)$, then $i(l+1)=i$ and $i \nin I'(l)$.
Hence, we have 
\begin{align}
z_{l+1}^{i, j}=z_{l}^{i, j}
=\prod_{j'=0}^j z_0^{i, j'} \cdot \lb \prod_{j'=0}^{j_{i(l)}(l)-1} z_0^{i(l), j'} \rb^{-1}
\end{align}
by \eqref{eq:l-l+1} and the induction hypothesis.
This coincides with \eqref{eq:xijl1} for $j < j_i(l)$ with $l$ replaced with $l+1$, since $l_{i,j}
=\max \lc l' \relmid j_{i(l+1)}(l')=j_{i(l+1)}(l) \rc=l$.
If $j<j_i(l)$, then we have 
\begin{align}
z_{l+1}^{i, j}=z_{l}^{i, j}
=\prod_{j'=0}^j z_0^{i, j'} \cdot \lb \prod_{j'=0}^{j_{i(l_{i, j})}(l_{i, j})-1} z_0^{i(l_{i, j}), j'} \rb^{-1} 
\end{align}
by \eqref{eq:l-l+1} and the induction hypothesis.
This coincides with \eqref{eq:xijl1} for $j < j_i(l)$ with $l$ replaced with $l+1$.
Thus we obtain (1).
\end{proof}

The claim of the above lemma for $l=\bar{l}$ is as follows:

\begin{lemma}\label{lm:xsx0}
Let $\scS \in \scrS_{C, 1}$.
For $(i, j)$ such that $0 \leq i \leq r$ and $0 \leq j \leq k_i$, we have the following equalities as elements of $K \lb \scrU_{\scS} \rb \cong K \lb \scU_{\scS, 0} \rb$:
\begin{enumerate}
\item For $(i, j) \neq (1, k_1), \cdots, (r, k_r), (i(1), 0)$, we have
\begin{align}\label{eq:xijl3}
z_{\scS}^{i, j}
=\prod_{j'=0}^j z_0^{i, j'} \cdot \lb \prod_{j'=0}^{j_{i(l_{i, j})}(l_{i, j})-1} z_0^{i(l_{i, j}), j'} \rb^{-1},
\end{align}
where $l_{i, j}:=\max \lc l' \relmid 1 \leq l' \leq d, j_i(l')=j \rc$.
\item For $(i, j) = (1, k_1), \cdots, (r, k_r), (i(1), 0)$, we have
\begin{align}
z_{\scS}^{i, j}=z_0^{i, j}.
\end{align}
\end{enumerate}
\end{lemma}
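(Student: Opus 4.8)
The plan is to obtain \pref{lm:xsx0} as the specialization of the preceding lemma to $l = \bar{l}$, the only work being to collapse the three clauses of case~(1) of that lemma into the single formula \eqref{eq:xijl3}. Case~(2) of \pref{lm:xsx0} is verbatim case~(2) of the preceding lemma at $l = \bar{l}$, so nothing is needed there; and all the identities below take place in the function field $K \lb \scU_{\scS, 0} \rb$, in which the elements $z_0^{i, j}$ may be freely inverted.

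First I would record the combinatorics of the walk attached to a shuffle $\scS \in \scrS_{C, 1}$. Such a shuffle has degree $(k_0, \cdots, k_r)$, so its walk terminates at $(k_0, \cdots, k_r)$; and since $\bar{l} = \min \lc l \relmid I(l) = \lc 0 \rc \rc$ with $I(l) = \lc 0 \rc \cup \lc i \in \lc 1, \cdots, r \rc \relmid j_i(l) < k_i \rc$, we have $j_i(\bar{l}) = k_i$ for every $i \in \lc 1, \cdots, r \rc$. The consequence used below is that at step $\bar{l}$ every coordinate other than the $0$-th is already maximal, so every move of the walk with index $> \bar{l}$ is a $0$-move; hence $i(l') = 0$ for all $l' > \bar{l}$, and $j_0(l')$ is strictly increasing on indices $\geq \bar{l}$.

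Then I would run through case~(1). Fix $(i, j)$ as in that case. If $i \in \lc 1, \cdots, r \rc$, then $j_i(\bar{l}) = k_i$, and since $j \leq k_i$ with $(i, j) \neq (i, k_i)$ we fall into the regime $j < j_i(\bar{l})$ of \eqref{eq:xijl1}, whose formula is already \eqref{eq:xijl3}. If $i = 0$, two regimes remain. When $j = j_0(\bar{l})$, strict monotonicity of $j_0$ past step $\bar{l}$ forces $l_{0, j} = \bar{l}$, so $i(l_{0, j}) = i(\bar{l})$ and the ``$j = j_i(l)$'' clause of \eqref{eq:xijl1} coincides with \eqref{eq:xijl3}. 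When $j > j_0(\bar{l})$, one has $l_{0, j} > \bar{l}$; the $l_{0, j}$-th move is then a $0$-move, so $i(l_{0, j}) = 0$ and $j_{i(l_{0, j})}(l_{0, j}) = j$, whence the right-hand side of \eqref{eq:xijl3} telescopes to $z_0^{0, j}$, which matches the ``$j > j_i(l)$'' clause of \eqref{eq:xijl1}. This exhausts case~(1). The argument is otherwise pure bookkeeping; the one place deserving care --- and the nearest thing to an obstacle --- is this last regime, where one must invoke that every move past step $\bar{l}$ is a $0$-move in order to pin down $i(l_{0, j})$ and collapse the telescoping product.
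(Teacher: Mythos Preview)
Your proof is correct and follows essentially the same approach as the paper: specialize the preceding lemma at $l = \bar{l}$ and verify that each clause of \eqref{eq:xijl1} collapses to \eqref{eq:xijl3}, using that $j_i(\bar{l}) = k_i$ for $i \geq 1$ and that all moves past step $\bar{l}$ are $0$-moves. One small imprecision: when you write ``If $i = 0$, two regimes remain,'' the regime $j < j_0(\bar{l})$ is still possible, but since you have already observed that the clause $j < j_i(\bar{l})$ of \eqref{eq:xijl1} is literally \eqref{eq:xijl3}, that regime is covered by the same remark.
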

\begin{proof}
(2) is obvious.
We will show (1).
Let $(i, j)$ be an element which is none of $(1, k_1), \cdots, (r, k_r)$, and $(i(1), 0)$.
It is obvious that (1) holds if $j < j_i(\bar{l})$, since \eqref{eq:xijl1} for the case $j < j_i(l)$ with $l=\bar{l}$ coincides with \eqref{eq:xijl3}.
When $i \neq 0$, we have $j_i(\bar{l})=k_i>j$.
Therefore, we show (1) supposing $i = 0$ and $j \geq j_i(\bar{l})$ in the following.
If $j > j_i(\bar{l})$, then $i(l_{i, j})=0=i$ and $j_{i(l_{i, j})}(l_{i, j})=j$.
Hence, \eqref{eq:xijl3} is 
\begin{align}
z_{\scS}^{i, j}
=\prod_{j'=0}^j z_0^{i, j'} \cdot \lb \prod_{j'=0}^{j-1} z_0^{i, j'} \rb^{-1}=z_{0}^{i, j}.
\end{align}
Since this coincides with \eqref{eq:xijl1} for the case $j > j_i(l)$ with $l=\bar{l}$, (1) holds in this case.
Lastly, if $j = j_i(\bar{l})$, then $l_{i, j}=\bar{l}$.
Hence, \eqref{eq:xijl1} for the case $j = j_i(l)$ with $l=\bar{l}$ coincides with \eqref{eq:xijl3}.
Thus we obtain (1).
\end{proof}

In particular, we obtain formulas for $z^{i(l), j_{i(l)}(l-1)}_\scS$ $\lb l \in \lc 1, \cdots, d+1 \rc  \rb$.

\begin{corollary}\label{cr:zsl}
Let $\scS \in \scrS_{C, 1}$. 
For $l \in \lc 1, \cdots, d+1 \rc$, we have the equality
\begin{align}
z^{i(l), j_{i(l)}(l-1)}_\scS=z^{m_l}
\end{align}
as elements of $K \lb \scrU_{\scS} \rb \cong K \lb \scU_{\scS, 0} \rb$, where $m_l \in M \oplus \bZ$ is defined by
\begin{align}\label{eq:ml}
m_l:=
\left\{
\begin{array}{ll}
m_{i(1), 0} & l=1, \\
\sum_{j'=0}^{j_{i(l)}(l-1)} m_{i(l), j'}- \sum_{j'=0}^{j_{i(l-1)}(l-1)-1} m_{i(l-1), j'} & l \geq 2.
\end{array}
\right.
\end{align}
\end{corollary}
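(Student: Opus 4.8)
The plan is to read off the claim from \pref{lm:xsx0} by specializing its formula to the coordinate $z^{i(l), j_{i(l)}(l-1)}_\scS$ and matching the combinatorial data.

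First I would dispose of the case $l=1$: here $i(l)=i(1)$ and $j_{i(l)}(l-1)=j_{i(1)}(0)=0$, so the relevant coordinate is $z^{i(1), 0}_\scS$, and $(i(1), 0)$ is one of the exceptional pairs occurring in \pref{lm:xsx0}. Part (2) of that lemma then gives $z^{i(1), 0}_\scS=z_0^{i(1), 0}=z^{m_{i(1),0}}$, which is the asserted equality for $l=1$ by \eqref{eq:ml}.

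For $l \geq 2$ I would set $i:=i(l)$, $j:=j_{i}(l-1)$, and first check that $(i, j)$ is \emph{none} of the exceptional pairs $(1, k_1), \dots, (r, k_r), (i(1), 0)$. Indeed, the $l$-th step of the walk attached to $\scS$ increments the $i$-th coordinate, so $j_{i}(l)=j+1$; since the walk of a full shuffle $\scS \in \scrS_{C, 1}$ (with the terminal convention \eqref{eq:ip1}, for which $i(\bar{p}+1)=0$) never lets the $i$-th coordinate exceed $p_i=k_i$ for $i \in \lc 1, \dots, r \rc$, this forces $j<k_i$ whenever $i \neq 0$, so $(i, j) \neq (i', k_{i'})$; and $(i, j) \neq (i(1), 0)$ because if $i=i(1)$ then $j=j_{i(1)}(l-1) \geq j_{i(1)}(1)=1>0$. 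Hence \pref{lm:xsx0}(1) applies, and it remains only to identify $l_{i, j}=\max \lc l' \relmid 1 \leq l' \leq d,\ j_i(l')=j \rc$. Since $j_i(l-1)=j$, since $j_i(l') \geq j_i(l)=j+1$ for all $l' \geq l$ in range, and since $1 \leq l-1 \leq d$, monotonicity of the walk gives $l_{i, j}=l-1$. Substituting $l_{i, j}=l-1$ into the displayed formula of \pref{lm:xsx0}(1) yields
\begin{align}
z^{i(l), j_{i(l)}(l-1)}_\scS
=\prod_{j'=0}^{j_{i(l)}(l-1)} z_0^{i(l), j'} \cdot \lb \prod_{j'=0}^{j_{i(l-1)}(l-1)-1} z_0^{i(l-1), j'} \rb^{-1},
\end{align}
and rewriting each $z_0^{i', j'}=z^{m_{i', j'}}$ turns the right-hand side into $z^{m_l}$ with $m_l$ as in \eqref{eq:ml}.

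I expect the only genuine obstacle to be the bookkeeping in the previous paragraph: verifying that the exceptional pairs of \pref{lm:xsx0} are really avoided, and that the terminal case $l=d+1$ (where $i(l)=0$ and $j_{i(l)}(l-1)=k_0$, which by the same analysis is non-exceptional and satisfies $l_{i, j}=d$) introduces no edge case. Once that combinatorial check is in place, the conclusion is a direct substitution.
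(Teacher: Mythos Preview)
Your proof is correct and follows essentially the same approach as the paper's: handle $l=1$ via \pref{lm:xsx0}(2), and for $l\geq 2$ verify that $(i(l),j_{i(l)}(l-1))$ avoids the exceptional pairs and that $l_{i,j}=l-1$, then substitute into \pref{lm:xsx0}(1). Your treatment of the edge case $l=d+1$ and the bookkeeping for the exceptional pairs is slightly more explicit than the paper's, but the argument is the same.
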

\begin{proof}
When $l=1$, one has $j_{i(l)}(l-1)=0$, and the claim follows from \pref{lm:xsx0}(2).
Suppose $l \geq 2$.
Then $\lb i(l), j_{i(l)} (l-1) \rb \neq (1, k_1), \cdots, (r, k_r), (i(1), 0)$.
Indeed, if $i(l)=i(1)$, then $j_{i(l)} (l-1) \geq 1$, and if $i(l) \neq 0$, then $j_{i(l)} (l-1)= j_{i(l)} (l)-1\leq k_{i(l)}-1$.
Furthermore, the number $l_{i, j}$ of \pref{lm:xsx0}(1) with $i=i(l), j=j_{i(l)} (l-1)$ is $l-1$.
Therefore, the claim follows from \pref{lm:xsx0}(1) also in this case.
\end{proof}

\begin{lemma}\label{lm:pi}
Let $l \in \lc 1, \cdots, \bar{l} \rc$.
The morphism $\scU_{\scS, l} \to \scU_{\scS, 0}$ obtained by composing the morphisms $\pi_{l'} \colon \scU_{\scS, l'} \to \scU_{\scS, l'-1}$ $(1 \leq l' \leq l)$ is given by
\begin{align}\label{eq:x0l}
z_0^{i, j} \mapsto 
z_l^{i, j} \cdot \prod_{l' \in L^{i, j}_l} z_{l}^{i(l'), j_{i(l')}(l'-1)} \quad (0 \leq i \leq r, 0 \leq j \leq k_i),
\end{align}
where
\begin{align}
L^{i, j}_l:=
\left\{
\begin{array}{ll}
\lc l' \in \bZ_{>0} \relmid l' \leq l, i \neq i(l'), j_i(l'-1)=j \rc & i=0, \\
\lc l' \in \bZ_{>0} \relmid l' \leq \min \lc l, l_i \rc, i \neq i(l'), j_i(l'-1)=j \rc & i \in \lc 1, \cdots, r \rc.\\
\end{array}
\right.
\end{align}
\end{lemma}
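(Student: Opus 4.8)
The plan is to prove \eqref{eq:x0l} by induction on $l$, unwinding the composition of the individual blow-up morphisms $\pi_{l'}\colon \scU_{\scS, l'} \to \scU_{\scS, l'-1}$ described in \eqref{eq:blow-up1}. The base case $l=1$ is exactly \eqref{eq:x01}: one checks directly from the definitions that $L^{i,j}_1 = \lc 1 \rc$ when $i \neq i(1)$ and $j = 0\,(= j_i(0))$, and $L^{i,j}_1 = \emptyset$ otherwise, and that the factor appearing in \eqref{eq:x01} is $z_1^{i(1), 0} = z_1^{i(1), j_{i(1)}(0)}$, which matches the product over $L^{i,j}_1$.

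For the inductive step, assume \eqref{eq:x0l} holds for some $l$ with $1 \le l < \bar{l}$, and apply $\pi_{l+1}^\ast$ --- i.e.\ \eqref{eq:blow-up1} with $l$ replaced by $l+1$ --- to the right-hand side of \eqref{eq:x0l}, which is a product of the ``head'' coordinate $z_l^{i,j}$ and the ``exceptional'' coordinates $z_l^{i(l'), j_{i(l')}(l'-1)}$ with $l' \in L^{i,j}_l$. Two observations drive the computation. First, every $l' \in L^{i,j}_l$ satisfies $l' \le l$, and step $l'$ of the walk increments the $i(l')$-th coordinate, so $j_{i(l')}(l) \ge j_{i(l')}(l') = j_{i(l')}(l'-1) + 1 > j_{i(l')}(l'-1)$; hence the defining condition ``$j = j_{i(l')}(l)$'' in \eqref{eq:blow-up1} fails for the exceptional factor, and $\pi_{l+1}^\ast$ sends $z_l^{i(l'), j_{i(l')}(l'-1)}$ to $z_{l+1}^{i(l'), j_{i(l')}(l'-1)}$ unchanged. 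Second, unwinding the definitions of $I'(l)$, $I(l)$ and $l_i$ separately for $i = 0$ and for $i \in \lc 1, \dots, r \rc$, the head coordinate $z_l^{i,j}$ acquires the extra factor $z_{l+1}^{i(l+1), j_{i(l+1)}(l)}$ under $\pi_{l+1}^\ast$ (namely when $i \in I'(l)$ and $j = j_i(l)$) if and only if $l+1 \in L^{i,j}_{l+1}$; moreover $L^{i,j}_\bullet$ is non-decreasing in $l$ and $L^{i,j}_{l+1} \setminus L^{i,j}_l \subseteq \lc l+1 \rc$, so $L^{i,j}_{l+1}$ equals $L^{i,j}_l$ otherwise. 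Since $j_{i(l+1)}(l) = j_{i(l+1)}((l+1)-1)$, the newly acquired factor is precisely the $l'=l+1$ term in the product over $L^{i,j}_{l+1}$. Putting these together, $\pi_{l+1}^\ast$ of the right-hand side of \eqref{eq:x0l} equals the right-hand side of \eqref{eq:x0l} with $l$ replaced by $l+1$; since the composite $\scU_{\scS, l+1} \to \scU_{\scS, 0}$ factors as $\pi_{l+1}$ followed by the composite $\scU_{\scS, l} \to \scU_{\scS, 0}$, the induction closes.

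The only real content is the first observation above: an exceptional coordinate, once it enters the product, is frozen by all subsequent blow-ups because its second index stays strictly below the current walk position $j_{i(l')}(l)$. The remaining work is the bookkeeping needed to match the two-way case split in \eqref{eq:blow-up1} (whether $i \in I'(l)$, whether $j = j_i(l)$) with the membership condition for $l+1$ in $L^{i,j}_{l+1}$, handled separately in the cases $i = 0$ and $i \neq 0$; I expect this to be routine but a little tedious rather than genuinely hard.
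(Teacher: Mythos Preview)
Your proof is correct and follows essentially the same approach as the paper's: both argue by induction on $l$, verify the base case $l=1$ directly against \eqref{eq:x01}, and for the inductive step use the key observation that each exceptional factor $z_{l}^{i(l'),\,j_{i(l')}(l'-1)}$ with $l' \le l$ is left unchanged by $\pi_{l+1}^\ast$ because $j_{i(l')}(l'-1) < j_{i(l')}(l)$. Your write-up is more detailed than the paper's (which dispatches the inductive step in a single sentence), but the content is the same.
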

\begin{proof}
We prove the claim by induction on $l$.
When $l=1$, we have
\begin{align}
L_1^{i, j}=
\left\{
\begin{array}{ll}
\lc 1 \rc & i \neq i (1), j_i(0)=0=j, \\
\emptyset & \mathrm{otherwise},\\
\end{array}
\right.
\end{align}
and \eqref{eq:x0l} indeed coincides with \eqref{eq:x01}.
When we suppose that the claim holds for $l-1$ $(\leq \bar{l}-1)$, one can easily show the claim for $l$ by \eqref{eq:blow-up1}.
Notice that since we have
\begin{align}
j_{i(l')}(l'-1)=j_{i(l')}(l')-1<j_{i(l')}(l-1)
\end{align}
for $l' \leq l-1$, the monomial $z_{l-1}^{i(l'), j_{i(l')}(l'-1)}$ $\lb l' \in L_{l-1}^{i, j} \rb$ is mapped by \eqref{eq:blow-up1} to $z_{l}^{i(l'), j_{i(l')}(l'-1)}$.
\end{proof}

\begin{lemma}\label{lm:usl}
For $l \in \lc 1, \cdots, \bar{l} \rc$, one has the isomorphism 
\begin{align}\label{eq:isom-ul0}
\scU_{\scS, l} \setminus \lb \prod_{l' = 1}^{l} z_{l}^{i(l'), j_{i(l')}(l'-1)}=0 \rb \cong 
\scU_{\scS, 0} \setminus \lb \prod_{l' = 1}^{l} z_{0}^{i(l'), j_{i(l')}(l'-1)}=0 \rb
\end{align}
given by the restriction of the composition of the morphisms $\pi_{l'} \colon \scU_{\scS, l'} \to \scU_{\scS, l'-1}$ $(1 \leq l' \leq l)$.
\end{lemma}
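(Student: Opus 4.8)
The plan is to derive the statement from the universal fact that a blow-up $\mathrm{Bl}_Z Y \to Y$ along a closed subscheme $Z$ restricts to an isomorphism over $Y \setminus Z$. Applying this to each $\pi_{l'} \colon \scU_{\scS, l'} \to \scU_{\scS, l'-1}$ $(1 \le l' \le l)$, whose center is the strict transform $\widetilde{D}_{\scS, l'}$ of $D_{\scS, l'}$, the composite $\pi := \pi_1 \circ \cdots \circ \pi_l \colon \scU_{\scS, l} \to \scU_{\scS, 0}$ restricts to an isomorphism over $\scU_{\scS, 0} \setminus \bigcup_{l'=1}^{l} Z_{l'}$, where $Z_{l'} \subset \scU_{\scS, 0}$ denotes the image of $\widetilde{D}_{\scS, l'}$ under $\pi_1 \circ \cdots \circ \pi_{l'-1}$ (a closed subset, this composite morphism being proper). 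So the two things to pin down are that this bad locus $\bigcup_{l'} Z_{l'}$ is contained in the vanishing locus of $\prod_{l'=1}^{l} z_0^{i(l'), j_{i(l')}(l'-1)}$ on $\scU_{\scS, 0}$, and that the $\pi$-preimage of the complement of that vanishing locus is exactly $\scU_{\scS, l} \setminus \lb \prod_{l'=1}^{l} z_l^{i(l'), j_{i(l')}(l'-1)} = 0 \rb$.

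For the first point I would argue as follows. Since $\widetilde{D}_{\scS, l'}$ is the strict transform of $D_{\scS, l'}$, its image $Z_{l'}$ is contained in $D_{\scS, l'}$; and $D_{\scS, l'}$, being the toric stratum of $X_{\tilde{\Sigma}'}$ attached to the cone generated by $\lc n_{i, j_i(l'-1)} \relmid 0 \le i \le r \rc$, is cut out inside $\scU_{\scS, 0} = \scX \cap U_C$ by the equations $z_0^{i, j_i(l'-1)} = 0$ $(0 \le i \le r)$ — this is exactly how $D_{\scS, 1}$ was described before \pref{dl:u}, and the general case is identical. In particular $D_{\scS, l'} \subseteq \lc z_0^{i(l'), j_{i(l')}(l'-1)} = 0 \rc$, so $\bigcup_{l'} Z_{l'}$ lies in the asserted hypersurface, and $\pi$ is an isomorphism over $\scU_{\scS, 0} \setminus \lb \prod_{l'=1}^{l} z_0^{i(l'), j_{i(l')}(l'-1)} = 0 \rb$.

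For the second point I would simply feed in the monomial formula of \pref{lm:pi}: each $z_0^{i, j}$ pulls back under $\pi$ to $z_l^{i, j}$ times a product of factors of the form $z_l^{i(l''), j_{i(l'')}(l''-1)}$ with $l'' \le l$. Hence $\pi^\ast \lb \prod_{l'=1}^{l} z_0^{i(l'), j_{i(l')}(l'-1)} \rb$ is a monomial in the coordinates $\lc z_l^{i(l''), j_{i(l'')}(l''-1)} \relmid 1 \le l'' \le l \rc$ alone, and each of these coordinates does occur in it, if only as the ``diagonal'' factor $z_l^{i(l'), j_{i(l')}(l'-1)}$ coming from $z_0^{i(l'), j_{i(l')}(l'-1)}$ itself. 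Therefore this pullback and $\prod_{l'=1}^{l} z_l^{i(l'), j_{i(l')}(l'-1)}$ generate ideals with the same radical, hence cut out the same vanishing locus, which identifies the two open sets; combined with the previous paragraph this yields the lemma.

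I expect no conceptual difficulty here — the underlying principle is just that a blow-up is an isomorphism away from its center — so the main obstacle is purely bookkeeping: correctly matching the coordinate that cuts out $D_{\scS, l'}$ in the chart $U_C$ against the inductive blow-up recipe of \pref{dl:u}, and unwinding \pref{lm:pi} without index slips.
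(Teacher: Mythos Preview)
Your first step has a real gap. The morphism $\pi_{l'} \colon \scU_{\scS, l'} \to \scU_{\scS, l'-1}$ is \emph{not} the blow-up of $\scU_{\scS, l'-1}$ along $\widetilde{D}_{\scS, l'}$; by construction (\pref{dl:u}(1)(c)) it is only the single affine chart of that blow-up obtained by adjoining the ratios $z_{l'-1}^{i, j_i(l'-1)}/z_{l'-1}^{i(l'), j_{i(l')}(l'-1)}$ for $i \in I'(l'-1)$. Hence $\pi_{l'}$ is not surjective onto $\scU_{\scS, l'-1} \setminus \widetilde{D}_{\scS, l'}$: a point at which $z_{l'-1}^{i(l'), j_{i(l')}(l'-1)}$ vanishes while some other defining coordinate of $\widetilde{D}_{\scS, l'}$ does not has no preimage in this chart (exactly as the $x$-chart of the blow-up of $\bA^2$ at the origin misses the punctured $y$-axis). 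So your claimed isomorphism of $\pi$ over $\scU_{\scS, 0} \setminus \bigcup_{l'} Z_{l'}$ is false; what you actually get is only an open immersion of $\scU_{\scS, l} \setminus (\prod z_l^{\cdots}=0)$ into $\scU_{\scS, 0} \setminus (\prod z_0^{\cdots}=0)$, and surjectivity is precisely the missing content.

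The paper's proof closes this by using the chart description directly. Because $\scU_{\scS, l}$ is the $z_{l-1}^{i(l), j_{i(l)}(l-1)}$-chart, $\pi_l$ \emph{is} an isomorphism over $\scU_{\scS, l-1} \setminus (z_{l-1}^{i(l), j_{i(l)}(l-1)}=0)$, hence over the smaller open where $\prod_{l'=1}^{l} z_{l-1}^{i(l'), j_{i(l')}(l'-1)}$ is invertible; the paper writes down the inverse explicitly from \eqref{eq:blow-up1}. Since each $z_{l-1}^{i(l'), j_{i(l')}(l'-1)}$ with $l'<l$ lies in the ``otherwise'' case of \eqref{eq:blow-up1} (one has $j_{i(l')}(l'-1)<j_{i(l')}(l-1)$), these coordinates are unchanged by $\pi_l$, so this one-step isomorphism composes cleanly with the induction hypothesis for $l-1$; \pref{lm:pi} is then invoked only to match the extra coordinate $z_{l-1}^{i(l), j_{i(l)}(l-1)}$ with $z_0^{i(l), j_{i(l)}(l-1)}$ on the base. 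Your second point (the preimage computation via \pref{lm:pi}) is correct and is essentially how the paper carries out that matching.
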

\begin{proof}
For any $l \in \lc 1, \cdots, \bar{l} \rc$, we have the isomorphism
\begin{align}\label{eq:isom-ul-1}
\scU_{\scS, l} \setminus \lb \prod_{l' = 1}^{l} z_{l}^{i(l'), j_{i(l')}(l'-1)}=0 \rb \cong 
\scU_{\scS, l-1} \setminus \lb \prod_{l' = 1}^{l} z_{l-1}^{i(l'), j_{i(l')}(l'-1)}=0 \rb
\end{align}
given by the restriction of the morphism $\pi_{l'} \colon \scU_{\scS, l'} \to \scU_{\scS, l'-1}$.
This is because the map \eqref{eq:blow-up1} sends $z_{l-1}^{i(l'), j_{i(l')}(l'-1)}$ to $z_{l}^{i(l'), j_{i(l')}(l'-1)}$ $(1 \leq l' \leq l)$, and the inverse morphism of \eqref{eq:isom-ul-1} is given by
\begin{align}
z_{l}^{i, j} \mapsto
\left\{
\begin{array}{ll}
\left. z_{l-1}^{i, j} \middle/ z_{l-1}^{i(l), j_{i(l)} (l-1)} \right. & i \in I'(l-1), j=j_{i}(l-1), \\
z_{l-1}^{i, j} & \mathrm{otherwise}.\\
\end{array}
\right.
\end{align}
We prove the lemma by using \eqref{eq:isom-ul-1} and induction on $l$.
When $l=1$, \eqref{eq:isom-ul0} is exactly \eqref{eq:isom-ul-1} with $l=1$.
Suppose that the lemma holds for $l-1$.
Then by the induction hypothesis and \pref{lm:pi} for $l-1$, we can get
\begin{align}
\scU_{\scS, l-1} \setminus \lb \prod_{l' = 1}^{l} z_{l-1}^{i(l'), j_{i(l')}(l'-1)}=0 \rb \cong 
\scU_{\scS, 0} \setminus \lb \prod_{l' = 1}^{l} z_{0}^{i(l'), j_{i(l')}(l'-1)}=0 \rb.
\end{align}
By combining this and \eqref{eq:isom-ul-1}, we obtain the lemma for $l$.
\end{proof}

\begin{corollary}\label{cr:usk}
Let $C':=C \cap \lb N_\bR \times \lc 0 \rc \rb$.
One has
\begin{align}
\scrU_\scS \times_R K 
\cong 
\lb X \cap X_{C'} \rb \setminus \lb \prod_{l = 1}^{\bar{p}+1} z_{0}^{i(l), j_{i(l)}(l-1)}=0 \rb,
\end{align}
where $X_{C'}$ is the affine toric variety over $K$ associated with the cone $C' \subset N_\bR$.
\end{corollary}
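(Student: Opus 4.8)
The plan is to obtain the corollary as a bookkeeping consequence of \pref{lm:usl} and \pref{cr:zsl}, the point being to determine exactly which divisors are deleted when one passes to the generic fibre over $R$. By \pref{dl:u}(2) we have $\scrU_\scS = \scU_{\scS, \bar{l}}$, and forming $\scrU_\scS \times_R K$ amounts to inverting $t$, so $\scrU_\scS \times_R K = \scU_{\scS, \bar{l}} \setminus \lb t = 0 \rb$. The first step is to express $t$ as a monomial in the coordinates of this chart: by \pref{cr:zsl} one has $z_\scS^{i(l), j_{i(l)}(l-1)} = z^{m_l}$ for $1 \le l \le \bar{p}+1$, and the exponents telescope. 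Indeed, from the formula \eqref{eq:ml} together with $t = z^{(0,1)} = \prod_{j=0}^{k_0} z^{m_{0,j}}$ (equivalently, $\sum_{j=0}^{k_0} m_{0,j} = (0,1)$), one obtains $\sum_{l=1}^{\bar{p}+1} m_l = (0,1)$, so that $t = \prod_{l=1}^{\bar{p}+1} z_\scS^{i(l), j_{i(l)}(l-1)}$ as a regular function on $\scrU_\scS$, with each factor equal to one of the coordinates $z_\scS^{i,j}$ ($0 \le i \le r$, $0 \le j \le k_i$) and the $\bar{p}+1$ factors pairwise distinct. In particular $\lb t = 0 \rb$ contains $\lb \prod_{l=1}^{\bar{l}} z_{\bar{l}}^{i(l), j_{i(l)}(l-1)} = 0 \rb$.

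I would then transport this down to $\scU_{\scS, 0} = \scX \cap U_C$. Applying \pref{lm:usl} with $l = \bar{l}$ identifies $\scU_{\scS, \bar{l}} \setminus \lb \prod_{l=1}^{\bar{l}} z_{\bar{l}}^{i(l), j_{i(l)}(l-1)} = 0 \rb$ with $\scU_{\scS, 0} \setminus \lb \prod_{l=1}^{\bar{l}} z_0^{i(l), j_{i(l)}(l-1)} = 0 \rb$ along the composition of the blow-ups; under this identification the remaining components $\lb z_{\bar{l}}^{i(l), j_{i(l)}(l-1)} = 0 \rb$ of $\lb t = 0 \rb$ (those with $\bar{l} < l \le \bar{p}+1$, for which $i(l) = 0$) correspond, again by \pref{cr:zsl}, to the coordinate divisors $\lb z_0^{0, j_0(l-1)} = 0 \rb$. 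Using $I(\bar{l}) = \lc 0 \rc$, a short computation shows that the union of all divisors thus deleted on $\scU_{\scS, 0}$ is exactly $\lb z_0^{i,j} = 0 \rb$ over the pairs $(i,j) = (i(l), j_{i(l)}(l-1))$, $1 \le l \le \bar{p}+1$; that is, $\scrU_\scS \times_R K \cong \scU_{\scS, 0} \setminus \lb \prod_{l=1}^{\bar{p}+1} z_0^{i(l), j_{i(l)}(l-1)} = 0 \rb$. Since this locus contains $\lb t = 0 \rb$, the scheme on the right is already defined over $K$, and it remains to identify $\scU_{\scS, 0} \times_R K$ with $X \cap X_{C'}$ compatibly with the inclusions into the ambient toric varieties. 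This last identification rests on the fact that $C' = C \cap (N_\bR \times \lc 0 \rc) = \cone(\mu) \times \lc 0 \rc$, whose primitive ray generators are the $n_{i,j}$ with $1 \le i \le r$ (a part of a basis of $N$ by \pref{cd:add}), and on the canonical identification, over $K$, of $U_C$ after inverting $t$ with $X_{C'}$ furnished by \pref{cd:orig}(1): under it the $z_0^{i,j}$ with $i \ge 1$ become toric coordinates on $X_{C'}$, the $z_0^{0,j}$ become the torus coordinates up to the unit $t$, and the equations $f_i \cdot \prod_{j=0}^{k_0} z_0^{0,j} = \prod_{j=0}^{k_i} z_0^{i,j}$ cutting out $\scX \cap U_C$ become the equations cutting out $X$ in the chart $X_{C'}$. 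When $\scS \in \scrS_{C,2}$ one argues in the same way, using \pref{lm:us} in place of \pref{cr:zsl} and noting that then $\scrU_\scS$ already lies over $\Spec K$.

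The step I expect to be the main obstacle is the final one --- the identification of $\scU_{\scS, 0} \times_R K$ with $X \cap X_{C'}$. One must set up the isomorphism between $U_C$ (after inverting $t$) and $X_{C'}$ from \pref{cd:orig}(1) carefully, verify its compatibility with the inclusions $\scX \subset X_{\tilde{\Sigma}'}$ and $X \subset X_{\Sigma'}$, and keep precise track of which of the $z_0^{i,j}$ become units after inverting $t$ (namely those with $i = 0$) and which survive as honest toric coordinates on $X_{C'}$ ($i \ge 1$) --- in particular, that the coordinate divisors $\lb z_0^{i,k_i} = 0 \rb$ ($i \ge 1$), which are \emph{not} deleted, match up correctly on the two sides. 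The combinatorial bookkeeping in the first two steps, while fiddly, is routine given \pref{cr:zsl}, \pref{lm:usl}, and \pref{dl:u}.
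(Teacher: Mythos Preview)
Your approach is essentially the same as the paper's, with one small variation and one minor slip. The paper also derives the formula $t = \prod_{l=1}^{\bar{p}+1} z_\scS^{i(l), j_{i(l)}(l-1)}$ on $\scrU_\scS$, but does so by pushing $t = \prod_j z_0^{0,j}$ forward through the blow-up maps via \pref{lm:pi} (this is the computation \eqref{eq:image-t}), rather than by pulling back via \pref{cr:zsl} and telescoping the $m_l$; both routes are valid. The paper then combines this with \pref{lm:usl} exactly as you do, and treats the final identification $\scU_{\scS,0} \times_R K = X \cap X_{C'}$ as immediate --- your discussion of that step is correct but more detailed than what the paper writes.

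The slip is in your remark on $\scS \in \scrS_{C,2}$: it is not true that $\scrU_\scS$ already lies over $\Spec K$ in that case. What \pref{dl:u}(2) asserts is that no \emph{strict transform} of a component of $\scX_k$ meets $\scU_{\scS, \bar{l}}$; but the divisors $\overline{D}_{\scS, l}$ for $1 \le l \le \bar{l} = \bar{p}+1$ (recording the exceptional loci produced along the way) still meet $\scrU_\scS$ by \pref{lm:comp}, and their union is precisely the zero locus of $t$. So the case $\scS \in \scrS_{C,2}$ is handled the same way: since $\bar{l} = \bar{p}+1$, \pref{lm:usl} with $l = \bar{l}$ already accounts for all $\bar{p}+1$ factors of $t$, and the rest proceeds as for $\scS \in \scrS_{C,1}$. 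Note also that \pref{cr:zsl} is only stated for $\scS \in \scrS_{C,1}$, so for $\scS \in \scrS_{C,2}$ you would want to compute $t$ via \pref{lm:pi}, as the paper does uniformly.
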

\begin{proof}
By \pref{lm:usl} and \pref{lm:pi} for $l=\bar{l}$, we obtain the isomorphism
\begin{align}
\scrU_{\scS} \setminus \lb \prod_{l = 1}^{\bar{p}+1} z_{\scS}^{i(l), j_{i(l)}(l-1)}=0 \rb \cong 
\scU_{\scS, 0} \setminus \lb \prod_{l = 1}^{\bar{p}+1} z_{0}^{i(l), j_{i(l)}(l-1)}=0 \rb.
\end{align}
On the other hand, the image of the function $t=\prod_{j=0}^{k_0}z^{0,j}_0$ by the map \eqref{eq:x0l} with $l=\bar{l}$ is
\begin{align}\label{eq:image-t}
\prod_{j=0}^{k_0} \lb z_{\scS}^{0, j} \prod_{l' \in L^{0, j}_{\bar{l}}} z_{\scS}^{i(l'), j_{i(l')}(l'-1)} \rb
=\prod_{j=0}^{k_0} z_{\scS}^{0, j} \cdot \prod_{1 \leq l \leq \bar{l}, i(l) \neq 0} z_{\scS}^{i(l), j_{i(l)}(l-1)}
=\prod_{l = 1}^{\bar{p}+1} z_{\scS}^{i(l), j_{i(l)}(l-1)}.
\end{align}
From these, we can get
\begin{align}
\scrU_\scS \times_R K \cong \lb \scU_{\scS, 0} \times_R K \rb \setminus \lb \prod_{l = 1}^{\bar{p}+1} z_{0}^{i(l), j_{i(l)}(l-1)}=0 \rb
=\lb X \cap X_{C'} \rb \setminus \lb \prod_{l = 1}^{\bar{p}+1} z_{0}^{i(l), j_{i(l)}(l-1)}=0 \rb.
\end{align}
\end{proof}

For $\scS \in \scrS_{C, 1} \cup \scrS_{C, 2}$, we define $\scX_{\scS, l}$ $(1 \leq l \leq \bar{p}+1)$ to be the variety obtained just after the blow-up along (the strict transform of) $D_{\scS, l}$ while we perform the repetitive blow-ups for $\scX$ explained in \pref{sc:blow-up}.
We also define $D_{\scS, i}^l$ $(1 \leq i \leq \bar{p}+1)$ to be the divisor on $\scX_{\scS, l}$ obtained by continuing the following process until we blow up along (the strict transform of) $D_{\scS, l}$:
For the divisor $D_{\scS, i}$, we
\begin{itemize}
\item take the strict transform when we blow up along (the strict transform of) an irreducible component of $\scX_k$ which is not $D_{\scS, i}$, and 
\item take the exceptional divisor when we blow up along (the strict transform of) $D_{\scS, i}$.
\end{itemize}
We also write the divisor on $\scrX$ obtained by continuing this process for $D_{\scS, i}$ until we finish all the blow-ups as $\overline{D}_{\scS, i} \subset \scrX$.
 
\begin{lemma}\label{lm:comp}
The divisor $\overline{D}_{\scS, l} \subset \scrX$ $(1 \leq l \leq \bar{p}+1)$ is a Cartier divisor defined by $z_\scS^{i(l), j_{i(l)}(l-1)}$ on $\scrU_\scS \subset \scrX$.
\end{lemma}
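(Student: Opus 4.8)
The plan is to prove the lemma chart by chart, tracking the transform of $D_{\scS, l}$ through the repetitive blow-ups by means of the explicit local models recorded in \pref{dl:u}. By \pref{dl:u}(2), no blow-up performed after the $\bar{l}$-th one affects $\scU_{\scS, \bar{l}} = \scrU_\scS$ (and, more generally, a blow-up whose center misses the chart leaves the restriction to the chart of every divisor unchanged), so $\overline{D}_{\scS, l} \cap \scrU_\scS$ is already determined by the finite chain $\scU_{\scS, 0} \leftarrow \scU_{\scS, 1} \leftarrow \cdots \leftarrow \scU_{\scS, \bar{l}}$ given by the morphisms $\pi_{l'}$ together with the blow-ups along $D_{\scS, l}$ for $\bar{l} < l \leq \bar{p}+1$, which restrict to isomorphisms on this chart. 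I would treat the ranges $1 \leq l \leq \bar{l}$ and (when $\scS \in \scrS_{C, 1}$, the only case in which it is nonempty) $\bar{l} < l \leq \bar{p}+1$ separately.

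For $1 \leq l \leq \bar{l}$ I would argue by induction along the chain, the base case being the blow-up along $D_{\scS, l}$ itself: the computation in the proof of \pref{dl:u} shows that the exceptional divisor $D_{\scS, l}^{l}$ is the Cartier divisor $\lc z_l^{i(l), j_{i(l)}(l-1)}=0 \rc \subset \scU_{\scS, l}$. For the inductive step, fix $l < l' \leq \bar{l}$ and assume $D_{\scS, l}^{l'-1}$ is cut out in $\scU_{\scS, l'-1}$ by $z_{l'-1}^{i(l), j_{i(l)}(l-1)}$. The key combinatorial observation is $j_{i(l)}(l-1) < j_{i(l)}(l'-1)$: the $i(l)$-th coordinate of the walk attached to $\scS$ increases at step $l$, so $j_{i(l)}(l) = j_{i(l)}(l-1)+1$, and since $l \leq l'-1$ and the coordinates of the walk are nondecreasing, $j_{i(l)}(l'-1) \geq j_{i(l)}(l) > j_{i(l)}(l-1)$. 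Hence $(i(l), j_{i(l)}(l-1))$ is not among the pairs $(i, j_i(l'-1))$ with $i \in I'(l'-1)$, so by \eqref{eq:blow-up1} the morphism $\pi_{l'}$ pulls $z_{l'-1}^{i(l), j_{i(l)}(l-1)}$ back to $z_{l'}^{i(l), j_{i(l)}(l-1)}$ with no factor of the exceptional coordinate $z_{l'}^{i(l'), j_{i(l')}(l'-1)}$. Therefore the total transform $\pi_{l'}^{*} D_{\scS, l}^{l'-1}$ is the divisor $\lc z_{l'}^{i(l), j_{i(l)}(l-1)}=0 \rc$, in which the exceptional divisor of $\pi_{l'}$ occurs with multiplicity zero; so the strict transform $D_{\scS, l}^{l'}$ equals it and is again Cartier, defined by $z_{l'}^{i(l), j_{i(l)}(l-1)}$. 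Taking $l'=\bar{l}$ yields $\overline{D}_{\scS, l} \cap \scrU_\scS = \lc z_\scS^{i(l), j_{i(l)}(l-1)}=0 \rc$.

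For $\bar{l} < l \leq \bar{p}+1$ we have $\scS \in \scrS_{C, 1}$ and $i(l)=0$, and $D_{\scS, l}$ is the component of $\scX_k$ attached to the cone generated by $\lc n_{i, j_i(l-1)} \relmid 0 \leq i \leq r \rc$. As $j_i(\bar{l}) \leq j_i(l-1) \leq k_i$ for all $i$, \pref{dl:u}(1)(b) with the index $l$ there equal to $\bar{l}$ shows that the strict transform of $D_{\scS, l}$ in $\scU_{\scS, \bar{l}}$ is the Cartier divisor $\lc z_{\bar{l}}^{0, j_0(l-1)}=0 \rc = \lc z_\scS^{0, j_0(l-1)}=0 \rc$. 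Since blowing up along a Cartier divisor is an isomorphism whose exceptional divisor is that divisor itself, and the remaining blow-ups do not affect $\scrU_\scS$, we get $\overline{D}_{\scS, l} \cap \scrU_\scS = \lc z_\scS^{0, j_0(l-1)}=0 \rc = \lc z_\scS^{i(l), j_{i(l)}(l-1)}=0 \rc$, using $i(l)=0$. Since the charts $\scrU_\scS$ cover $\scrX_k$ by \pref{cr:cover}, this completes the argument.

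The step I expect to be the main obstacle is the passage from total transform to strict transform in the induction of the second paragraph, i.e.\ checking that no exceptional component is ever absorbed into the transform of $D_{\scS, l}$; this is exactly what the inequality $j_{i(l)}(l-1) < j_{i(l)}(l'-1)$ and the form of the transition maps \eqref{eq:blow-up1} guarantee, keeping the coordinate $z^{i(l), j_{i(l)}(l-1)}$ inert under all later blow-ups. A secondary technical point is the bookkeeping in the range $\bar{l} < l \leq \bar{p}+1$: one must check that the combinatorially defined $D_{\scS, l}$ of \pref{dl:u}(2) is among the finitely many components of $\scX_k$ whose strict transform meets the stabilized chart $\scU_{\scS, \bar{l}}$ and is Cartier there, which follows once one observes that $j_0(l-1)$ ranges exactly over $\lc j_0(\bar{l}), j_0(\bar{l})+1, \dots, k_0 \rc$ as $l$ ranges over $\lc \bar{l}+1, \dots, \bar{p}+1 \rc$.
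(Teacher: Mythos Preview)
Your proof is correct and follows essentially the same approach as the paper's own proof. Both arguments hinge on the inequality $j_{i(l)}(l-1) < j_{i(l)}(l')$ for $l' \geq l$, which guarantees via \eqref{eq:blow-up1} that the coordinate $z_{l}^{i(l),j_{i(l)}(l-1)}$ is sent to $z_{\scS}^{i(l),j_{i(l)}(l-1)}$ with no exceptional factor; the paper simply states this in one line rather than unwinding it as an induction, and treats the range $\bar{l} < l \leq \bar{p}+1$ identically to you via \pref{dl:u}(1)(b). Your final appeal to \pref{cr:cover} is unnecessary, since the statement only concerns the restriction to the single chart $\scrU_\scS$, but this does no harm.
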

\begin{proof}
First, we show the claim for $\overline{D}_{\scS, l}$ such that $1 \leq l \leq \bar{l}$.
The divisor $D_{\scS, l}^l$ is defined by $z_{l}^{i(l), j_{i(l)}(l-1)}$ on $\scU_{\scS, l}$.
Since we have 
\begin{align}
j_{i(l)}(l-1)=j_{i(l)}(l)-1<j_{i(l)}(l')
\end{align}
for $l' \geq l$, we can see that $z_{l}^{i(l), j_{i(l)}(l-1)}$ is mapped to $z_\scS^{i(l), j_{i(l)}(l-1)}$ by the morphism $\scrU_\scS \to \scU_{\scS, l}$ obtained by composing the morphisms of \eqref{eq:blow-up1}.
Thus we can conclude the claim.

Next, we consider $\overline{D}_{\scS, l}$ such that $\bar{l} < l \leq d+1(=\bar{p}+1)$, which may exist when $\scS \in \scrS_{C, 1}$.
By \pref{dl:u}(b) with $l=\bar{l}$, we can see that it is defined by the monomial $z_\scS^{0, j_0(l-1)}$ on $\scrU_\scS$.
Since $i(l)=0$ for $l > \bar{l}$, the claim follows.
\end{proof}

\subsection{The minimal snc-model}\label{sc:minimal}

The aim of this subsection is to prove the following proposition.
Recall that an snc-model $\scrX$ is called \emph{minimal} if the logarithmic relative canonical divisor $K_{\scrX/R}+\scrX_{k, \mathrm{red}}$ is semi-ample.

\begin{proposition}\label{pr:snc}
The model $\scrX \to \Spec R$ constructed in \pref{sc:blow-up} is a minimal snc-model of $X \to \Spec K$ with reduced special fiber.
\end{proposition}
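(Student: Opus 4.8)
The plan is to verify the two defining properties separately: that $\scrX$ is a regular $R$-model of $X$ whose special fibre is a strict normal crossing divisor, and that $K_{\scrX/R}+\scrX_{k,\mathrm{red}}$ is semi-ample. Everything will be read off from the explicit charts $\scrU_\scS$ constructed in \pref{sc:local}; for the second property the inputs are \pref{pr:canonical} together with the fact that the repetitive blow-ups are crepant over $\scX$ relative to the reduced special fibre.

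First I would check the snc-model property. Since the centres $D_{l+1}^{l}$ all lie in the special fibre, the generic fibre is unchanged, so $\scrX\times_R K\cong X$ and $\scrX\to\Spec R$ is a (proper, inherited from $\scX$) flat model; normality will follow from regularity. By \pref{cr:cover} the opens $\scrU_\scS$ cover $\scrX_k$, and away from $\scrX_k$ the model agrees with the smooth variety $X$, so it suffices to see that each $\scrU_\scS$ is regular and that $\scrX_k|_{\scrU_\scS}$ is snc. For regularity, in the presentation \eqref{eq:Us} each relation $g_{\scS,i}$ with $p_i=k_i$ has the form $f_{\scS,i}\cdot(\mathrm{monomial})-z^{i,k_i}_\scS$ and eliminates the variable $z^{i,k_i}_\scS$; for $\scS\in\scrS_{C,1}$ this identifies $\scrU_\scS$ with $\bA^{d+1}$ (with coordinates the remaining $z^{i,j}_\scS$), while for $\scS\in\scrS_{C,2}$ the surviving relations $g_{\scS,i}=f_{\scS,i}-\prod_{j=p_i}^{k_i}z^{i,j}_\scS$ involve disjoint sets of variables and cut out a smooth complete intersection because the $s_i$ were chosen general, exactly as in the Bertini argument used for the smoothness of $X$ itself. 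For the snc property, \eqref{eq:image-t} gives $t=\prod_{l=1}^{\bar p+1}z^{i(l),j_{i(l)}(l-1)}_\scS$ on $\scrU_\scS$, and by \pref{lm:comp} the component $\overline D_{\scS,l}$ is cut out on $\scrU_\scS$ by the single function $z^{i(l),j_{i(l)}(l-1)}_\scS$. The pairs $\lb i(l),j_{i(l)}(l-1)\rb$ are pairwise distinct (at the $l$-th step the coordinate $i(l)$ strictly increases from $j_{i(l)}(l-1)$), so on the regular scheme $\scrU_\scS$ these are distinct smooth divisors meeting transversally; in particular $t$ has no repeated factor, so $\scrX_k$ is reduced. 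When $\scS\in\scrS_{C,1}$ the $d+1$ functions $z^{i(l),j_{i(l)}(l-1)}_\scS$ even exhaust the $d+1$ coordinates of $\scrU_\scS\cong\bA^{d+1}$, so there $\scrX_k|_{\scrU_\scS}$ is the full toric boundary.

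It remains to prove minimality. Because $\scrX_k$ is reduced and equals $\mathrm{div}(t)=\mu^{\ast}\lb\mathrm{div}(t)\rb=\mu^{\ast}\scX_k$ for $\mu\colon\scrX\to\scX$, we have $\scrX_{k,\mathrm{red}}=\mu^{\ast}\lb\scX_{k,\mathrm{red}}\rb$, whence $K_{\scrX/R}+\scrX_{k,\mathrm{red}}=\mu^{\ast}\lb K_{\scX/R}+\scX_{k,\mathrm{red}}\rb+K_{\scrX/\scX}$, which by \pref{pr:canonical} is linearly equivalent to $K_{\scrX/\scX}$. I would then show $K_{\scrX/\scX}=0$, i.e.\ that the blow-ups are crepant over $\scX$; since the divisor in question is then trivial, it is in particular semi-ample. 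Concretely, fixing a nowhere-vanishing logarithmic volume form $\sigma$ generating $\omega_{\scX}(\scX_k)$ (which is trivial by \pref{pr:canonical}), one checks chart by chart that $\mu^{\ast}\sigma$ generates $\omega_{\scrX}(\scrX_{k,\mathrm{red}})$ on $\scrU_\scS$, i.e.\ that $K_{\scrX/\scX}|_{\scrU_\scS}=0$. For $\scS\in\scrS_{C,1}$ this is immediate: $\scrU_\scS\cong\bA^{d+1}$ and $\scrX_k|_{\scrU_\scS}$ is the full toric boundary, so $\omega_{\scrU_\scS}(\scrX_k|_{\scrU_\scS})$ is generated by the toric logarithmic volume form and one only has to match $\mu^{\ast}\sigma$ with it using the monomial description of $\mu|_{\scrU_\scS}$ from \eqref{eq:x0l} (equivalently \pref{lm:pi}) and \pref{cr:usk}. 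For $\scS\in\scrS_{C,2}$ the same computation, now carried out with the (not entirely toric) presentation \eqref{eq:Us}, shows that no extra zeros or poles along the exceptional divisors are introduced. As the $\scrU_\scS$ cover $\scrX_k$, which contains every $\mu$-exceptional divisor, this gives $K_{\scrX/\scX}=0$ and hence $K_{\scrX/R}+\scrX_{k,\mathrm{red}}\sim 0$, which is semi-ample. The main obstacle is precisely this crepancy statement: it is transparent for $\scS\in\scrS_{C,1}$, but for $\scS\in\scrS_{C,2}$ (where genuine hypersurface relations occur and not every coordinate of $\scrU_\scS$ is a boundary component) it requires the careful monomial bookkeeping of \pref{sc:local} and the genericity of the $s_i$; an alternative, if one prefers to avoid the local volume-form computation, is to argue that $\lb\scX,\scX_{k,\mathrm{red}}\rb$ is log canonical and each blow-up centre $D^{l}_{l+1}$ is a log canonical centre, so that each individual blow-up preserves $K+(\text{reduced fibre})$, and to conclude by induction together with \pref{pr:canonical}.
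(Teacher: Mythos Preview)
Your overall strategy matches the paper's, but the regularity argument has a genuine gap. You assert that for $\scS\in\scrS_{C,1}$ the relation $g_{\scS,i}=f_{\scS,i}\cdot(\text{monomial})-z^{i,k_i}_\scS$ ``eliminates the variable $z^{i,k_i}_\scS$'' and hence $\scrU_\scS\cong\bA^{d+1}$. This is false: the function $f_{\scS,i}$ is the pullback of a \emph{general} section $s_i$ and depends on all the coordinates, in particular on $z^{i,k_i}_\scS$ itself (and on the $z^{i',k_{i'}}_\scS$ for other $i'$). The equation therefore does not express $z^{i,k_i}_\scS$ as a function of the remaining variables, and the projection $\scrU_\scS\to\bA^{d+1}$ is typically finite of degree $>1$ rather than an isomorphism. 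The same problem undermines your $\scrS_{C,2}$ case: the claim that the surviving relations ``involve disjoint sets of variables'' is simply wrong, since every $f_{\scS,i}$ involves all the $z_\scS^{i',j}$. The paper deals with exactly this difficulty: at an arbitrary closed point $p$ it builds an \'etale neighbourhood modelled on a scheme $\Spec H$ in which the $f_i$ are replaced by independent linear variables $y_i$ (using the genericity of the $s_i$ to get the required nonvanishing Jacobian), and then checks regularity of the blow-ups of this purely monomial model. Your Bertini intuition is pointing in the right direction, but it has to be implemented through this \'etale trick, not through a global elimination that does not exist.

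For minimality, your approach is correct in principle but you are working much harder than necessary. The paper simply observes that $\scrX\to\scX$ is a \emph{small} resolution: by \pref{lm:intersect2} each exceptional ``divisor'' $D_{l_0}^{l_0}$ is birational to the centre $D_{l_0}^{l_0-1}$ (it is a blow-up of it along a codimension $\ge 2$ locus), so the morphism has no exceptional divisors at all. Hence $K_{\scrX/\scX}=0$ automatically, and \pref{pr:canonical} finishes. Your chart-by-chart volume-form computation and your alternative log-canonical-centre induction would both reach the same conclusion, but the smallness observation makes them unnecessary.
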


In the following, we will use $I_\scX, \scX_l, D_{i}^l, \overline{D}_i$, which we defined in \pref{sc:blow-up}.
See the final paragraph of \pref{sc:blow-up} for the definitions of these.

\begin{lemma}\label{lm:special}
One has
\begin{align}\label{eq:central}
\scrX_k = \sum_{i \in I_\scX} \overline{D}_i,
\end{align}
where $\scrX_k$ denotes the special fiber of $\scrX$.
\end{lemma}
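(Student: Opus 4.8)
The plan is to realize the special fiber $\scrX_k$, as a divisor, as the principal divisor $\operatorname{div}(t)$ cut out by the uniformizer $t$ — which is legitimate since every blow-up in \pref{sc:blow-up} is performed over $\Spec R$, so $t$ is a global regular function on $\scrX$ — and then to compute $\operatorname{div}(t)$ chart by chart using the explicit description of the repetitive blow-ups in \pref{sc:local}. First I would observe that $\bigcup_{i \in I_\scX}\overline{D}_i \subseteq \scrX_k$: indeed $D_i \subseteq \scX_k = V(t)$ for every $i$, and each center $D_{l+1}^l$ of the successive blow-ups lies in the special fiber, so this inclusion is inherited by all strict transforms and all exceptional divisors. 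Hence both sides of \eqref{eq:central} are effective divisors supported on $\scrX_k$, and since the charts $\scrU_\scS$ (with $C$ a relevant cone of maximal dimension and $\scS \in \scrS_{C, 1}\cup\scrS_{C, 2}$) cover $\scrX_k$ by \pref{cr:cover}, it suffices to verify \eqref{eq:central} after restricting to each $\scrU_\scS$.

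On a fixed chart $\scrU_\scS \cong \Spec k[z_\scS^{i, j}]/(g_{\scS, i})$ from \pref{lm:us}, I would track $t = \prod_{j=0}^{k_0} z_0^{0, j}$ through the composition of the blow-up morphisms. By the computation \eqref{eq:image-t} carried out in the proof of \pref{cr:usk}, one obtains
\[
t|_{\scrU_\scS} \;=\; \prod_{l=1}^{\bar{p}+1} z_\scS^{i(l),\, j_{i(l)}(l-1)},
\]
and the $\bar{p}+1$ pairs $\bigl(i(l), j_{i(l)}(l-1)\bigr)$ are pairwise distinct, since they record the successive edges of the walk attached to $\scS$ in \pref{sc:shuffle} and no edge is traversed twice. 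Thus $\scrX_k\cap\scrU_\scS = \operatorname{div}(t)|_{\scrU_\scS} = \sum_{l=1}^{\bar{p}+1}\operatorname{div}\bigl(z_\scS^{i(l), j_{i(l)}(l-1)}\bigr)\big|_{\scrU_\scS}$, and by \pref{lm:comp} the $l$-th summand is $\overline{D}_{\scS, l}\cap\scrU_\scS$. Each $D_{\scS, l}$ is an irreducible component of $\scX_k$, hence equals $D_i$ for a unique $i \in I_\scX$, and distinct $l$ give distinct components and therefore distinct transforms $\overline{D}_{\scS, l}$; so the $\overline{D}_{\scS, l}\cap\scrU_\scS$ exhaust, with multiplicity one, the divisor $\scrX_k\cap\scrU_\scS$, and — since any $\overline{D}_i$ meeting $\scrU_\scS$ is a component of $\scrX_k\cap\scrU_\scS$ — they are exactly the traces of the global components $\overline{D}_i$ on $\scrU_\scS$. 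Hence $\scrX_k\cap\scrU_\scS = \sum_{i \in I_\scX}\overline{D}_i\cap\scrU_\scS$, and gluing these local identities over the cover $\{\scrU_\scS\}$ of $\scrX_k$ yields \eqref{eq:central}.

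The substance of the argument is entirely the combinatorial bookkeeping in the middle step: verifying that the coordinate functions occurring in the monomial $t|_{\scrU_\scS}$ are pairwise distinct, and that \pref{lm:comp} matches each of them with a global component $\overline{D}_i$ so that the $\overline{D}_{\scS, l}$ account for all components of $\scrX_k$ over $\scrU_\scS$. This is where the only real care is needed, but both points follow directly from the shuffle/walk correspondence recalled in \pref{sc:shuffle} together with the blow-up formulas of \pref{dl:u}, so no computation beyond what is already assembled in \pref{sc:local} is required. (One could instead argue by induction on $l$ that $(\scX_l)_k = \sum_{i\in I_\scX} D_i^l$ as divisors, reducing the inductive step to the same local model, but this amounts to the identical bookkeeping.)
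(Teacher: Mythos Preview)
Your proof is correct and follows essentially the same approach as the paper: compute $t$ on each chart $\scrU_\scS$ via \eqref{eq:image-t}, identify the factors with the $\overline{D}_{\scS,l}$ using \pref{lm:comp}, and glue via \pref{cr:cover}. You have simply made explicit the distinctness and exhaustiveness checks that the paper leaves implicit.
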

\begin{proof}
As we saw in \eqref{eq:image-t}, the image of the function $t=\prod_{j=0}^{k_0}z^{0,j}_0$ by the map \eqref{eq:x0l} with $l=\bar{l}$ is $\prod_{l = 1}^{\bar{p}+1} z_{\scS}^{i(l), j_{i(l)}(l-1)}$.
By this, \pref{lm:comp}, and \pref{cr:cover}, we obtain \eqref{eq:central}.
\end{proof}

\begin{lemma}\label{lm:intersect0}
Let $l \in I_\scX \cup \lc 0 \rc$.
For any integer $i \in I_\scX \cap \bZ_{>l}$, the divisor $D_i^l \subset \scX_l$ is a smooth toric variety. 
Furthermore, for any subset $J \subset I_\scX \cap \bZ_{>l}$, the intersection $\bigcap_{j \in J} D_j^l \subset \scX_l$ is a toric stratum of every toric variety $D_j^l$ $(j \in J)$, if the intersection is not empty.
\end{lemma}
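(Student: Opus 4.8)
The plan is to reduce both assertions to a computation in the explicit blow-up charts of \pref{sc:local}. The case $l=0$ is separate and immediate from the construction recalled in \pref{sc:blow-up}: each $D_i^0=D_i$ is a $d$-dimensional toric stratum of the smooth toric variety $X_{\tilde{\Sigma}'}$, and any intersection of toric strata of a toric variety is again a toric stratum of each of them (or empty). For $l\geq 1$ I would use \pref{cr:cover}, together with the evident refinement recording which of the first $l$ blow-ups affect a given chart $\scU_\scS$, to reduce to verifying the claims in each chart $\scU_{\scS, l'}$; by \pref{dl:u}(1)(a) this chart is the closed subscheme $\Spec k[z^{i,j}_{l'}]/(g_{l',i})$ of the affine space $\bA^{d+r+1}$ with coordinates $z^{i,j}_{l'}$, and the transition maps between such charts are monomial, so that a subvariety covered by coordinate subspaces of these charts is automatically a toric variety.

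In such a chart, \eqref{eq:transform1} and \pref{lm:comp} describe every divisor $D_i^l$ meeting $\scU_{\scS, l'}$ as the intersection of $\scU_{\scS, l'}$ with a coordinate subspace $Z_i\subset\bA^{d+r+1}$ — namely $\{z^{i(l''), j_{i(l'')}(l''-1)}_{l'}=0\}$ when $D_i^l=\overline D_{\scS,l''}$ is exceptional, and $\{z^{i,j_i}_{l'}=0 : i\in I(l')\}$ when it is a strict transform — and hence describe $\bigcap_{j\in J}D_j^l\cap\scU_{\scS, l'}$ as the intersection of $\scU_{\scS, l'}$ with $\bigcap_{j\in J}Z_j$. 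The heart of the matter is to restrict the $g_{l',i}$ to such a coordinate subspace. Each $g_{l',i}$ has the shape $f_{l',i}\cdot M^{(1)}_i-M^{(2)}_i$ with $M^{(1)}_i$ and $M^{(2)}_i=\prod_{j=j_i(l')}^{k_i}z^{i,j}_{l'}$ monomials of disjoint support and $f_{l',i}$ a (general) function on the chart; what I would check, by reading off the combinatorics of the walk attached to $\scS$, is that on each relevant subspace $\bigcap_{j\in J}Z_j$ every $g_{l',i}$ restricts either to the zero function or to a single-variable equation $z^{i,k_i}_{l'}=0$, so that $\bigcap_{j\in J}D_j^l\cap\scU_{\scS, l'}$ is itself a coordinate subspace of $\bA^{d+r+1}$, hence a smooth affine toric variety, of the expected dimension $d+1-|J|$. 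Taking $|J|=1$ gives the first assertion; and since $\bigcap_{j\in J}Z_j\subset Z_j$, after restriction $\bigcap_{j\in J}D_j^l\cap\scU_{\scS, l'}$ is a coordinate subspace of $D_j^l\cap\scU_{\scS, l'}$, i.e.\ a toric stratum of it; gluing over the charts gives the statement over $\scX_l$.

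The step I expect to be the main obstacle is precisely this last verification: showing that for each shuffle $\scS$ and each admissible $J$ the subspace $\bigcap_{j\in J}Z_j$ annihilates enough of each $g_{l',i}$ — killing $M^{(2)}_i$ together with $M^{(1)}_i$, or else killing $M^{(1)}_i$ and all but one factor of $M^{(2)}_i$ — so that no equation survives as a genuine, a priori non-toric, hypersurface, and that the surviving coordinate subspace has the right dimension. This amounts to unwinding the definitions of $I(l')$, $l_i$ and the grid positions $j_i(l')$ along the walk of $\scS$, using in particular that $i(l'')=i$ forces $j_i(l''-1)<k_i$, the observation already exploited in \pref{cr:zsl}, so that the coordinate cutting out an exceptional divisor is never one the other equations have eliminated. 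The charts with $\scS\in\scrS_{C,2}$ require no argument, since by \pref{dl:u}(2) no component of $\scX_k$ meets them.
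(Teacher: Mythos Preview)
Your approach is substantially different from the paper's. The paper does not touch the explicit charts of \pref{sc:local} at all for this lemma; it gives a short induction on $l$ using only two abstract facts about smooth toric varieties: (i) the blow-up of a smooth toric variety along a toric stratum is again smooth toric, and (ii) under such a blow-up the strict transform of a toric stratum is again a toric stratum. The base case $l=0$ is your first sentence; for the inductive step one observes that $D_i^l$ is the blow-up of $D_i^{l-1}$ along $D_i^{l-1}\cap D_l^{l-1}$, which by the inductive hypothesis is a toric stratum (or empty), so (i) gives the first claim; and $\bigcap_{j\in J}D_j^l$ is the strict transform of $\bigcap_{j\in J}D_j^{l-1}$ under this toric blow-up, so (ii) gives the second.

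Your chart-by-chart verification can be made to work, and your analysis of how each $g_{l',i}$ restricts to a coordinate subspace is essentially correct, but two points need repair. First, since the lemma concerns only $i>l$, every $D_i^l$ in play is a strict transform of a component not yet blown up; the ``exceptional'' case you mention (and your citation of \pref{lm:comp}, which concerns divisors on the final model $\scrX$) is irrelevant here. Second, and more seriously, your passage from ``coordinate subspace in every chart with monomial transitions'' to ``toric stratum of $D_j^l$'' skips the global step: a torus-invariant closed subvariety of a toric variety is in general only a \emph{union} of orbit closures, and you have not argued irreducibility of $\bigcap_{j\in J}D_j^l$. This is exactly what the paper's inductive framing gives for free (strict transforms of irreducible varieties are irreducible), and without it your local computation only yields that the intersection is smooth and torus-invariant, not that it is a single toric stratum.
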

\begin{proof}
We show it by induction on $l$.
The claim for $l=0$ is obvious from \pref{cd:add}.
We suppose that the claim holds for $l-1$, and show the claim for $l$.

First, we show the former claim for $l$.
By the induction hypothesis, for any integer $i \in I_\scX \cap \bZ_{>l}$, the divisor $D_i^{l-1} \subset \scX_{l-1}$ is a smooth toric variety.
For the $l$-th blow-up, the strict transform $D_i^l$ of $D_i^{l-1}$ is isomorphic to the blow-up of $D_i^{l-1}$ along the intersection $D_i^{l-1} \cap D_l^{l-1} \subset \scX_{l-1}$.
If the intersection $D_i^{l-1} \cap D_l^{l-1}$ is not empty, then it is a toric stratum of the smooth toric variety $D_i^{l-1}$ by the induction hypothesis.
Thus the strict transform $D_i^l$ is also a smooth toric variety.
If the intersection $D_i^{l-1} \cap D_l^{l-1}$ is empty, then the the strict transform $D_i^l \subset \scX_{l}$ is isomorphic to $D_i^{l-1} \subset \scX_{l-1}$, and is also a smooth toric variety.
Thus we conclude the former claim for $l$.

Next, we show the latter claim for $l$.
Suppose that the intersection $\bigcap_{j \in J} D_j^l \subset \scX_l$ is not empty for a subset $J \subset I_\scX \cap \bZ_{>l}$.
One can check that it coincides with the strict transform of the intersection $\bigcap_{j \in J} D_j^{l-1} \subset \scX_{l-1}$.
The intersection $\bigcap_{j \in J} D_j^{l-1}$ is also not empty, and is a toric stratum of every toric variety $D_j^{l-1}$ $(j \in J)$ by the induction hypothesis.
The strict transform of $\bigcap_{j \in J} D_j^{l-1}$ also coincides with the strict transform of $\bigcap_{j \in J} D_j^{l-1}$ with respect to the blow-up of $D_j^{l-1}$ along $D_l^{l-1} \cap D_j^{l-1}$ for any $j \in J$.
The intersection $D_l^{l-1} \cap D_j^{l-1}$ is a toric stratum of $D_j^{l-1}$ again by the induction hypothesis.
Since for a toric blow-up of a smooth toric variety, the strict transform of a toric stratum is again a toric stratum, the strict transform of $\bigcap_{j \in J} D_j^{l-1}$ is a toric stratum of $D_j^l \subset \scX_l$.
Thus we conclude the claim.
\end{proof}

\begin{lemma}\label{lm:intersect00}
Let $l \in I_\scX$ and $i \in I_\scX \cap \bZ_{>l}$.
\begin{enumerate}
\item For any integer $j \in \lc 1, \cdots, l \rc$, the intersection $D_j^l \cap D_i^l \subset \scX_l$ is a toric divisor of the toric variety $D_i^l$, if the intersection is not empty.
\item Such toric divisors $D_j^l \cap D_i^l \neq \emptyset$ $(1 \leq j \leq l)$ of the toric variety $D_i^l$ are all distinct.
\end{enumerate}
\end{lemma}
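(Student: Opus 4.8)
The plan is to establish (1) and (2) simultaneously by induction on $l$, combining the structure theorem Lemma~\ref{lm:intersect0} with the explicit local model of the repetitive blow-ups from Definition-Lemma~\ref{dl:u}.

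Fix a point of $D_j^l \cap D_i^l$. By Lemma~\ref{lm:cover} it lies in some chart $\scU_{\scS,l}$ attached to a relevant maximal cone $C$ and a shuffle $\scS \in \scrS_{C,1} \cup \scrS_{C,2}$, where $\scU_{\scS,l} \cong \Spec k[z_l^{i',j'}]/(g_{l,i'})$ and Definition-Lemma~\ref{dl:u}(1)(b) presents $D_i^l$ (which is not yet blown up, since $i > l$) as $\{z_l^{i',j_{i'}} = 0 : i' \in I(l)\}$ for the exponents $j_{i'} \in [j_{i'}(l), k_{i'}]$ determined by the cone $\{n_{i',j_{i'}}\}$ of $D_i$. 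The first task is to check, by an induction running alongside the one in the proof of Definition-Lemma~\ref{dl:u} and using the transition map \eqref{eq:blow-up1} exactly as in the proof of Lemma~\ref{lm:comp}, that in $\scU_{\scS,l}$ each already-created divisor $D_m^l$ with $1 \le m \le l$ is the Cartier divisor $\{z_l^{\,i(m),\,j_{i(m)}(m-1)} = 0\}$, where $i(m) = i_\scS(m)$ and the $j_{i(m)}$ are the grid data of the shuffle $\scS$.

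Granting this, (1) is immediate: the grid walk attached to $\scS$ increments the $i(m)$-th coordinate from $j_{i(m)}(m-1)$ to $j_{i(m)}(m) \le j_{i(m)}(l) \le j_{i(m)}$, so the pair $(i(m), j_{i(m)}(m-1))$ is never among the pairs $(i', j_{i'})$ cutting out $D_i^l$; hence $z_l^{\,i(m),\,j_{i(m)}(m-1)}$ restricts to a genuine toric coordinate on the smooth toric variety $D_i^l$ (smoothness and toricity by Lemma~\ref{lm:intersect0}), and $D_m^l \cap D_i^l$ is a prime toric divisor of $D_i^l$. For (2), the same computation shows that the lattice points $(i(m), j_{i(m)}(m-1))$, $1 \le m \le l$, are pairwise distinct — if $i(m)=i(m')$ the two ``from'' heights differ, and otherwise the first coordinates differ — so the divisors $\{z_l^{\,i(m),\,j_{i(m)}(m-1)}=0\} \cap D_i^l$ are pairwise distinct.

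The main obstacle is the bookkeeping compressed into the second paragraph: one must reconcile the global total order $\le_\scX$ and the global labels $D_j$ of \pref{sc:blow-up} with the shuffle/grid data of the charts $\scU_{\scS,l}$ of \pref{sc:local}, and verify that $z_l^{\,i(m),\,j_{i(m)}(m-1)}$ cuts out $D_m^l$ at every intermediate stage $l$, not only in the final model $\scrX$ where this is recorded by Lemma~\ref{lm:comp}. A chart-free alternative would be to argue directly that $D_i^l = \mathrm{Bl}_{D_i^{l-1}\cap D_l^{l-1}} D_i^{l-1}$ (as in the proof of Lemma~\ref{lm:intersect0}), that $D_l^l \cap D_i^l$ is the resulting exceptional prime toric divisor whereas $D_j^l \cap D_i^l$ for $j<l$ is the strict transform of the prime toric divisor $D_j^{l-1}\cap D_i^{l-1}$ furnished by the inductive hypothesis, and that strict transforms of distinct prime toric divisors remain distinct and differ from a newly contracted divisor; but this route still needs separate care (again cleanest in the charts) in the degenerate case where the center $D_i^{l-1}\cap D_l^{l-1}$ is already a divisor of $D_i^{l-1}$, so that $D_i^l \to D_i^{l-1}$ is an isomorphism.
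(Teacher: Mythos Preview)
Your chart-free alternative in the final paragraph is precisely the paper's route, and the ``degenerate case'' you flag causes no difficulty. For (1), the paper argues by induction on $l$ entirely at the level of the toric variety $D_i^l$, never descending to the charts $\scU_{\scS,\cdot}$: for $j=l$, the intersection $D_l^l\cap D_i^l$ is the exceptional divisor of the toric blow-up $D_i^l\to D_i^{l-1}$ along the toric stratum $D_l^{l-1}\cap D_i^{l-1}$ (Lemma~\ref{lm:intersect0}), hence a single prime toric divisor; for $j<l$, the intersection is the strict transform, under this same toric blow-up, of the prime toric divisor $D_j^{l-1}\cap D_i^{l-1}$ furnished by induction, hence again prime toric. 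When the center is already a divisor the blow-up is an isomorphism and the ``exceptional divisor'' equals the center, which is already prime toric. For (2), writing $\Sigma_i^{l'}(1)$ for the set of rays in the fan of $D_i^{l'}$, the paper tracks which ray each $D_j^l\cap D_i^l$ corresponds to: if $D_j^0\cap D_i^0$ was already a toric divisor of $D_i^0$, the ray survives from $\Sigma_i^0(1)$; otherwise it is the unique new ray in $\Sigma_i^j(1)\setminus\Sigma_i^{j-1}(1)$ created when $D_j$ is blown up. These rays are visibly pairwise distinct.

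Your primary chart-based approach has a real gap in (1): knowing that $D_j^l\cap D_i^l$ is cut out of $D_i^l$ by a single toric coordinate in each chart only forces it to be a \emph{union} of prime toric divisors of $D_i^l$, not a single one. For instance, $\{0\}\times\bP^1\cup\{\infty\}\times\bP^1\subset\bP^1\times\bP^1$ is a single coordinate hyperplane in every affine toric chart yet is reducible. Closing this gap means identifying one ray of the fan of $D_i^l$ and checking it is independent of the chart---which is the paper's argument in disguise. Your argument for (2) via distinct coordinate indices does go through once the global-versus-local index conflation you acknowledge is sorted out, since equality of the two intersections would force them to agree in a common chart where they are cut out by different coordinates; but note that this step also needs the (true but unproved) fact that every global $D_j^l$ with $j\le l$ meeting a given chart $\scU_{\scS,l'}$ is one of the local $D_{\scS,m}$'s.
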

\begin{proof}
First, we show the claim (1) for the case $j=l$.
Suppose that $D_l^l \cap D_i^l \subset \scX_{l}$ is not empty.
The intersection $D_l^l \cap D_i^l \subset \scX_l$ is the exceptional divisor of the blow-up of $D_i^{l-1} \subset \scX_{l-1}$ along $D_l^{l-1} \cap D_i^{l-1} \subset \scX_{l-1}$.
Since $D_l^{l-1} \cap D_i^{l-1} \subset \scX_{l-1}$ is a toric stratum of the smooth toric variety $D_i^{l-1} \subset \scX_{l-1}$ by \pref{lm:intersect0}, the intersection $D_l^{l} \cap D_i^{l} \subset \scX_l$ is a toric divisor of the toric variety $D_i^l \subset \scX_{l}$.
Thus we conclude the claim (1) for the case $j=l$.
In particular, the claim (1) holds when $l=1$.

Next, we suppose that the claim (1) holds for $l-1$, and show it for $l$.
Let $j \in \lc 1, \cdots, l-1 \rc$, and suppose that the intersection $D_j^l \cap D_i^l \subset \scX_{l}$ is not empty.
One can check that the intersection $D_j^l \cap D_i^l$ coincides with the strict transform of $D_j^{l-1} \cap D_i^{l-1} \neq \emptyset$.
It also coincides with the strict transform of $D_j^{l-1} \cap D_i^{l-1} \subset \scX_{l-1}$ with respect to the blow-up of $D_i^{l-1}$ along $D_l^{l-1} \cap D_i^{l-1} \subset \scX_{l-1}$.
The intersection $D_j^{l-1} \cap D_i^{l-1} \subset \scX_{l-1}$ is a toric divisor of $D_i^{l-1} \subset \scX_{l-1}$ by the induction hypothesis, and the intersection $D_l^{l-1} \cap D_i^{l-1} \subset \scX_{l-1}$ is a toric stratum of the smooth toric variety $D_i^{l-1}$ by \pref{lm:intersect0}.
Since for a toric blow-up of a smooth toric variety, the strict transform of a toric divisor is again a toric divisor, the intersection $D_j^l \cap D_i^l \subset \scX_{l}$ is also a toric divisor of the toric variety $D_i^l$.
Thus we conclude the claim (1) for $l$ and $j \in \lc 1, \cdots, l-1 \rc$.
Since we have already proved the claim (1) for the case $j=l$ in the previous paragraph, we conclude the claim (1).

We show the claim (2).
For any $l' \in \lc 0, \cdots, l \rc$, let $\Sigma_i^{l'}(1)$ denote the set of $1$-dimensional cones in the fan corresponding to the smooth toric variety $D_i^{l'}$.
Then $\Sigma_i^{l_1}(1) \subset \Sigma_i^{l_2}(1)$ for $l_1 \leq l_2$.
For any $l' \in \lc 1, \cdots, l \rc$, the toric stratum $D_j^{l'} \cap D_i^{l'} \subset D_i^{l'}$ is
\begin{itemize}
\item the strict transform of $D_j^{l'-1} \cap D_i^{l'-1} \subset \scX_{l'}$ when $l' \neq j$, and
\item the exceptional divisor of the blow-up of $D_i^{l'-1}$ along $D_j^{l'-1} \cap D_i^{l'-1}$ when $l' = j$.
\end{itemize}
We can see the following:
If $D_j^{0} \cap D_i^{0} \subset \scX_{0}$ is a toric divisor of $D_i^0$, then $D_j^{l} \cap D_i^{l} \subset \scX_{l}$ is a toric divisor of $D_i^l$, which corresponds to a $1$-dimensional cone originally contained in $\Sigma_i^{0}(1) \lb \subset \Sigma_i^l (1) \rb$.
If $D_j^{0} \cap D_i^{0} \subset \scX_{0}$ is a toric stratum of $D_i^0$ of codimension greater than $1$, then $D_j^{l} \cap D_i^{l} \subset \scX_{l}$ is a toric divisor of $D_i^l$, which corresponds to the unique $1$-dimensional cone in $\Sigma_i^{j}(1) \setminus \Sigma_i^{j-1} (1) \lb \subset \Sigma_i^l (1) \rb$.
Since toric divisors $D_j^{0} \cap D_i^{0} \subset \scX_{0}$ of $D_i^0$ of the former case are all distinct, toric divisors $D_j^{l} \cap D_i^{l} \subset \scX_{l}$ arising from the former case correspond to distinct $1$-dimensional cones in $\Sigma_i^{0}(1) \lb \subset \Sigma_i^l (1) \rb$.
Furthermore, every toric divisor $D_j^{l} \cap D_i^{l} \subset \scX_{l}$ arising from the latter case corresponds to the unique $1$-dimensional cone in $\Sigma_i^{j}(1) \setminus \Sigma_i^{j-1} (1) \lb \subset \Sigma_i^l (1) \rb$.
Therefore, we can conclude the claim (2).
\end{proof}

\begin{lemma}\label{lm:intersect1}
Let $l \in I_\scX$, $i \in I_\scX \cap \bZ_{>l}$, and $J \subset \lc 1, \cdots, l \rc$.
The intersection $\bigcap_{j \in J \sqcup \lc i \rc} D_j^l \subset \scX_l$ is a toric stratum of codimension $|J|$ in the toric variety $D_i^l$, if the intersection is not empty.
\end{lemma}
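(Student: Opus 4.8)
The plan is to obtain this statement directly from \pref{lm:intersect0} and \pref{lm:intersect00}, without rerunning an induction on $l$. Set $Z := \bigcap_{j \in J \sqcup \lc i \rc} D_j^l$ and observe that
\[
Z = \lb \bigcap_{j \in J} D_j^l \rb \cap D_i^l = \bigcap_{j \in J} \lb D_j^l \cap D_i^l \rb ,
\]
so that $Z$ is an intersection, taken inside the variety $D_i^l$, of the divisors $D_j^l \cap D_i^l$ for $j \in J$. The idea is to show that each of these is a distinct toric prime divisor of $D_i^l$ and then apply the orbit--cone correspondence for the smooth toric variety $D_i^l$.

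First I would dispose of the case $J = \emptyset$, in which $Z = D_i^l$ is a smooth toric variety by \pref{lm:intersect0}, hence a toric stratum of codimension $0 = |J|$. So assume $J \neq \emptyset$ and that $Z$ is non-empty. For each $j \in J$ the divisor $D_j^l \cap D_i^l$ contains $Z$ and is therefore non-empty, so by \pref{lm:intersect00}(1) it is a toric prime divisor of $D_i^l$, and by \pref{lm:intersect00}(2) the $|J|$ divisors $D_j^l \cap D_i^l$ $(j \in J)$ are pairwise distinct. Moreover $D_i^l$ is smooth by \pref{lm:intersect0}, so its fan is smooth; hence a non-empty intersection of distinct toric prime divisors $D_{\rho_1}, \dots, D_{\rho_k}$ of $D_i^l$ equals the orbit closure $V(\sigma)$, where $\sigma$ is the cone generated by $\rho_1, \dots, \rho_k$; this cone lies in the fan and, by smoothness, its generators are part of a lattice basis, so $\dim \sigma = k$ and $V(\sigma)$ has codimension $k$ in $D_i^l$ (see e.g.\ \cite[\S 3.2]{MR2810322}). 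Applying this to the $|J|$ distinct toric prime divisors $D_j^l \cap D_i^l$ $(j \in J)$ shows that $Z$ is a toric stratum of codimension $|J|$ in $D_i^l$, as claimed.

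In effect, the substance has already been put into \pref{lm:intersect0} and \pref{lm:intersect00}; the one point deserving attention is the \emph{distinctness} of the divisors $D_j^l \cap D_i^l$, which is exactly what forces the codimension of $Z$ to be $|J|$ rather than strictly smaller, and this is precisely \pref{lm:intersect00}(2). If one prefers to stay within the inductive scheme of the previous lemmas, the same conclusion follows by distinguishing the cases $l \notin J$ --- where $Z$ is the strict transform of $\bigcap_{j \in J \sqcup \lc i \rc} D_j^{l-1}$ under the blow-up of $D_i^{l-1}$ along the toric stratum $D_l^{l-1} \cap D_i^{l-1}$, and strict transforms of toric strata under toric blow-ups of smooth toric varieties remain toric strata of the same codimension --- and $l \in J$ --- where $D_l^l \cap D_i^l$ is the exceptional divisor of that blow-up and $Z$ is again an intersection of distinct toric prime divisors inside $D_i^l$ --- but this merely reduces to the same toric computation.
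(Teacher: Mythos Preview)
Your proof is correct and follows essentially the same approach as the paper: the paper's proof is the one-line remark that the lemma ``immediately follows from \pref{lm:intersect00}, since $D_i^l$ is a smooth toric variety (\pref{lm:intersect0}),'' and you have simply spelled out the underlying toric fact that a non-empty intersection of distinct toric prime divisors in a smooth toric variety is an orbit closure of the expected codimension. The alternative inductive sketch you include at the end is unnecessary but harmless.
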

\begin{proof}
This immediately follows from \pref{lm:intersect00}, since $D_i^l$ is a smooth toric variety (\pref{lm:intersect0}).
\end{proof}

Let $l_0 \in I_\scX$ be an arbitrary element, and $\lb n_i \rb_i$ be the element of \eqref{eq:prod} corresponding to the irreducible component $D_{l_0} \subset \scX_k$.
Let further $\scK \subset I_\scX$ be the subset of integers $k \in I_\scX$ such that there are exactly two distinct elements $i_1, i_2 \in \lc 0, \cdots, r \rc$ such that the element $\lb n_i' \rb_i$ of \eqref{eq:prod} corresponding to $D_k$ satisfies $n_i' > n_i$ for $i=i_1, i_2$ and $n_i' = n_i$ for $i \neq i_1, i_2$.
For every $j \in \lc 1, \cdots, r \rc$, we also let $\scK_j \subset I_\scX$ denote the subset of integers $k \in I_\scX$ such that the element $\lb n_i' \rb_i$ of \eqref{eq:prod} corresponding to $D_k$ satisfies $n_i' > n_i$ for $i=j$ and $n_i' = n_i$ for $i \neq j$.

\begin{lemma}\label{lm:intersect2}
The exceptional divisor $D_{l_0}^{l_0} \subset \scX_{l_0}$ of the $l_0$-th blow-up is isomorphic to the blow-up of the toric variety $D_{l_0}^{l_0-1} \subset \scX_{l_0-1}$ along
\begin{align}\label{eq:along}
D_{l_0}^{l_0-1} \cap \ld \lb \bigcup_{k \in \scK} D_k^{l_0-1} \rb \cup \bigcup_{j=1}^r \lb \bigcup_{k \in \scK_j} D_k^{l_0-1} \cap \lb \tilde{s}_j=0 \rb \rb \rd,
\end{align}
where $\tilde{s}_j$ is the section of the line bundle obtained as the pullback of the section $s_j \in \Gamma \lb X_{\tilde{\Sigma}'},  \scL_j \rb$ that we took for constructing the toric degeneration $\scX \to \Spec R$.
In particular, the divisor $D_{l_0}^{l_0}$ is irreducible.
\end{lemma}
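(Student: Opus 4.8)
The plan is to reduce the assertion to the explicit local charts of \pref{sc:local} and to carry out the blow-up there by hand, comparing the outcome with \eqref{eq:along}. Since the claim is local along $D_{l_0}^{l_0-1}$, it suffices, by \pref{lm:cover} and \pref{cr:cover}, to work in the charts $\scU_{\scS,l-1}$ in which the $l_0$-th global blow-up acts nontrivially, namely those attached to a relevant cone $C$ of maximal dimension, a shuffle $\scS\in\scrS_{C,1}\cup\scrS_{C,2}$, and the index $l=l(\scS)\le\bar l$ with $D_{\scS,l}=D_{l_0}$; in the remaining charts the verification is either vacuous (the strict transform of $D_{l_0}$ is empty) or trivial (the blow-up is already an isomorphism by \pref{dl:u}, while \eqref{eq:along} is empty there). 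On such a chart the $l_0$-th blow-up restricts to the blow-up of $\scU_{\scS,l-1}$ along the ideal $\mathfrak{a}_\scS:=\lb z^{i,j_i(l-1)}_{l-1}\relmid i\in I(l-1)\rb$, and \eqref{eq:gli} shows this is exactly the ideal of the toric stratum $D_{\scS,l}^{l-1}$ inside $\scU_{\scS,l-1}$: for $i\notin I(l-1)$ the relation $g_{l-1,i}=0$ expresses $z^{i,k_i}_{l-1}$ as $z^{0,j_0(l-1)}_{l-1}$ times a regular function, so these coordinates can be eliminated and contribute no further generators.

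First I would rewrite $\scU_{\scS,l-1}$, after eliminating the $z^{i,k_i}_{l-1}$ with $i\notin I(l-1)$, as $\Spec$ of a polynomial ring modulo the relations $a_i\,z^{0,j_0(l-1)}_{l-1}=b_i\,z^{i,j_i(l-1)}_{l-1}$ for $i\in I(l-1)\cap\lc 1,\dots,r\rc$, where $b_i=\prod_{j>j_i(l-1)}^{k_i}z^{i,j}_{l-1}$ and $a_i$ collects $f_{l-1,i}$ together with the remaining monomial factors of the first term of $g_{l-1,i}$; in these coordinates $D_{\scS,l}^{l-1}=V(\mathfrak{a}_\scS)$ is an affine space, the relations becoming trivial upon restriction to it. Blowing up $\mathfrak{a}_\scS$ and passing to the chart $\scU_{\scS,l}$ indexed by $i_\scS(l)$, one checks, exactly as in \pref{dl:u} and \pref{lm:comp}, that the exceptional divisor is $\lc z^{i(l),j_{i(l)}(l-1)}_l=0\rc$ and that the induced morphism to $D_{\scS,l}^{l-1}$ is identified with one affine chart of the blow-up of the toric variety $D_{\scS,l}^{l-1}$ along $V\lb\prod_i(\bar a_i,\bar b_i)\rb$, where the bar denotes restriction to $D_{\scS,l}^{l-1}$. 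Here one uses the elementary facts that $\mathrm{Bl}_{\prod_i J_i}=\mathrm{Bl}_{\bigcap_i J_i}$ for a finite family of ideals $J_i$, and that the blow-up of an ideal $(\alpha,\beta)$ generated by two elements has the two evident affine charts $V(\alpha-\beta v)$ and $V(\beta-\alpha v')$.

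The heart of the argument is the geometric identification of the center. By \pref{lm:intersect00} and \pref{lm:intersect1}, the toric divisors $\lc z^{i,j}_{l-1}=0\rc\cap D_{\scS,l}^{l-1}$ with $j>j_i(l-1)$ are precisely the intersections $D_k^{l_0-1}\cap D_{\scS,l}^{l-1}$ for $k\in K_i$; the products $\lc z^{i',j}_{l-1}=0\rc\cap\lc z^{i'',j'}_{l-1}=0\rc\cap D_{\scS,l}^{l-1}$ arising from the monomial factors of the $a_i$ are unions of the $D_k^{l_0-1}\cap D_{\scS,l}^{l-1}$ with $k\in K$; and $\lc f_{l-1,i}=0\rc\cap D_{\scS,l}^{l-1}=\lb\tilde{s}_i=0\rb\cap D_{\scS,l}^{l-1}$, since $f_{l-1,i}$ equals, up to a unit times a monomial that is invertible on $D_{\scS,l}^{l-1}$, the pullback of the section $s_i$. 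Taking the union over $i$ and over all charts, $V\lb\prod_i(\bar a_i,\bar b_i)\rb$ assembles to the intersection with $D_{l_0}^{l_0-1}$ of $\lb\bigcup_{k\in K}D_k^{l_0-1}\rb\cup\bigcup_{j=1}^r\lb\bigcup_{k\in K_j}D_k^{l_0-1}\cap\lb\tilde{s}_j=0\rb\rb$, and the charts $\scU_{\scS,l}$ glue to all of the blow-up of $D_{l_0}^{l_0-1}$ along \eqref{eq:along}; this last gluing is where the combinatorics of shuffles recalled in \pref{sc:shuffle}, which govern exactly the chart structure of such a blow-up, is needed. Irreducibility of $D_{l_0}^{l_0}$ is then immediate: $D_{l_0}^{l_0-1}$ is irreducible by \pref{lm:intersect0}, the center \eqref{eq:along} is a proper closed subscheme, and the blow-up of an integral scheme along a proper center is integral.

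I expect the main difficulty to lie in this third step: translating the zero loci of the coefficients $a_i,b_i$ of the local complete-intersection equations into the intrinsic description \eqref{eq:along} in terms of $K$, the $K_j$ and the sections $\tilde{s}_j$, and verifying that the charts $\scU_{\scS,l}$, as $\scS$ ranges over all shuffles meeting $D_{l_0}$, cover the entire blow-up rather than merely an open part of it.
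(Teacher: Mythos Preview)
Your overall strategy---reducing to the charts $\scU_{\scS,l-1}$, rewriting the defining equations as $a_i\,z^{0,j_0(l-1)}_{l-1}=b_i\,z^{i,j_i(l-1)}_{l-1}$, and reading off the induced blow-up on $D_{\scS,l}^{l-1}$---is exactly the paper's. The gap is in your identification of the center once $|I(l-1)|\ge 3$, which happens as soon as $r\ge 2$. The exceptional divisor is \emph{not} the blow-up of $D_{\scS,l}^{l-1}$ along the product ideal $\prod_i(\bar a_i,\bar b_i)$: over a point where all $\bar a_i,\bar b_i$ vanish, the relations $a_i z_0=b_i z_i$ degenerate to $0=0$, so the projectivized normal cone---hence the exceptional fiber---is the full $\mathbb{P}^{|I(l-1)|-1}$, whereas the blow-up along a product of $s=|I(l-1)|-1$ length-two regular sequences has fiber $(\mathbb{P}^1)^{s}$. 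Your two-generator chart picture is correct for each $(\bar a_i,\bar b_i)$ separately but does not assemble into the product blow-up. (The asserted ``elementary fact'' $\mathrm{Bl}_{\prod_i J_i}=\mathrm{Bl}_{\bigcap_i J_i}$ is also false---take $J_1=(x)$, $J_2=(x,y)$ in $k[x,y]$---but that is a side issue.)

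What the paper actually does, in formulas \eqref{eq:xi0}--\eqref{eq:xi2}, is solve the equations $g_{l,i}=0$ on the exceptional divisor for the new ratios, obtaining $z_l^{i,j_i(l-1)}=P_i/P_{i(l)}$ with $P_0=\prod_{i'\in I(l-1)\setminus\{0\}}\bar b_{i'}$ and $P_i=f_{l-1,i}\prod_{i'\in I(l-1)\setminus\{i\}}\bar b_{i'}$; the center is therefore the ideal $(P_0,\ldots,P_s)$ of \eqref{eq:busub}--\eqref{eq:busub'}, strictly smaller than $\prod_i(\bar a_i,\bar b_i)$ for $s\ge 2$. The set-theoretic description of its zero locus---either two of the $\bar b_{i'}$ (including $\bar b_0$) vanish, or some $\bar b_i$ together with the matching $f_{l-1,i}$---then yields \eqref{eq:along}. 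For $r=1$ one always has $s\le 1$ and $(P_0,P_1)=(\bar b_1,\bar a_1)$ is literally the single ideal $(\bar a_1,\bar b_1)$, so your argument does recover the hypersurface case; the discrepancy is a genuinely complete-intersection phenomenon.
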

\begin{proof}
By \pref{lm:cover}, the exceptional divisor $D_{l_0}^{l_0} \subset \scX_{l_0}$ is covered by the affine charts $\scU_{\scS, l} \subset \scX_{l_0}$ constructed in \pref{dl:u}.
Let $\scU_{\scS, l}$ be one of such charts, and let $\scS$ and $l$ be the shuffle and the integer giving rise to the chart $\scU_{\scS, l}$ in the following.
Since the exceptional divisor $D_{l_0}^{l_0}=D_{\scS, l}^l \subset \scX_{l_0}$ is defined by $z_{l}^{i(l), j_{i(l)}(l-1)}$ in $\scU_{\scS, l}$, we can see from \eqref{eq:blow-up1} that the restriction of the function $f_{l, i}$ to $D_{\scS, l}^l$ does not depend on the variables $z_{l}^{i, j}$ with $i \in I(l-1), j=j_i(l-1)$.
When $i(l)=0$, one can see from \eqref{eq:gli} that for $i \in I'(l-1)=I(l-1) \setminus \lc 0 \rc$, we have
\begin{align}\label{eq:xi0}
z_l^{i, j_i(l-1)}=
z_l^{i, j_i(l)}
=\frac{f_{l, i} \cdot \prod_{j=j_0(l)}^{k_0} z^{0,j}_l}{\prod_{j=j_i(l)+1}^{k_i} z^{i, j}_l}
=\frac{f_{l, i} \cdot \prod_{j=j_0(l-1)+1}^{k_0} z^{0,j}_l}{\prod_{j=j_i(l-1)+1}^{k_i} z^{i, j}_l}
=\frac{f_{l, i} \cdot \prod_{i' \in I(l-1) \setminus \lc i \rc} \prod_{j=j_{i'}(l-1)+1}^{k_{i'}} z^{i',j}_l}{\prod_{i' \in I(l-1) \setminus \lc 0 \rc} \prod_{j=j_{i'}(l-1)+1}^{k_{i'}} z^{i', j}_l}
\end{align}
on the open set on which the last denominator is invertible.
When $i(l) \neq 0$, one can also see from \eqref{eq:gli} that we also have
\begin{align}\label{eq:x0}
z_l^{0, j_0(l-1)}
&=z_l^{0, j_0(l)}
=\frac{\prod_{j=j_{i(l)}(l)}^{k_{i(l)}} z^{i(l), j}_l}{f_{l, i(l)} \cdot \prod_{j=j_0(l)+1}^{k_0} z^{0,j}_l}
=\frac{\prod_{j=j_{i(l)}(l-1)+1}^{k_{i(l)}} z^{i(l), j}_l}{f_{l, i(l)} \cdot \prod_{j=j_0(l-1)+1}^{k_0} z^{0,j}_l} \\ \label{eq:x0'}
&=\frac{\prod_{i' \in I(l-1) \setminus \lc 0 \rc} \prod_{j=j_{i'}(l-1)+1}^{k_{i'}} z^{i', j}_l}
{f_{l, i(l)} \cdot \prod_{i' \in I(l-1) \setminus \lc i(l) \rc} \prod_{j=j_{i'}(l-1)+1}^{k_{i'}} z^{i',j}_l},
\end{align}
and for $i \in I'(l-1) \setminus \lc 0 \rc=I(l-1) \setminus \lc 0, i(l) \rc$,
\begin{align}\label{eq:xi}
z_l^{i, j_i(l-1)}
&=
z_l^{i, j_i(l)}
=\frac{f_{l, i} \cdot \prod_{j=j_0(l)}^{k_0} z^{0,j}_l}{\prod_{j=j_i(l)+1}^{k_i} z^{i, j}_l}
=\frac{f_{l, i} \cdot \prod_{j=j_0(l-1)+1}^{k_0} z^{0,j}_l}{\prod_{j=j_i(l-1)+1}^{k_i} z^{i, j}_l}
\cdot \frac{\prod_{i' \in I(l-1) \setminus \lc 0 \rc} \prod_{j=j_{i'}(l-1)+1}^{k_{i'}} z^{i', j}_l}
{f_{l, i(l)} \cdot \prod_{i' \in I(l-1) \setminus \lc i(l) \rc} \prod_{j=j_{i'}(l-1)+1}^{k_{i'}} z^{i',j}_l} \\ \label{eq:xi2}
&=\frac{f_{l, i} \cdot \prod_{i' \in I(l-1) \setminus \lc i \rc} \prod_{j=j_{i'}(l-1)+1}^{k_{i'}} z^{i',j}_l}{f_{l, i(l)} \cdot \prod_{i' \in I(l-1) \setminus \lc i(l) \rc} \prod_{j=j_{i'}(l-1)+1}^{k_{i'}} z^{i',j}_l},
\end{align}
where we used \eqref{eq:x0}-\eqref{eq:x0'} for the last equality of \eqref{eq:xi}.

On the other hand, the divisor $D_{l_0}^{l_0-1}=D_{\scS, l}^{l-1} \subset \scX_{l_0-1}$ is defined by $z^{i, j_i(l-1)}_{l-1}=0$ $\lb i \in I(l-1) \rb$ in $\scU_{\scS, l-1}$ by \eqref{eq:transform1}.
From this and \eqref{eq:xi0}-\eqref{eq:xi2}, one can see that the exceptional divisor $D_{l_0}^{l_0}=D_{\scS, l}^l \subset \scX_{l_0}$ of the $l_0$-th blow-up is isomorphic to the blow-up of $D_{l_0}^{l_0-1}=D_{\scS, l}^{l-1} \subset \scX_{l_0-1}$ along its subvariety defined by
\begin{align}\label{eq:busub}
\prod_{i' \in I(l-1) \setminus \lc 0 \rc} \prod_{j=j_{i'}(l-1)+1}^{k_{i'}} z^{i', j}_{l-1}&=0,\\ \label{eq:busub'}
f_{l-1, i} \cdot \prod_{i' \in I(l-1) \setminus \lc i \rc} \prod_{j=j_{i'}(l-1)+1}^{k_{i'}} z^{i',j}_{l-1}&=0 \quad (i \in I(l-1) \setminus \lc 0 \rc)
\end{align}
in the affine chart $\scU_{\scS, l-1}$.
The equations \eqref{eq:busub} \eqref{eq:busub'} are satisfied if and only if we have either 
\begin{itemize}
\item $z_{l-1}^{i, j}=0$ and $f_{l-1, i}=0$ for some $i \in I(l-1) \setminus \lc 0 \rc$ and $j \geq j_i(l-1)+1$, or
\item $z_{l-1}^{i_1, j_1}=z_{l-1}^{i_2, j_2}=0$ for some distinct $i_1, i_2 \in I(l-1)$ and $j_1 \geq j_{i_1}(l-1)+1$, $j_2 \geq j_{i_2}(l-1)+1$.
\end{itemize}
We can conclude the lemma by this.
\end{proof}

\begin{lemma}\label{lm:intersect20}
For any subset $I \subset \lc 1, \cdots, l_0-1 \rc$, the intersection $\bigcap_{i \in I \sqcup \lc l_0 \rc} D_i^{l_0} \subset \scX_{l_0}$ is an irreducible subvariety of codimension $|I|+1$ in $\scX_{l_0}$, if the intersection is not empty.
\end{lemma}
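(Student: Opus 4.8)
The plan is to deduce the statement from the explicit blow-up description in \pref{lm:intersect2}. Write $Y:=D_{l_0}^{l_0-1}$; by \pref{lm:intersect0} this is a smooth toric variety, and by \pref{lm:intersect2} the exceptional divisor $E:=D_{l_0}^{l_0}$ of the $l_0$-th blow-up is the blow-up of $Y$ along the center $Z$ described there, with structure morphism $\pi \colon E \to Y$ obtained by restricting $\scX_{l_0} \to \scX_{l_0-1}$. First I would record, for each $i \in \lc 1, \dots, l_0-1 \rc$ with $D_i^{l_0} \cap E \neq \emptyset$, that $W_i:=D_i^{l_0-1} \cap Y$ is a toric divisor of $Y$ (\pref{lm:intersect00}(1)) and that $D_i^{l_0} \cap E$ is exactly the strict transform $\widetilde{W}_i$ of $W_i$ under $\pi$: indeed $D_i^{l_0}$ is the strict transform of $D_i^{l_0-1}$, the morphism $\scX_{l_0}\to\scX_{l_0-1}$ is an isomorphism away from $E$ and restricts over $Y$ to $\pi$, so $D_i^{l_0}\cap E$ is the closure in $E$ of $\pi^{-1}(W_i \setminus Z)$. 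Hence
\begin{align}
\bigcap_{i \in I \sqcup \lc l_0 \rc} D_i^{l_0} = \bigcap_{i \in I} \widetilde{W}_i \subseteq E,
\end{align}
and by \pref{lm:intersect1} the common intersection $T:=\bigcap_{i\in I}W_i \subseteq Y$ is a toric stratum of codimension $|I|$ (it is nonempty because the left-hand side is), hence irreducible of dimension $\dim Y - |I| = d - |I|$. It then remains to prove $\bigcap_{i\in I}\widetilde{W}_i = \widetilde{T}$, the strict transform of $T$ under $\pi$, which is irreducible and birational to $T$, hence of dimension $d-|I|$, i.e.\ of codimension $|I|+1$ in $\scX_{l_0}$.

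For this last identity I would argue chart by chart, using the affine cover of $E$ supplied by \pref{lm:cover}. In a chart $\scU_{\scS, l}$ with $D_{\scS, l} = D_{l_0}$, \pref{lm:comp} and its proof show that each divisor $D_i^{l_0}$ with $i \in I \sqcup \lc l_0 \rc$ meeting the chart is cut out by a single coordinate $z^{\alpha_i, \beta_i}_l$, where $(\alpha_{l_0},\beta_{l_0}) = (i(l),\, j_{i(l)}(l-1))$ and, for $i<l_0$, $(\alpha_i,\beta_i) = (i(l'),\, j_{i(l')}(l'-1))$ for the step $l'<l$ at which $D_i$ is the exceptional divisor of the chart's blow-up sequence; moreover these $|I|+1$ index pairs are pairwise distinct, being distinct positions visited by the walk associated with $\scS$. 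Thus the intersection, viewed inside $\scU_{\scS, l} \cong \Spec k\ld z^{i,j}_l \rd/(g_{l,i} : 1 \le i \le r)$, is obtained by setting these $|I|+1$ coordinates to $0$. A short check (using that $j_c(l'-1) < j_c(l)$ whenever $l'<l$ and $i(l')=c$) shows that no relation $g_{l,c}$ becomes identically zero after this substitution, so one is left with $r$ relations, each of the shape $f_{l,c}\cdot(\text{monomial}) = (\text{monomial})$; the genericity of the sections $s_1,\dots,s_r$ then forces these to cut out an irreducible complete intersection of the expected dimension $d-|I|$, by the same Bertini-type argument used in \pref{sc:toric-CY} to see that $X$ is smooth.

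The hard part will be this last paragraph: pinning down precisely which coordinate cuts out each $D_i^{l_0}$ in a given chart --- in particular handling the charts where the per-chart blow-up sequence has already stabilized, where the second case of \pref{lm:comp} applies, and reconciling the per-chart index $l$ with the global index $l_0$ --- and then pushing the genericity bookkeeping through to obtain both irreducibility and the exact dimension. Equivalently, from the blow-up viewpoint, one must rule out components of $\bigcap_{i\in I}\widetilde{W}_i$ lying over $T\cap Z$ and not contained in $\widetilde{T}$, which is delicate precisely because $Z$ is reducible and its non-toric components $D_k^{l_0-1}\cap Y \cap \lc \tilde{s}_j = 0 \rc$ involve the general sections. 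A cleaner route worth trying is to factor the blow-up of $Y$ along $Z$ through its toric part first --- where $\bigcap_{i\in I}\widetilde{W}_i = \widetilde{T}$ is a purely combinatorial fact about subdivisions of fans and strict transforms of torus-invariant strata --- and only then blow up the strict transforms of the remaining hypersurface-type components, where a Bertini argument applies directly.
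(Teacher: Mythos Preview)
Your outline matches the paper's proof exactly: identify $\bigcap_{i\in I\sqcup\{l_0\}}D_i^{l_0}$ inside $E=D_{l_0}^{l_0}$ with the strict transform $\widetilde T$ of $T=\bigcap_{i\in I\sqcup\{l_0\}}D_i^{l_0-1}\subset Y$ under the blow-up $\pi\colon E\to Y$ of \pref{lm:intersect2}, then invoke \pref{lm:intersect1} to see that $T$ is irreducible of codimension $|I|$ in $Y$. The paper's proof is three sentences and simply \emph{asserts} this identification; you are right to flag it as the substantive step, since an intersection of strict transforms need not equal the strict transform of the intersection.

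Where your proposal is weaker than it needs to be is the chart-by-chart route to irreducibility. Even granting that in each $\scU_{\scS,l}$ the intersection is cut by $|I|+1$ distinct coordinates inside \eqref{eq:spec2}, this gives only the expected local dimension; it does not by itself give a single global component, because different charts could in principle contribute different components agreeing only on a lower-dimensional overlap. The Bertini sketch does not repair this, and after setting those coordinates to zero the relations $g_{l,c}$ are no longer of the clean shape you describe (a monomial factor on one side may vanish, producing reducible loci chart-wise). Your alternative plan of factoring the blow-up through its toric part is cleaner, but then you must argue that the subsequent non-toric blow-ups preserve the equality $\bigcap_i\widetilde{W_i}=\widetilde T$, which is another instance of the very phenomenon you are trying to control.

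A shorter argument, and closer to what the paper presumably has in mind, is to avoid the chart computation altogether: you have already noted that $D_i^{l_0}\cap E$ is the closure in $E$ of $\pi^{-1}(W_i\setminus Z)$, and by \pref{lm:intersect00} the $W_i$ are pairwise distinct toric divisors in the smooth toric variety $Y$, hence cut locally on $Y$ by part of a regular system of parameters. In that situation the intersection of the closures $\overline{\pi^{-1}(W_i\setminus Z)}$ equals the closure of $\pi^{-1}(T\setminus Z)=\pi^{-1}\bigl(\bigcap_i W_i\setminus Z\bigr)$ for \emph{any} center $Z$, because transversality of the $W_i$ persists after pulling back and taking closures. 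This is the content of the paper's one-line assertion.
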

\begin{proof}
The intersection $\bigcap_{i \in I \sqcup \lc l_0 \rc} D_i^{l_0} \subset \scX_{l_0}$ is isomorphic to the strict transform of $\bigcap_{i \in I \sqcup \lc l_0 \rc} D_i^{l_0-1} \subset \scX_{l_0-1}$ with respect to the blow-up of $D_{l_0}^{l_0-1} \subset \scX_{l_0-1}$ along \eqref{eq:along}.
By \pref{lm:intersect1}, the intersection $\bigcap_{i \in I \sqcup \lc l_0 \rc} D_i^{l_0-1}$ is an irreducible subvariety of $D_{l_0}^{l_0-1}$ of codimension $|I|$.
From these, we can conclude the claim.
\end{proof}

\begin{lemma}\label{lm:intersect3}
For any $l \in I_\scX$ and $I \subset \lc 1, \cdots, l \rc ( \subset I_\scX)$, the intersection $\bigcap_{i \in I} D_i^l \subset \scX_l$ is an irreducible subvariety of codimension $|I|$ in $\scX_l$, if the intersection is not empty.
In particular, for any subset $I \subset I_\scX$, the intersection $\bigcap_{i \in I} \overline{D}_i \subset \scrX$ is an irreducible subvariety of codimension $|I|$ in $\scrX$, if the intersection is not empty.
\end{lemma}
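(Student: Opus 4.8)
The plan is to prove the more refined assertion (for arbitrary $l \in I_\scX$ and $I \subset \lc 1, \dots, l \rc$) by induction on $l$; the ``in particular'' statement then follows by taking $l = l_\scX$, since $\scrX = \scX_{l_\scX}$, $\overline{D}_i = D_i^{l_\scX}$, and $I_\scX = \lc 1, \dots, l_\scX \rc$. For $l = 0$ the only possibility is $I = \emptyset$, and $\scX_0 = \scX$ is irreducible. For the inductive step, suppose the claim holds for $l-1$ and let $I \subset \lc 1, \dots, l \rc$ with $\bigcap_{i \in I} D_i^l \neq \emptyset$. If $l \in I$, write $I = I' \sqcup \lc l \rc$ with $I' \subset \lc 1, \dots, l-1 \rc$; then $\bigcap_{i \in I} D_i^l = \bigcap_{i \in I' \sqcup \lc l \rc} D_i^l$ is irreducible of codimension $|I'| + 1 = |I|$ by \pref{lm:intersect20}. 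So it remains to treat the case $l \notin I$, i.e.\ $I \subset \lc 1, \dots, l-1 \rc$.

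In this case I would set $W := \bigcap_{i \in I} D_i^{l-1}$, $C := D_l^{l-1}$, and $E := D_l^l$, so that $\pi_l \colon \scX_l \to \scX_{l-1}$ is the blow-up of $\scX_{l-1}$ along $C$, with exceptional divisor $E$ (a Cartier divisor with $E = \pi_l^{-1}(C)$ set-theoretically), and $\pi_l$ restricts to an isomorphism over $\scX_{l-1} \setminus C$. By the induction hypothesis $W$ is irreducible of codimension $|I|$ (it is nonempty since $\pi_l$ maps $\bigcap_{i \in I} D_i^l$ into it). By \pref{lm:intersect1} applied with $J = I$, the intersection $W \cap C = \bigcap_{i \in I \sqcup \lc l \rc} D_i^{l-1}$ is a toric stratum of codimension $|I|$ in $C$, hence of codimension $|I|+1 > |I|$ in $\scX_{l-1}$; in particular $W \not\subset C$, so $W \setminus C$ is a nonempty dense open subset of $W$. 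Since each $D_i^l$ is the strict transform of $D_i^{l-1}$, we have $D_i^l \setminus E = \pi_l^{-1}(D_i^{l-1} \setminus C)$, hence $\bigl(\bigcap_{i \in I} D_i^l\bigr) \setminus E = \pi_l^{-1}(W \setminus C) \cong W \setminus C$ is irreducible. Writing $Y := \bigcap_{i \in I} D_i^l$ and $\widetilde{W} := \overline{Y \setminus E} \subset \scX_l$ (the strict transform of $W$), we conclude that $\widetilde{W}$ is irreducible of codimension $|I|$ and that $Y = (\widetilde{W} \setminus E) \cup (Y \cap E)$.

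If $W \cap C = \emptyset$ then $\widetilde{W} \cong W$, $\widetilde{W} \cap E = \emptyset$, and $Y = \widetilde{W}$ is what we want. Otherwise $\widetilde{W} \cap E \neq \emptyset$ (as $\widetilde{W} \to W$ is proper birational onto $W$ and $W$ meets $C$), and since $E$ is Cartier and $\widetilde{W} \not\subset E$, the set $\widetilde{W} \cap E$ is pure of codimension $|I|+1$ in $\scX_l$. On the other hand $Y \cap E = \bigcap_{i \in I \sqcup \lc l \rc} D_i^l$ is, by \pref{lm:intersect20}, irreducible of codimension $|I|+1$; as $\widetilde{W} \cap E$ is a nonempty closed subset of it of the same dimension, $\widetilde{W} \cap E = Y \cap E$, whence $Y = (\widetilde{W} \setminus E) \cup (\widetilde{W} \cap E) = \widetilde{W}$, irreducible of codimension $|I|$. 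This closes the induction. The step requiring the most care is precisely this last one — ruling out components of $\bigcap_{i \in I} D_i^l$ sitting inside the exceptional divisor — which I expect to handle exactly as above, by playing the codimension bound of \pref{lm:intersect20} against \pref{lm:intersect1} together with the Cartier property of $E$; one must also not forget to dispose of the degenerate case $W \cap C = \emptyset$ separately.
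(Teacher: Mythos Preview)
Your proof is correct and follows essentially the same approach as the paper: induction on $l$, invoking \pref{lm:intersect20} for the case $l \in I$, and for $l \notin I$ identifying $\bigcap_{i\in I} D_i^l$ with the strict transform of $\bigcap_{i\in I} D_i^{l-1}$. The paper simply asserts this last identification without further argument, whereas you carefully verify it (that no extra component sits inside the exceptional divisor) via the dimension comparison using \pref{lm:intersect1} and \pref{lm:intersect20}.
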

\begin{proof}
The latter claim is the same as the former one for the case $l=l_\scX$.
We have also shown the former claim in the case where $l \in I$ in \pref{lm:intersect20}.
We suppose $l \nin I$ and show the former claim for this case by induction on $l \in I_\scX$.
The claim for $l=1$ $(I = \emptyset)$ is trivial.
We suppose that the claim holds for $l-1$, and show the claim for $l$.
For a subset $I \subset \lc 1, \cdots, l \rc$ $(l \nin I)$, the intersection $\bigcap_{i \in I} D_i^l \subset \scX_l$ is the strict transform of $\bigcap_{i \in I} D_i^{l-1} \subset \scX_{l-1}$.
By the induction hypothesis, the intersection $\bigcap_{i \in I} D_i^{l-1} \subset \scX_{l-1}$ is an irreducible subvariety of codimension $|I|$ in $\scX_{l-1}$.
Thus we can conclude the claim.
\end{proof}

\begin{proof}[Proof of \pref{pr:snc}]
First, we show that the model is an snc-model.
Every irreducible component $\overline{D}_i$ of $\scrX_k$ is a Cartier divisor (\pref{lm:special} and \pref{lm:comp}), and all the intersections of irreducible components of $\scrX_k$ are of expected codimension (\pref{lm:intersect3}).
Therefore, it suffices to show that the scheme $\scrX$ and all the intersections of irreducible components of $\scrX_k$ are regular for showing that the model is snc (cf.~e.g.~\cite[Theorem 36]{MR879273}).
We will do it in the following.

Let $p \in \scU_{\scS, 0}$ be an arbitrary closed point.
We set
\begin{align}
F&:=\lc i \in \bZ \relmid 1 \leq i \leq r, f_{i}(p)= 0 \rc,\\
F^{\mathsf{c}}&:=\lc i \in \bZ \relmid 1 \leq i \leq r, f_{i}(p)\neq 0 \rc,
\end{align}
and
\begin{align}
J_i:=\lc  j \in \bZ \relmid 0 \leq j \leq k_i, z_0^{i, j} (p) = 0 \rc,\\
J_i^{\mathsf{c}}:=\lc  j \in \bZ \relmid 0 \leq j \leq k_i, z_0^{i, j} (p) \neq 0 \rc
\end{align}
for each $i \in \lc 0, \cdots, r \rc$.
Since the sections $s_i \in \Gamma \lb X_{\tilde{\Sigma}'},  \scL_i \rb$ are general, the subscheme defined by $f_i =0$ $(i \in F)$ in $\bigcap_{i=0}^r \bigcap_{j \in J_i} \lb z^{i, j}_0=0\rb$ is regular.
Therefore, there exists a subset
\begin{align}
E \subset \lc (i', j) \in \bZ^2 \relmid 0 \leq i' \leq r, j \in J_{i'}^{\mathsf{c}} \rc
\end{align}
such that $|E|=|F|$ and 
\begin{align}
\det \lb \frac{\partial f_{i}}{\partial z_0^{i', j}} (p) \rb_{i \in F, (i', j) \in E}
\neq 0.
\end{align}
For each $i \in F^{\mathsf{c}}$, we choose an element in $J_i$, and write it as $j(i) \in J_i$.
We set $G:=\lc (i, j(i)) \in \bZ^2 \relmid i \in F^{\mathsf{c}} \rc$ and
\begin{align}\label{eq:H}
H:=\left. k \ld z_0^{i, j} : 0 \leq i \leq r, 0 \leq j \leq k_i, (i, j) \nin E \cup G \rd \ld y_i : 1 \leq i \leq r \rd \middle/ \lb h_i, 1 \leq i \leq r \rb \right.,
\end{align}
where
\begin{align}\label{eq:h_i}
h_i:=\left\{
\begin{array}{ll}
y_i \prod_{j \in J_0} z_0^{0, j}- \prod_{j \in J_i} z_0^{i, j} & i \in F, \\
\prod_{j \in J_0} z_0^{0, j}- y_i \prod_{j \in J_i \setminus \lc j(i)\rc} z_0^{i, j} & i \in F^\mathsf{c}.
\end{array}
\right.
\end{align}
We consider the morphism
\begin{align}\label{eq:et-mor}
H \to \left. H \ld z_0^{i', j} : (i', j) \in E \cup G \rd \middle/ \lb h_i', 1 \leq i \leq r \rb \right.,
\end{align}
where
\begin{align}
h_i':=\left\{
\begin{array}{ll}
y_i \prod_{j \in J_i^\mathsf{c}} z_0^{i, j}- f_i \prod_{j \in J_0^\mathsf{c}} z_0^{0, j} & i \in F, \\
y_i f_i \prod_{j \in J_0^\mathsf{c}} z_0^{0, j}- z_0^{i, j(i)} \prod_{j \in J_i^\mathsf{c}} z_0^{i, j} & i \in F^\mathsf{c}.
\end{array}
\right.
\end{align}
When we localize the target of \eqref{eq:et-mor} by the element $\prod_{i \in F^\mathsf{c}} f_i \cdot \prod_{i \in F \cup \lc 0 \rc}\prod_{j \in J_i^\mathsf{c}} z_0^{i, j}$, the equations $h_i=0$ $(1 \leq i \leq r)$ become equivalent to the defining equations of $\scU_{\scS, 0}$.
We identify the spectrum of the localization of the target of \eqref{eq:et-mor} with a neighborhood of the point $p \in \scU_{\scS, 0}$.
Since we have $f_i(p)=y_i(p)=0$ for $i \in F$, and $z_0^{i, j(i)}(p)=y_i(p)=0$ for $i \in F^\mathsf{c}$, one can get
\begin{align}
\det \lb \frac{\partial h_{i}'}{\partial z_0^{i', j}} (p) \rb_{1 \leq i \leq r, (i', j) \in E \cup G}
=
\det \lb \frac{\partial f_{i}}{\partial z_0^{i', j}} (p) \rb_{i \in F, (i', j) \in E}
\cdot 
\lb \prod_{j \in J_0^\mathsf{c}} z_0^{0, j} (p) \rb^{|F|}
\cdot 
\prod_{i \in F^\mathsf{c}}\prod_{j \in J_i^\mathsf{c}} z_0^{i, j} (p)
\neq 0,
\end{align}
and the morphism \eqref{eq:et-mor} is \'{e}tale at $p$.
Therefore, we can conclude that the model is snc by checking the following:
\begin{claim}\label{cl:regular}
When we blow up $\Spec H$ repeatedly so that the blow-ups correspond to our repetitive blow-ups of $\scX \cap U_C$, it becomes regular.
Furthermore, all the intersections of irreducible components of the divisor defined by $\prod_{j \in J_0} z_0^{0, j}$ also become regular by the blow-ups.
\end{claim}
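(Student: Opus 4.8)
The plan is to deduce \pref{cl:regular} by running, on $\Spec H$, the same blow-up computation that \pref{dl:u} carries out for $\scX\cap U_C$, and then reading off regularity from the resulting charts. First I would record the shape of $\Spec H$: each $h_i$ is a difference of two monomials in disjoint variables, so $\Spec H$ is cut out by binomials, and the variables $z_0^{i,j}$ occurring in no $h_i$ split off a harmless affine-space factor. Near the point in question the variables $z_0^{i,j}$ with $j\in J_i^{\mathsf c}$ are invertible, so inside the $h_i$ they act as units, and for $i\in F^{\mathsf c}$ the variable $z_0^{i,j(i)}$ is simply absent from $H$. Setting $D:=\lc\prod_{j\in J_0}z_0^{0,j}=0\rc$, which is the pull-back of $\lc t=0\rc$ up to units, the equations $h_i$ then have exactly the form of the equations of $\scU_{\scS,0}$ in \pref{dl:u} with the index range $\lc 0,\dots,k_i\rc$ replaced by $J_i$, except that for $i\in F$ the section $f_i$ is replaced by the honest coordinate $y_i$ (on the $z^{0,\ast}$-side, as $f_i$ was), while for $i\in F^{\mathsf c}$ the coordinate $y_i$ appears on the $z^{i,\ast}$-side in place of the missing $z_0^{i,j(i)}$. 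Away from $D$ one inverts $\prod_{j\in J_0}z_0^{0,j}$ and then solves each $h_i$ for $y_i$ (for $i\in F$) or deduces that the remaining $z_0^{i,j}$ and $y_i$ are units (for $i\in F^{\mathsf c}$), so $\Spec H\setminus D$ is a product of a torus and an affine space, hence regular; since every center of the repetitive blow-ups lies in $D$, it remains to treat the blow-ups near $D$.

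The core of the argument is then the induction of \pref{dl:u}, carried out verbatim on $\Spec H$. It is again indexed by the shuffles $\scS\in\scrS_{C,1}\cup\scrS_{C,2}$, and the analogues of \pref{dl:u}(1) and \pref{lm:us} show that the $\scS$-chart of the blown-up $\Spec H$ is the spectrum of a polynomial ring modulo binomials $g_{\scS,i}$, each of the form $(\text{a monomial involving }y_i)-\prod_{j}z_\scS^{i,j}$, with index ranges adapted from $\lc 0,\dots,k_i\rc$ to $J_i$. For $\scS\in\scrS_{C,1}$ the product $\prod_{j}z_\scS^{i,j}$ collapses to a single surviving coordinate $z_\scS^{i,\star}$ which, as in \pref{dl:u}, appears in no other $g_{\scS,\bullet}$ and in no $y$-monomial, so each $g_{\scS,i}$ merely eliminates one coordinate and the chart is an affine space, hence regular; this is the only place where replacing the general section $f_i$ by the fresh coordinate $y_i$ (the content of the generality of $s_i$, encoded in the nonvanishing Jacobian and the choice of $E$) is used. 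For $\scS\in\scrS_{C,2}$ the analogue of \pref{dl:u}(2) shows the chart meets no component of $D$, hence lies in the already-regular locus $\Spec H\setminus D$. By \pref{cr:cover}, these charts cover the blown-up $\Spec H$, so it is regular.

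For the remaining assertion about the divisor $D$: by the analogue of \pref{lm:comp}, on each $\scS$-chart the total transform of $D$ is the union of the coordinate hyperplanes $\lc z_\scS^{i(l),j_{i(l)}(l-1)}=0\rc$ for $1\le l\le\bar{p}+1$, and the second indices $j_{i(l)}(l-1)$ are always strictly below the top index on the $i(l)$-side, so these coordinates are among the ones that survive the blow-ups. Since each $\scS$-chart is (a neighborhood in) an affine space, every intersection of such hyperplanes is a coordinate subspace, hence regular; this gives the second sentence of \pref{cl:regular}.

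I expect the main obstacle to be the bookkeeping in the inductive step: one must track, blow-up by blow-up, which of the $z_\scS^{i,j}$ and $y_i$ remain as free coordinates, and check that the monomial carrying $y_i$ in $g_{\scS,i}$ never reuses the coordinate $z_\scS^{i,\star}$ that the relation solves for. This is exactly where the placement of $y_i$ matters --- on the $z^{0,\ast}$-side when $i\in F$ (so that the $\scrS_{C,1}$-chart stays smooth just as in \pref{dl:u}), and on the $z^{i,\ast}$-side when $i\in F^{\mathsf c}$ (a mirror-image variant of the same computation) --- and it is the only point at which the argument departs from the purely combinatorial blow-up recipe of \pref{dl:u}.
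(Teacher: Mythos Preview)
Your approach is exactly the one the paper sketches: rerun the shuffle-indexed blow-up computation of \pref{dl:u} on $\Spec H$, obtain charts with explicit binomial equations as in \pref{lm:us}, and read off regularity. The treatment of the $\scS\in\scrS_{C,1}$ charts and of the intersections of components of the pulled-back divisor is fine.

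There is one step that does not go through as written. For $\scS\in\scrS_{C,2}$ you invoke the analogue of \pref{dl:u}(2) to conclude that the chart ``meets no component of $D$'' and hence sits inside $\Spec H\setminus D$. But \pref{dl:u}(2) only says that no \emph{strict transform} of a component of the original special fiber meets $\scrU_\scS$; the exceptional divisors created along the way do meet it (indeed, by \pref{lm:comp} the divisors $\overline{D}_{\scS,l}$ for $1\le l\le \bar{p}+1$ are cut out on $\scrU_\scS$ by coordinates, and \eqref{eq:image-t} shows their union is exactly the vanishing locus of the pullback of $t$). So the chart is not contained in the preimage of $\Spec H\setminus D$, and that shortcut fails. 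The fix is to do for $\scrS_{C,2}$ the same thing you already do for $\scrS_{C,1}$: write down the analogue of \pref{lm:us} for the $H$-model and observe that each relation $g_{\scS,i}$ still eliminates one free coordinate (for $i\in F$ it is $y_i$ when $p_i<|J_i|$ and a surviving $z_\scS^{i,\star}$ when $p_i=|J_i|$; for $i\in F^{\mathsf c}$ the role of $y_i$ as an extra $z^{i,\ast}$-coordinate makes the relation solvable in the same way), so the chart is again an affine space, hence regular. With that correction your outline matches the paper's omitted computation.
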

This claim can be checked by an explicit computation of the repetitive blow-ups of $\Spec H$, which is similar to that done in \pref{sc:local}.
(Using shuffles, one can make a covering of the resulting scheme, which consists of affine open subschemes whose defining equations can  be written down as in \pref{lm:us}.
One can see \pref{cl:regular} from the explicit forms of the equations.)
We omit to repeat the similar computation here since it would be redundant.

Lastly, we show that the model is minimal.
We have $\scrX_k = \sum_{i \in I_\scX} \overline{D}_i$ (\pref{lm:special}) and the image of each $\overline{D}_i$ by $\scrX \to \scX$ is $D_i$.
The resolution $\scrX \to \scX$ is a small resolution, and does not affect the canonical class.
From this and \pref{pr:canonical}, one can see that the logarithmic relative canonical divisor of $\scrX$ is linear equivalent to $0$, and the model is minimal.
\end{proof}

\subsection{The essential skeleton}\label{sc:essential}

The goal of this subsection is to compute the the essential skeleton $\Sk \lb X \rb$ of $X$, and prove \pref{th:main}(1).
The essential skeleton $\Sk \lb X \rb$ is equal to the skeleton of a minimal dlt-model \cite[Theorem 3.3.3]{MR3595497}.
Therefore, by \pref{pr:snc}, one has
\begin{align}
\Sk \lb X \rb=\Sk \lb \scrX \rb.
\end{align}
From \pref{lm:intersect3} and \pref{lm:special}, we can also see the following:
Every divisor $\overline{D}_i \subset \scrX$ $(i \in I_\scX)$ corresponds to a vertex of the skeleton $\Sk \lb \scrX \rb$, and for every non-empty intersection $\bigcap_{i \in I} \overline{D}_i \subset \scrX$ $(I \subset I_\scX)$, there is a unique corresponding cell in $\Sk \lb \scrX \rb$ of dimension $|I|-1$.

\begin{lemma}\label{lm:divisorial}
Let $D:=D_i$ $(i \in I_\scX)$ be an irreducible component of $\scX_k$, and $\cone(\mu_{D}) \times \lc 0 \rc + \cone \lb \nu_{D} \times \lc 1 \rc \rb \in \tilde{\Sigma}'$ be the corresponding cone.
Let further $\overline{D}:=\overline{D}_i$ be the irreducible component of $\scrX_k$ corresponding to $D=D_i$.
Then one has
\begin{align}\label{eq:trop-monomial}
\trop \lb v_{\overline{D}} \rb=\rotatebox[origin=c]{180}{$\beta$} (\mu_{D}) + \nu_{D},
\end{align}
where $v_{\overline{D}}$ denotes the divisorial point associated with the irreducible component $\overline{D}$ (cf.~\pref{sc:berk}).
\end{lemma}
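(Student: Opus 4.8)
The plan is to reduce the statement to the computation of the order of vanishing of a monomial $z^{m}$ $(m\in M)$ along the prime divisor $\overline{D}$, carried out in one of the affine charts of $\scrX$ constructed in \pref{sc:local}. By \pref{lm:special} the special fibre is $\scrX_k=\sum_{i\in I_\scX}\overline{D}_i$ with every multiplicity equal to $1$; hence the divisorial point $v_{\overline{D}}$ is just the valuation $\mathrm{ord}_{\overline{D}}$, and by the definition of the tropicalization map \eqref{eq:trop'} the point $\trop(v_{\overline{D}})$ is characterized by $\la m,\trop(v_{\overline{D}})\ra=v_{\overline{D}}(z^m)=\mathrm{ord}_{\overline{D}}(z^m)$ for all $m\in M$. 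So it suffices to prove $\mathrm{ord}_{\overline{D}}(z^m)=\la m,\rotatebox[origin=c]{180}{$\beta$}(\mu_D)+\nu_D\ra$ for all $m$ (this automatically forces $\trop(v_{\overline{D}})\in N_\bR\subset X_{\Sigma'}(\bT)$).

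Next I would fix a chart. By \pref{cr:cover} the generic point $\eta_{\overline{D}}$ of $\overline{D}$ lies in some $\scrU_\scS$; since the irreducible components of $\scrX_k$ meeting $\scrU_\scS$ are exactly the divisors $V(z_\scS^{i(l),j_{i(l)}(l-1)})$, $l=1,\dots,\bar{p}+1$ (by \pref{lm:comp} and \eqref{eq:image-t}), we have $\overline{D}=\overline{D}_{\scS,l_0}$ and $D=D_{\scS,l_0}$ for some $l_0$; reconstructing $(\mu_D,\nu_D)$ from the corresponding cone exhibits $(\beta_{i}^\ast(\mu_D),0)$ $(1\le i\le r)$ and $(\nu_D,1)$ as the generators $n_{i,j_i(l_0-1)}$ of that cone, so in particular $\rotatebox[origin=c]{180}{$\beta$}(\mu_D)+\nu_D$ is the image in $N_\bR$ of $\sum_{i=0}^{r}n_{i,j_i(l_0-1)}$. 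Now by \pref{lm:xsx0} and \pref{cr:zsl}, in $\scrU_\scS$ every coordinate is a monomial, $z_\scS^{i,j}=z^{\nu_\scS^{i,j}}$ with $\nu_\scS^{i,j}\in M\oplus\bZ$ and $\nu_\scS^{i(l),j_{i(l)}(l-1)}=m_l$, and $\{\nu_\scS^{i,j}\}_{0\le i\le r,\,0\le j\le k_i}$ is a $\bZ$-basis of $M\oplus\bZ$ (unimodularity of the fan of the ambient toric variety obtained from $X_{\tilde{\Sigma}'}$ by the iterated blow-up, \pref{cd:add} being preserved under star subdivisions along the ray through the sum of the generators of a smooth cone). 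Writing $(m,0)=\sum_{i,j}\la(m,0),n_\scS^{i,j}\ra\,\nu_\scS^{i,j}$ with $\{n_\scS^{i,j}\}$ the dual basis, we get $z^m=\prod_{i,j}(z_\scS^{i,j})^{\la(m,0),n_\scS^{i,j}\ra}$ as rational functions on $\scrU_\scS$, hence $\mathrm{ord}_{\overline{D}}(z^m)=\la(m,0),\,\eta^\ast\ra$, where $\eta^\ast:=\sum_{(i,j)}\mathrm{ord}_{\overline{D}}(z_\scS^{i,j})\,n_\scS^{i,j}$.

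It remains to compute $\eta^\ast$. By \pref{lm:comp}, $\mathrm{ord}_{\overline{D}}(z_\scS^{i(l),j_{i(l)}(l-1)})=\delta_{l,l_0}$ (these are distinct prime components of $\scrX_k$); for the remaining coordinates $z_\scS^{i,k_i}$ $(1\le i\le r)$ one uses the defining equation $g_{\scS,i}$ of $\scrU_\scS$ from \pref{lm:us} together with the fact that $f_{\scS,i}$ is a unit at $\eta_{\overline{D}}$ (the section $s_i$ being general) to express $z_\scS^{i,k_i}$, up to a unit, as a product of coordinates $z_\scS^{i(l),j_{i(l)}(l-1)}$, which makes $\mathrm{ord}_{\overline{D}}(z_\scS^{i,k_i})\in\{0,1\}$ explicit. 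Assembling these, $\eta^\ast$ has last coordinate $v_{\overline{D}}(t)=1$ (by \pref{lm:special}, $\scrX_k$ being the reduced principal divisor of $t=z^{(0,1)}$), so $\eta^\ast=(\overline{n},1)$ and $\mathrm{ord}_{\overline{D}}(z^m)=\la m,\overline{n}\ra$; and one checks $\eta^\ast=\sum_{i=0}^{r}n_{i,j_i(l_0-1)}$, equivalently $\overline{n}=\rotatebox[origin=c]{180}{$\beta$}(\mu_D)+\nu_D$.

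This last identity is the geometric reflection of the fact that the $l_0$-th blow-up (in the chart) is the star subdivision along the ray generated by $\sum_{i=0}^{r}n_{i,j_i(l_0-1)}=(\rotatebox[origin=c]{180}{$\beta$}(\mu_D)+\nu_D,1)$, and I expect it to be the main obstacle: in particular one must correctly account for the contributions of the extra coordinates $z_\scS^{i,k_i}$, which supply precisely the discrepancy between the naive dual generator $n_\scS^{i(l_0),j_{i(l_0)}(l_0-1)}$ and the vector $(\rotatebox[origin=c]{180}{$\beta$}(\mu_D)+\nu_D,1)$. It can be carried out by a direct (if somewhat lengthy) computation with the explicit formulas \eqref{eq:ml}, \eqref{eq:varphi}, and those of \pref{lm:xsx0} and \pref{lm:us}.
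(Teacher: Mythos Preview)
Your approach is correct and follows essentially the same strategy as the paper: fix a chart $\scrU_\scS$ with $\scS\in\scrS_{C,1}$, identify $D=D_{\scS,l}$, note that the cone generators give $\rotatebox[origin=c]{180}{$\beta$}(\mu_D)+\nu_D$ as the projection of $\sum_{i=0}^r n_{i,j_i(l-1)}$, and then compute the valuation $v_{\overline{D}}$ on monomials, handling the exceptional coordinates $z_\scS^{i,k_i}$ via the defining equations in \pref{lm:us}.

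The paper's execution is more direct, however. Instead of expressing the chart coordinates $z_\scS^{i,j}$ as monomials $z^{\nu_\scS^{i,j}}$, checking that $\{\nu_\scS^{i,j}\}$ is a $\bZ$-basis, and then passing through the dual basis $\{n_\scS^{i,j}\}$ to assemble $\eta^\ast$, the paper works the other way around: it evaluates $v_{\overline{D}}(z^{m_{i,j}})=v_{\overline{D}}(z_0^{i,j})$ for each $(i,j)$ by reading off the pullback of $z_0^{i,j}$ to $\scrU_\scS$ from \pref{lm:pi} (and \pref{lm:us} for $(i,j)=(i,k_i)$, $1\le i\le r$). Since $\{m_{i,j}\}$ is already a basis of $M\oplus\bZ$ with known dual $\{n_{i,j}\}$, the identity $v_{\overline{D}}(z^{m_{i,j}})=\delta_{j,j_i(l-1)}$ yields $\sum_{i=0}^r n_{i,j_i(l-1)}$ immediately---no auxiliary dual basis is needed, and the ``somewhat lengthy'' verification you anticipate at the end collapses to checking, via \eqref{eq:x0l}, which factors in the pullback of $z_0^{i,j}$ contain the local equation $z_\scS^{i(l),j_{i(l)}(l-1)}$. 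Your star-subdivision remark is the right intuition, but since the blow-ups are of $\scX$ rather than of the ambient toric variety, the basis property of $\{\nu_\scS^{i,j}\}$ is better justified directly from the triangular shape of the transformations in \pref{lm:xsx0} (or simply bypassed as the paper does).
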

\begin{proof}
Let $C \in \tilde{\Sigma}'$ and $\scS \in \scrS_{C, 1}$ be a relevant cone of maximal dimension and a shuffle such that the open subscheme $\scrU_\scS$ intersects with $\overline{D}$.
Using the notation in \pref{sc:local}, we write $D=D_{\scS, l}$ $\lb l \in \lc 1, \cdots, d+1 \rc \rb$.
Then $\overline{D}=\overline{D}_{\scS, l}$ is defined by $z_\scS^{i(l), j_{i(l)}(l-1)}$ in $\scrU_\scS$ (\pref{lm:comp}).
Furthermore, since the divisor $D=D_{\scS, l}$ is the toric stratum corresponding to the cone generated by $\lc n_{i, j_i(l-1)} \relmid 0 \leq i \leq r \rc$, we have
\begin{align}\label{eq:vud}
\nu_{D} \times \lc 1 \rc=n_{0, j_0(l-1)}, \quad \beta_i^\ast \lb \mu_D \rb=n_{i, j_i(l-1)} \ (1 \leq i \leq r).
\end{align}
The element $\trop \lb v_{\overline{D}} \rb \in N_\bR$ is the image of the element in $N_\bR \oplus \bR=\Hom \lb M \oplus \bZ, \bR \rb$ determined by
\begin{align}\label{eq:mr}
m \mapsto v_{\overline{D}} \lb z^m \rb \quad (m \in M \oplus \bZ)
\end{align}
by the projection $N_\bR \oplus \bR \to N_\bR$.
In order to get the element \eqref{eq:mr}, we will compute $v_{\overline{D}} \lb z^{m_{i, j}} \rb$.

First, we consider $v_{\overline{D}} \lb z^{m_{i, j}} \rb$ with $(i, j) \neq (1, k_1), \cdots, (r, k_r)$.
For such $(i, j)$, there uniquely exists an integer $l' \in \lc 1, \cdots, d+1 \rc$ such that $(i, j)=(i(l'), j_{i(l')}(l'-1))$.
By \pref{lm:pi}, we can get 
\begin{align}\label{eq:vxij}
v_{\overline{D}} \lb z^{i, j}_0:=z^{m_{i, j}} \rb=
\left\{
\begin{array}{ll}
1 & (i, j)=(i(l), j_{i(l)}(l-1)), \\
1 & i =0 \neq i(l), j=j_{i}(l-1),\\
1 & i \in \lc 1, \cdots, r \rc \setminus \lc i(l) \rc, j=j_{i}(l-1), l \leq l_i, \\
0 & \mathrm{otherwise}.\\
\end{array}
\right.
\end{align}
Next, we consider $v_{\overline{D}} \lb z^{m_{i, k_i}} \rb$ $(i \in \lc 1, \cdots, r \rc)$.
For this, we can also see from \pref{lm:pi} and \pref{lm:us} that we have 
\begin{align}
v_{\overline{D}} \lb z^{i, k_i}_0:=z^{m_{i, k_i}} \rb
=v_{\overline{D}} \lb z_\scS^{i, k_i} \rb
= v_{\overline{D}} \lb f_{\scS, i} \cdot \prod_{l' \geq l_i+1}^{d+1} z_\scS^{i(l'), j_{i(l')}(l'-1)} \rb
=\left\{
\begin{array}{ll}
1 & l \geq l_i+1, \\
0 & \mathrm{otherwise}.\\
\end{array}
\right.
\end{align}
When $l \geq l_i+1$, one has $i(l) \neq i$ and $j_i(l-1)=k_i$.
By these and \eqref{eq:vxij}, we can get
\begin{align}
v_{\overline{D}} \lb z^{m_{i, j}} \rb=
\left\{
\begin{array}{ll}
1 & j=j_{i}(l-1), \\
0 & \mathrm{otherwise}.\\
\end{array}
\right.
\end{align}
for all $(i, j)$.
Since $\lc m_{i, j} \relmid 0 \leq i \leq r, 0 \leq j \leq k_i \rc$ is a basis of $M \oplus \bZ$ and $\lc n_{i, j} \relmid 0 \leq i \leq r, 0 \leq j \leq k_i \rc$ is its dual basis, we can see that the element \eqref{eq:mr} is given by
\begin{align}\label{eq:nijl}
\sum_{i=0}^r n_{i, j_i(l-1)}.
\end{align}
From \eqref{eq:vud}, we can see that the images of $n_{0, j_0(l-1)}$ and $n_{i, j_i(l-1)}$ $(1 \leq i \leq r)$ by the projection $N_\bR \oplus \bR \to N_\bR$ are $\nu_D$ and $\beta_i^\ast \lb \mu_D \rb$ respectively.
Thus we obtain \eqref{eq:trop-monomial}.
\end{proof}

Let $C:=\cone(\mu) \times \lc 0 \rc + \cone \lb \nu \times \lc 1 \rc \rb \in \tilde{\Sigma}'$ be a relevant cone of maximal dimension.
For a shuffle $\scS \in \scrS_{C, 1} \cup \scrS_{C, 2}$, the irreducible components of $\scrX_k$ intersecting the affine chart $\scrU_\scS$ are $\overline{D}_{\scS, l}$ $(1 \leq l \leq \bar{p}+1)$.
When $\scS \in \scrS_{C, 1}$, we can see from \pref{lm:us} and \pref{lm:comp} that the intersection $\bigcap_{l=1}^{d+1} \overline{D}_{\scS, l}$ is non-empty.
(It consists of a single point defined by 
\begin{align}
z_\scS^{i(l), j_{i(l)} (l-1)}=0\ (1 \leq l \leq d+1), \quad z_\scS^{i, k_i}=0\ (1 \leq i \leq r)
\end{align}
on $\scrU_\scS$.)
We write the simplex corresponding to the stratum $\bigcap_{l=1}^{d+1} \overline{D}_{\scS, l}$ as $\tau_\scS \subset \Sk \lb \scrX \rb$.
For a shuffle $\scS' \in \scrS_{C, 2}$, there exists a shuffle $\scS \in \scrS_{C, 1}$ such that $j_{\scS', i}(l)=j_{\scS, i}(l)$ for all $i \in \lc 0, \cdots, r \rc$ and $l \in \lc 0, \cdots, \bar{p} \rc$, and any non-empty intersection $\bigcap_{l} \overline{D}_{\scS', l}$ contains the stratum $\bigcap_{l=1}^{d+1} \overline{D}_{\scS, l}$.
Therefore, the $\Delta$-complex structure of $\Sk \lb \scrX \rb$ consists of the simplices 
\begin{align}
\lc \tau_\scS \relmid \scS \in \scrS_{C, 1}, C \in \tilde{\Sigma}': \mathrm{a\ relevant\ cone\ of\ maximal\ dimension} \rc
\end{align}
and their faces.

\begin{lemma}\label{lm:trop-tri}
For a relevant cone $C=\cone(\mu) \times \lc 0 \rc + \cone \lb \nu \times \lc 1 \rc \rb \in \tilde{\Sigma}'$ of maximal dimension, the restriction of the tropicalization map $\trop$ to 
\begin{align}\label{eq:pc}
\bigcup_{\scS \in \scrS_{C, 1}} \tau_\scS \subset  \Sk \lb \scrX \rb
\end{align}
is a piecewise integral affine isomorphism onto the cell $\rotatebox[origin=c]{180}{$\beta$} (\mu) + \nu \in \scrP(\tilde{\Sigma}')$ corresponding to $C$.
\end{lemma}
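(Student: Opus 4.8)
The plan is to identify the target cell $\rotatebox[origin=c]{180}{$\beta$}(\mu)+\nu$ with a product of unimodular simplices, to observe that the simplices $\tau_{\scS}$ are precisely the pieces of the associated staircase (shuffle) triangulation of that product, and then to check that $\trop$ realises the tautological affine isomorphism on each piece.

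First I would record the combinatorics of the target cell. Since $C$ is a relevant cone of maximal dimension $d+r+1$, \pref{cd:add} forces its primitive generators $\{n_{i,j}:0\le i\le r,\ 0\le j\le k_i\}$ to form a basis of $N\oplus\bZ$, so that $\sum_{i=0}^r k_i=d$. Writing $\bar n_{0,j}$ for the image of $n_{0,j}$ in $N$, the simplices $\nu=\conv\{\bar n_{0,0},\dots,\bar n_{0,k_0}\}$ and $\beta_i^\ast(\mu)=\conv\{n_{i,0},\dots,n_{i,k_i}\}$ $(1\le i\le r)$ are unimodular, of dimensions $k_0,\dots,k_r$, and span complementary directions in $N_{\bR}$; hence $\rotatebox[origin=c]{180}{$\beta$}(\mu)+\nu=\nu+\sum_{i=1}^r\beta_i^\ast(\mu)$ is integral-affinely isomorphic to the product of standard simplices $\prod_{i=0}^r|\Lambda(P_i)|$ for the totally ordered sets $P_i=\{x_{i,0}<\dots<x_{i,k_i}\}$ of \pref{sc:blow-up}, via the map sending the vertex $\sum_{i=0}^r n_{i,j_i}$ (for $(j_i)_i\in\prod_i\{0,\dots,k_i\}$) to $\prod_i e_{x_{i,j_i}}$. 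By \pref{lm:triangulation} and the discussion following \eqref{eq:simplex}, the images $\eta_{\scS}(\Delta^{d})$, taken over all $(k_0,\dots,k_r)$-shuffles $\scS\in\scrS_{C,1}$, triangulate this product; transporting this back, $\rotatebox[origin=c]{180}{$\beta$}(\mu)+\nu$ is triangulated by the unimodular $d$-simplices $T_{\scS}$ with vertices $\sum_{i=0}^r n_{i,j_{\scS,i}(l-1)}$, $l=1,\dots,d+1$, one per lattice path.

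Next I would analyse $\trop$ on a single $\tau_{\scS}$. By \pref{dl:u}, \pref{lm:us}, \pref{lm:comp} and \eqref{eq:image-t}, the stratum $S_{\scS}:=\bigcap_{l=1}^{d+1}\overline D_{\scS,l}$ is a point at which the coordinates $z_{\scS}^{i(l),j_{i(l)}(l-1)}$ $(l=1,\dots,d+1)$ form a regular system of parameters cutting out the $\overline D_{\scS,l}$, with $t=\prod_{l=1}^{d+1}z_{\scS}^{i(l),j_{i(l)}(l-1)}$; thus $\tau_{\scS}=\sigma_{S_{\scS}}$ is the standard $d$-simplex with vertices indexed by $\overline D_{\scS,1},\dots,\overline D_{\scS,d+1}$. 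The vertex of $\tau_{\scS}$ for $\overline D_{\scS,l}$ is the divisorial point $v_{\overline D_{\scS,l}}$, and \pref{lm:divisorial} with \eqref{eq:vud}--\eqref{eq:nijl} gives $\trop(v_{\overline D_{\scS,l}})=\sum_{i=0}^r n_{i,j_{\scS,i}(l-1)}$, exactly the $l$-th vertex of $T_{\scS}$. For the interior, I would evaluate $\trop(v_\alpha)(m)=v_\alpha(z^m)$ for $\alpha\in\tau_{\scS}$ and $m\in M$ using \pref{cr:zsl}, \pref{lm:xsx0} and \pref{lm:pi}: every toric coordinate restricts on $\scrU_{\scS}$ to an integral monomial in the $z_{\scS}^{i(l),j_{i(l)}(l-1)}$ times a product of the regular functions $f_{\scS,i}$ (the non-free coordinates being $z_{\scS}^{i,k_i}=f_{\scS,i}\cdot(\text{monomial})$), so $z^m$ restricts to such a monomial times a product $\prod_i f_{\scS,i}^{\,c_i}$ with $c_i\in\bZ$; since each $f_{\scS,i}$ is a polynomial with nonzero constant term (here the generality of $s_i$ enters) and $\alpha\ge 0$, one gets $v_\alpha(f_{\scS,i})=0$, so $v_\alpha(z^m)$ is the integral affine function of $\alpha$ read off from the monomial exponents. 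Hence $\trop|_{\tau_{\scS}}$ is integral affine, and, matching vertices, it is an integral affine isomorphism onto the unimodular simplex $T_{\scS}$.

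Finally I would glue. Distinct maximal shuffle simplices meet exactly along the common faces dictated by their lattice paths, and the same combinatorics describes how the $\tau_{\scS}$ and their faces sit inside the $\Delta$-complex $\Sk(\scrX)$, as recorded just before the statement of the lemma; since $\trop$ respects this dictionary on vertices and is affine on each $\tau_{\scS}$, the maps $\trop|_{\tau_{\scS}}\colon\tau_{\scS}\xrightarrow{\sim}T_{\scS}$ agree on overlaps and assemble to a bijection $\bigcup_{\scS\in\scrS_{C,1}}\tau_{\scS}\to\bigcup_{\scS}T_{\scS}=\rotatebox[origin=c]{180}{$\beta$}(\mu)+\nu$ which is integral affine on each simplex, i.e.\ a piecewise integral affine isomorphism. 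I expect the main obstacle to be the affineness step of the third paragraph: carefully following $z^m$ through the cascade of blow-ups to the ``monomial times $\prod_i f_{\scS,i}^{c_i}$'' form and checking that $v_\alpha$ annihilates the $f_{\scS,i}$; once that is in place the remainder is bookkeeping with the shuffle material of \pref{sc:shuffle} and the divisorial-point computation of \pref{lm:divisorial}.
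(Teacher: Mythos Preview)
Your proposal is correct and follows essentially the same strategy as the paper's own proof: identify $\rotatebox[origin=c]{180}{$\beta$}(\mu)+\nu$ with a product of unimodular simplices via \pref{cd:add}, invoke the shuffle triangulation of \pref{sc:shuffle} and \pref{lm:triangulation}, and check that $\trop$ sends each $\tau_\scS$ onto the corresponding shuffle simplex $T_\scS$ by an integral affine isomorphism. The paper handles the last step by a one-line appeal to \pref{lm:divisorial}, \pref{lm:pi} and the definition of monomial points, whereas you spell out the mechanism more explicitly (writing $z^m$ on $\scrU_\scS$ as a monomial in the local parameters times a product $\prod_i f_{\scS,i}^{\,c_i}$ and using $v_\alpha(f_{\scS,i})=0$); this is exactly the content hidden behind the paper's terse citation, and your observation that generality of $s_i$ forces $f_{\scS,i}$ to have nonzero constant term at the stratum point is the right justification.
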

\begin{proof}
It follows from \pref{lm:divisorial}, \pref{lm:pi}, and the definition of monomial points (cf.~\pref{sc:berk}) that the restriction of $\trop$ to the cell $\tau_\scS$ $\lb \scS \in \scrS_{C, 1} \rb$ is an affine isomorphism onto the convex hull of the points 
\begin{align}\label{eq:tri-vert}
\lc \nu_{D_{\scS, l}}+ \sum_{i=1}^r \beta_i^\ast (\mu_{D_{\scS, l}}) \relmid 1 \leq l \leq d+1 \rc.
\end{align}
These points are the images of \pref{eq:nijl} with $l \in \lc 1, \cdots, d+1 \rc$ by the projection $N_\bR \oplus \bR \to N_\bR$, and we can see that the convex hull of \eqref{eq:tri-vert} is a standard simplex.

Due to \pref{cd:add}, the sets $\nu$ and $\beta_i^\ast (\mu)$ $(1 \leq i \leq r)$ are standard simplices, and their tangent spaces form an internal direct sum in $N_\bR \oplus \bR$.
Hence, the sum $\rotatebox[origin=c]{180}{$\beta$} (\mu) + \nu=\nu + \sum_{i=1}^r \beta_i^\ast \lb \mu \rb$ is isomorphic to the product $\nu \times \prod_{i=1}^r \beta_i^\ast (\mu)$.
We identify them in the following.
Let $P_0$ (resp. $P_i$ $(1 \leq i \leq r)$) be the set of the vertices of $\nu$ (resp. $\beta_i^\ast (\mu)$) equipped with the the total order induced by the total order on $N_i$ of \eqref{eq:ni}, which we fixed in the beginning of \pref{sc:blow-up} respectively.
We think of the sets $\nu, \beta_i^\ast (\mu)$ as the geometric realizations $|\Lambda (P_i)|$ of the abstract simplicial complexes associated with $P_i$.
Then $\rotatebox[origin=c]{180}{$\beta$} (\mu) + \nu=\nu + \sum_{i=1}^r \beta_i^\ast \lb \mu \rb$ is identified with $\prod_{i=0}^r | \Lambda \lb P_i \rb |$.

Let $\scrP(C)$ be the subcomplex of $\Sk(\scrX)$, which consists of $\lc \tau_\scS \relmid \scS \in \scrS_{C, 1} \rc$ and their faces.
A vertex in $\scrP(C)$ corresponds to an irreducible component $D$ of $\scX_k$ (intersecting $U_C$), and the irreducible component $\overline{D}$ of $\scrX_k$.
The cone $\cone(\mu_{D}) \times \lc 0 \rc + \cone \lb \nu_{D} \times \lc 1 \rc \rb \in \tilde{\Sigma}'$ corresponding to the irreducible component $D$ is a face of the cone $C$, and $\nu_{D}, \beta_i^\ast (\mu_D)$ are vertices of $\nu, \beta_i^\ast (\mu)$ respectively.
For the vertex $v_{\overline{D}} \in \scrP(C)$ corresponding to $D$ and $\overline{D}$, we assign the vertex $(\nu_D, \beta_1^\ast (\mu_D), \cdots  \beta_r^\ast (\mu_D))$ in $\Lambda \lb \prod_{i=0}^r P_i \rb$.
One can check that this correspondence is extended to an isomorphism $\scrP(C) \to \Lambda \lb \prod_{i=0}^r P_i \rb$ as simplicial complexes.
We identify $\scrP(C)$ with $\Lambda \lb \prod_{i=0}^r P_i \rb$ by this.

Under all the above identifications, one can also check that the restriction of the tropicalization map $\trop$ to \eqref{eq:pc} coincides with the map \eqref{eq:triangulation} for our $\lc P_i \relmid 0 \leq i \leq r \rc$.
By \pref{lm:triangulation}, it is a piecewise affine isomorphism onto the cell $\rotatebox[origin=c]{180}{$\beta$} (\mu) + \nu$.
Since it sends each (standard) simplex in $\Sk \lb \scrX \rb$ to a standard simplex, it is a piecewise integral affine isomorphism.
Thus we can conclude the claim of the lemma.
\end{proof}

\begin{proof}[Proof of  \pref{th:main}(1)]
For each relevant cone $C$ of maximal dimension, the piecewise integral affine map of \pref{lm:trop-tri} gives the corresponding cell $\rotatebox[origin=c]{180}{$\beta$} (\mu) + \nu \in \scrP(\tilde{\Sigma}')$ a triangulation.
The triangulation is determined according to the orders of vertices of $\nu$ and $\beta_i^\ast (\mu)$.
Those orders come from the total orders of $N_i$ of \eqref{eq:ni}.
Hence, the triangulations for any two cells in $\scrP(\tilde{\Sigma}')$, which are given the piecewise affine maps of \pref{lm:trop-tri} coincide on their intersection, and the triangulations for all the cells in $\scrP(\tilde{\Sigma}')$ give rise to a triangulation of $B^{\check{h}}_\nabla$.
This simplicial structure of $B^{\check{h}}_\nabla$ coincides with that of $\Sk \lb \scrX \rb$.
Therefore, since the restriction of the tropicalization map $\trop$ to each simplex $\tau_\scS$ of $\Sk(\scrX)$ is an integral affine isomorphism, we can conclude \pref{th:main}(1).
\end{proof}

\subsection{The tropical contraction and Berkovich retractions}\label{sc:tropical-berkovich}

The goal of this subsection is to prove \pref{th:main}(2).
Recall that we fixed a vertex $v=\rotatebox[origin=c]{180}{$\beta$} (\mu_v) + \nu_v \in \scrP(\tilde{\Sigma}')$ in the beginning of \pref{sc:blow-up}.
We will prove \pref{th:main}(2) by showing that
\begin{enumerate}
\item locally around the vertex $v$, the composition $\delta \circ \trop$ coincides with the Berkovich retraction $\rho_\scrX$ associated with the model $\scrX$ constructed in \pref{sc:blow-up} (\pref{lm:g-fiber}, \pref{lm:berkovich-tropical}), and
\item the Berkovich retraction $\rho_\scrX$ is an affinoid torus fibration around $v$ (\pref{lm:affinoid}).
\end{enumerate}

The irreducible component of $\scX_k$ to which the vertex $v$ corresponds is $D_{l_\scX} \subset \scX$, i.e., the irreducible component that is the greatest with respect to the order $\leq_\scX$.
In the following, we write $D_v:=D_{l_\scX} \subset \scX$ and $\overline{D}_v:=\overline{D}_{l_\scX} \subset \scrX$.
Let further $\frakX_\eta \subset X^{\an}$ denote the generic fiber of the formal completion $\frakX$ of $\scrX$ along $\overline{D}_v$.

\begin{lemma}\label{lm:g-fiber}
One has
\begin{align}\label{eq:g-fiber}
\frakX_\eta=X^{\an} \cap \trop^{-1} 
\lb \ostar(v) +  \cone_\bT \lb \bigcup_{i=1}^r \beta_i^\ast \lb \mu_{v} \rb \rb \rb,
\end{align}
where $\trop$ is the tropicalization map \eqref{eq:trops}, and $\ostar(v) \subset B^{\check{h}}_\nabla$ is the open star of $v$ in $\scrP(\tilde{\Sigma}')$.
\end{lemma}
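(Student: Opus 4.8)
The plan is to compute $\frakX_\eta$ one affine chart at a time, using the covering $\{\scrU_\scS\}$ of \pref{cr:cover} and the explicit monomial coordinates of \pref{sc:local}, and then to assemble the local pieces into the global description. First I would observe that $\frakX_\eta$ is the set of points of $X^\an$ whose center in $\scrX$ lies on $\overline{D}_v$. The component $\overline{D}_v$ meets $\scrU_\scS$ only when $\scS\in\scrS_{C,1}$ for a maximal relevant cone $C$ such that $v$ is a vertex of the corresponding cell $\rotatebox[origin=c]{180}{$\beta$}(\mu_C)+\nu_C\in\scrP(\tilde\Sigma')$; and in such a chart, because the total orders fixed in \pref{sc:blow-up} make $\nu_v$ and $\beta_i^\ast(\mu_v)$ the greatest elements of the sets $N_i$ of \eqref{eq:ni}, one has $\overline{D}_v=\overline{D}_{\scS,d+1}$, which by \pref{lm:comp} is the Cartier divisor cut out in $\scrU_\scS$ by the coordinate $z_\scS^{i(d+1),j_{i(d+1)}(d)}$. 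Since $\overline{D}_v$ is covered by these charts, $\frakX_\eta$ is the union, over all such $C$ and $\scS$, of the sets of points of $X^\an$ whose center lies in $\scrU_\scS\cap\overline{D}_v$.

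Next I would translate the chart conditions into inequalities on the tropicalization. A point $(x,v_x)\in X^\an$ with $n:=\trop(x,v_x)\in N_\bR$ (and $v_x(t)=1$) has center in $\scrU_\scS$ precisely when $v_x(z_\scS^{i,j})\ge 0$ for all $0\le i\le r$, $0\le j\le k_i$, and its center moreover lies on $\overline{D}_v$ precisely when in addition $v_x(z_\scS^{i(d+1),j_{i(d+1)}(d)})>0$. By \pref{lm:xsx0} and \pref{cr:zsl} each $z_\scS^{i,j}$ is a Laurent monomial $z^{\tilde m}$ with $\tilde m\in M\oplus\bZ$ and $v_x(z^{\tilde m})=\la\tilde m,(n,1)\ra$; explicitly $z_\scS^{i(l),j_{i(l)}(l-1)}=z^{m_l}$ for $1\le l\le d+1$ with $m_l$ as in \eqref{eq:ml}, and $z_\scS^{i,k_i}=z^{m_{i,k_i}}$ for $1\le i\le r$, and the map $l\mapsto(i(l),j_{i(l)}(l-1))$ is a bijection onto the set of all index pairs except the $(i,k_i)$, $1\le i\le r$. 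Hence "center in $\scrU_\scS$'' is the conjunction of the half-spaces $\la m_l,(n,1)\ra\ge 0$ ($1\le l\le d+1$) and $\la m_{i,k_i},(n,1)\ra\ge 0$ ($1\le i\le r$), while "center in $\scrU_\scS\cap\overline{D}_v$'' replaces the inequality with $l=d+1$ by the strict one. Writing $R_\scS\subset N_\bR$ for the resulting polyhedral region and using \pref{cr:usk} to see that these conditions confine $(x,v_x)$ to the analytic domain over $X_{C'}$, one gets that the points of $X^\an$ with center in $\scrU_\scS\cap\overline{D}_v$ are exactly $X^\an\cap\trop^{-1}(R_\scS)$.

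Finally I would identify $\bigcup_\scS R_\scS$ with $\ostar(v)+\cone_\bT\bigl(\bigcup_{i=1}^r\beta_i^\ast(\mu_v)\bigr)$. Unwinding \eqref{eq:ml}, the half-spaces $\la m_l,(n,1)\ra\ge 0$ for $1\le l\le d+1$ are precisely those cutting out, on $\rotatebox[origin=c]{180}{$\beta$}(\mu_C)+\nu_C$, the open star of $v$ in the shuffle triangulation of that cell described in \pref{lm:trop-tri}, while the half-spaces $\la m_{i,k_i},(n,1)\ra\ge 0$ are those allowing movement in the directions $\beta_i^\ast(\mu_v)$ (here one uses the pairing relations between the $m_{i,k_i}$ and the generators $n_{j,0}$, in the spirit of \cite[Lemma 5.5]{Yam21}). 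Gluing the $R_\scS$ over the admissible $C$ — which match along the facets coming from common faces of the cones — yields $\ostar(v)+\cone_\bT\bigl(\bigcup_{i=1}^r\beta_i^\ast(\mu_v)\bigr)$, where the passage from the open star in the triangulation to the open star $\ostar(v)$ in the coarser complex $\scrP(\tilde\Sigma')$ is legitimate because, exactly as in \pref{lm:sigma-cone}, adding the cone $\cone_\bT\bigl(\bigcup_i\beta_i^\ast(\mu_v)\bigr)$ and intersecting with $X(f_1,\dots,f_r)$ absorbs the simplices of the triangulation into the relative interiors of the cells of $\scrP(\tilde\Sigma')$.

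I expect this last step to be the main obstacle: the combinatorial bookkeeping that simultaneously matches each monomial $z_\scS^{i,j}$ with either a supporting half-space of $C'$ or an "interior'' direction, checks that the regions $R_\scS$ glue without gaps or overlaps along the lower-dimensional strata where several charts meet, and carries out the coarsening from the shuffle triangulation to $\scrP(\tilde\Sigma')$. By contrast, the reduction to charts and the translation into monomial valuations should be routine consequences of \pref{lm:xsx0}, \pref{cr:zsl}, and \pref{cr:usk}.
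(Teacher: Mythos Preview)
Your strategy matches the paper's: cover $\overline{D}_v$ by the charts $\scrU_\scS$ with $\scS\in\scrS_{C,1}$ and $C\succ\rho_v$, translate the condition ``center on $\overline{D}_v$ in $\scrU_\scS$'' into monomial inequalities via \pref{cr:zsl} and \pref{lm:xsx0}, and glue over $C$ and $\scS$. The paper also introduces the basis $n_l:=\sum_{i=0}^r n_{i,j_i(l-1)}$ and the dual elements $m_l$ of \eqref{eq:ml}, checks the pairings $\la m_l,n_{l'}\ra=\delta_{l,l'}$, $\la m_l,n_{i,k_i}\ra=0$, $\la m_{i,k_i},n_l\ra=\mathbf{1}_{l>l_i}$, and writes $(n,1)=\sum_l r_l n_l+\sum_i s_i n_{i,k_i}$; this makes the final union transparent, since the simplices $U_{C,\scS}=\{\sum r_l(n_l-(0,1)):r_l\ge 0,\ r_{d+1}>0,\ \sum r_l=1\}$ reassemble into $\ostar(v)$ by \pref{lm:trop-tri} without any appeal to \pref{lm:sigma-cone}.

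There is, however, a genuine gap in your third paragraph. The half-space $\la m_{i,k_i},(n,1)\ra\ge 0$ is \emph{not} the condition ``movement in the direction $\beta_i^\ast(\mu_v)$'': in the coordinates above it reads $s_i+\sum_{l>l_i} r_l\ge 0$, which is strictly weaker than $s_i\ge 0$ once the $r_l$ are nonnegative. Thus your region $R_\scS$ properly contains $U_{C,\scS}+\cone_\bT\bigl(\bigcup_i\beta_i^\ast(\mu_v)\bigr)$, and $\bigcup_\scS R_\scS$ is not equal to $\ostar(v)+\cone_\bT\bigl(\bigcup_i\beta_i^\ast(\mu_v)\bigr)$ as a subset of $X_{C'}(\bT)$. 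The paper closes this gap by using that $(x,v_x)$ lies on $X^{\an}$, not just on the ambient torus: on $\scrU_\scS$ the defining equation $g_{\scS,i}=0$ of \pref{lm:us} gives
\[
v_x\bigl(z_\scS^{i,k_i}\bigr)-v_x\Bigl(\prod_{l>l_i} z_\scS^{i(l),j_{i(l)}(l-1)}\Bigr)=v_x(f_{\scS,i})\ge 0,
\]
i.e.\ $s_i\ge 0$ (this is \eqref{eq:vf}); and conversely $s_i\ge 0$ together with $r_l\ge 0$ implies $\la m_{i,k_i},(n,1)\ra\ge 0$. So the step you flag as ``main obstacle'' really does require the hypersurface equations, not just the combinatorics of the shuffle triangulation; once you insert this use of $v_x(f_{\scS,i})\ge 0$, your argument becomes the paper's.
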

\begin{proof}
Let $\rho_v \in \tilde{\Sigma}'$ denote the cone corresponding to the toric subvariety $D_v (\subset X_{\tilde{\Sigma}'})$.
The irreducible component $\overline{D}_v \subset \scrX$ is covered by the family of open subschemes
\begin{align}\label{eq:dv-cover}
\lc \scrU_\scS \relmid \scS \in \scrS_{C, 1}, C \in \tilde{\Sigma}': \mathrm{a\ relevant\ cone\ of\ maximal\ dimension\ such\ that\ } C \succ \rho_v \rc.
\end{align}
(Notice that $D_v$ is the greatest irreducible component with respect to the order $\leq_{\scX}$, and $\overline{D}_v$ does not intersect $\scrU_\scS$ if $\scS \in \scrS_{C, 2}$.)
In an open subscheme $\scrU_\scS$ in \eqref{eq:dv-cover}, the divisor $\overline{D}_{\scS, l}$ $(1 \leq l \leq d+1)$ is defined by $z_\scS^{i(l), j_{i(l)}(l-1)}=0$ (\pref{lm:comp}), and $\overline{D}_v=\overline{D}_{\scS, d+1}$.
We set
\begin{align}\label{eq:xcs0}
\frakX_{C, \scS}:=\lc x \in \lb \scrU_\scS \times_R K \rb^{\an} \relmid
\begin{array}{l}
v_x \lb z_\scS^{i(d+1), j_{i(d+1)}(d)} \rb > 0 \\
v_x \lb z_\scS^{i(l), j_{i(l)}(l-1)} \rb \geq 0, v_x \lb z_\scS^{i, k_i} \rb \geq 0 \ \lb 1 \leq l \leq d, 1 \leq i \leq r \rb 
\end{array}
\rc,
\end{align}
where $v_x \colon k(x) \to \bR \cup \lc \infty \rc$ denotes the additive valuation corresponding to the point $x$ as stated in \pref{sc:berk}.
Then one has
\begin{align}\label{eq:g-union}
\frakX_\eta = \bigcup_{C \in \tilde{\Sigma}'} \bigcup_{\scS \in \scrS_{C, 1}} \frakX_{C, \scS},
\end{align}
where the union with respect to $C$ is taken over all the relevant cones $C \in \tilde{\Sigma}'$ of maximal dimension such that $C \succ \rho_v$.

Let $C':=C \cap \lb N_\bR \times \lc 0 \rc \rb$, and consider the map
\begin{align}\label{eq:trop-a}
\trop_{C'} \colon \lb X_{C'} \rb^{\an} \to \Hom \lb \lb C' \rb^\vee \cap \lb M \oplus \bZ \rb , \bT \rb=X_{C'} \lb \bT \rb \times \bR, 
\quad x \mapsto 
\lb m \mapsto v_x \lb z^m \rb \rb.
\end{align}
In \eqref{eq:trop-a}, the cone $C'$ is regarded as a cone in $N_\bR$ when we consider the affine toric variety $X_{C'}$ over $K$ and the tropical toric variety $X_{C'} \lb \bT \rb$, and as a cone in $N_\bR \times \bR$ when we take the dual cone $\lb C' \rb^\vee$.
Since $v_x \lb z^{(0, 1)}=t \rb=1$ with $(0, 1) \in M \oplus \bZ$ for any $x \in \lb X_{C'} \rb^{\an}$, the image of $\trop_{C'}$ is contained in $X_{C'} \lb \bT \rb \times \lc 1 \rc$, and the composition of $\trop_{C'}$ with the projection $X_{C'} \lb \bT \rb \times \bR \to X_{C'} \lb \bT \rb$ coincides with the tropicalization map \eqref{eq:trop}.

For $l \in \lc 1, \cdots, d+1\rc$, we set
\begin{align}\label{eq:n_l}
n_l:=\sum_{i=0}^r n_{i, j_i(l-1)} \in N \oplus \bZ.
\end{align}
Then one can check
\begin{align}\label{eq:nr}
N_\bR \oplus \bR 
\cong 
\lb \bigoplus_{l=1}^{d+1} \bR \cdot n_l \rb \oplus \lb \bigoplus_{i=1}^r \bR \cdot n_{i, k_i} \rb
\end{align}
and 
\begin{align}\label{eq:mlnl}
\la m_l, n_{l'} \ra=\delta_{l, l'},
\quad
\la m_l, n_{i, k_i} \ra=0 \quad \lb 1 \leq i \leq r \rb, \\ \label{eq:minl}
\la m_{i, k_i}, n_{l} \ra:=\left\{
\begin{array}{ll}
1 & l \geq l_i+1, \\
0 & l <l_i+1,
\end{array}
\right.
\end{align}
where $m_l \in M \oplus \bZ$ $(1 \leq l \leq d+1)$ are the elements defined in \eqref{eq:ml}.

By \eqref{eq:nr}, any element in the target of the map $\trop_{C'}$ of \eqref{eq:trop-a} is written as
\begin{align}\label{eq:n1}
\sum_{l=1}^{d+1} r_l \cdot n_l + \sum_{i=1}^r s_i \cdot n_{i, k_i} \quad \lb r_l, s_i \in \bR \rb,
\end{align}
or as the limit of $\eqref{eq:n1}$ as some of $r_l, s_i$ approach to $\pm \infty$.
By \pref{cr:zsl} and \eqref{eq:mlnl}, the conditions $v_x \lb z_\scS^{i(d+1), j_{i(d+1)}(d)} \rb > 0, v_x \lb z_\scS^{i(l), j_{i(l)}(l-1)} \rb \geq 0$ in \eqref{eq:xcs0} are equivalent to the conditions $r_{d+1} > 0$ and $r_l \geq 0$ for their images by the map $\trop_{C'}$ of \eqref{eq:trop-a} respectively.
We also have $\sum_{l=1}^{d+1} r_l=1$ for the image by the map $\trop_{C'}$, since the image of $\trop_{C'}$ is contained in $X_{C'} \lb \bT \rb \times \lc 1 \rc$ as already mentioned.
Furthermore, by the equation $f_{\scS, i} \cdot \prod_{l \geq l_i+1}^{d+1} z_\scS^{i(l), j_{i(l)}(l-1)}=z^{i, k_i}_\scS$ (\pref{lm:us}), we also have
\begin{align}\label{eq:vf}
v_x \lb z^{i, k_i}_\scS \rb -v_x \lb \prod_{l \geq l_i+1}^{d+1} z_\scS^{i(l), j_{i(l)}(l-1)} \rb=v_x \lb f_{\scS, i} \rb \geq 0
\end{align}
for $x \in \frakX_{C, \scS}$.
From this, \eqref{eq:mlnl}, and \eqref{eq:minl}, we can see that we have
\begin{align}
\lb s_i + \sum_{l \geq l_i+1}^{d+1} r_l \rb -\sum_{l \geq l_i+1}^{d+1} r_l=s_i \geq 0 \quad \lb i \in \lc 1, \cdots, r \rc \rb
 \end{align}
for the image of $\frakX_{C, \scS}$ by the map $\trop_{C'}$.
Conversely, we can see that if the image \eqref{eq:n1} of $x \in \frakX_{C, \scS}$ by the map $\trop_{C'}$ satisfies 
$r_{d+1} > 0$, $r_l \geq 0$ $\lb l \in \lc 1, \cdots ,d \rc \rb$, and $s_i \geq 0$ $\lb i \in \lc 1, \cdots, r \rc \rb$, then $v_x \lb z_\scS^{i, k_i} \rb \geq 0$ is also satisfied by \eqref{eq:minl}.
Therefore, $\frakX_{C, \scS}$ is equal to
\begin{align}\label{eq:xcs}
\lb \scrU_\scS \times_R K \rb^{\an} \cap 
\trop_{C'}^{-1} \lb \lc \sum_{l=1}^{d+1} r_l \cdot n_l + \sum_{i=1}^r s_i \cdot n_{i, k_i} \relmid 
r_{d+1} \in \bR_{>0}, r_l \in \bR_{\geq 0} \lb 1 \leq l \leq d \rb, s_i \in \bT_{\geq 0}, \sum_{l=1}^{d+1} r_l=1 \rc \rb.
\end{align}
By \pref{cr:usk}, this is also equal to
\begin{align}\label{eq:xcs}
X^{\an} \cap 
\trop_{C'}^{-1} \lb \lc \sum_{l=1}^{d+1} r_l \cdot n_l + \sum_{i=1}^r s_i \cdot n_{i, k_i} \relmid 
r_{d+1} \in \bR_{>0}, r_l \in \bR_{\geq 0} \lb 1 \leq l \leq d \rb, s_i \in \bT_{\geq 0}, \sum_{l=1}^{d+1} r_l=1 \rc \rb.
\end{align}
Since $\beta_i^\ast \lb \mu_{v} \rb=n_{i, k_i}$, one can get by \eqref{eq:g-union} and \eqref{eq:xcs} 
\begin{align}\label{eq:xv}
\frakX_\eta=\bigcup_{C \in \tilde{\Sigma}'} \bigcup_{\scS \in \scrS_{C, 1}} 
X^{\an} \cap \trop^{-1} 
\lb U_{C, \scS} +  \cone_\bT \lb \bigcup_{i=1}^r \beta_i^\ast \lb \mu_{v} \rb \rb \rb,
\end{align}
where the union with respect to $C$ is taken over all the relevant cones $C \in \tilde{\Sigma}'$ of maximal dimension such that $C \succ \rho_v$, and $U_{C, \scS}$ is the subset of $N_\bR$ defined by
\begin{align}
U_{C, \scS}:=
\lc -(0, 1) + \sum_{l=1}^{d+1} r_l \cdot n_l \relmid 
r_d \in \bR_{>0}, r_l \in \bR_{\geq 0} \lb 1 \leq l \leq d \rb, \sum_{l=1}^{d+1} r_l=1 \rc \subset N_\bR\ (\times \lc 0 \rc)
\end{align}
with $(0, 1) \in N_\bR \oplus \bR$.
One has
\begin{align}
-(0, 1) + \sum_{l=1}^{d+1} r_l \cdot n_l
= \sum_{l=1}^{d+1} r_l \cdot \lb n_l - (0, 1)\rb.
\end{align}
By \pref{lm:divisorial} and \eqref{eq:vud}, the element $n_l-(0, 1) \in N_\bR \ (\times \lc 0 \rc)$ is the image by the tropicalization map $\trop$ of the divisorial point associated with the divisor $\overline{D}_{\scS, l}$.
We also have $v=n_{d+1}-(0, 1)$.
From these, we can get
\begin{align}
\bigcup_{C \in \tilde{\Sigma}'} \bigcup_{\scS \in \scrS_{C, 1}} U_{C, \scS}=\ostar (v).
\end{align}
It turns out that \eqref{eq:xv} is written as \eqref{eq:g-fiber}.
We obtained the claim of the lemma.
\end{proof}

Let $\rho_\scrX \colon X^{\an} \to \Sk \lb \scrX \rb$ be the Berkovich retraction associated with the model $\scrX$.
Let further $\pi_{C_{v}} \colon X_{C_{v}}(\bT) \to O_{C_{v}} (\bT)$ be the map \eqref{eq:proj} for the cone $C_v:=\cone (\mu_v)$.
For the same reason as \eqref{eq:t}, there uniquely exists a map $t_{v} \colon \pi_{C_{v}}\lb \ostar(v) \rb \to \ostar (v)$ such that 
$t_v \circ \pi_{C_v}$ is the identity map on $\ostar (v)$.

\begin{lemma}\label{lm:berkovich-tropical}
The restriction to $\frakX_\eta$ of the composition
\begin{align}\label{eq:ret-trop}
X^{\an} \xrightarrow{\rho_\scrX} \Sk \lb \scrX \rb=\Sk \lb X \rb \xrightarrow[\cong]{\trop} B^{\check{h}}_\nabla
\end{align}
coincides with the composition 
\begin{align}\label{eq:trop-proj}
\varphi_v \colon \frakX_\eta \xrightarrow{\trop} X_{C_v} \lb \bT \rb \xrightarrow{\pi_{C_{v}}} \pi_{C_{v}}\lb \ostar(v) \rb \xrightarrow{t_{v}} \ostar (v).
\end{align}
\end{lemma}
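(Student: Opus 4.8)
The plan is to verify the equality pointwise on the open cover $\frakX_\eta=\bigcup_{C}\bigcup_{\scS\in\scrS_{C,1}}\frakX_{C,\scS}$ furnished by \eqref{eq:g-union}, where $C$ runs over the maximal-dimensional relevant cones with $C\succ\rho_v$. So I fix such $C$ and $\scS$, and a point $(x,v_x)\in\frakX_{C,\scS}$, and first compute $\rho_\scrX(x,v_x)$ from the recipe in \pref{sc:berk}. By \pref{lm:special} all irreducible components of $\scrX_k$ have multiplicity one, the components meeting $\scrU_\scS$ are exactly $\overline{D}_{\scS,l}$ $\lb 1\le l\le d+1\rb$, and by \pref{lm:comp} and \pref{cr:zsl} the local equation of $\overline{D}_{\scS,l}$ on $\scrU_\scS$ is $z_\scS^{i(l),j_{i(l)}(l-1)}=z^{m_l}$. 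Hence the minimal stratum through the center of $(x,v_x)$ is $\bigcap_{l\in L}\overline{D}_{\scS,l}$ with $L=\lc l:v_x(z^{m_l})>0\rc$, which contains $d+1$ because $v_x(z^{m_{d+1}})>0$ on $\frakX_{C,\scS}$ by \eqref{eq:xcs0}; and $\rho_\scrX(x,v_x)$ is the monomial point of this stratum whose barycentric coordinate at the vertex $\overline{D}_{\scS,l}$ is $\alpha_l:=v_x(z^{m_l})$. Note $\sum_{l=1}^{d+1}\alpha_l=v_x(t)=1$ by \eqref{eq:image-t}.

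Next I transport this point to $B^{\check{h}}_\nabla$ through the isomorphism $\Sk(\scrX)\cong\Sk(X)\xrightarrow{\trop}B^{\check{h}}_\nabla$ of \pref{th:main}(1). The monomial point $\rho_\scrX(x,v_x)$ lies in a face of the simplex $\tau_\scS$, and by the proof of \pref{lm:trop-tri} the restriction of $\trop$ to $\tau_\scS$ is the affine isomorphism sending the divisorial point of $\overline{D}_{\scS,l}$ to $\nu_{D_{\scS,l}}+\sum_{i=1}^r\beta_i^\ast(\mu_{D_{\scS,l}})$, which by \eqref{eq:vud} and \pref{lm:divisorial} is the image of $n_l$ under the projection $N_\bR\oplus\bR\to N_\bR$, i.e.\ $n_l-(0,1)$. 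Therefore
\[
\trop\!\lb\rho_\scrX(x,v_x)\rb=\sum_{l=1}^{d+1}\alpha_l\,\lb n_l-(0,1)\rb .
\]
Since $\alpha_{d+1}>0$ and $v=n_{d+1}-(0,1)$, this convex combination lies in the relative interior of the face of $\tau_\scS$ having $v$ as a vertex, hence in $\ostar(v)$.

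Finally I compute $\varphi_v(x,v_x)$. By the description of $\frakX_{C,\scS}$ obtained in the proof of \pref{lm:g-fiber} together with \eqref{eq:mlnl}, the point $\trop(x,v_x)\in X_{C_v}(\bT)$ is the image under $N_\bR\oplus\bR\to N_\bR$ of $\sum_{l=1}^{d+1}\alpha_l\,n_l+\sum_{i=1}^r s_i\,n_{i,k_i}$ for some $s_i\in\bT_{\geq 0}$, where the coefficient of $n_l$ is $\la m_l,\trop_{C'}(x,v_x)\ra=v_x(z^{m_l})=\alpha_l$. Because $\overline{D}_v=\overline{D}_{\scS,d+1}$ corresponds to the cone generated by $\lc n_{i,k_i}:0\le i\le r\rc$, one has $C_v=\cone(\mu_v)$ with $\vspan(C_v)=\vspan(\lc n_{i,k_i}:1\le i\le r\rc)$, so the projection $\pi_{C_v}$ annihilates the $\sum_i s_i\,n_{i,k_i}$ part, and $\pi_{C_v}(\trop(x,v_x))$ is the class of $\sum_l\alpha_l\lb n_l-(0,1)\rb$. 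As this element already lies in $\ostar(v)$ and $t_v\circ\pi_{C_v}=\mathrm{id}$ on $\ostar(v)$, we conclude $\varphi_v(x,v_x)=\sum_l\alpha_l\lb n_l-(0,1)\rb=\trop\!\lb\rho_\scrX(x,v_x)\rb$, which is the asserted equality.

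The bulk of the argument is the bookkeeping in the last two paragraphs — aligning the local equations of the $\overline{D}_{\scS,l}$, the minimal stratum, and the dual-basis relations \eqref{eq:mlnl}–\eqref{eq:minl} so that both computations yield the same affine combination $\sum_l\alpha_l\lb n_l-(0,1)\rb$ — but this is routine. The one point that is genuinely substantive rather than formal is the verification that this combination lands in $\ostar(v)$, so that $t_v$ acts on it as the identity; this relies on the strict inequality $v_x(z^{m_{d+1}})>0$ built into the definition of $\frakX_\eta$ as the generic fibre of the completion along $\overline{D}_v$, and would fail on the boundary of $\frakX_\eta$.
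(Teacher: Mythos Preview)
Your proof is correct and follows essentially the same approach as the paper's own proof: both work on the open pieces $\frakX_{C,\scS}$, compute $\trop\circ\rho_\scrX$ as the affine combination $\sum_{l}v_x(z^{m_l})\,(n_l-(0,1))$ using \pref{lm:comp}, \pref{cr:zsl} and \pref{lm:divisorial}, and then use the dual-basis relations \eqref{eq:mlnl} to see that $t_v\circ\pi_{C_v}\circ\trop$ produces the same expression. Your version is in fact more explicit on two points the paper leaves to the reader---the identification of the minimal stratum and its barycentric coordinates, and the reason the combination lands in $\ostar(v)$ so that $t_v$ acts as the identity.
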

\begin{proof}
We use the same notation as in the proof of \pref{lm:g-fiber}.
As we saw in the proof of \pref{lm:g-fiber}, the element $n_l-(0, 1) \in N_\bR$ $(l \in \lc 1, \cdots d+1 \rc)$ is the image by the tropicalization map $\trop$ of the divisorial point associated with the divisor $\overline{D}_{\scS, l}$.
The subset $\frakX_\eta \subset X^{\an}$ consists of elements $x \in X^{\an}$ whose centers are contained in the stratum $\overline{D}_{\scS, d+1}$.
Therefore, for any $x \in \frakX_{C, \scS} \subset \frakX_\eta$, we have
\begin{align}\label{eq:trop-rho}
\trop \circ \rho_\scrX (x)
=\sum_{l =1}^{d+1} v_x \lb z_\scS^{i(l), j_{i(l)}(l-1)} \rb \cdot \lb n_l-(0, 1) \rb.
\end{align}
One can also check by using \eqref{eq:mlnl} that the image of $\trop (x)=\trop_{C'} (x)-(0,1)$ by the map $t_v \circ \pi_{C_v}$ coincides with \eqref{eq:trop-rho}.
Thus the lemma holds.
\end{proof}

\begin{lemma}\label{lm:affinoid}
The restriction of the map \eqref{eq:ret-trop} to $\frakX_\eta$ is an affinoid torus fibration.
Furthermore, it induces the same integral affine structure on $\ostar(v)$ as the one defined by the projection map $\psi_v$ of \eqref{eq:fanstr}.
\end{lemma}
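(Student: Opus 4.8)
The plan is to build on \pref{lm:berkovich-tropical}, which already identifies the restriction of \eqref{eq:ret-trop} to $\frakX_\eta$ with the composition $\varphi_v = t_v \circ \pi_{C_v} \circ \trop$ of \eqref{eq:trop-proj}; thus it remains to show (i) that $\varphi_v$ is an affinoid torus fibration and (ii) that the integral affine structure it induces on $\ostar(v)$ is the one determined by $\psi_v$. To prove (i) I would fix $b \in \ostar(v)$ and, using $\ostar(v) = \bigcup_{C, \scS} U_{C, \scS}$, reduce to a single chart $\frakX_{C, \scS}$ with $\scS \in \scrS_{C,1}$ from \eqref{eq:g-union}, checking compatibility of the resulting charts on overlaps exactly as in the proof of \pref{lm:g-fiber}.

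The decisive structural input, to be extracted from \pref{lm:us}, \pref{cr:usk}, \pref{cr:zsl} and \eqref{eq:image-t}, is the following description of $X$ on such a chart: on $\scrU_\scS \times_R K$ the $d+1$ functions $\zeta_l := z_\scS^{i(l), j_{i(l)}(l-1)}$ $(1 \le l \le d+1)$ are invertible, satisfy $\prod_l \zeta_l = t \in K^\times$, and equal $z^{m_l}$, while --- since $p_i = k_i$ for $\scS \in \scrS_{C,1}$ --- each of the $r$ remaining coordinates satisfies $z_\scS^{i, k_i} = f_{\scS, i} \cdot w_i$ with $w_i := \prod_{l \ge l_i + 1}^{d+1} \zeta_l$ a unit of positive valuation on all of $\frakX_{C, \scS}$ (the exceptional coordinate $\zeta_{d+1}$, which has positive valuation there, divides every $w_i$). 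I would then solve these $r$ equations for $z_\scS^{1, k_1}, \dots, z_\scS^{r, k_r}$ as analytic functions of $\zeta_1, \dots, \zeta_{d+1}$: the map $z \mapsto \lb f_{\scS, i}(\zeta, z)\, w_i(\zeta) \rb_i$ is a contraction on the relevant affinoid, since its Jacobian in the $z_\scS^{\bullet, k_\bullet}$ is the identity plus a matrix of entries of valuation $\ge \min_i v_x(w_i) > 0$, so a Banach/Hensel fixed-point argument produces these functions and identifies $\frakX_{C, \scS}$ with the open analytic subdomain $\lc v_x(\zeta_{d+1}) > 0,\ v_x(\zeta_l) \ge 0 \rc$ of the analytification of the $d$-dimensional torus $T_\scS := \lc (\zeta_1, \dots, \zeta_{d+1}) \in \mathbb{G}_m^{d+1} \relmid \prod_l \zeta_l = t \rc \cong \mathbb{G}_{m, K}^d$. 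Under this identification $\pi_{C_v} \circ \trop$ becomes $\trop_{T_\scS}$ --- $\pi_{C_v}$ kills $\vspan(\beta_i^\ast(\mu_v)) = \vspan(n_{i, k_i})$, so $\pi_{C_v} \circ \trop$ records exactly $(v_x(\zeta_l))_l$ subject to $\sum_l v_x(\zeta_l) = 1$ --- and $\varphi_v = t_v \circ \trop_{T_\scS}$. Taking $U \subset \ostar(v)$ a small neighbourhood of $b$ and $V := \pi_{C_v}(U)$, an open subset of the $d$-dimensional affine space $\lc \sum_l \rho_l = 1 \rc$, one gets $\varphi_v^{-1}(U) = \trop_{T_\scS}^{-1}(V) \cong \trop^{-1}(V)$ together with the commutative square required by \pref{df:affinoid}.

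I expect the technical heart --- and the reason one cannot here simply invoke \cite[Theorem A]{MPS21} as \cite{PS22} does in the hypersurface case (cf.~\pref{sc:overview}) --- to be this analytic solving step, i.e.\ controlling the general sections $f_{\scS, i}$, whose presence is precisely what makes the model non-toric: one must verify the contraction estimate uniformly on a suitable affinoid neighbourhood. This is legitimate because $b \in \ostar(v)$ and $\ostar(v) \cap \Gamma(\tilde{\Sigma}') = \emptyset$ --- every simplex of $\Gamma(\tilde{\Sigma}')$ is spanned by barycenters $a_{\tau_0}, \dots, a_{\tau_l}$ with $\dim \tau_0 \ge 1$, hence does not lie in the open star of the vertex $v$ --- so the sections $s_i$ are units near every stratum of $\scrX_k$ meeting $\frakX_\eta$. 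Should the direct estimate turn out to be cumbersome, I would instead argue that $(\scrX, \scrX_{k, \mathrm{red}})$ is toroidal along $\overline{D}_v$ over those strata, via the \'etale-local model of \eqref{eq:H} from the proof of \pref{pr:snc} (whose repeated blow-ups become genuinely toric once the units $f_{\scS, i}$ are inverted), and then apply \cite[Proposition 5.4]{MR3946280}.

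Finally, for (ii): by construction the integral affine structure induced on $\ostar(v)$ by the fibration just produced is pulled back from the canonical integral affine structure on $\lc \sum_l \rho_l = 1 \rc$ through the homeomorphism $\pi_{C_v}|_{\ostar(v)} = t_v^{-1}$, so it suffices to identify $\pi_{C_v}|_{\ostar(v)}$ with $\psi_v$ of \eqref{eq:fanstr}, i.e.\ to check $\vspan(C_v) = \vspan(\beta_1^\ast(\mu_v), \dots, \beta_r^\ast(\mu_v))$. This follows from \pref{cd:add} --- which makes $C_v = \cone(\mu_v)$ generated by part of a lattice basis of $N \oplus \bZ$ --- together with the fact that $\mu_v$, being a lattice face of $\partial \Delta^\ast$, satisfies $\mu_v \cap N = \bigsqcup_{i=1}^r \lb \beta_i^\ast(\mu_v) \cap N \rb$ by the decomposition $\partial \Delta^\ast \cap N = \bigsqcup_{i=1}^r \lb \nabla_i \cap N \setminus \lc 0 \rc \rb$ recalled in the proof of \pref{pr:canonical}; hence $\cone(\mu_v) = \cone \lb \bigcup_i \beta_i^\ast(\mu_v) \rb$ and the two projections coincide as maps of integral affine manifolds, which gives the claim.
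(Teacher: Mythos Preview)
Your core idea for (i) --- solve the $r$ equations $z_\scS^{i,k_i} = f_{\scS,i}\cdot w_i$ for the extra coordinates using that each $w_i$ is divisible by the topologically nilpotent $\zeta_{d+1}$ --- is correct and is exactly what underlies the formal isomorphism the paper invokes. However, the paragraph you wrote to justify the ``uniform contraction estimate'' contains two genuine errors. First, $\ostar(v)\cap\Gamma(\tilde{\Sigma}')\neq\emptyset$ in general: for any $1$-cell $\tau\ni v$ with $\dim\tau\le d-1$ the barycenter $a_\tau$ lies in both $\rint(\tau)\subset\ostar(v)$ and in $\Gamma(\tilde{\Sigma}')$ (your argument confuses $\ostar(v)$, the open star in $\scrP(\tilde{\Sigma}')$, with $W_v^\circ$, the open star in the barycentric subdivision). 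Second, and relatedly, the sections $s_i$ are \emph{not} units near every stratum meeting $\frakX_\eta$: the zero loci $\tilde s_j=0$ appear explicitly in the blow-up center \eqref{eq:along}, so $f_{\scS,i}$ does vanish on $\overline{D}_v$. Fortunately neither claim is needed: the only input for the contraction is $v_x(w_i)\ge v_x(\zeta_{d+1})>0$ on $\frakX_{C,\scS}$, and $f_{\scS,i}$ has coefficients in $k$ so satisfies $|f_{\scS,i}|\le 1$ on the unit polydisk regardless of whether it is a unit. Once you drop that paragraph, the Banach/Hensel step goes through for each $b$ by choosing $U$ small enough that $v_x(\zeta_{d+1})$ is bounded below.

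The paper's route is somewhat different and cleaner in its packaging. Rather than solve analytically chart by chart, it builds an auxiliary regular toric model $\scrY\to\Spec R$ from the fan over $\psi_v(\scrT|_{\ostar(v)})$ in $\overline{N}_\bR\oplus\bR$, writes down the obvious morphism $f_\scS\colon\frakU_\scS\to\frakV_\scS$, $y_\scS^l\mapsto z_\scS^l$, glues to a morphism of formal schemes $f\colon\frakX\to\frakY$, and then appeals to \cite[Proposition~2.5.3]{MPS21} (the Hensel-type argument you sketch, but carried out formally rather than analytically) to conclude that $f$ is an isomorphism. Since the Berkovich retraction of a regular toric model coincides with tropicalization and is therefore an affinoid torus fibration, both claims of the lemma follow at once from $\frakX\cong\frakY$. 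This avoids any discussion of uniform estimates or chart compatibility, and makes the identification of integral affine structures immediate: $\ostar(v)$ is identified with $\ostar(\psi_v(v))\subset\scrP_v\subset\overline{N}_\bR$ via $\psi_v$ itself. Your approach reaches the same destination, and your treatment of~(ii) is correct, but the detour through $\Gamma(\tilde{\Sigma}')$ and unit sections should be removed.
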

\begin{proof}
In order to prove the lemma, we will construct a regular toric model $\scrY \to \Spec R$ whose formal completion along a stratum is isomorphic to $\frakX$.
By using the fact that Berkovich retractions locally depend only on the formal completions, and that the Berkovich retraction associated with a regular toric model is an affinoid torus fibration (cf.~\cite[Example 3.5]{MR3946280}, \cite[Section 1.5]{MPS21}), we will see that the restriction of the map \eqref{eq:ret-trop} to $\frakX_\eta$ is also an affinoid torus fibration.

We set
\begin{align}
\overline{N} := 
\left. N \middle/ \bigoplus_{i=1}^r \bZ \cdot \beta_i^\ast \lb \mu_v \rb \right., \quad
\overline{M}
:=\Hom \lb \overline{N}, \bZ \rb,
\end{align}
and write the triangulation of $B^{\check{h}}_\nabla$ mentioned in the end of \pref{sc:essential} as $\scrT$.
We consider the polyhedral complex
\begin{align}
\scrP_v:=\lc \psi_v \lb \tau \rb \relmid \tau \in \scrT, \tau \succ v \rc
\end{align}
in $\overline{N}_\bR:=\overline{N} \otimes_\bZ \bR$, where $\psi_v$ is the projection map given in \eqref{eq:fanstr}.
Let $\Sigma_v$ be the fan in $\overline{N}_\bR \oplus \bR$ obtained by taking the conic hull of the image of $\scrP_v$ by the map
\begin{align}
\overline{N}_\bR
\hookrightarrow
\overline{N}_\bR \oplus \bR, \quad n \mapsto (n, 1).
\end{align}
Let further $Y \lb \Sigma_v \rb$ be the toric variety over $k$ associated with the fan $\Sigma_v$.
The fan $\Sigma_v$ is contained in $\overline{N}_\bR \times \bR_{\geq 0}$, and the projection $\overline{N}_\bR \times \bR_{\geq 0} \to \bR_{\geq 0}$ induces a toric morphism $Y \lb \Sigma_v \rb \to \bA^1_k$.
We write its base change to $R$ as $\scrY \to \Spec R$.
Since $\scrT$ is a unimodular triangulation, the fan $\Sigma_v$ is also unimodular.
Hence, the toric model $\scrY \to \Spec R$ is regular.

The maximal-dimensional cones in $\Sigma_v$ are
\begin{align}
\lc C_\scS:=\cone \lb \psi_v \lb \tau_\scS \rb \times \lc 1 \rc \rb \relmid \scS \in \scrS_{C, 1}, C \in \tilde{\Sigma}': \mathrm{a\ relevant\ cone\ of\ maximal\ dimension\ such\ that\ } C \succ \rho_v \rc,
\end{align}
and the cone $C_\scS$ is generated by the elements $n_l \in \overline{N} \oplus \bZ$ $(1 \leq l \leq d+1)$ given by \eqref{eq:n_l}.
We can see from \eqref{eq:mlnl} that the elements $m_l \in \overline{M} \oplus \bZ$ given by \eqref{eq:ml} are dual to $n_l$.
Therefore, the affine toric subscheme of $\scrY$ associated with the cone $C_\scS$ is written as
\begin{align}\label{eq:cs-sub}
\left. \Spec R \ld C_\scS^\vee \cap \lb \overline{M} \oplus \bZ \rb \rd \middle/ \lb t- \prod_{l=1}^{d+1} y_\scS^l \rb \right.
\end{align}
with $y_\scS^l:=z^{m_l} \in k \ld C_\scS^\vee \cap \lb \overline{M} \oplus \bZ \rb \rd$, and the toric model $\scrY$ is covered by such open subschemes.

We take the formal completion $\frakY$ of $\scrY$ along the toric divisor corresponding to the $1$-dimensional cone $\cone \lb \psi_v (v) \times \lc 1 \rc \rb \in \Sigma_v$.
Then \eqref{eq:cs-sub} turns into
\begin{align}
\frakV_\scS:=\Spf \left. R \lc y^l_\scS \lb 1 \leq l \leq d \rb \rc
\ldd y_\scS^{d+1} \rdd \middle/ \lb t- \prod_{l=1}^{d+1} y_\scS^l \rb \right..
\end{align}
On the other hand, when we take the formal completion $\frakX$ of $\scrX$ along $\overline{D}_v$, the open subscheme $\scrU_\scS$ of \eqref{eq:Us} turns into
\begin{align}
\frakU_\scS:=\Spf \left. R \lc z_\scS^l \lb 1 \leq l \leq d \rb \rc
\lc z^{i, k_i}_\scS \lb 1 \leq i \leq r \rb \rc
\ldd z_\scS^{d+1} \rdd \middle/ \lb t- \prod_{l=1}^{d+1} z_\scS^l, g_{\scS, i}, \lb 1 \leq i \leq r \rb \rb \right.,
\end{align}
where $z_\scS^l:=z_\scS^{i(l), j_{i(l)}(l-1)}$ $(1 \leq l \leq d+1)$.
We consider the morphism $f_\scS \colon \frakU_\scS \to \frakV_\scS$ defined by
\begin{align}
\scO \lb \frakV_\scS \rb \to \scO \lb \frakU_\scS \rb, \quad y_\scS^l \mapsto z_\scS^l \ (1 \leq l \leq d+1).
\end{align}
For any shuffles $\scS, \scS' \in \scrS_{C, 1}$, the coordinates $z_\scS^l, z_{\scS'}^l$ $(1 \leq l \leq d+1)$ are written with $z^{m_{i, j}}$ as in \pref{cr:zsl}, and the coordinate transformation between them is written down using the formula \eqref{eq:ml}.
The coordinate transformation from $y_\scS^l$ to $y_{\scS'}^l$ is also written down in exactly the same way.
Therefore, we can see that the morphisms $\lc f_\scS \rc_\scS$ can be glued together, and get the morphism of formal $R$-schemes
\begin{align}
f \colon \frakX \to \frakY.
\end{align}

\begin{claim}\label{cl:f-isom}
The morphism $f \colon \frakX \to \frakY$ is an isomorphism.
\end{claim}

\pref{cl:f-isom} can be proved in exactly the same way as \cite[Proposition 2.5.3]{MPS21}.
We omit to repeat the same proof here.
The argument was originally considered in \cite[Proposition 5.4]{MR3946280}.

The generic fiber of the toric model $\scrY$ is the torus $T:= \Spec K \ld \overline{M} \rd$, and the tropicalization map $\trop \colon T^{\an} \to \overline{N}_\bR$ gives an identification between the Berkovich skeleton $\Sk \lb \scrY \rb \subset T^{\an}$ associated with the model $\scrY$ and the polyhedral complex $\scrP_v \subset \overline{N}_\bR$.
Furthermore, the generic fiber $\frakY_\eta$ of $\frakY$ coincides with the preimage of the open star $\ostar \lb \psi_v \lb v \rb \rb$ of $\psi_v \lb v \rb$ in $\scrP_v$ by the tropicalization map, and the Berkovich retraction $\rho_\scrY \colon \frakY_\eta \to \Sk \lb \scrY \rb$ coincides with the restriction of the tropicalization map $\trop$ to $\frakY_\eta \subset T^{\an}$.
For these facts, we refer the reader to \cite[Example 3.5]{MR3946280} or \cite[Section 1.5]{MPS21}.

By \pref{cl:f-isom}, we have $\frakX_\eta \cong \frakY_\eta$, and the Berkovich retractions $\rho_{\scrX} \colon \frakX_\eta \to \ostar(v) \subset \Sk \lb \scrX \rb$ and $\rho_\scrY \colon \frakY_\eta \to \ostar \lb \psi_v (v) \rb \subset \Sk \lb \scrY \rb$ are identified.
(We are thinking of $\ostar(v)$ and $\ostar \lb \psi_v (v) \rb$ as subsets of $\Sk \lb \scrX \rb$ and $\Sk \lb \scrY \rb$ respectively via the tropicalization maps.)
By this identification and what was mentioned in the previous paragraph, we can see that the Berkovich retractions $\rho_{\scrX}$ is isomorphic to a restriction of the tropicalization map.
Thus we can conclude the former claim of \pref{lm:affinoid}.
The latter claim also holds, since the sets $\ostar(v)$ and $\ostar \lb \psi_v (v) \rb$ are identified via the projection map $\psi_v$ of \eqref{eq:fanstr}.
\end{proof}

\begin{lemma}\label{lm:torus-fib}
The map $\varphi_v \colon \frakX_\eta \to \ostar(v)$ of \eqref{eq:trop-proj} is an affinoid torus fibration.
Furthermore, it induces the same integral affine structure on $\ostar(v)$ as the one defined by the projection map $\psi_v$ of \eqref{eq:fanstr}.
\end{lemma}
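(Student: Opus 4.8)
The plan is to obtain this as an immediate consequence of \pref{lm:berkovich-tropical} and \pref{lm:affinoid}, in which all the substance has already been established. By \pref{lm:berkovich-tropical}, the map $\varphi_v$ of \eqref{eq:trop-proj} is exactly the restriction to $\frakX_\eta$ of the composition \eqref{eq:ret-trop}, that is, of $\trop \circ \rho_\scrX$ post-composed with the identification $\Sk(\scrX) \cong B^{\check{h}}_\nabla$. By \pref{lm:affinoid}, that restriction is an affinoid torus fibration onto $\ostar(v)$, and the integral affine structure it induces on $\ostar(v)$ is the one defined by the projection $\psi_v$ of \eqref{eq:fanstr}. Putting these two facts together yields both assertions of the lemma at once, so the argument is a formal two-line deduction.

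The only point worth a glance is the matching of domains and codomains. On the domain side, \pref{lm:g-fiber} identifies $\frakX_\eta$ with $X^{\an} \cap \trop^{-1}\lb \ostar(v) + \cone_\bT \lb \bigcup_{i=1}^r \beta_i^\ast(\mu_v) \rb \rb$; since $C_v = \cone(\mu_v)$ and each $\beta_i^\ast(\mu_v)$ generates a ray of $C_v$, the projection $\pi_{C_v}$ collapses the $\cone_\bT \lb \bigcup_{i=1}^r \beta_i^\ast(\mu_v) \rb$-direction, so $\pi_{C_v}$, and hence $\varphi_v = t_v \circ \pi_{C_v} \circ \trop$, is well defined on $\frakX_\eta$ with image in $\ostar(v)$. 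On the codomain side, the integral affine structure compared in \pref{lm:affinoid} and the one asserted here both live on $\ostar(v)$ (equivalently on $\ostar(\psi_v(v))$ via $\psi_v$), so there is nothing further to reconcile.

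I do not expect any real obstacle in this final step: the substantive work — the explicit local blow-up computation of \pref{sc:local}, the construction of the auxiliary regular toric model $\scrY$, and the isomorphism $\frakX \cong \frakY$ of \pref{cl:f-isom} reducing $\rho_\scrX$ near $\overline{D}_v$ to a toric Berkovich retraction — was already carried out in proving \pref{lm:affinoid}. The one thing to be careful about is bookkeeping: tracking which of $\trop$, $\pi_{C_v}$, $t_v$ corresponds to which of $\rho_\scrX$, $\rho_\scrY$, and confirming that \pref{lm:berkovich-tropical} genuinely identifies $\varphi_v$ with the restriction of \eqref{eq:ret-trop} itself (so that the fibration and the affine structure transport verbatim), rather than with something merely related to it — but this is precisely what those two lemmas were arranged to deliver.
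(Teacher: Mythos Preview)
Your proposal is correct and matches the paper's own proof, which is the one-line statement that the lemma follows immediately from \pref{lm:berkovich-tropical} and \pref{lm:affinoid}. Your additional remarks on matching domains and codomains are sound but not needed for the deduction.
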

\begin{proof}
This immediately follows from \pref{lm:berkovich-tropical} and \pref{lm:affinoid}.
\end{proof}

\begin{lemma}\label{lm:torus-fib2}
The map $\delta \circ \trop$ of \eqref{eq:na-syz} is an affinoid torus fibration over $W_v^\circ \subset B^{\check{h}}_\nabla$.
Furthermore, it induces the same integral affine structure on $W_v^\circ$ as the one defined by the projection map $\psi_v$ of \eqref{eq:fanstr}.
\end{lemma}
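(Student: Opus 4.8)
The plan is to reduce \pref{lm:torus-fib2} to \pref{lm:torus-fib}, by showing that over $W_v^\circ$ the map $\delta\circ\trop$ coincides with the affinoid torus fibration $\varphi_v$ of \eqref{eq:trop-proj}.

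First I would unwind the notation of \pref{sc:contraction} for $\tau=v$. Since $v$ is a vertex, $a_v=v$, so the open star $U_v$ of $a_v$ in $\widetilde{\scrP}(\tilde{\Sigma}')$ equals $W_{v,v}^\circ=W_v^\circ$, and $C_v=\cone(\mu_v)$ is the cone $C_\tau$ attached to $\tau=v$. Moreover $X_v^\circ=\bigcup_{\tau'\prec v}V_{\tau',v}^\circ=V_{v,v}^\circ=\left(W_v^\circ+\cone_\bT\left(\bigcup_{i=1}^r\beta_i^\ast(\mu_v)\right)\right)\cap X(f_1,\cdots,f_r)$, and by \eqref{eq:local-delta} the restriction of $\delta$ to $X_v^\circ=V_{v,v}^\circ$ is $t_v\circ\pi_{C_v}$, where $t_v$ agrees, on $\pi_{C_v}(W_v^\circ)\subseteq\pi_{C_v}(\ostar(v))$, with the section map appearing in \eqref{eq:trop-proj}. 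Then \pref{lm:delta-1} gives $\delta^{-1}(W_v^\circ)=X_v^\circ$, so $(\delta\circ\trop)^{-1}(W_v^\circ)=X^{\an}\cap\trop^{-1}(X_v^\circ)$ and on this set $\delta\circ\trop=t_v\circ\pi_{C_v}\circ\trop$.

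Next I would identify this set with $\varphi_v^{-1}(W_v^\circ)$ and check that the two maps coincide on it. Since $W_v^\circ\subseteq\ostar(v)$ (the open star of $v$ in the refinement $\widetilde{\scrP}(\tilde{\Sigma}')$ is contained in the open star of $v$ in $\scrP(\tilde{\Sigma}')$) and $\cone_\bT\left(\bigcup_i\beta_i^\ast(\mu_v)\right)$ is annihilated by $\pi_{C_v}$ (each $\beta_i^\ast(\mu_v)$ lies in $\cone(\mu_v)=C_v$), we have $X_v^\circ\subseteq\ostar(v)+\cone_\bT\left(\bigcup_i\beta_i^\ast(\mu_v)\right)$, so by \pref{lm:g-fiber} the set $X^{\an}\cap\trop^{-1}(X_v^\circ)$ lies inside $\frakX_\eta$; there $t_v\circ\pi_{C_v}\circ\trop$ is $\varphi_v$ by \eqref{eq:trop-proj}, hence $\delta\circ\trop=\varphi_v$ on $X^{\an}\cap\trop^{-1}(X_v^\circ)$. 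For the reverse inclusion, take $x\in\frakX_\eta$ and write $\trop(x)=s+c$ with $s\in\ostar(v)$ and $c\in\cone_\bT\left(\bigcup_i\beta_i^\ast(\mu_v)\right)$ as in \pref{lm:g-fiber}; since $\pi_{C_v}$ kills $c$ and $t_v\circ\pi_{C_v}$ is the identity on $\ostar(v)$, we get $\varphi_v(x)=t_v(\pi_{C_v}(s))=s$, so $\varphi_v(x)\in W_v^\circ$ forces $s\in W_v^\circ$, whence $\trop(x)=s+c\in\left(W_v^\circ+\cone_\bT\left(\bigcup_i\beta_i^\ast(\mu_v)\right)\right)\cap X(f_1,\cdots,f_r)=X_v^\circ$. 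Thus $\varphi_v^{-1}(W_v^\circ)=X^{\an}\cap\trop^{-1}(X_v^\circ)=(\delta\circ\trop)^{-1}(W_v^\circ)$, and $\delta\circ\trop=\varphi_v$ there.

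Finally, \pref{lm:torus-fib} asserts that $\varphi_v\colon\frakX_\eta\to\ostar(v)$ is an affinoid torus fibration inducing the integral affine structure defined by $\psi_v$ of \eqref{eq:fanstr}; restricting it over the open subset $W_v^\circ\subseteq\ostar(v)$ again gives an affinoid torus fibration, which by the previous step is exactly $\delta\circ\trop$ over $W_v^\circ$ and induces on $W_v^\circ$ the integral affine structure given by $\psi_v$, namely the one coming from the toric degeneration. The step I expect to require the most care is the middle one: keeping track of the two distinct open stars of $v$, confirming that $X^{\an}\cap\trop^{-1}(X_v^\circ)\subseteq\frakX_\eta$, and matching the section maps $t_v$ occurring in \eqref{eq:local-delta} and \eqref{eq:trop-proj}; the rest is a direct combination of \pref{lm:delta-1}, \pref{lm:g-fiber}, and \pref{lm:torus-fib}.
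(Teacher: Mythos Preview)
Your proposal is correct and follows essentially the same approach as the paper: both identify $(\delta\circ\trop)^{-1}(W_v^\circ)$ with $\varphi_v^{-1}(W_v^\circ)$ using \pref{lm:delta-1} and \pref{lm:g-fiber}, check that $\delta\circ\trop=\varphi_v$ on this set via \eqref{eq:local-delta}, and then invoke \pref{lm:torus-fib}. You are more explicit than the paper about the two inclusions in the middle step and about reconciling the section maps $t_v$ from \eqref{eq:t} and from \eqref{eq:trop-proj}, which is a good habit; the paper handles this identification in a single sentence.
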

\begin{proof}
By \pref{lm:delta-1} for $\tau=v$, we have
\begin{align}
\lb \delta \circ \trop \rb^{-1} \lb W_v^\circ \rb
&=
\trop^{-1} \lc \lb W_{v}^\circ + \cone_\bT \lb \bigcup_{i=1}^r \beta_i^\ast \lb \mu_{v} \rb \rb \rb \cap X(f_1, \cdots, f_r) \rc \\ \label{eq:d-trop-w}
&=X^{\an} \cap \trop^{-1} 
\lb W_{v}^\circ +  \cone_\bT \lb \bigcup_{i=1}^r \beta_i^\ast \lb \mu_{v} \rb \rb \rb.
\end{align}
We can see from \pref{lm:g-fiber} and the construction of the map $\varphi_v \colon \frakX_\eta \to \ostar(v)$ of \eqref{eq:trop-proj} that \eqref{eq:d-trop-w} is the preimage of $W_v^\circ (\subset \ostar (v))$ by the map $\varphi_v$.
Furthermore, according to \eqref{eq:local-delta}, the map $\delta \circ \trop$ coincides with the map $\varphi_v=t_v \circ \pi_{C_v} \circ \trop$ over $W_v^\circ$.
Therefore, by \pref{lm:torus-fib}, we can conclude \pref{lm:torus-fib2}.
\end{proof}

\begin{lemma}\label{lm:torus-fib3}
Let $\sigma \in \scrP(\tilde{\Sigma}')$ be a maximal-dimensional polyhedron containing $v$ as its vertex.
The map $\delta \circ \trop$ of \eqref{eq:na-syz} is an affinoid torus fibration over $\rint \lb \sigma \rb \subset B^{\check{h}}_\nabla$.
Furthermore, it induces the same integral affine structure on $\rint \lb \sigma \rb$ as the one defined by the map $\psi_\sigma$ of \eqref{eq:int-aff}.
\end{lemma}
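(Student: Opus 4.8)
The plan is to mirror the proof of \pref{lm:torus-fib2}, with $\rint(\sigma)$ playing the role of $W_v^\circ$. Since $v\prec\sigma$, \pref{lm:delta-1m} gives $\delta^{-1}\lb\rint(\sigma)\rb=\rint(\sigma)$ with $\delta$ the identity there, so that
\begin{align}
\lb\delta\circ\trop\rb^{-1}\lb\rint(\sigma)\rb
&=X^{\an}\cap\trop^{-1}\lb\rint(\sigma)\rb\\
&=X^{\an}\cap\trop^{-1}\lb\rint(\sigma)+\cone_\bT\lb\textstyle\bigcup_{i=1}^r\beta_i^\ast\lb\mu_v\rb\rb\rb,
\end{align}
where the second equality uses $\trop(X)=X(f_1,\cdots,f_r)$ together with \eqref{eq:sigma-cone2} of \pref{lm:sigma-cone}. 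By \pref{lm:g-fiber} this set is contained in $\frakX_\eta$ and coincides with the preimage of $\rint(\sigma)\subset\ostar(v)$ under the map $\varphi_v$ of \eqref{eq:trop-proj}; moreover, exactly as in the proof of \pref{lm:torus-fib2}, over $\rint(\sigma)$ the map $\delta\circ\trop$ coincides with $\varphi_v=t_v\circ\pi_{C_v}\circ\trop$, using that $t_v\circ\pi_{C_v}$ is the identity on $\ostar(v)\supset\rint(\sigma)$.

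With this in hand, \pref{lm:torus-fib} --- which asserts that $\varphi_v$ is an affinoid torus fibration over $\ostar(v)$ inducing the integral affine structure of $\psi_v$ of \eqref{eq:fanstr} --- gives at once that $\delta\circ\trop$ is an affinoid torus fibration over $\rint(\sigma)$: indeed $\rint(\sigma)$ is a $d$-dimensional, hence open, subset of $B^{\check{h}}_\nabla$ and therefore open in $\ostar(v)$, and the restriction of an affinoid torus fibration to an open subset of its base is again one. This proves the first assertion.

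It then remains to identify the induced integral affine structure on $\rint(\sigma)$ with the one defined by $\psi_\sigma$ of \eqref{eq:int-aff}. By the previous paragraph, $\delta\circ\trop$ induces on $\rint(\sigma)$ the restriction of the $\psi_v$-structure. The charts $\psi_v|_{W_v^\circ}$ and $\psi_\sigma$ overlap on the nonempty open set $W_v^\circ\cap\rint(\sigma)$ --- nonempty because $v$ is a vertex of $\sigma$, so there is a maximal chain $v\prec\tau_1\prec\cdots\prec\tau_{d-1}\prec\sigma$ in $\scrP(\tilde\Sigma')$, whose barycentric simplex $\conv\lb\lc v,a_{\tau_1},\cdots,a_{\tau_{d-1}},a_\sigma\rc\rb$ has interior lying in both $W_v^\circ$ and $\rint(\sigma)$ --- and their transition map there is integral affine by the construction of the integral affine structure on $B^{\check{h}}_\nabla\setminus\Gamma(\tilde\Sigma')$ recalled in \pref{sc:toric-CY} (\cite[Proposition 3.14]{MR2198802}). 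Since $\psi_v$ is linear and $\psi_\sigma$ is an affine embedding, this transition extends to a single affine-linear map on all of $\aff(\sigma)$, which is therefore integral affine on all of $\rint(\sigma)$; hence the two structures agree there.

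The step I expect to require the most care is the first reduction --- verifying that $\lb\delta\circ\trop\rb^{-1}\lb\rint(\sigma)\rb$ has precisely the shape needed to invoke \pref{lm:torus-fib} and \pref{lm:g-fiber} --- and the key technical input there is exactly \eqref{eq:sigma-cone2} of \pref{lm:sigma-cone}. Everything else is a direct appeal to \pref{lm:torus-fib}, \pref{lm:delta-1m}, and the chart compatibility already established in \cite{MR2198802}, together with the routine observation about restricting affinoid torus fibrations and linear charts.
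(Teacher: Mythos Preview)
Your proof is correct and follows essentially the same route as the paper's: compute $(\delta\circ\trop)^{-1}(\rint(\sigma))$ via \pref{lm:delta-1m} and \eqref{eq:sigma-cone2}, identify it with $\varphi_v^{-1}(\rint(\sigma))$ using \pref{lm:g-fiber}, observe that $\delta\circ\trop=\varphi_v$ there, and invoke \pref{lm:torus-fib}. The only cosmetic difference is in the last step: the paper simply notes that $\psi_v$ restricts to an integral affine isomorphism $\aff(\sigma)\to\overline{N}_\bR$, so the $\psi_v$- and $\psi_\sigma$-structures on $\rint(\sigma)$ coincide outright, whereas you reach the same conclusion by checking compatibility on the overlap $W_v^\circ\cap\rint(\sigma)$ and extending by linearity.
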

\begin{proof}
By \pref{lm:delta-1m} and \eqref{eq:sigma-cone2}, one has 
\begin{align}
\lb \delta \circ \trop \rb^{-1} \lb \rint (\sigma) \rb
&=
\trop^{-1} \lb \rint (\sigma) \rb \\
&=
\trop^{-1} \lb \lb \rint \lb \sigma \rb + \cone_\bT \lb \bigcup_{i=1}^r \beta_i^\ast \lb \mu_{v} \rb \rb
 \rb 
 \cap X(f_1, \cdots, f_r) \rb \\ \label{eq:d-trop-s}
&=X^{\an} \cap \trop^{-1} 
\lb \rint \lb \sigma \rb +  \cone_\bT \lb \bigcup_{i=1}^r \beta_i^\ast \lb \mu_{v} \rb \rb \rb.
\end{align}
We can see from \pref{lm:g-fiber} and the construction of the map $\varphi_v \colon \frakX_\eta \to \ostar(v)$ of \eqref{eq:trop-proj} that \eqref{eq:d-trop-s} is the preimage of $\rint \lb \sigma \rb (\subset \ostar (v))$ by the map $\varphi_v$.
Furthermore, we can see from \pref{lm:delta-1m} that the map $\delta \circ \trop$ coincides with the map $\varphi_v =t_v \circ \pi_{C_v} \circ \trop$ over $\rint \lb \sigma \rb$.
Therefore, by \pref{lm:torus-fib}, we can conclude \pref{lm:torus-fib3}.
(Notice that the projection map $\psi_v$ gives an integral affine isomorphism from $\aff \lb \sigma \rb$ to $\overline{N}_\bR$, and the integral affine structures on $\rint (\sigma)$ induced by $\psi_v$ and $\psi_\sigma$ are the same.)
\end{proof}

\begin{proof}[Proof of  \pref{th:main}(2)]
In the beginning of \pref{sc:proof1}, we took a vertex $v \in \scrP(\tilde{\Sigma}')$ arbitrarily.
Hence, \pref{lm:torus-fib2} and \pref{lm:torus-fib3} hold for any vertex $v \in \scrP(\tilde{\Sigma}')$.
Thus we can conclude \pref{th:main}(2).
\end{proof}

\begin{remark}\label{rm:ps22}
Let $\lc e_i \relmid 1 \leq i \leq d+1 \rc$ be a basis of the lattice $M$.
\pref{th:main} is proved in \cite[Theorem A]{PS22} in the case where $r=1$,
\begin{align}
\Delta=\conv \lb \lc -\sum_{i=1}^{d+1} e_i, (d+2) e_j-\sum_{i=1}^{d+1} e_i \relmid 1 \leq j \leq d+1 \rc \rb,
\end{align}
$\Sigma' =\Sigma, \Sigmav' =\Sigmav$, and $\check{h}=\check{\varphi}$.
In his proof, he applies \cite[Theorem A]{MPS21} for the snc-model $\scrX$ of \pref{pr:snc} and the stratum $\overline{D}_v$ to show that the Berkovich retraction $\rho_\scrX$ associated with $\scrX$ is an affinoid torus fibration around $v$.
In order to apply \cite[Theorem A]{MPS21}, we need the conormal bundle of the stratum $\overline{D}_v$ to be nef.
In our general setup stated in \pref{sc:toric-CY}, this does not hold in general.
Therefore, in our setup, we are not able to apply \cite[Theorem A]{MPS21} directly as done in \cite{PS22}.
Instead, we considered the toric model $\scrY \to \Spec R$ in this article.
\end{remark}

\begin{remark}
In the final part \cite[Corollary 4.18]{PS22} of the proof of \cite[Theorem A]{PS22}, he combines
\begin{align}\label{eq:PS22-1}
\rho_{\frakX_i}=(q_i)^{-1} \circ p_i \circ \val_{\bP}
\end{align}
on $\frakX^\eta_i$ and 
\begin{align}\label{eq:PS22-2}
\delta_{\underline{a}}=(q_i)^{-1} \circ p_i
\end{align}
on $Y_{v_i}$ to obtain 
\begin{align}\label{eq:PS22-3}
\rho_{\frakX_i}=\delta_{\underline{a}} \circ \val_{\bP}
\end{align}
on $\rho_{\frakX_i}^{-1} \lb \widetilde{\mathrm{Star}}(v_i) \rb$.
(Here we are using the notation in \cite{PS22}.
If we use the notation of this article, then
\begin{align}
\rho_{\frakX_i}, (q_i)^{-1}, p_i, \val_{\bP}, \frakX^\eta_i, \delta_{\underline{a}}, Y_{v_i}, \widetilde{\mathrm{Star}}(v_i)
\end{align} 
are
\begin{align}
\rho_\scrX, t_v, \pi_{c_v}, \trop, \frakX_\eta, \delta, W_v^\circ +  \cone_\bT \lb \bigcup_{i=1}^r \beta_i^\ast \lb \mu_{v} \rb \rb, W_v^\circ
\end{align}
respectively. 
\eqref{eq:PS22-1} and \eqref{eq:PS22-2} are the equalities mentioned in \pref{lm:berkovich-tropical} and \eqref{eq:local-delta} respectively.)
We also did essentially the same thing in the proof of \pref{lm:torus-fib2}.
For doing this, we need the inclusion
\begin{align}\label{eq:PS22-4}
\val_{\bP} \lb \rho_{\frakX_i}^{-1} \lb \widetilde{\mathrm{Star}}(v_i) \rb \rb
=
\trop \lb \rho_\scrX^{-1} \lb W_v^\circ \rb \rb 
\subset
W_v^\circ +  \cone_\bT \lb \bigcup_{i=1}^r \beta_i^\ast \lb \mu_{v} \rb \rb=Y_{v_i},
\end{align}
since \eqref{eq:PS22-2} holds only on $Y_{v_i}=W_v^\circ +  \cone_\bT \lb \bigcup_{i=1}^r \beta_i^\ast \lb \mu_{v} \rb \rb$.
\eqref{eq:PS22-4} is not discussed in \cite{PS22}.
In this article, \eqref{eq:PS22-4} is essentially clarified in the proof of \pref{lm:torus-fib2} by using \pref{lm:g-fiber} that was proved by the explicit computation of blow-ups in \pref{sc:local}.
\end{remark}

\begin{remark}\label{rm:KSMPS21}
The non-archimedean SYZ fibration of \pref{th:main} (2) (or of \cite[Theorem A]{PS22}) coincides with the one in \cite[Section 4.2.5]{MR2181810} in the case of a quartic K3 surface (cf.~\cite[Proposition 4.2.4]{pilleschneider:tel-04107501}).
It is also similar to the one constructed in \cite[Theorem C]{MPS21} for a quintic $3$-fold.
However, they differ over the discriminant (cf.~\cite[Section 3.7.5]{MPS21}).
\end{remark}

\section*{Acknowledgement}

I am greatly indebted to Sam Payne for motivating discussions.
In particular, studying the relations between the Gross--Siebert program and Berkovich geometry was suggested by him.
I am grateful to Keita Goto and Enrica Mazzon for their kind answers to my questions on \cite{MR3946280, MPS21}.
I thank Igor Krylov and Yat-Hin Suen for helpful communications.
I also thank Keita Goto, Yuji Odaka, and Kazushi Ueda for valuable comments on the draft of this article.
I also appreciate many comments from the anonymous referee, which helped me improve this article.
This work was supported by RIKEN iTHEMS Program.

\bibliographystyle{amsalpha}
\bibliography{bibs}

\end{document}